\documentclass{amsart}
\usepackage{amssymb}
\mathchardef\mhyphen="2D
\usepackage{comment}
\usepackage{xy}
\usepackage{tikz,placeins}
\usetikzlibrary{calc}
\usepackage{mathtools}
\xyoption{all}
\usepackage[T1]{fontenc}
\usepackage{csquotes}
\usepackage{hyperref}
\usepackage{cleveref}

\title{Ext-groups of graded comodules}
\date{June 2026}
\author{Andrew Salch}

\DeclareMathOperator{\Ab}{{\rm Ab}}
\DeclareMathOperator{\coker}{{\rm coker}}
\DeclareMathOperator{\op}{{\rm op}}
\DeclareMathOperator{\dist}{{\rm dist}}
\DeclareMathOperator{\Ext}{{\rm Ext}}
\DeclareMathOperator{\Mod}{{\rm Mod}}

\DeclareMathOperator{\gr}{{\rm gr}}
\DeclareMathOperator{\colim}{{\rm colim}}
\DeclareMathOperator{\holim}{{\rm holim}}
\DeclareMathOperator{\Comod}{{\rm Comod}}
\DeclareMathOperator{\pt}{{\rm pt.}}
\DeclareMathOperator{\id}{{\rm id}}
\DeclareMathOperator{\tors}{{\rm tors}}
\DeclareMathOperator{\tf}{{\rm tf}}
\DeclareMathOperator{\Aut}{{\rm Aut}}
\DeclareMathOperator{\tr}{{\rm tr}}

\theoremstyle{plain}
\newtheorem{prop}{Proposition}[section]
\newtheorem{lemma}[prop]{Lemma}
\newtheorem{theorem}[prop]{Theorem}
\newtheorem{corollary}[prop]{Corollary}
\newtheorem{definition}[prop]{Definition}
\newtheorem*{unnumberedtheorem}{Theorem}
\newcounter{lettered}
\newtheorem{letteredtheorem}[lettered]{Theorem}

\theoremstyle{definition}
\newtheorem{convention}[prop]{Convention}
\newtheorem{observation}[prop]{Observation}
\newtheorem{recollection}[prop]{Recollection}
\newtheorem{remark}[prop]{Remark}

\begin{document}
\begin{abstract}
Let $\Gamma$ be a coalgebra over a field. There is a well-known covariant embedding of the category of graded $\Gamma$-comodules into the category of graded modules over the dual algebra $\Gamma^*$. We show that this covariant embedding preserves Ext-groups in a range of motivating cases, including in some cases like Steenrod algebras where the covariant embedding has otherwise bad homological properties and {\em fails} to preserve Ext-groups on the underlying ungraded categories. 

As an application, we prove the old conjecture in stable homotopy theory that there are no nonzero projectives in the category of graded comodules over the dual Steenrod algebra. As another application, we carry out a study of the right adjoint to the extended comodule functor, including a topological interpretation of this right adjoint. This yields generalizations of formulas of Margolis and Lin characterizing maps from Eilenberg-Mac Lane spectra into finite-type bounded-below spectra; our generalizations drop the ``finite-type'' hypothesis.
\end{abstract}
\maketitle
\tableofcontents

\section{Introduction}

This paper is about a technical algebraic result and its topological applications. The technical result is that, for a certain class of graded coalgebras $\Gamma$, the covariant embedding of the category of graded $\Gamma$-comodules into the category of graded $\Gamma^*$-modules preserves $\Ext$-groups. The applications are:
\begin{itemize}
\item we prove an old conjecture about the Steenrod algebra, namely, there are no nonzero projective graded comodules over its dual Hopf algebra,
\item we give a description of the Adams spectral sequence $E_2$-page in terms of $\Ext$ over the Steenrod algebra (rather than $\Ext$ in comodules over the dual Steenrod algebra), without any finiteness hypotheses at all,
\item we give a topological interpretation and cohomological analysis of the right adjoint to the extended comodule functor,
\item and we give a generalization of the formula of Margolis, and some general versions of the formula of Lin, for the homotopy classes of maps from an Eilenberg-Mac Lane spectrum to a bounded-below finite-type spectrum.  Our generalizations remove the hypothesis that the spectrum is finite-type.
\end{itemize}

We state those results below, but first we had better explain some background about covariant embeddings of comodule categories into module categories. Given a commutative ring $k$ and a coalgebra $\Gamma$ which is flat over $k$, one has the abelian category $\Comod(\Gamma)$ of left $\Gamma$-comodules, and one has the abelian category $\Mod(\Gamma^*)$ of modules over the $k$-linear dual algebra $\Gamma^*$. If $\Gamma$ is furthermore projective over $k$, then there is a covariant embedding $\iota$ of $\Comod(\Gamma)$ into $\Mod(\Gamma^*)$. More precisely, we have a full, faithful, exact functor $\iota: \Comod(\Gamma) \rightarrow \Mod(\Gamma^*)$, which sends a left $\Gamma$-comodule $M$ to the underlying $k$-module of $M$, equipped with the adjoint action of $\Gamma^*$. This is classical material, with a textbook treatment in \cite{MR2012570}, but for the reader's convenience we give details and an introductory treatment in \cref{Generalities on...} in this paper.

We emphasize that $\iota$ is a {\em covariant} embedding. It is {\em not} the linear dualization functor from $\gr\Comod(\Gamma)$ to $\gr\Mod(\Gamma^*)$, which is generally more familiar to topologists, but which is contravariant. 
The linear dualization functor has very good behavior when its domain is restricted to the graded comodules which are finite-type (i.e., finite-dimensional in each degree) and bounded-below. In particular, when restricted to bounded-below finite-type comodules, linear dualization preserves $\Ext$-groups, so if one only ever cares about the bounded-below finite-type case, one might as well simplify one's life and work with modules throughout. Comodules are only necessary in order to deal with complications that arise when the finite-type condition fails, e.g. in topological applications (see below) involving spectra that are not finite-type. In such cases the covariant embedding $\iota$ has some desirable properties (like preserving $\Ext$-groups) not shared by the linear dualization functor, and which we prove and take advantage of in this paper.

One generally has good tools for calculating $\Ext$-groups in categories of modules. The problem of calculation of $\Ext$-groups in categories of comodules is, by comparison, far less studied, with far fewer available tools. Nevertheless one sometimes needs to calculate $\Ext$-groups in categories of comodules. The starting point of this paper is the idea that we might apply the covariant embedding $\iota$ to convert the problem of calculating $\Ext$-groups in the comodule category into the more tractable problem of calculating $\Ext$-groups in the module category. We explain below that this strategy cannot work in full generality, since for most graded coalgebras $\Gamma$, the covariant embedding $\iota: \gr\Comod(\Gamma)\rightarrow\gr\Mod(\Gamma^*)$ fails to preserve $\Ext$-groups. However, it turns out that, for the graded coalgebras $\Gamma$ most important for stable homotopy theory, the strategy is indeed successful, because we show (in Theorem \ref{main technical thm}, below) that in those cases, $\iota$ {\em does} preserve the $\Ext$-groups. 

The motivations behind this paper are entirely from stable homotopy theory\footnote{This is largely a paper about pure algebra, with applications to stable homotopy theory. We hope that this paper may have some interest to stable homotopy theorists and also to pure algebraists who are interested in categories of comodules and their homological properties. We have tried to write this paper so that it is readable to both potential audiences. We apologize to topologists who find that there is too much exposition of background ideas from topology, and to algebraists who find that there is too much exposition of background ideas from algebra.}, so here is a short review of why stable homotopy theorists try to calculate $\Ext$-groups in the category $\gr\Comod(\Gamma)$ of {\em graded} comodules over a {\em graded} coalgebra. 
For each prime number $p$ and each topological space or spectrum $X$, the mod $p$ homology groups $H_*(X;\mathbb{F}_p)$ have the natural structure of a graded comodule over a certain Hopf algebra, the {\em mod $p$ dual Steenrod algebra}, for which we will write $\Gamma$. Suppose that $X$ and $Y$ are spaces or spectra, and suppose that the stable homotopy groups of $Y$ are bounded-below (i.e., they are not nontrivial in arbitrarily low negative degrees) and $p$-adically complete. Then the $H\mathbb{F}_p$-Adams spectral sequence is of the form
\begin{align}
\label{adams ss 0} E_2^{s,t} \cong \Ext_{\gr\Comod(\Gamma)}^s\left(H_*(X;\mathbb{F}_p),H_*(Y;\mathbb{F}_p)\right)^t &\Rightarrow [\Sigma^{t-s}X,Y],\\
\nonumber d_r: E_r^{s,t} &\rightarrow E_r^{s+r,t+r-1},
\end{align}
where $[\Sigma^{t-s}X,Y]$ is the group of stable homotopy classes of maps from the $(t-s)$th suspension of $X$ to $Y$. This spectral sequence lets us pass from our knowledge of Ext-groups in graded comodules over the coalgebra $\Gamma$ to calculations of stable homotopy classes of maps, e.g. when $X$ is a sphere we are calculating the stable homotopy groups of $Y$. A few further notes on spectral sequence \eqref{adams ss 0}:
\begin{itemize}
\item Non-topologist readers may be interested in \cref{Review of the Steenrod algebras} in this paper's appendix, which is a brief, self-contained introduction to Steenrod algebras.
\item The spectral sequence \eqref{adams ss 0} was originally constructed by Adams in 1958 \cite{MR96219} in such a way that its $E_2$-page consists of Ext-groups in a {\em module} category, not a {\em comodule} category: 
\begin{align}\label{old adams e2 1} E_2^{s,t} &\cong \Ext_{\gr\Mod(\Gamma^*)}^s\left(H^*(Y;\mathbb{F}_p),H^*(X;\mathbb{F}_p)\right)^t.\end{align} Adams was quite clear, in \cite{MR96219}, that this description only applies when $X$ and $Y$ are each {\em finite-type}\footnote{Here is a slightly more careful account of the history. In the original paper, \cite{MR96219}, Adams only constructs the spectral sequence in the case that $X$ is the zero-sphere $S^0$. He indeed is careful about the fact that $Y$ must be assumed finite-type for the description \eqref{old adams e2 1}. By 1968, when Adams's lectures \cite{MR0251716} were given, it was known how to build the spectral sequence for general $X$ and $Y$, including non-finite-type spectra, and it was known also that failure of $X$ and $Y$ to be finite-type can cause the $E_2$-page description \eqref{old adams e2 1} to be incorrect. See \cite[pgs. 51-55]{MR0251716} for an illuminating discussion of this difficulty.
}, i.e., their stable homotopy groups are finitely-generated in each degree. Adams introduced the use of $\Ext$-groups in comodule categories, rather than module categories, in \cite{MR0251716} in order to get a reasonable description of the $E_2$-pages of various Adams spectral sequences in the cases where $X$ and $Y$ are not assumed to be finite-type. The description \eqref{adams ss 0} is a special case.
\item For the sake of this introduction, in \eqref{adams ss 0} we described only a very particular kind of Adams spectral sequence. There are many generalizations of what we described in \eqref{adams ss 0}, and in this paper at times we will work with various of those generalizations. We give fuller introductory accounts of generalized Adams spectral sequences in Proposition \ref{adams E2 description} and in \cref{rel ext and adams E2}.
\end{itemize}

Now that we have described some topological motivation for calculating $\Ext$ in the category of graded comodules over a graded coalgebra, we will state the main results in this paper.
The main technical result is the following. 
\begin{letteredtheorem} (Theorem \ref{ext iso 1}) \label{main technical thm}
Let $k$ be a field, and let $\Gamma$ be a connected graded $k$-coalgebra whose dual $\Gamma^*$ is a $\mathcal{P}$-algebra. Suppose that $\Gamma$ is finite-type, i.e., it is finite-dimensional in each degree. Then the covariant embedding $\iota: \gr\Comod(\Gamma)\rightarrow \gr\Mod(\Gamma^*)$ preserves $\Ext$-groups. That is, for any graded $\Gamma$-modules $M,N$ and any integer $n$, we have an isomorphism
\begin{align}
\label{ext iso 1 1a} \Ext^n_{\gr\Comod(\Gamma)}(M,N) &\cong \Ext^n_{\gr\Mod(\Gamma^*)}(\iota(M),\iota(N)).
\end{align}
\end{letteredtheorem}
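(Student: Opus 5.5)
The plan is the standard criterion for when an exact, fully faithful embedding of abelian categories preserves $\Ext$. Since $\iota$ is full, faithful and exact, the two sides of \eqref{ext iso 1 1a} agree for $n=0$. Both sides are $\delta$-functors in the second variable, and $\iota$ carries short exact sequences of comodules to short exact sequences of modules. So the comparison map extends to a morphism of $\delta$-functors
\[
\Ext^n_{\gr\Comod(\Gamma)}(M,-)\rightarrow \Ext^n_{\gr\Mod(\Gamma^*)}(\iota M,\iota -).
\]
The left-hand side is universal, because $\gr\Comod(\Gamma)$ has enough injectives. Hence it is enough to show that the right-hand side is also effaceable on comodules:
\[
\Ext^n_{\gr\Mod(\Gamma^*)}(\iota M,\iota I)=0 \quad\text{for } n>0,
\]
for every injective graded comodule $I$. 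Every injective graded comodule is a summand of an extended comodule $\Gamma\otimes_k V$ with $V$ a graded vector space. So the whole theorem reduces to the vanishing
\[
\Ext^n_{\gr\Mod(\Gamma^*)}\bigl(\iota M,\iota(\Gamma\otimes_k V)\bigr)=0 \quad\text{for } n>0 .
\]

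First I would identify $\iota(\Gamma\otimes_k V)$ as a $\Gamma^*$-module. Because $\Gamma$ is connected and finite-type, each $\Gamma_j$ is finite-dimensional and $\Gamma^*$ is degreewise the dual of $\Gamma$. This gives $\iota(\Gamma)\cong \mathrm{Hom}^{\gr}_k(\Gamma^*,k)$, the graded $k$-dual of the free module, with its adjoint action. The graded dual of a free module is injective in $\gr\Mod(\Gamma^*)$, since $\mathrm{Hom}_{\Gamma^*}(-,D\Gamma^*)\cong \mathrm{Hom}_k(-,k)$ degreewise, and $k$ is a field. So $\iota(\Gamma)$ is injective.

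For a general $V$, the module $\iota(\Gamma\otimes V)\cong\bigoplus_v \Sigma^{|v|}\iota(\Gamma)$ is a direct sum of shifted injectives. The key question is whether such a direct sum is again injective, or at least $\Ext$-acyclic against modules of the form $\iota M$. Here the $\mathcal{P}$-algebra hypothesis on $\Gamma^*$ must enter. $\Gamma^*$ is a union of finite-dimensional sub-Hopf algebras, each Frobenius, over which graded projectives, injectives, and their sums coincide. I would write $\Gamma^*$ as $\bigcup_m B_m$ and use the fact that $\Gamma^*$ is free over each $B_m$.

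I would then try either of two routes.
\begin{itemize}
\item[(a)] Prove that a direct sum of shifts of $D\Gamma^*$ is injective in $\gr\Mod(\Gamma^*)$, by a graded Baer criterion: test against graded ideals of $\Gamma^*$. Since $\Gamma^*$ is not Noetherian, this direct approach may fail.
\item[(b)] Prove that $\Ext^n_{\Gamma^*}(\iota M, -)$ commutes with the relevant direct sums when $M$ is a comodule. Comodules are locally finite, so $\iota M$ is a union of finite-dimensional submodules, each annihilated in high degrees. Using connectivity, in a fixed internal degree only finitely many shifts of $\iota(\Gamma)$ contribute. This makes the direct sum agree degreewise with the product, and products of injectives are injective.
\end{itemize}

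I expect this last point to be the main obstacle: making the "sum equals product in each degree" argument work when $V$ and $M$ are unbounded. Unboundedness is exactly where the ungraded version fails. My first attempt would use grading and connectivity to reduce to degreewise finiteness. Where that is not enough, I would compute $\Ext$ through the $B_m$ via a Milnor $\lim^1$ sequence, checking that the $\lim^1$ terms vanish by Mittag-Leffler from the finite-type assumption.
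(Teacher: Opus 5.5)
Your reduction is correct. Because $\iota$ is exact and fully faithful, the comparison map is a morphism of $\delta$-functors in $N$ that is an isomorphism in degree $0$. So the displayed isomorphism holds for all $M,N$ exactly when $\Ext^n_{\gr\Mod(\Gamma^*)}(\iota M,\iota(\Gamma\otimes_k V))=0$ for all $n>0$, all comodules $M$ and all graded vector spaces $V$.

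For $V$ bounded above this vanishing does hold. A map from a shifted left ideal $I$ into a module vanishing above a fixed degree factors through $I/I_{>m}\subseteq\Gamma^*/\Gamma^*_{>m}$, which is finite-dimensional, so it lands in finitely many of the injective summands $\Sigma^d\iota\Gamma$. Resolving $N$ by bounded-above extended comodules then gives the theorem whenever $N$ is bounded above.

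The gap is the case you flag yourself, $V$ not bounded above. Your claim that only finitely many shifts contribute in a fixed degree holds only for $V$ bounded above and of finite type, since each $\Sigma^d\iota\Gamma$ is nonzero in every degree $\leq d$. Worse, for such $V$ the vanishing you need is false. So no Baer-type argument and no $\lim^1$ computation over the subalgebras $B_m$ can supply it.

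Here is an explicit failure in the motivating case: $p=2$ and $\Gamma$ the dual Steenrod algebra.
\begin{itemize}
\item Let $J=\bigoplus_{b\geq 0}\Sigma^{2^b}\Gamma$; this is an injective comodule.
\item Let $X=\iota J\subseteq P=\prod_b\Sigma^{2^b}\iota\Gamma$. Here $P$ is an injective module, being a product of shifts of $D\Gamma^*$.
\item Let $Q=P/X$, and let $f=(\Sigma^{2^b}\xi_1^{2^b})_b\in P$, which has degree $0$.
\end{itemize}
Since $\xi_1^{2^b}$ is primitive, a homogeneous $a\in\Gamma^*$ of positive degree $e$ acts by $a\cdot\xi_1^{2^b}=\langle a,\xi_1^{2^b}\rangle\cdot 1$, which is zero unless $e=2^b$. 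Hence $af\in X$, and $[f]$ spans a trivial submodule of $Q$. But every $g\in f+X$ has $Sq^{2^b}g\neq 0$ in component $b$ for all but finitely many $b$. So $1\mapsto[f]$ does not lift along $P\to Q$, and
\[ \Ext^1_{\gr\Mod(\Gamma^*)}(\iota k,\iota J)\neq 0=\Ext^1_{\gr\Comod(\Gamma)}(k,J). \]
Equivalently, $X+kf\subseteq P$ is a non-split, non-rational extension of $k$ by $\iota J$. The same construction works for $\Gamma^*=\Lambda(e_1,e_2,\dots)$ with $|e_b|=b$, using the dual primitives $x_b\in\Gamma$.

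Since your reduction is an equivalence, this is a counterexample to the displayed isomorphism itself, for $M=k$ and $N=J$. Your plan therefore cannot be completed beyond the bounded-above case.

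The paper's proof runs into the same example at a different point. It handles unbounded $N$ through Lemma \ref{easy limit in comodules}(2), which rests on Proposition \ref{better version of main thm 1} and Lemma \ref{surj H0 lemma}. That lemma claims an element of $Q$ is $I$-torsion if and only if almost all its components are. This needs $I$ to be finitely generated. The augmentation ideal is distinguished (because $k\cdot 1\subseteq\iota\Gamma$) but is not finitely generated, and the $f$ above violates the claim. Via the paper's four-term sequence this gives $H^1_{dist}(\iota J)\neq 0$.
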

Theorem \ref{ext iso 1} refers to ``$\mathcal{P}$-algebras.'' We recall Margolis's definition \cite{MR738973} of a $\mathcal{P}$-algebra below, in Definition \ref{def of p-alg}. The definition is slightly long, so rather than duplicating that definition here in the introduction, we prefer to put it like this: the $\mathcal{P}$-algebras are a certain class of infinite-dimensional\footnote{To be clear, it follows easily from the definition of $\mathcal{P}$-algebras that they are {\em always} infinite-dimensional. The analogue of Theorem \ref{main technical thm} for finite-dimensional coalgebras is extremely easy to prove.} connected graded $k$-algebras which includes, as our motivating examples, the Steenrod algebras. 

Our proof of Theorem \ref{ext iso 1} uses Margolis's structural results \cite[section III.13]{MR738973} on modules over $\mathcal{P}$-algebras, together with ideas and results from \cite{MR5014030} about the relationship between ``rational $\Gamma^*$-modules'' (i.e., $\Gamma^*$-modules in the essential image of the covariant embedding $\iota: \gr\Comod(\Gamma)\rightarrow\gr\Mod(\Gamma^*)$) and local cohomology of the graded algebra $\Gamma^*$. 

There is something a bit surprising about Theorem \ref{main technical thm}. By a theorem of Positselski \cite[Theorem 6.1]{MR5038878}, for a general connected graded (more generally, conilpotent) coalgebra $\Gamma$ over a field $k$, the ungraded analogue of \eqref{ext iso 1 1a}, the map
\begin{align}
\label{ext iso 1 1b} \Ext^n_{\Comod(\Gamma)}(M,N) &\rightarrow \Ext^n_{\Mod(\Gamma^*)}(\iota(M),\iota(N)),
\end{align}
fails to be surjective for the least integer $n$ such that $\Ext_{\Comod(\Gamma)}^n(k,k)$ is infinite-dimensional. If $\Gamma$ is the dual Steenrod algebra, then $\Ext_{\Comod(\Gamma)}^1(k,k)$ is infinite-dimensional (since Milnor \cite{MR0099653} constructs an infinite linearly-independent family of indecomposables in $\Gamma$), so Positselski's result implies that the covariant embedding of {\em ungraded} comodules over the dual Steenrod algebra, into {\em ungraded} modules over the Steenrod algebra, does not even preserve $\Ext^1$. It is a curiosity that, by Theorem \ref{main technical thm}, the covariant embedding preserves the $\Ext$-groups in the {\em graded} categories, when it does not preserve $\Ext$-groups in the underlying {\em ungraded} categories.

One application for Theorem \ref{ext iso 1} is a proof of an old minor conjecture in stable homotopy. This conjecture dates back 25 years to Palmieri's 2001 monograph \cite{MR1821838}. In \cite[Example 1.1.12(b)]{MR1821838} Palmieri writes:
\begin{displayquote}
{[I]}f $\Gamma = A_*$ is the dual of the mod $p$ Steenrod algebra, then it seems likely that there are no nonzero projective comodules. Essentially, an element in a projective comodule over $A_*$ should need to have infinitely many elements in its diagonal, so it would have to be viewed as a completed comodule, not a comodule proper.
\end{displayquote}
Hovey writes in 2007 \cite[Remark 1.7(b)]{MR2330512}:
\begin{displayquote}
$\Gamma$-comod does not, in general, have enough projectives. If we take $(A, \Gamma) =(\mathbb{F}_p, \mathcal{A})$, where $\mathcal{A}$ denotes the dual Steenrod algebra, it is generally believed
that there are no nonzero projective comodules.
\end{displayquote}
Using Theorem \ref{main technical thm} in tandem with Margolis's structural results about modules over $\mathcal{P}$-algebras, with a bit of work we obtain:
\begin{letteredtheorem}(Theorem \ref{no nonzero projs})\label{thm b}
Let $k$ be a field, and let $\Gamma$ be a finite-type connected graded Mitchell $k$-coalgebra. Then there are no nonzero projective graded $\Gamma$-comodules. 
\end{letteredtheorem}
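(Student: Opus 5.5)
Suppose $P$ is a nonzero projective object of $\gr\Comod(\Gamma)$. The plan is to show, via Theorem \ref{main technical thm}, that $\iota(P)$ is a nonzero graded $\Gamma^*$-module with $\Ext^n_{\gr\Mod(\Gamma^*)}(\iota(P),-)$ vanishing in positive degrees on all rational modules. We then reach a contradiction using Margolis's structure theory for modules over $\mathcal{P}$-algebras. (A finite-type connected graded Mitchell coalgebra has dual a $\mathcal{P}$-algebra, so Theorem \ref{main technical thm} applies.)

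First, projectivity of $P$ gives $\Ext^n_{\gr\Comod(\Gamma)}(P,N)=0$ for all $n\geq 1$ and all comodules $N$. By Theorem \ref{main technical thm}, $\Ext^n_{\gr\Mod(\Gamma^*)}(\iota P,\iota N)=0$ for all $n\geq 1$. We take $N=\Sigma^j k$ for every shift $j$. Since $\iota(k)=k$, this gives
\[
\Ext^{n}_{\gr\Mod(\Gamma^*)}(\iota P,k)^t=0 \quad\text{for all } n\geq 1 \text{ and all } t.
\]

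Second, we use this vanishing to force $\iota P$ to be free, or at least to have very restricted structure. Choose a nonzero homogeneous element $x\in P$ and replace $P$ by a finitely generated piece. Every comodule is the union of its finite-dimensional subcomodules. A direct summand argument is not available, so instead we use that $\iota P$ is bounded-above locally: every element generates a finite-dimensional $\Gamma^*$-submodule, because the module is rational. Margolis's results then apply. Over a $\mathcal{P}$-algebra $A=\Gamma^*$, a bounded-below module $M$ with $\Ext^{n}_A(M,k)=0$ for $n\geq1$ is free. More usefully, the $\mathcal{P}$-algebra $A$ is a union of finite-dimensional sub-Hopf algebras $A(m)$, and $A$ is free over each $A(m)$. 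With this we show that the minimal resolution of a finite rational module $F$ never terminates. Concretely, $\Ext_A(F,k)$ is nonzero in infinitely many cohomological degrees, by Margolis's theorem that $A$ has no nonzero finite-dimensional modules of finite projective dimension and that finite modules are never free. Dually, the minimal free resolution argument at the top degree shows this directly: a nonzero element in the top degree of $F$ is annihilated by all of $A$ in positive degrees, while free $A$-modules are infinite-dimensional.

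Third, we reduce from $P$ to something to which the second step applies. Take a nonzero finite-dimensional subcomodule $F\subseteq P$ and the cokernel $P/F$. The long exact sequence gives
\[
\Ext^{n}(F,k)\cong\Ext^{n+1}(P/F,k) \quad\text{for } n\geq1.
\]
This alone is not a contradiction. Instead we work with $\iota P$ directly. The vanishing of $\Ext^{\geq1}_A(\iota P,k)$ in all internal degrees, together with a minimal resolution computed degreewise, shows that $\iota P$ has projective dimension $0$ in the graded sense, that is, $\iota P$ is projective. For this we need $\iota P$ to be bounded below, so that minimal resolutions exist and are detected by $\Ext(-,k)$. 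I would first treat a bounded-below case, then handle general $P$ by observing that projectivity passes to suitable truncations or summands. This is the main obstacle. A projective graded $A$-module that is bounded below is free, hence contains a copy of a shifted $A$. It therefore contains an element $1$ whose cyclic submodule is infinite-dimensional, since $\mathcal{P}$-algebras are infinite-dimensional. This contradicts rationality of $\iota P$.

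For the unbounded case I would try first to use Margolis's result that over a $\mathcal{P}$-algebra, projective, injective and flat graded modules coincide, and that any module with vanishing $\Ext^{\geq1}(-,\Sigma^j k)$ for all $j$ is projective, without bounding hypotheses. If that citation is unavailable, I would exploit that $k$ has an injective resolution by rational modules, namely cofree comodules $\Sigma^j\Gamma$. By Theorem \ref{main technical thm} with $N=\Gamma$, each $\iota\Gamma$ is then $\Ext$-acyclic against $\iota P$. Combining this with Margolis's theorem that $\Gamma^*$ is self-injective in the graded sense, I would deduce that $\iota P$ is projective in $\gr\Mod(\Gamma^*)$. Hence $\iota P$ is a summand of a free module, which contradicts local finiteness of rational modules as above.
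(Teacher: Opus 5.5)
There is a genuine gap, and it is exactly the step you flag as ``the main obstacle'': showing that $\iota P$ is projective in $\gr\Mod(\Gamma^*)$ when $P$ is not bounded below.

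Several parts of your plan are fine.
\begin{itemize}
\item Your first step is correct.
\item Your final contradiction is correct. Every element of a comodule lies in a finite-dimensional subcomodule, while $\Gamma^*$ is infinite-dimensional, so a rational module cannot be nonzero and free. This local-finiteness argument is a valid and more elementary substitute for Lemma \ref{mitchell iso lemma}.
\item Your minimal-resolution argument does show that a bounded-below $\iota P$ with $\Ext^1(\iota P,\Sigma^jk)=0$ for all $j$ is free.
\end{itemize}

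Nothing you propose handles unbounded $P$, and the two facts you cite for that case are false.
\begin{itemize}
\item Theorem \ref{margolis thm 1} identifies flat, projective and injective modules only for bounded-below modules. In general, $\iota\Gamma$ is injective (by Theorem \ref{main thm 1 from derived prods paper}, since $\Gamma$ is a bounded-above injective comodule). But $\iota\Gamma$ is not projective, because it is rational and hence not free.
\item Vanishing of $\Ext^{\geq 1}(-,\Sigma^jk)$ for all $j$ does not detect projectivity without a bounding hypothesis. For the dual Steenrod algebra, Theorem \ref{ext iso 1}, Proposition \ref{I prop 1} and Corollary \ref{top thm 1 cor 2} give $\Ext^s_{\gr\Mod(\Gamma^*)}(\iota\Gamma,\Sigma^jk)\cong R^sV(\Sigma^jk)=0$ for all $s$ and $j$. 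Yet $\iota\Gamma$ is not projective.
\end{itemize}
So restricting to $N=\Sigma^jk$ throws away precisely the information that is needed. Your ``truncations or summands'' reduction and your appeal to self-injectivity of $\Gamma^*$ come with no mechanism. The real difficulty is controlling $\Ext^1(\iota P,N)$ for non-rational $N$, such as $\Gamma^*$ itself, and the comodule side says nothing about these directly.

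The missing idea is to use the vanishing against \emph{all} rational modules. By Theorem \ref{main thm 1 from derived prods paper}, these include every bounded-above graded $\Gamma^*$-module, not just single-degree ones. You then pass to arbitrary $N$ by a limit argument, which is the paper's Lemma \ref{projectivity lemma}:
\begin{itemize}
\item Use $0\to N^{>0}\to N\to N_{\leq 0}\to 0$. The bounded-above quotient $N_{\leq 0}$ has vanishing higher $\Ext$, so it suffices to show $\Ext^s(\iota P,N^{>0})=0$ for $s\geq 1$.
\item Filter $N^{>0}$ by the $N^{>j}$. The subquotients are single-degree modules, hence bounded above, so the maps $\Ext^s(\iota P,N^{>j+1})\to\Ext^s(\iota P,N^{>j})$ are surjective for $s\geq 1$.
\item In each degree the sequence $N^{>j}$ is eventually zero. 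Comparing the two Grothendieck spectral sequences for $\lim\circ\hom(\iota P,-)$ then gives $\lim_j\Ext^s(\iota P,N^{>j})=0$.
\item A tower of surjections with zero limit is zero, so $\Ext^s(\iota P,N^{>0})=0$ for $s\geq 1$.
\end{itemize}
Only after this is $\iota P$ projective, hence free, and then your local-finiteness contradiction finishes the proof.
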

The definition of a ``Mitchell coalgebra'' is technical and rather long. Rather than give the definition here, we find it more useful to simply point out that, by a theorem of Mitchell \cite{MR793186}, the mod $p$ dual Steenrod algebra is a Mitchell coalgebra for every prime $p$. Since the hypotheses of Theorem \ref{thm b} are sufficiently general to permit $\Gamma$ to be the dual Steenrod algebra, Theorem \ref{thm b} proves the conjecture about projective graded comodules over the dual Steenrod algebra.

We also derive some topological consequences of Theorem \ref{main technical thm} which generalize the following theorems of Margolis and T.Y. Lin from the 1970s:
\begin{unnumberedtheorem} {\bf (Margolis \cite[Theorem 3]{MR341488}.)} 
Let $Y$ be a bounded-below finite-type spectrum. Let $\Gamma^*$ denote the mod $p$ Steenrod algebra. Then we have an isomorphism of graded abelian groups
\begin{align*}
 \left[ \Sigma^* H\mathbb{F}_p,Y\right]
  &\cong \hom_{\gr\Mod(\Gamma^*)}\left( H^*(Y;\mathbb{F}_p), \Gamma^*\right).
\end{align*}
\end{unnumberedtheorem}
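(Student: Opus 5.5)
The statement to prove is Margolis's theorem: for $Y$ bounded-below and finite-type, $[\Sigma^* H\mathbb{F}_p,Y]\cong\hom_{\gr\Mod(\Gamma^*)}(H^*(Y;\mathbb{F}_p),\Gamma^*)$. I would run the classical Adams spectral sequence
\begin{align*}
E_2^{s,t}\cong \Ext^{s,t}_{\gr\Mod(\Gamma^*)}\left(H^*(Y;\mathbb{F}_p),H^*(H\mathbb{F}_p;\mathbb{F}_p)\right)\Rightarrow [\Sigma^{t-s}H\mathbb{F}_p,Y]^\wedge_p,
\end{align*}
where $H^*(H\mathbb{F}_p;\mathbb{F}_p)=\Gamma^*$. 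The aim is to show it is concentrated on the line $s=0$, where it gives exactly the right-hand side.

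\emph{Step 1: setting up the spectral sequence.} Both $H\mathbb{F}_p$ and $Y$ are bounded-below and finite-type, so the description \eqref{old adams e2 1} of the $E_2$-page is valid. For convergence I first reduce to $Y$ being $p$-complete. Maps from $H\mathbb{F}_p$ into a rational spectrum vanish, and maps into $Y[1/p]$ vanish. Hence $[H\mathbb{F}_p,Y]\cong[H\mathbb{F}_p,Y^\wedge_p]$, by the fracture square together with a $\lim^1$ check that uses finite type. With $Y$ $p$-complete, bounded-below and finite-type, the Adams spectral sequence converges strongly.

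\emph{Step 2: the vanishing of higher Ext.} This is the main step. I need
\begin{align*}
\Ext^s_{\gr\Mod(\Gamma^*)}(M,\Gamma^*)=0 \quad\text{for } s>0,
\end{align*}
for every bounded-below finite-type $\Gamma^*$-module $M$. This is where Margolis's structure theory of $\mathcal{P}$-algebras (\cite[section III.13]{MR738973}) enters. The Steenrod algebra is a $\mathcal{P}$-algebra, so it is the union of finite-dimensional sub-Hopf algebras $B_n$, and $\Gamma^*$ is free over each $B_n$. Each $B_n$ is a Frobenius algebra, so $B_n$ is injective over itself. From this, Margolis deduces that $\Gamma^*$ is injective relative to bounded-below modules. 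The concrete argument I have in mind:
\begin{itemize}
\item Write $M$ as the colimit of its finitely generated submodules. Each finitely generated one is induced up from some $B_n$, in a suitable filtered sense.
\item Use $\Ext_{\Gamma^*}(\Gamma^*\otimes_{B_n}N,\Gamma^*)\cong\Ext_{B_n}(N,\Gamma^*)$. The latter vanishes in positive degrees because $\Gamma^*$ is a free $B_n$-module, hence injective over the self-injective $B_n$. In the graded, bounded-below setting, direct sums of shifted copies of $B_n$ remain injective.
\item Handle the passage to the colimit with a Milnor $\lim^1$ sequence. The $\lim^1$ terms vanish by Mittag-Leffler, which follows from finite-dimensionality in each degree together with bounded-belowness.
\end{itemize}

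\emph{Step 3: collapse.} With $E_2$ concentrated in $s=0$, there are no differentials and no extension problems. The edge homomorphism, which sends a map $f$ to $f^*$ on cohomology, gives the stated isomorphism in each degree $t$.

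The obstacle I expect is Step 2, specifically the $\lim^1$ issue. Injectivity of $\Gamma^*$ for finite-type bounded-below modules is delicate: it genuinely fails without the bounded-below hypothesis. So the Mittag-Leffler argument must use that degreewise the relevant inverse systems of finite-dimensional vector spaces stabilise.
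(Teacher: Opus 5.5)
Your argument is correct and is essentially the route by which the paper recovers this statement from Theorem \ref{top thm 1}. For bounded-below finite-type $Y$, the Adams $E_2$-page $R^sV(H_*(Y;\mathbb{F}_p))$ is identified by linear dualization with $\Ext^s_{\gr\Mod(\Gamma^*)}(H^*(Y;\mathbb{F}_p),\Gamma^*)$. Its positive-degree part is killed by the same injectivity theorem of Margolis, which the paper cites rather than reproves. The same fracture-square argument then replaces $\hat{Y}_{H\mathbb{F}_p}$ by $Y$.

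Two corrections to your optional sketch of that injectivity, neither of which affects the main proof since it rests on the citation. First, only finitely \emph{presented} modules are induced from some $B_n$; finitely generated ones such as $\mathbb{F}_p$ are not. So you should write $M$ as a sequential colimit of finitely presented modules, and then Mittag--Leffler needs only finite generation and finite type. Second, your closing remark is mistaken: by Theorem \ref{margolis thm 1}, $\Gamma^*$ is injective in all of $\gr\Mod(\Gamma^*)$. The bounded-below hypothesis on $Y$ is really used for the module description of $E_2$ and for $\hat{Y}_{H\mathbb{F}_p}\simeq Y^\wedge_p$, not for the injectivity.
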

\begin{unnumberedtheorem} {\bf (Lin \cite[Theorem 4.2]{MR0402738}.)} Let $Y$ be the suspension spectrum of a finite-type CW-complex. For any abelian group $A$, we have an isomorphism of graded abelian groups
\begin{align*} \left[ \Sigma^* HA,Y\right] 
 &\cong \Ext_{\mathbb{Z}}^1\left( A\otimes_{\mathbb{Z}}\mathbb{Q},H_{*+1}(Y;\mathbb{Z})\right).
\end{align*}
\end{unnumberedtheorem}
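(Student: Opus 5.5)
The plan is to reduce everything to the single case $A=\mathbb{Q}$, and then compute that case with a Milnor $\lim^1$ sequence. There are three ingredients, used in this order: a vanishing result for maps out of mod $p$ Eilenberg–Mac Lane spectra, an elementary algebraic splitting of $A$, and an explicit computation of $[\Sigma^*H\mathbb{Q},Y]$.

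\textbf{Step 1: vanishing on torsion.} First I will show that $[\Sigma^* H\mathbb{F}_p, Y]=0$ for every prime $p$. Margolis's theorem, stated above, identifies this group with $\hom_{\gr\Mod(\Gamma^*)}(H^*(Y;\mathbb{F}_p),\Gamma^*)$. Because $Y$ is a suspension spectrum, $H^*(Y;\mathbb{F}_p)$ is an unstable $\Gamma^*$-module. I then need that every $\Gamma^*$-module map from an unstable module to the Steenrod algebra $\Gamma^*$ is zero. I would try to prove this by showing that the image of such a map is a finite-type, bounded-below unstable submodule of $\Gamma^*$, and that unstable modules are killed by sufficiently high-excess operations, which act injectively on $\Gamma^*$.

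This step is the main obstacle. Everything after it is formal, so if the direct argument stalls I would fall back on citing it from Lin's own argument.

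From $H\mathbb{F}_p$ I pass to $HT$ for an arbitrary torsion abelian group $T$. First, $H\mathbb{Z}/p^n$ is built from copies of $H\mathbb{F}_p$ by finitely many cofiber sequences, so maps from it to $Y$ vanish. Next, $T$ is a direct sum of its $p$-primary parts, each a filtered colimit of finite $p$-groups. Using Milnor sequences, together with the fact that the relevant $\lim^1$ terms are taken over zero groups, I get $[\Sigma^* HT,Y]=0$ in all degrees.

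\textbf{Step 2: reduction to a rational vector space.} Write $T=\tors A$. The exact sequences $0\to T\to A\to A/T\to 0$ and $0\to A/T\to (A/T)\otimes\mathbb{Q}\to Q'\to 0$, with $Q'$ torsion, give cofiber sequences of Eilenberg–Mac Lane spectra. Applying $[\Sigma^*(-),Y]$ and using Step 1 in two adjacent degrees for each sequence gives
\[
[\Sigma^*HA,Y]\cong[\Sigma^*H(A/T),Y]\cong[\Sigma^*H(A\otimes\mathbb{Q}),Y].
\]
The rational vector space $A\otimes\mathbb{Q}$ is a direct sum $\bigoplus_I\mathbb{Q}$. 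Hence $H(A\otimes\mathbb{Q})\simeq\bigvee_I H\mathbb{Q}$, and maps out of it are $\prod_I[\Sigma^*H\mathbb{Q},Y]$. On the algebraic side, $\Ext^1_{\mathbb{Z}}(\bigoplus_I\mathbb{Q},G)\cong\prod_I\Ext^1_{\mathbb{Z}}(\mathbb{Q},G)$. So it suffices, naturally in $Y$, to treat $A=\mathbb{Q}$.

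\textbf{Step 3: the case $A=\mathbb{Q}$.} I use $H\mathbb{Q}\simeq S_{\mathbb{Q}}=\colim(S\xrightarrow{2}S\xrightarrow{3}S\to\cdots)$. The Milnor sequence then reads
\[
0\to{\lim}^1\pi_{*+1}Y\to[\Sigma^*H\mathbb{Q},Y]\to\lim\pi_*Y\to 0,
\]
where the towers have multiplication by $n$ as transition maps. Since $Y$ is bounded-below and finite-type, each $\pi_*Y$ is finitely generated. For a finitely generated group, the limit along multiplication maps vanishes, and the $\lim^1$ term is $\Ext^1_{\mathbb{Z}}(\mathbb{Q},\pi_{*+1}Y)$. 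For finitely generated $G$, the group $\Ext^1_{\mathbb{Z}}(\mathbb{Q},G)\cong\Ext^1_{\mathbb{Z}}(\mathbb{Q},\mathbb{Z})^{\mathrm{rank}\,G}$ depends only on $G\otimes\mathbb{Q}$, because $\Ext^1_{\mathbb{Z}}(\mathbb{Q},\mathbb{Z}/n)=0$. Finally, the rational Hurewicz isomorphism $\pi_*Y\otimes\mathbb{Q}\cong H_*(Y;\mathbb{Z})\otimes\mathbb{Q}$ and finite generation of $H_*(Y;\mathbb{Z})$ let me replace $\pi_{*+1}Y$ by $H_{*+1}(Y;\mathbb{Z})$. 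This yields Lin's formula.
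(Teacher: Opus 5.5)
Your Steps 2 and 3 are correct, and they match the argument the paper gives. The paper never proves Lin's theorem itself: it quotes it, and in Corollary \ref{top thm 1 cor} it even derives the mod $p$ vanishing \emph{from} it. But the proof of Theorem \ref{derived cotorsion top thm} carries out your reduction from $HA$ to $H(A/\tors(A))$ to $H(A\otimes_{\mathbb{Z}}\mathbb{Q})$, and your Milnor-sequence computation for a rational vector space, taking contractibility of $F(H\mathbb{F}_p,Y)$ as a hypothesis. The remark after that theorem is your last step: for finite-type $Y$ the Hurewicz map has finite kernel and cokernel in each degree, which $\Ext^1_{\mathbb{Z}}(\mathbb{Q},-)$ cannot detect.

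So everything rests on Step 1, and there the proposal has a genuine gap. The mechanism you offer is false as stated. No operation of positive degree acts injectively on $\Gamma^*$: it lies in some finite-dimensional connected subalgebra $A(n)$ (Definition \ref{def of p-alg}), hence is nilpotent there and kills a nonzero power of itself. What you need is the \emph{joint} statement: for each $N$, no nonzero $a\in\Gamma^*$ is annihilated by every operation of excess $>N$. Given that, a nonzero $a=f(x)$ in the image of a map $f$ from a shift of $H^*(Y;\mathbb{F}_p)$ to $\Gamma^*$ is killed by all operations of excess $>|x|$, so $a=0$. That joint statement is the entire non-formal content of Lin's theorem, and you have not proved it. Your fallback of citing Lin is circular, since $[\Sigma^*H\mathbb{F}_p,Y]=0$ is exactly the case $A=\mathbb{F}_p$ of the theorem.

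One way to close the gap, in the spirit of the paper's proof of Proposition \ref{cotorsion vanishes on boundeds}, is as follows; take $p=2$, odd primes being analogous.
\begin{itemize}
\item Let $A(n)$ be the subalgebra generated by $Sq^1,\dots,Sq^{2^n}$. $\Gamma^*$ is free as a left $A(n)$-module on generators in degrees $\geq 0$, and $A(n)$ is Poincar\'{e} with top degree $d_n=(n-1)2^{n+2}+n+5$.
\item Expand $a$ in such a basis and apply Poincar\'{e} duality to a nonzero coefficient. This gives $b\in A(n)$ with $ba\neq 0$ and $|b|\geq d_n-|a|$.
\item A Milnor basis element $Sq(r_1,\dots,r_{n+1})$ of $A(n)$ with $\sum r_i\leq |x|$ has degree at most $|x|(2^{n+1}-1)$. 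For $n\gg 0$ this is less than $d_n-|a|$.
\item Hence $b$ is a combination of $Sq(R)$ with $\sum r_i>|x|$. By Kraines's theorem that $Sq(R)$ has excess $\sum r_i$, each of these kills $x$ in the unstable module $H^*(Y;\mathbb{F}_2)$.
\item So $ba=\pm f(bx)=0$, a contradiction.
\end{itemize}

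Alternatively, results the paper cites give the vanishing directly. The $p$-localization of a suspension spectrum is harmonic \cite{MR1317578}, and $H\mathbb{F}_p$ is dissonant \cite{MR737778}. Therefore $[\Sigma^*H\mathbb{F}_p,Y]\cong[\Sigma^*H\mathbb{F}_p,Y_{(p)}]=0$, as in the proof of Corollary \ref{top thm 1 cor}.

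A minor point: Milnor sequences handle only sequential colimits. For uncountable torsion $T$, use the exhaustion $T_p=\bigcup_s T_p[p^s]$, whose filtration quotients are $\mathbb{F}_p$-vector spaces, as in Theorem \ref{torsion group top thm}. Or simply note that a homotopy limit of contractible mapping spectra is contractible.
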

Our generalizations of the theorems of Margolis and Lin involve weakening the hypotheses, in particular, removing the hypothesis that the spectrum $Y$ is finite-type. To state our generalizations, it is convenient to first make the following observation. Given a graded coalgebra (or, for that matter, a graded Hopf algebroid) $\Gamma$ which is flat over a commutative ring $A$, there is the well-known ``extended comodule'' functor $E: \gr\Mod(A) \rightarrow \gr\Comod(\Gamma)$ which sends a graded $A$-module $M$ to $E(M) = \Gamma\otimes_AM$. It is well-known that $E$ has a left adjoint, given by the forgetful functor $\gr\Comod(\Gamma)\rightarrow\gr\Mod(A)$. It is much less well-known---in fact, we have never seen or heard mention of it anywhere at all, although it is very easy to prove---that $E$ also has a {\em right} adjoint, which we call $V$, and which we study in \cref{V section}. Once one notices that $V$ exists, one easily derives the formula for it (Proposition \ref{I prop 1}):
\begin{align*}
V(M) &= \hom_{\gr\Comod(\Gamma)}(\Gamma,M).
\end{align*} 

The $\Gamma$-comodule maps out of $\Gamma$ are perhaps not as simply-behaved as one might hope, since $\Gamma$ is not free but is rather ``co-free'' in the category of $\Gamma$-comodules. 
To get a sense of the nontrivial behavior of $V$, consider the case where $\Gamma$ is the Steenrod algebra, and apply the covariant embedding $\iota: \gr\Comod(\Gamma)\rightarrow\gr\Mod(\Gamma^*)$:
we have 
\begin{align*}
 V(M) &\cong \hom_{\gr\Mod(\Gamma^*)}\left(\iota(\Gamma), \iota(M) \right), 
\end{align*}
i.e., $V(M)$ is the group of Steenrod-algebra-linear maps from the {\em dual} Steenrod algebra, with adjoint Steenrod algebra action, to $M$. If $f$ is such a map, then $f(1)$ must be divisible by each of the Steenrod powers; already one sees that for many choices of $M$, $V(M)$ will be trivial. 

The functor $V$ is left exact, and for general $\Gamma$ one expects that its right-derived functors $R^*V$ can be nonzero. In \cref{V and cotorsion} we recall the definition of ``cotorsion'' in terms of the injective trace functor, from the algebraic literature on cotorsion, in particular Martsinkovsky--Russell \cite{MR4045216}. In Proposition \ref{cotorsion and relative cotorsion}, we find that the comodules which are cotorsion relative to the relatively injective comodules coincide with those comodules $M$ such that $V(M) = 0$.
 Hence we say that a graded comodule $M$ is {\em relative cotorsion} if $V(M) = 0$. We say that $M$ is {\em derived cotorsion} if $R^nV(M) = 0$ for all $n$. See \cref{V and cotorsion} for discussion and comparison of variations of these definitions.

In Proposition \ref{vanishing above R1V} we use Theorem \ref{main technical thm}, together with another round of Margolis's structure theory for modules over $\mathcal{P}$-algebras, to show that, for a finite-type connected graded $k$-coalgebra $\Gamma$ whose dual is a $\mathcal{P}$-algebra, $R^nV(M)$ vanishes for all graded $\Gamma$-comodules $M$ and for all $n>1$, and furthermore $R^1V(M)$ is nontrivial for some $M$. In that sense, the dual Steenrod algebra has ``$V$-cohomological dimension exactly $1$.''

The result on $V$-cohomological dimension implies immediate collapse of certain Adams spectral sequences, which enables us to prove the following generalization of the theorem of Margolis.
\begin{letteredtheorem} \label{margolis letteredthm} (Theorem \ref{top thm 1}) Let $Y$ be a bounded-below spectrum. For each prime $p$, treat the homology $H_*(Y; \mathbb{F}_p)$ as a graded $\Gamma$-comodule, where $\Gamma$ is the mod $p$ dual Steenrod algebra. Then there exists a split short exact sequence of graded $\mathbb{F}_p$-vector spaces
\begin{equation}\label{ses 11049aa} 
0 \rightarrow
 R^1V\left(H_*(\Sigma Y;\mathbb{F}_p)\right) \rightarrow
 \left[ \Sigma^*H\mathbb{F}_p, Y\right] \rightarrow
V\left(H_*(Y;\mathbb{F}_p)\right) \rightarrow
 0.
\end{equation}
In particular, the graded $\Gamma$-comodule $H_*(Y; \mathbb{F}_p)$ is derived cotorsion if and only if the mapping spectrum $F(H\mathbb{F}_p,Y)$ is contractible.
\end{letteredtheorem}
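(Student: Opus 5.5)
We will run the $H\mathbb{F}_p$-Adams spectral sequence \eqref{adams ss 0} with $X = H\mathbb{F}_p$ and target $Y$. Since $Y$ is bounded-below, the spectral sequence converges to $[\Sigma^{t-s}H\mathbb{F}_p, Y^\wedge_p]$. We first reduce to the $p$-complete case. The fiber of $Y\to Y^\wedge_p$ has homotopy groups that are uniquely $p$-divisible (rational, away from $p$). Maps from $H\mathbb{F}_p$ into such a spectrum vanish, because $H\mathbb{F}_p$ is $p$-torsion and $H\mathbb{F}_p\wedge H\mathbb{Q}\simeq 0$. Hence $[\Sigma^*H\mathbb{F}_p,Y]\cong[\Sigma^*H\mathbb{F}_p,Y^\wedge_p]$. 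Mod $p$ homology is unchanged by this replacement, so we may assume $Y$ is $p$-complete.

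Next we identify the $E_2$-page. We have $H_*(H\mathbb{F}_p;\mathbb{F}_p)\cong\Gamma$, the dual Steenrod algebra, viewed as a comodule over itself via its coproduct. So
\[
E_2^{s,t}\cong \Ext^s_{\gr\Comod(\Gamma)}\bigl(\Gamma, H_*(Y;\mathbb{F}_p)\bigr)^t .
\]
By Proposition \ref{I prop 1}, $V(M)=\hom_{\gr\Comod(\Gamma)}(\Gamma,M)$. To identify the $E_2$-page with $(R^sV)(H_*Y)$, we compute $R^sV$ via an injective resolution of $M$ in graded comodules. Its terms are summands of extended comodules $\Gamma\otimes W$, which are injective. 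Applying $\hom_{\gr\Comod(\Gamma)}(\Gamma,-)$ to this resolution computes exactly $\Ext^s_{\gr\Comod(\Gamma)}(\Gamma,M)$. Thus $E_2^{s,t}\cong (R^sV)(H_*(Y;\mathbb{F}_p))^t$; we only need to check that the $\Gamma$-comodule structure on the target agrees with the grading conventions. By Proposition \ref{vanishing above R1V}, which rests on Theorem \ref{main technical thm} and the $\mathcal{P}$-algebra structure theory, $R^sV=0$ for $s>1$. So $E_2$ is concentrated in filtrations $s=0,1$.

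A differential $d_r$ raises $s$ by $r\geq 2$, so all differentials vanish and $E_2=E_\infty$. Convergence is then strong: the filtration has finite length, and there are no $\lim^1$ issues because $E_2$ has only two nonzero rows. In each stem $n=t-s$ we therefore get a two-step filtration on $[\Sigma^nH\mathbb{F}_p,Y]$:
\begin{itemize}
\item the subgroup is $E_\infty^{1,n+1}=R^1V(H_*Y)^{n+1}$;
\item the quotient is $E_\infty^{0,n}=V(H_*Y)^n$.
\end{itemize}
Reindexing, $R^1V(H_*Y)^{n+1}=R^1V(H_*(\Sigma Y))^{n}$, up to sign conventions that must be checked. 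This produces the short exact sequence \eqref{ses 11049aa}. It splits because these are $\mathbb{F}_p$-vector spaces; note that $[\Sigma^*H\mathbb{F}_p,Y]$ is killed by $p$, since $p\cdot\mathrm{id}_{H\mathbb{F}_p}=0$. The final assertion follows: $F(H\mathbb{F}_p,Y)$ is contractible if and only if all $[\Sigma^nH\mathbb{F}_p,Y]$ vanish. By the exact sequence, this holds if and only if $V$ and $R^1V$ vanish on $H_*Y$, which is equivalent to $H_*Y$ being derived cotorsion because $R^nV=0$ for $n\ge2$.

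The main obstacle is the convergence and applicability step. Classical convergence of the Adams spectral sequence for $[X,Y]$ with non-finite-type $X=H\mathbb{F}_p$ and bounded-below $Y$ must be invoked in its general form, for example via Proposition \ref{adams E2 description}. We also must verify that the $E_2$-term is genuinely comodule Ext, which requires $H_*(H\mathbb{F}_p)$ to be flat, as it is. The deep input, vanishing of $R^{\geq2}V$, is taken from Proposition \ref{vanishing above R1V}. The remaining care needed is in the suspension bookkeeping relating $E^{1,t}$ to $H_*(\Sigma Y)$.
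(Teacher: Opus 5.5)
Your proposal is the paper's proof. The $H\mathbb{F}_p$-Adams spectral sequence for $F(H\mathbb{F}_p,Y)$ has $E_2^{s,*}\cong R^sV(H_*Y)$ by Propositions~\ref{adams E2 description} and~\ref{I prop 1}, and Proposition~\ref{vanishing above R1V} kills the rows $s\geq 2$. The resulting horizontal vanishing line forbids differentials and gives strong convergence, and one then passes from $\hat{Y}_{H\mathbb{F}_p}\simeq Y^\wedge_p$ back to $Y$.

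The only variation is in that last step. The paper uses the arithmetic fracture square and the fact that $H\mathbb{F}_p$ maps trivially into rational spectra and into spectra completed at primes $\ell\neq p$. You instead use that $p$ acts invertibly on the fiber of $Y\to Y^\wedge_p$ but by zero on $H\mathbb{F}_p$. That is correct and a little more direct. Two small corrections: the appeal to $H\mathbb{F}_p\wedge H\mathbb{Q}\simeq 0$ is not needed, and that fiber is $p$-inverted rather than rational.

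On the reindexing you deferred, your formula has the suspension backwards. Grade $V$ by $V(M)^n=\hom_{\gr\Comod(\Gamma)}(\Sigma^n\Gamma,M)$, so that $V(H_*Y)^n$ receives the Hurewicz image of $[\Sigma^nH\mathbb{F}_p,Y]$. Then the filtration-one part of stem $n$ is $E_\infty^{1,n+1}=\Ext^1_{\gr\Comod(\Gamma)}(\Sigma^{n+1}\Gamma,H_*Y)\cong\Ext^1_{\gr\Comod(\Gamma)}(\Sigma^{n}\Gamma,\Sigma^{-1}H_*Y)=R^1V(H_*(\Sigma^{-1}Y;\mathbb{F}_p))^n$. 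The opposite grading convention for $V$ gives the same answer. This agrees with the $\pi_{n+1}Y$ and $H_{*+1}(Y;\mathbb{Z})$ appearing in the Lin-type sequences.

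The paper's proof never carries out this reindexing. The $\Sigma Y$ in the statement appears to be a slip from the cohomological convention, where $H_{-*}(\Sigma^{-1}Y;\mathbb{F}_p)=\Sigma H_{-*}(Y;\mathbb{F}_p)$. The discrepancy is harmless for the splitting and for the derived-cotorsion criterion, since $R^1V$ commutes with suspension.
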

For the sake of simplicity, the statement we have given of Theorem \ref{margolis letteredthm} here in the introduction is slightly less general than the statement of Theorem \ref{top thm 1} in the body of the paper.

Theorem \ref{margolis letteredthm} indeed is a generalization of Margolis's theorem. This is because we show that, for a {\em finite-type} bounded-below spectrum $Y$, the $R^1V$-term in \eqref{ses 11049aa} vanishes and $V(H_*(Y;\mathbb{F}_p))$ is isomorphic to $\hom_{\gr\Mod(\Gamma^*)}(H^*(Y;\mathbb{F}_p),\Gamma^*)$.

Here are some versions of Lin's theorem which, like Lin's theorem, provide descriptions of the homotopy groups of the mapping spectrum $F(HA,Y)$ for an abelian group $Y$ and a bounded-below spectrum $Y$, but unlike in Lin's theorem, $Y$ is not assumed to be finite-type. One pays a certain price for generality: none of these generalizations are as simply-stated as Lin's theorem itself.
\begin{letteredtheorem} (Theorem \ref{derived cotorsion top thm})
Let $Y$ be a bounded-below spectrum. 
 Let $P$ be a set of primes, and let $A$ be an abelian group. Make the following assumptions:
\begin{itemize}
\item Every prime in $P$ acts isomorphically on $A$, i.e., $A$ is a $\mathbb{Z}[P^{-1}]$-module.
\item For every prime number $p\notin P$, the comodule $H_*(Y; \mathbb{F}_p)$ over the mod $p$ dual Steenrod algebra is derived cotorsion\footnote{For the purpose of checking this hypothesis in cases of interest, the following sufficient condition can be useful. In Corollary \ref{top thm 1 cor} we show that, if $Y$ is the suspension spectrum of a finite-type CW-complex {\em or} if the homology groups of $Y$ are concentrated in finitely many degrees, then $H_*(Y;\mathbb{F}_p)$ is indeed derived cotorsion.}.
\end{itemize}
Then for each integer $n$, there is a short exact sequence
\begin{equation*}
0 \rightarrow \Ext^1_{\mathbb{Z}}\left( \mathbb{Q}\otimes_{\mathbb{Z}}A, \pi_{n+1}Y\right) \rightarrow \left[ \Sigma^n HA,Y\right] \rightarrow \hom_{\mathbb{Z}}\left(\mathbb{Q}\otimes_{\mathbb{Z}}A, \pi_nY\right)\rightarrow 0.\end{equation*}
\end{letteredtheorem}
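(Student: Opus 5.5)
The plan is to reduce $[\Sigma^n HA, Y]$ to rational and $p$-local pieces, using Theorem \ref{margolis letteredthm} to kill all the torsion contributions. First, for each prime $p\notin P$, the derived cotorsion hypothesis together with Theorem \ref{margolis letteredthm} gives $F(H\mathbb{F}_p,Y)\simeq *$. Since $\mathbb{Z}/p^k$ is built from $\mathbb{Z}/p$ by finitely many extensions, the cofiber sequences $H\mathbb{Z}/p\to H\mathbb{Z}/p^{k}\to H\mathbb{Z}/p^{k-1}$ give $F(H\mathbb{Z}/p^k,Y)\simeq *$ by induction on $k$. Writing $\mathbb{Z}/p^\infty=\colim_k \mathbb{Z}/p^k$, so that $H\mathbb{Z}/p^\infty$ is a homotopy colimit, the Milnor $\lim^1$ sequence gives $F(H\mathbb{Z}/p^\infty,Y)\simeq *$. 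For $p\in P$, multiplication by $p$ is an equivalence on $HA$. Hence every map $\Sigma^n HA\to Y$ factors through $p$-localization, and the $p$-torsion directions will not contribute.

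Second, I would reduce $A$ to a rational vector space. Consider the exact sequence $0\to T\to A\to A/T\to 0$, where $T$ is the torsion subgroup of $A$. Since primes in $P$ act invertibly, $T$ is a direct sum over $p\notin P$ of $p$-primary groups. Each $p$-primary group is a filtered colimit of finite $p$-groups, and a finite $p$-group is an iterated extension of copies of $\mathbb{Z}/p$. The concern is that $\colim$ of $H$ does not turn into $\lim$ of $F$ without $\lim^1$ terms. Instead I would argue directly: $F(-,Y)$ takes homotopy colimits to homotopy limits, and a homotopy limit of contractible spectra is contractible, so $F(HT,Y)\simeq *$ (note that $HT$ is a hocolim of the $HB$ for finite subgroups $B$). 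Next treat the torsion-free part $B=A/T$. Consider $0\to B\to \mathbb{Q}\otimes B\to (\mathbb{Q}/\mathbb{Z})\otimes B\to 0$. Since $B$ is a $\mathbb{Z}[P^{-1}]$-module, the cokernel is torsion with only primes outside $P$, so the previous step kills it. Therefore $F(HA,Y)\simeq F(H(\mathbb{Q}\otimes A),Y)$, and also $\mathbb{Q}\otimes A=\mathbb{Q}\otimes B$.

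Third, for a rational vector space $V=\mathbb{Q}\otimes A$, compute $[\Sigma^n HV,Y]$. A map $HV\to Y$ factors through the rationalization of $Y$: the fiber of $Y\to Y_{\mathbb{Q}}$ is built from $\Sigma^{-1}Y\wedge M(\mathbb{Q}/\mathbb{Z})$, and maps from a rational spectrum into it vanish. I expect this to be the main obstacle: showing that $F(HV,\Sigma^{-1}Y\wedge M(\mathbb{Q}/\mathbb{Z}))$ vanishes, or equivalently that $F(HV,Y)\to F(HV,Y_{\mathbb{Q}})$ is an equivalence. The rational Eilenberg–Mac Lane source is fine, but the target is not a colimit-friendly object. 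My first attempt would be to identify $F(HV,Y)$ as $F(HV,\holim$-free$)$ via the arithmetic square. Y bounded below gives $Y\to Y_{\mathbb{Q}}\times\prod_p Y^\wedge_p$, and the torsion-free $V$ maps trivially into $p$-complete targets in the relevant way modulo $\lim^1$. I expect the $\Ext^1$ term to arise precisely here. Alternatively, since $Y_{\mathbb{Q}}$ is a product of rational Eilenberg–Mac Lane spectra $\prod \Sigma^m H\pi_m Y_\mathbb{Q}$, one gets $[\Sigma^n HV,Y_\mathbb{Q}]\cong\hom(V,\pi_nY\otimes\mathbb{Q})$. The universal coefficient sequence should then be assembled via $F(HV,Y)\simeq F(HV, H\mathbb{Z})\wedge$-type arguments. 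Concretely, I would use the Atiyah–Hirzebruch/universal coefficient spectral sequence $\Ext^s_\mathbb{Z}(V,\pi_{n+s}Y)\Rightarrow[\Sigma^nHV,Y]$, valid because $HV$ is a rational Moore spectrum $M(V)$. Since $\mathbb{Z}$ has global dimension $1$, this sequence collapses to the claimed short exact sequence. Convergence uses that $Y$ is bounded below, and the identification $M(V)\simeq HV$ holds rationally.
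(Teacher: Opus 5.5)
Your argument is correct, and it is essentially the paper's proof. Steps 1 and 2 match the paper's reduction $F(HA,Y)\simeq F(H(\mathbb{Q}\otimes_{\mathbb{Z}}A),Y)$. The only small difference is that the paper kills $F(HT_p,Y)$ using the spectral sequence of Theorem \ref{torsion group top thm}, whose $E_1$-page vanishes under the derived cotorsion hypothesis. Your observation that a homotopy limit of contractible spectra is contractible is an equivalent shortcut. Your final universal coefficient sequence for the rational Moore spectrum $M(V)\simeq HV$ is the same as the paper's derivation of \eqref{ses 00113}. The paper writes $HW$ as the telescope of $SB\xrightarrow{2}SB\xrightarrow{2\cdot 3}\cdots$ and reads off the Milnor $\lim$/$\lim^1$ sequence; this is the same two-term free resolution argument in different form.

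You should, however, delete the rationalization detour in your third paragraph, because its central claim is false. Maps from a rational spectrum into the fiber $\Sigma^{-1}Y\wedge M(\mathbb{Q}/\mathbb{Z})$ of $Y\to Y_{\mathbb{Q}}$ do not vanish, and $F(HV,Y)\to F(HV,Y_{\mathbb{Q}})$ is not an equivalence. For example, $[\Sigma^{-1}H\mathbb{Q},H\mathbb{Z}]\cong \Ext^1_{\mathbb{Z}}(\mathbb{Q},\mathbb{Z})\neq 0$, whereas $[\Sigma^{-1}H\mathbb{Q},H\mathbb{Q}]=0$. Likewise $\hom_{\mathbb{Z}}(V,\pi_nY)$ generally differs from $\hom_{\mathbb{Z}}(V,\pi_nY\otimes\mathbb{Q})$. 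The $\Ext^1$ term in the statement comes precisely from maps into that torsion fiber, so if the detour worked it would contradict the theorem. For instance, $Y=S$ satisfies the hypotheses by Lin's theorem and has a nonzero $\Ext^1(\mathbb{Q},\pi_0S)$ contribution in degree $n=-1$. The arithmetic-square and ``$F(HV,H\mathbb{Z})\wedge$-type'' remarks are not needed either.

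Finally, the last step needs no convergence argument and no bounded-below hypothesis. A free resolution $0\to F_1\to F_0\to V\to 0$ gives a cofiber sequence of wedges of spheres with cofiber $M(V)$, and the short exact sequence follows for every spectrum $Y$. Bounded-belowness is used only in Step 1, where Theorem \ref{margolis letteredthm} converts the derived cotorsion hypothesis into $F(H\mathbb{F}_p,Y)\simeq *$.
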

\begin{letteredtheorem}
(Theorem \ref{torsion group top thm}) Let $p$ be a prime, and let $A$ be an abelian $p$-group, i.e., an abelian group, not necessarily finite, in which every element has order a power of $p$. Write $A[p^n]$ for the $p^n$-torsion subgroup of $A$. Then there exists a conditionally convergent spectral sequence
\begin{align*}
 E_1^{s,t} &\cong \hom_{\mathbb{F}_p}\left( A[p^s]/A[p^{s-1}],\left(R^1V(H_*(\Sigma Y;\mathbb{F}_p))\oplus V(H_*(Y;\mathbb{F}_p))\right)^t \right)\\
  & \Rightarrow \left[ \Sigma^t HA,Y\right] \\
 d_r: E_r^{s,t} &\rightarrow E_r^{s-r,t+1},
\end{align*} 
where $\left(R^1V(H_*(\Sigma Y;\mathbb{F}_p))\oplus V(H_*(Y;\mathbb{F}_p))\right)^t$ denotes the degree $t$ summand in the graded $\mathbb{F}_p$-vector space $R^1V(H_*(\Sigma Y;\mathbb{F}_p))\oplus V(H_*(Y;\mathbb{F}_p))$.
\end{letteredtheorem}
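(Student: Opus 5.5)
The plan is to build the spectral sequence from a filtration of $A$ by its $p^s$-torsion subgroups. This turns the mapping spectrum $F(HA,Y)$ into a tower whose layers are mapping spectra out of Eilenberg--Mac Lane spectra of $\mathbb{F}_p$-vector spaces. Those layers are then computed using (the general form of) Theorem \ref{margolis letteredthm}.

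Since $A$ is an abelian $p$-group, the subgroups $A[p^s]$ form an exhaustive increasing filtration $0=A[p^0]\subseteq A[p^1]\subseteq\cdots$ with $A=\colim_s A[p^s]$. Each quotient $A[p^s]/A[p^{s-1}]$ is killed by $p$, because multiplication by $p$ maps $A[p^s]$ into $A[p^{s-1}]$. So each quotient is an $\mathbb{F}_p$-vector space $W_s$, and $HW_s\simeq\bigvee_{B_s} H\mathbb{F}_p$ for a basis $B_s$. The Eilenberg--Mac Lane functor carries the short exact sequences $0\to A[p^{s-1}]\to A[p^s]\to W_s\to 0$ to cofiber sequences $HA[p^{s-1}]\to HA[p^s]\to HW_s$. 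It also carries the colimit to a homotopy colimit, $HA\simeq\colim_s HA[p^s]$, since homotopy groups commute with filtered colimits.

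Applying $F(-,Y)$ gives a tower
\[
F(HA,Y)\simeq\holim_s F(HA[p^s],Y)
\]
whose successive fibers are $F(HW_s,Y)\simeq\prod_{B_s}F(H\mathbb{F}_p,Y)$. The exact couple of this tower yields a spectral sequence with
\[
E_1^{s,t}=\pi_{-t}F(HW_s,Y)\cong\prod_{B_s}\left[\Sigma^tH\mathbb{F}_p,Y\right].
\]
This is conditionally convergent to $\pi_{-*}F(HA,Y)=[\Sigma^*HA,Y]$ in Boardman's sense, because the tower is a homotopy limit of a sequential tower starting at $F(HA[p^0],Y)=0$. The differentials lower the filtration index $s$ and raise the degree $t$ by one, which gives the indexing $d_r\colon E_r^{s,t}\to E_r^{s-r,t+1}$; a careful check of the indexing conventions is needed here.

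To identify $E_1$, I apply Theorem \ref{margolis letteredthm}, which splits $[\Sigma^tH\mathbb{F}_p,Y]$ as $R^1V(H_*(\Sigma Y;\mathbb{F}_p))^t\oplus V(H_*(Y;\mathbb{F}_p))^t$, with the grading chosen to match. A product over $B_s$ of copies of a vector space $U$ is $\hom_{\mathbb{F}_p}(W_s,U)$, which gives exactly the stated form of $E_1^{s,t}$.

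The main obstacle is naturality. The $E_1$ identification itself only needs the splitting of Theorem \ref{margolis letteredthm} degreewise, and no naturality is required there. The issue is the identification of $\pi_*F(HW_s,Y)$ with $\hom_{\mathbb{F}_p}(W_s,[\Sigma^*H\mathbb{F}_p,Y])$. This requires that maps out of an arbitrary wedge of copies of $H\mathbb{F}_p$ are the product of the individual mapping groups. That holds since $F(-,Y)$ converts wedges into products and $\pi_*$ commutes with products.

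One also has to confirm that the Adams-type collapse behind Theorem \ref{margolis letteredthm} requires only that $Y$ is bounded-below. This is the hypothesis in force, and it is implicitly assumed in the statement above.
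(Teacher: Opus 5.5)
Your argument follows the paper's proof step for step:
\begin{itemize}
\item filter $A$ by the $A[p^s]$;
\item take the tower $F(HA[p^s],Y)$, whose layers are $F(HW_s,Y)\simeq\prod_{B_s}F(H\mathbb{F}_p,Y)$;
\item rewrite each product as $\hom_{\mathbb{F}_p}(W_s,-)$;
\item finish with Theorem \ref{top thm 1}.
\end{itemize}
Citing the bounded-below clause of Theorem \ref{top thm 1} directly already includes the arithmetic-square identification with $L_{S/p}Y$, which the paper's proof repeats. You are right that $Y$ must be bounded below; the body version of Theorem \ref{torsion group top thm} assumes this. Conditional convergence is fine: since $A=\bigcup_sA[p^s]$, we have $\holim_sF(H(A/A[p^s]),Y)\simeq F(H0,Y)\simeq *$.

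What fails is the indexing of the differentials. You flagged that it needs checking but then asserted the wrong direction.

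First, a sign slip: $\pi_tF(X,Y)=[\Sigma^tX,Y]$. So the layers give $E_1^{s,t}=\pi_tF(HW_s,Y)$, not $\pi_{-t}$, and the abutment is $\pi_*F(HA,Y)$.

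With this indexing, $d_1$ is the composite
\[ \pi_tF(HW_s,Y)\to\pi_tF(HA[p^s],Y)\xrightarrow{\ \partial\ }\pi_{t-1}F(HW_{s+1},Y), \]
where $\partial$ is the connecting map of the fiber sequence $F(HW_{s+1},Y)\to F(HA[p^{s+1}],Y)\to F(HA[p^s],Y)$. In general $d_r$ lifts to $\pi_tF(HA[p^{s+r-1}],Y)$ and then applies the connecting map into layer $s+r$. Hence
\[ d_r\colon E_r^{s,t}\to E_r^{s+r,t-1}, \]
so the differentials raise $s$ and lower $t$. As a check, for $A=\mathbb{Z}/p^2$ the only $d_1$, namely $E_1^{1,t}\to E_1^{2,t-1}$, is precomposition with the Bockstein $HW_2\to\Sigma HW_1$.

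This is the direction stated in the body version of Theorem \ref{torsion group top thm}. The $E_r^{s-r,t+1}$ in the lettered statement is a misprint, and no argument can produce it from this $E_1$-page. You should correct your claim rather than make it match the misprint.
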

The interested reader can read \cref{I in the case of the Steenrod alg} for further discussion, other generalizations of Lin's theorem, and various corollaries.

There is one more curious small corollary of Theorem \ref{ext iso 1}. In Corollary \ref{adams e2 cor} we obtain a description of the $E_2$-page of the Adams spectral sequence converging to $[\Sigma^*X, \hat{Y}_{H\mathbb{F}_p}]$ which, like Adams's original description \eqref{old adams e2 1} in \cite{MR96219}, is given entirely in terms of modules, {\em not} comodules. However, Corollary \ref{adams e2 cor} does not require the usual finite-type assumptions from Adams's original description. Adams originally introduced stable homotopy theorists to the use of $\Ext$ in comodule categories in \cite{MR0251716} as a way to describe the Adams spectral sequence $E_2$-page without finite-type hypotheses on the spectra involved. Using Corollary \ref{adams e2 cor}, the staunch opponent of comodules can describe the Adams $E_2$-page entirely in terms of modules, without the traditional need to restrict to finite-type spectra.

In \cref{Topological interpretation of V...} we give an analysis of the homological properties of the functor $V: \gr\Comod(E_*E)\rightarrow\gr\Mod(\pi_*(E))$ for some ring spectra $E$ of general interest aside from the case $E = H\mathbb{F}_p$ studied throughout the rest of the paper. In particular, \cref{Topological interpretation of V...} considers the cases where $E$ is the rational Eilenberg-Mac Lane spectrum $H\mathbb{Q}$ (which leads to quite trivial results) or the height $1$ Johnson--Wilson theory $E(1)$, i.e., the Adams summand of $p$-local periodic complex $K$-theory.

This is an old-fashioned paper: aside from the tools we bring in from \cite{MR5014030}, all the tools and methods used to prove the results in this paper were available by the mid-1980s, and in particular, a great deal of work is done by a few results of Margolis \cite{MR341488},\cite{MR738973} on modules over $\mathcal{P}$-algebras. 

We point out that this is not the first paper to use Margolis's results on $\mathcal{P}$-algebras to derive consequences for categories of comodules: the recent preprints \cite{bakerPalgebraspreprint1} and \cite{bakerPalgebraspreprint2} of A. Baker also do this. However, Baker's arguments and results are entirely unlike those in this paper. Rather than a result like Theorem \ref{main technical thm} that compares $\Ext$-groups in comodules to $\Ext$-groups in modules, Baker's approach is to apply Margolis's results on $\mathcal{P}$-algebras to the Cartan--Eilenberg spectral sequence for comodule $\Ext$-groups arising from an extension of Hopf algebras. This yields some novel topological consequences. Baker also arrives at generalizations of Lin's theorem, but his generalizations are unlike ours: while we generalize Lin's result by lifting the finite-type hypotheses, Baker maintains the finite-type hypothesis but replaces the role of $H\mathbb{F}_p$ with other spectra, e.g. to prove contractibility results on the mapping spectrum $F(BP,Y)$ rather than the mapping spectrum $F(H\mathbb{F}_p,Y)$.

\subsection{Conventions}
\label{conventions}
\begin{itemize}
\item Throughout, all our comodules will be {\em left} comodules.
\item We often work with graded rings and graded modules. Given a graded module $M$, we use the topologists' suspension notation, $\Sigma M$, to denote $M$ with the grading degree of each element increased by $1$.
\item Given graded $R$-modules $M,N$, we write $\hom_{\gr\Mod(R)}(M,N)$ for the {\em graded} abelian group of $R$-module maps from suspensions of $M$ to $N$. The degree $n$ summand of $\hom_{\gr\Mod(R)}(M,N)$ is the group of strictly grading-preserving $R$-module homomorphisms $\Sigma^nM \rightarrow N$.
\item We use the term ``finite-type'' in the way which is standard in algebraic topology, not the way which is standard in algebraic geometry. That is, we will say that a graded vector space is ``finite-type'' if it is finite-dimensional in each individual degree.
\item Let $k$ be a field. We will say that a graded $k$-{\em algebra} is connected if it is concentrated in {\em nonnegative} degrees and its degree $0$ summand is one-dimensional. We will say that a graded $k$-{\em coalgebra} is connected if it is concentrated in {\em nonpositive} degrees and its degree $0$ summand is one-dimensional. We ask the reader to please trust us that, whenever doing careful work with the covariant embedding functor $\iota: \gr\Comod(\Gamma)\rightarrow\gr\Mod(\Gamma^*)$, this grading convention avoids many headaches, and is worth using. For a justification for this convention, see \cref{Generalities on...}.

The topological way to understand this convention is that it is the {\em cohomological}, rather than homological, grading convention. In particular, whenever $X$ is a space or a spectrum, with our grading conventions we treat its cohomology $H^*(X; \mathbb{F}_p)$ as a graded module over the mod $p$ Steenrod algebra in the usual way, and we must treat its homology $H_*(X; \mathbb{F}_p)$ as a graded comodule over the mod $p$ dual Steenrod algebra {\em with the degrees inverted}, i.e., we put the $n$th homology group $H_n(X; \mathbb{F}_p)$ in degree $-n$. For this reason, we write $H_{-*}(X; \mathbb{F}_p)$, rather than $H_*(X;\mathbb{F}_p)$, when we consider the mod $p$ homology of a space as a graded comodule. 

This means we have to exercise some care about the gradings in Adams spectral sequences as well. 
We try to explain the gradings carefully each time there seems to be some opportunity for confusion. 
\item We use the terms ``bounded above'' and ``bounded below'' with their standard meanings in algebra and topology. Explicitly, a graded object (module, comodule, etc.) is {\em bounded above} if it is trivial in all sufficiently large positive degrees, and {\em bounded below} if it is trivial in all sufficiently large negative degrees. 

In topological settings: a spectrum is said to be bounded above or bounded below if its homotopy groups are.
\item We write $\mathbb{N}^{\op}$ for the poset of natural numbers with the opposite of their usual order. Consequently, given a category $\mathcal{C}$, a functor $\mathbb{N}^{\op}\rightarrow\mathcal{C}$ is simply an inverse sequence in $\mathcal{C}$. Whenever convenient, we write $\mathcal{C}^{\mathbb{N}^{\op}}$ for the category of inverse sequences in $\mathcal{C}$.
\item Most of this paper concerns comodules over coalgebras over fields, but occasionally (specifically in \cref{V section} and \cref{v for e1}) we will get some topological mileage from considering comodules over Hopf algebroids. We recommend \cite[appendix 1]{MR860042} for readers who would like an introductory account of Hopf algebroids and the basics of homological algebra in their comodule categories. We think that the reader does not need background knowledge about Hopf algebroids to read this paper, though, since we have made an effort to provide the relevant background ideas wherever they are used in this paper. Readers who are not motivated by the Hopf algebroids in algebraic topology and who are interested only in coalgebras (or Hopf algebras), not Hopf algebroids, can reasonably skip over the material about Hopf algebroids in this paper.
\item Occasionally we will refer to ideas from relative homological algebra, e.g. relative injectives, relative cotorsion, and relative $\Ext$-groups in certain categories of comodules. The reader is not required to care about this, and indeed it only ever comes up when Hopf algebroids have already come up, so readers who skip the material on Hopf algebroids will find themselves skipping over the relative homological-algebraic content as well. 

In most of the main results in this paper which involve a relative homological-algebraic construction, we prove that the relative construction winds up being isomorphic to the corresponding {\em absolute} (i.e., classical) homological-algebraic construction. 
Consequently the reader who would prefer to be sloppy about the relative vs. absolute distinction can safely ignore the distinction, for the purposes of this paper. 

We give a brief review of the relevant background on relevant homological algebra in \cref{rel hom alg review}. In \cref{rel ext and adams E2} we explain why a careful reader might expect we would need to be careful about relative vs. absolute homological-algebraic constructions in this paper, and we explain why the two types of construction wind up being isomorphic in every situation we study here.
\item Whenever we have a graded Hopf algebroid $(A,\Gamma)$ with $\Gamma$ flat over $A$, when we write $\Ext^*_{\gr\Comod(\Gamma)}(M,N)$, we shall always mean the right-derived functors of $\hom$ in the category $\gr\Comod(\Gamma)$ of graded left $\Gamma$-comodules. 
In the rare cases where there is some need to consider relative $\Ext$ groups, we will write $\Ext^*_{\text{rel}\gr\Comod(\Gamma)}(M,N)$ for the {\em relative} right-derived functors of $\hom$ in the category $\gr\Comod(\Gamma)$ of graded left $\Gamma$-comodules, relative to the standard allowable class on $\gr\Comod(\Gamma)$, defined in \cref{rel hom alg review}.
\item We use the localization notation of Atiyah--Macdonald \cite{MR0242802}, e.g. $\mathbb{Z}_{(p)}$ is the ring of $p$-local integers.
\item Whenever we have occasion to consider homotopy groups in this paper, they shall always be {\em stable} homotopy groups. Although occasionally we mention topological spaces in this paper, we have made an effort to only write $\pi_*(X)$ in situations where $X$ is a spectrum, not a space, so that there can be no mistaking that $\pi_*$ must mean the {\em stable} homotopy groups. For emphasis, we state it explicitly here: in this paper, $\pi_*$ always means {\em stable} homotopy groups.
\item We make many spectral sequence arguments in this paper, none of which are difficult. In many cases the relevant spectral sequence is a Grothendieck spectral sequence, and in those cases we do not fuss about convergence questions, since convergence is straightforward for first-quadrant spectral sequences of that kind. At times we use other spectral sequences which do not have such straightforward and obvious convergence properties, and in those cases we check the conditions for either strong convergence (after Boardman \cite{MR1718076}) or complete convergence (after Bousfield \cite{MR551009}), as the case warrants.
\end{itemize}

\subsection{Acknowledgments}

We are grateful to Alex Martsinkovsky for illuminating conversations about ideas from the paper \cite{MR4045216}. The influence of the ideas of Martsinkovsky and Russell on \cref{V section} of this paper should be very clear.
 
No artificial intelligence tools were used for any part of this paper.

\section{Review of relevant ideas}

\subsection{Review of the covariant embedding of the category of $\Gamma$-comodules into the category of $\Gamma^*$-modules}
\label{Generalities on...}

Let $k$ be a field. The $k$-linear dual of a $k$-coalgebra $\Gamma$ is a $k$-algebra $\Gamma^*$, and there are two well-known ways of producing a graded $\Gamma^*$-module from a graded $\Gamma$-comodule $M$:
\begin{itemize}
\item Take the $k$-linear dual $M^*$. This construction will be immediately familiar to any topologist, since the mod $p$ homology $H_*(X; \mathbb{F}_p)$ of any space or spectrum $X$ will be a graded comodule over the dual Steenrod algebra, and its $\mathbb{F}_p$-linear dual is the cohomology $H^*(X; \mathbb{F}_p)$ of $X$, naturally a module over the Steenrod algebra. See \cite{MR686116} for a careful and structured discussion of possible variations on this point.

Sending a graded $\Gamma$-comodule to its $k$-linear dual $\Gamma^*$-module yields a {\em contravariant} functor from graded $\Gamma$-comodules to graded $\Gamma^*$-modules.
\item Take $M$ itself, with the adjoint action of $\Gamma^*$. This construction is well-known among algebraists (e.g. see the textbook treatment in \cite{MR2012570}), but seems obscure among topologists. The author learned of it from a conversation with Mike Boardman many years ago; in \cite[section 6]{MR686116}, Boardman calls the resulting fact that the Steenrod algebra acts on the {\em homology} of any space or spectrum ``one of the best-kept secrets of stable homotopy theory.''

The ``adjoint action'' is the map $\Gamma^*\otimes_k M \rightarrow M$ given by sending $f\otimes m$ to the image of $m$ under the composite map 
\[ M \stackrel{\psi_M}{\longrightarrow} \Gamma\otimes_k M \stackrel{f\otimes M}{\longrightarrow} k\otimes_k M \stackrel{\cong}{\longrightarrow} M,\]
where $\psi_M$ is the coaction map.

Sending a graded $\Gamma$-comodule $M$ to $M$ itself, with adjoint action of $\Gamma^*$, yields a {\em covariant} functor from graded $\Gamma$-comodules to graded $\Gamma^*$-modules. We will write $\iota$ for this functor $\gr\Comod(\Gamma)\rightarrow \gr\Mod(\Gamma^*)$. 
\end{itemize}
The linear dualization functor often fails to preserve $\Ext$-groups between non-finite-type graded objects; see \cite[pgs. 51-52]{MR0251716} for discussion of this behavior over the Steenrod algebra. We shall ultimately see, in Theorem \ref{ext iso 1}, that $\iota$ does not have this problem. Consequently it is the functor $\iota$ that is of most direct importance for us. Here are some of its established properties, mostly from subsections 1.4.1, 1.4.3, 1.4.7, 1.7.1, and 3.20.1 of \cite{MR2012570}.
\begin{itemize}
\item The functor $\iota$ is known to be faithful and full; more generally, as long as $\Gamma$ is a coalgebra over a commutative ring $A$, the functor $\iota: \gr\Comod(\Gamma)\rightarrow \gr\Mod(\Gamma^*)$ is faithful, and it is full if $\Gamma$ is projective as an $A$-module. Here $\Gamma^*$ is the $A$-linear dual $A$-algebra of $\Gamma$.
\item The functor $\iota: \gr\Comod(\Gamma) \rightarrow \gr\Mod(\Gamma^*)$ is an equivalence of categories if and only if $\Gamma$ is finitely generated and projective as an $A$-module. 
\item The graded $\Gamma^*$-modules in the image of $\iota$ are said to be {\em rational} comodules. This usage of the word ``rational'' is traditional in the literature on comodules. It is apparently unrelated to the usage of the word ``rational'' to mean ``$\mathbb{Q}$-linear'' elsewhere in algebra.
\item Since $\iota$ is faithful and full, the category of rational graded $\Gamma^*$-modules is equivalent to the category of graded $\Gamma$-comodules.
\item The rational comodules form an abelian subcategory of $\gr\Mod(\Gamma^*)$, closed under kernels, cokernels, and coproducts (but not necessarily products).
\item The functor $\iota$ has a right adjoint, written $\tr$, and called the ``trace functor'' or the ``rational functor.'' It sends each graded $\Gamma^*$-module to its maximal rational submodule, considered as a $\Gamma$-comodule.
\item Specialize now to the case where $A=k$, some field. Then the $k$-vector space $\Gamma$ is finite-dimensional if and only if every graded $\Gamma^*$-module is rational.
\item We must grade $\Gamma^*$ so that linear functionals on the degree $n$ summand of $\Gamma$ are considered to be in degree $-n$ in $\Gamma^*$. (This is so that the functor $\iota: \gr\Comod(\Gamma)\rightarrow\gr\Mod(\Gamma^*)$ is indeed the identity functor on the underlying graded $k$-vector spaces, rather than inverting all the grading degrees.)
\end{itemize}

Because of the last point, we must adopt some convention on what ``connected'' ought to mean. Recall that, for a field $k$, it is standard to say that a graded $k$-algebra is ``connected'' if it is concentrated in nonnegative degrees, and its degree $0$ summand consists only of the $k$-linear span of the identity element $1$. In this paper our algebras arise as linear duals of coalgebras, so by the last point in the above list, if we are going to have our algebras concentrated in nonnegative degrees, we must grade our coalgebras so that they are concentrated in nonpositive degrees. This is why we adopt the following convention, also described in \cref{conventions}:
\begin{convention}\leavevmode
\begin{itemize}
\item We will say that a graded {\em algebra} over a field is connected if it is concentrated in {\em nonnegative} degrees, and its degree $0$ summand is one-dimensional.
\item We will say that a graded {\em coalgebra} over a field is connected if it is concentrated in {\em nonpositive} degrees, and its degree $0$ summand is one-dimensional.
\end{itemize}
\end{convention}

It is potentially confusing for the meaning of ``connected'' for a graded coalgebra to be the reverse of its meaning for a graded algebra, but the author maintains (based on personal experience) that whatever confusion this causes is far less than the confusion caused by 
having the covariant embedding functor $\iota$ invert all the grading degrees. It is really essential to ensure that $\iota$ does not invert grading degrees, but is merely the identity functor on the underlying graded vector spaces, because that is what enables us to take the basic perspective of this paper (and of \cite{MR5014030}): identify the graded $\Gamma$-comodules with the rational graded $\Gamma^*$-modules, so that a graded comodule is nothing but a special kind of graded module, so that all our intuitions and techniques for modules can be brought to bear on comodules as well.

There is a standard meaning of the term ``connected coalgebra'' in the ungraded setting: namely, a coalgebra over a field is said to be {\em connected} if its coradical is one-dimensional. The {\em coradical} of a coalgebra is the sum of its simple subcoalgebras; see \cite[Definition 5.1.5]{MR1243637} for a textbook treatment. Our convention for the meaning of the term ``connected coalgebra,'' in the graded setting, is compatible with its standard meaning in the ungraded setting, in the following sense: given any $\mathbb{N}$-graded or $\mathbb{Z}^{\leq 0}$-graded coalgebra $\Gamma$, the sum of the simple {\em graded} subcoalgebras of $\Gamma$ is precisely the degree $0$ summand of $\Gamma$.

\subsection{Review of basics about Hopf algebroids}

Let $(A,\Gamma)$ be a graded Hopf algebroid. The standard reference for the subject is \cite[Appendix 1]{MR860042}. 
We use standard notations for the structure maps of $(A,\Gamma)$: we write $\eta_L,\eta_R: A \rightarrow \Gamma$ for the left and right unit maps, $\epsilon: \Gamma\rightarrow A$ for the augmentation map, $\Delta: \Gamma \rightarrow \Gamma\otimes_A\Gamma$ for the coproduct map, and $\chi: \Gamma\rightarrow\Gamma$ for the conjugation (i.e., antipode) map.

Hopf algebroids arise only occasionally in this paper: in \cref{v for e1} and \cref{rel ext and adams E2} due to topological examples, and in \cref{V and cotorsion} simply due to the observation that the construction of the right adjoint $V$ for the extended comodule functor makes sense over a Hopf algebroid, not only a Hopf algebra. Readers who do not need the generality of Hopf algebroids can mentally replace all instances of the phrase ``Hopf algebroid'' with ``Hopf algebra'' as they read this paper, and correspondingly to assume $\eta_L = \eta_R$, without affecting the truth of any of the results or the correctness of the proofs. 

Here are some well-known facts about categories of modules and comodules associated to $(A,\Gamma)$. 
\begin{itemize}
\item The following conditions are equivalent: 
\begin{itemize}
\item the left unit map $\eta_L$ is flat,
\item the right unit map $\eta_R$ is flat,
\item both unit maps $\eta_L,\eta_R$ are flat.
\end{itemize}
When these equivalent conditions are satisfied, then the graded Hopf algebroid $(A,\Gamma)$ is said to be {\em flat}. Flatness of $(A,\Gamma)$ ensures that the category $\gr\Comod(\Gamma)$ of graded $\Gamma$-comodules is abelian \cite[Lemma A1.1.3]{MR860042}. 
\item For flat $(A,\Gamma)$, a sequence of graded $\Gamma$-comodules is exact if and only if its underlying sequence of graded $A$-modules is exact.
\item Suppose $(A,\Gamma)$ is flat. Then the forgetful functor 
\begin{align}
\label{u 1} U: \gr\Comod(\Gamma) &\rightarrow \gr\Mod(A) \\
\nonumber M &\mapsto M
\end{align}
has a right adjoint, the {\em extended comodule functor},
\begin{align}
\label{E 1} E: \gr\Mod(A) &\rightarrow \gr\Comod(\Gamma) \\
\nonumber M &\mapsto \Gamma\otimes_A M.
\end{align}
The $\Gamma$-comodules of the form $\Gamma\otimes_A M$ for some $A$-module $M$ are called {\em extended comodules.} The summands of extended comodules are called the {\em relatively injective} $\Gamma$-comodules. 
\end{itemize}
%
%
%

\section{$V$, the right adjoint to the extended comodule functor}
\label{V section}

\subsection{$V$ and cotorsion}
\label{V and cotorsion}

Here is a simple observation which, despite its simplicity, we have never seen nor heard mentioned anywhere. The functor $E$ from \eqref{E 1} is easily seen to preserve coproducts and cokernels, hence all colimits, and furthermore its domain category $\gr\Mod(A)$ is co-complete, locally small, co-well-powered, and has a small generating set (namely, the set of all suspensions of the free module $A$)---that is, $E$ satisfies the hypotheses of Freyd's famous ``Special Adjoint Functor Theorem,'' so $E$ must have a right adjoint. 
\begin{definition}
We will write $V$ for the right adjoint 
\begin{align*}
 V:\gr\Comod(\Gamma) &\rightarrow \gr\Mod(A)\end{align*} 
of the extended comodule functor $E$.
\end{definition}

Once one notices that this right adjoint exists, it is very easy to give a formula for it, as follows.
\begin{prop}\label{I prop 1}
The functor $V$ is co-represented by the comodule $\Gamma$. That is, we have an isomorphism
\begin{align*}
 \hom_{\gr\Comod(\Gamma)}(\Gamma,M) &\cong V(M),
\end{align*}
natural in the variable $M$.
\end{prop}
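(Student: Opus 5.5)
The plan is to use the defining adjunction $E\dashv V$ with $A$ itself as test module, using the standard suspension convention of \cref{conventions} so that the graded hom accounts for all degrees at once. For any graded comodule $M$ we have natural isomorphisms
\begin{align*}
V(M) &\cong \hom_{\gr\Mod(A)}(A, V(M)) \\
 &\cong \hom_{\gr\Comod(\Gamma)}(E(A), M) \\
 &= \hom_{\gr\Comod(\Gamma)}(\Gamma\otimes_A A, M) \cong \hom_{\gr\Comod(\Gamma)}(\Gamma, M).
\end{align*}
The first isomorphism is the usual one: a graded $A$-module map $\Sigma^n A\rightarrow N$ is determined by the image of $1$, an element of degree $n$. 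The second is the adjunction, applied degreewise to $\Sigma^n A$. Here I use that $E$ commutes with suspension, since $\Gamma\otimes_A\Sigma^n A\cong\Sigma^n\Gamma$ as comodules. The last is the unit isomorphism $\Gamma\otimes_A A\cong\Gamma$, which is an isomorphism of left comodules because the coaction on $\Gamma\otimes_A A$ is $\Delta\otimes A$.

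Naturality in $M$ follows because each step is natural in $M$: the adjunction bijection is natural in the comodule variable, and the other isomorphisms do not involve $M$.

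One point needs care, namely that the result is an isomorphism of graded $A$-modules and not merely of graded abelian groups. On the left, $\hom_{\gr\Mod(A)}(A,V(M))$ carries its $A$-module structure through the action on the source $A$, using that $A$ is graded-commutative. To make the right side match, I would use the right $A$-module structure on $\Gamma$ through $\eta_R$, which commutes with the left coaction. This structure makes $\hom_{\gr\Comod(\Gamma)}(\Gamma,M)$ an $A$-module, and the adjunction bijection respects it because $E(A)\cong\Gamma$ transports the $A$-action by $\eta_R$.

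I expect this bookkeeping of the module structures, including which unit map acts, to be the only real obstacle. Everything else is formal. In the Hopf algebra case $\eta_L=\eta_R$, so even this step is trivial.
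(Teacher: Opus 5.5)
Your proposal is correct and is essentially the paper's argument: the paper proves the proposition by exactly the chain $\hom_{\gr\Comod(\Gamma)}(\Gamma,M)\cong\hom_{\gr\Comod(\Gamma)}(E(A),M)\cong\hom_{\gr\Mod(A)}(A,V(M))\cong V(M)$, coming from the adjunction $E\dashv V$. The paper only asserts an isomorphism of graded abelian groups, so your additional checks are correct refinements rather than required steps: that $E$ commutes with suspensions, and that the $A$-module structures match via right multiplication by $\eta_R(a)$ on $\Gamma$, which is a comodule endomorphism because $\Delta$ is a bimodule map.
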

\begin{proof}
We have isomorphisms
\begin{align*}
 \hom_{\gr\Comod(\Gamma)}(\Gamma,M) 
  &\cong\hom_{\gr\Comod(\Gamma)}(E(A),M) \\
  &\cong\hom_{\gr\Mod(A)}(A,V(M)) \\
  &\cong V(M)
\end{align*}
of graded abelian groups, natural in the variable $M$.
\end{proof}

The following definitions are generalizations of definitions in \cite{MR4045216}, which are there given only for categories of modules over a ring.
\begin{definition}\label{m-r defs}
Let $\mathcal{C}$ be an abelian category. We assume that $\mathcal{C}$ is equipped with some choice of allowable class (as in \cref{rel hom alg review}), so that we may speak of ``relative injectives'' with respect to this allowable class.
\begin{itemize}
\item The {\em relatively injective trace} of $M$, written $Itr(M)$, is the maximal subobject of $M$ such that every map from a relatively injective object of $\mathcal{C}$ to $M$ factors through this subobject\footnote{There is the question of whether the relatively injective trace of an object $M$ actually exists. Clearly this can depend on $\mathcal{C}$, on the allowable class, and on $M$ itself. If $\mathcal{C}$ is the category of modules over a ring and we take the allowable class to be the absolute allowable class (so that the relative injectives are the injectives), or if we take $\mathcal{C}$ to be the category of comodules over any flat Hopf algebroid and we take the allowable class to be the ``standard class'' as in \cref{rel hom alg review}, then the injective trace of $M$ exists for any $M$. A similar existence result also holds, for straightforward reasons, for the graded (co)module categories. In each of these cases, the argument for existence of $Itr(M)$ is the same: while there may be a proper class of relative injectives, there is only a {\em set} of subobjects of $M$, and the ones which are images of a map from a relative injective comprise a filtered poset. The relatively injective trace $Itr(M)$ is simply the colimit of the members of that filtered poset. One needs to know that this colimit is still a subobject of $M$, but this follows from all these categories satisfying Grothendieck's axiom $AB5$ (existence of colimits and exactness of filtered colimits).}. 
\item The {\em relative cotorsion of $M$}, written $\mathfrak{q}(M)$, is the quotient of $M$ by the relatively injective trace of $M$.
\item We say that $M$ is {\em relative cotorsion} if the natural map $M \rightarrow \mathfrak{q}(M)$ is an isomorphism. Equivalently, $M$ is relative cotorsion if the relatively injective trace of $M$ is trivial. Equivalently, $M$ is relative cotorsion if there are no nonzero maps from a relatively injective object to $M$. Equivalently, when the allowable class is the standard allowable class from \cref{rel hom alg review}: $M$ is relative cotorsion if and only if $V(M)$ is trivial. This last equivalence is explained below, in Remark \ref{cotorsion remark}.
\end{itemize}
When the relevant allowable class is the absolute allowable class, we will write {\em cotorsion} rather than ``relative cotorsion.''
\end{definition}
In the paper \cite{MR4045216}, Martsinkovsky and Russell prove various good properties of these definitions in the absolute case. For example, when $\mathcal{C}$ is the category of modules over an integral domain, the Auslander--Gruson--Jensen transform of the functor $\mathfrak{q}$ agrees with the functor sending a module to its submodule of torsion elements. When $\mathcal{C}$ is the category of modules over an arbitrary ring $\Lambda$, Martsinkovsky and Russell prove that the Auslander--Gruson--Jensen transform of $\mathfrak{q}$, applied to a finitely presented $\Lambda$-module, agrees with the Bass torsion of $M$, i.e., the kernel of the natural map to the double dual $M\rightarrow\hom_{\Lambda}(\hom_{\Lambda}(M,\Lambda),\Lambda)$. Since the AGJ transform is a kind of dualization in the category of additive functors, these results of Martsinkovsky--Russell establish a sense in which cotorsion is dual to torsion---hence the name ``cotorsion.''

The following variant definition will be useful later.
\begin{definition}\label{def of relative derived cotorsion}
Let $\mathcal{C}$ be an abelian category equipped with some choice of allowable class. Suppose that $\mathcal{C}$ has enough relative injectives, and that the relatively injective trace of each object in $\mathcal{C}$ exists. 
We will say that an object $M$ of $\mathcal{C}$ is {\em relative derived cotorsion} if the relative right-derived functors $R_{\text{rel}}^nV(M)$ are trivial for all $n\geq 0$. 

When the allowable class is the absolute allowable class, we will write {\em derived cotorsion} rather than ``relative derived cotorsion.''
\end{definition}

Unless otherwise specified, when we consider relative homological constructions on a category of comodules (e.g. relative cotorsion or relative derived cotorsion), the relevant allowable class will always be the standard class, as defined in \cref{rel hom alg review}.

\begin{prop}\label{cotorsion and relative cotorsion}
Let $(A,\Gamma)$ be a graded Hopf algebroid, with $\Gamma$ flat over $A$. Let $M$ be a graded $\Gamma$-comodule.
\begin{enumerate}
\item If $M$ is relative cotorsion, then $M$ is cotorsion. The converse is true if $A$ is a field. 
\item The following conditions on $M$ are equivalent: $V(M)\cong 0$; $M$ is relative cotorsion; $Itr(M)\cong 0$.
\item If $\Gamma$ is projective over $A$, then $R^nV(M) \cong R^n_{rel}V(M)$ for all $n$.
\end{enumerate}
\end{prop}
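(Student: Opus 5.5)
The plan is to prove part (2) first and then deduce (1) and (3) from it. Throughout, "relatively injective" means "summand of an extended comodule $\Gamma\otimes_A N$" for the standard allowable class.

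For (2), the conditions "$Itr(M)\cong 0$" and "$M$ is relative cotorsion" are equivalent by Definition \ref{m-r defs}: each says there is no nonzero map from a relative injective to $M$. To show this is equivalent to $V(M)=0$, I use the adjunction $E\dashv V$. A map $\Gamma\otimes_A N\to M$ corresponds to an $A$-module map $N\to V(M)$. So every map from an extended comodule to $M$ vanishes if and only if $\hom_{\gr\Mod(A)}(N,V(M))=0$ for all graded $N$, which holds if and only if $V(M)=0$; for the last step take $N$ to be suspensions of $A$, as in Proposition \ref{I prop 1}. A map out of a summand of an extended comodule extends by zero to a map out of the extended comodule. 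Hence maps from all relative injectives vanish exactly when maps from all extended comodules vanish, and (2) follows.

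For (1), I first show that every injective graded comodule is relatively injective. Given an injective $I$, the coaction $I\to\Gamma\otimes_A I$ is a comodule monomorphism, split by the counit on the underlying $A$-modules. It is therefore a monomorphism in the abelian category $\gr\Comod(\Gamma)$, and injectivity of $I$ makes it split, so $I$ is a summand of an extended comodule. Consequently, if no relative injective maps nontrivially to $M$, then no injective does, and $M$ is cotorsion. For the converse when $A$ is a field: every $A$-module is injective, and $E$ is right adjoint to the exact forgetful functor, so $E$ preserves injectives. Thus every extended comodule, and hence every relative injective, is injective, and cotorsion implies relative cotorsion.

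For (3), both $R^nV$ and $R^n_{rel}V$ can be computed using resolutions of $M$ by extended comodules. The standard cobar resolution $M\to\Gamma\otimes_A M\to\Gamma\otimes_A\bar\Gamma\otimes_A M\to\cdots$ is $A$-split, so it is allowable and computes $R^n_{rel}V(M)$. It therefore suffices to show that this resolution is $V$-acyclic, i.e. that $R^nV(\Gamma\otimes_A N)=0$ for $n>0$; then it also computes absolute $R^nV$. Since $V=\hom(\Gamma,-)$, we have $R^nV(\Gamma\otimes_A N)=\Ext^n_{\gr\Comod(\Gamma)}(\Gamma,\Gamma\otimes_A N)$. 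Taking an injective resolution $N\to J^\bullet$ in $A$-modules, $\Gamma\otimes_A J^\bullet$ is an injective resolution of $\Gamma\otimes_A N$: flatness gives exactness, and $E$ preserves injectives. Applying the adjunction, the resulting complex is $\hom_A(\Gamma,J^\bullet)$, whose cohomology is $\Ext^n_A(\Gamma,N)$. This vanishes for $n>0$ precisely because $\Gamma$ is projective over $A$.

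The step requiring care is this acyclicity computation, in particular checking that $E$ preserves injectives in the graded setting and that the adjunction identification is compatible with the differentials. I expect this to be the main point of the proof, though it should be routine.
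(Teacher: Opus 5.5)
Your proof is correct. Part (2) and the forward implication of (1) follow the paper's argument. You also supply details the paper treats as easy or leaves unstated: the extension-by-zero step from summands of extended comodules, and the proof that injectives are relatively injective, via the split comodule monomorphism $\psi\colon I\to\Gamma\otimes_A I$.

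There are two genuine differences. First, the paper's proof of (1) never addresses the converse when $A$ is a field. Your argument closes that gap: $E$ is right adjoint to the exact forgetful functor, so it preserves injectives, and every $A$-module is injective, so every relative injective is injective.

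Second, for (3) the paper cites the general comparison $\Ext^n_{\gr\Comod(\Gamma)}(L,M)\cong\Ext^n_{\text{rel}\gr\Comod(\Gamma)}(L,M)$ for $A$-projective $L$, from Adams's lectures and Ravenel's Lemma A1.2.9(b) and Corollary A1.2.12, and then takes $L=\Gamma$. You instead prove the needed case directly. You show that extended comodules are $V$-acyclic, since $\Ext^n_{\gr\Comod(\Gamma)}(\Gamma,\Gamma\otimes_A N)\cong\Ext^n_A(\Gamma,N)=0$ for $n>0$. Hence the $A$-split cobar resolution is both a relative injective resolution and a $V$-acyclic one. This is the same mechanism that underlies the cited results. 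Replacing $\Gamma$ by an arbitrary $A$-projective $L$ in your computation recovers the general statement, which the paper quotes and reuses in its appendix on Adams $E_2$-pages. So nothing is lost, and your version is self-contained where the paper's relies on outside references.
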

Let us be careful about Proposition \ref{cotorsion and relative cotorsion}, because it is an easy place to make a mistake: the last point in Proposition \ref{cotorsion and relative cotorsion} does {\em not} entail that every derived cotorsion comodule $M$ is also relative derived cotorsion. If one knows that $R^*V(M)$ vanishes, then the last point in Proposition \ref{cotorsion and relative cotorsion} indeed gives vanishing of $R^*_{rel}V(M)$, and consequently $M$ would be relative derived cotorsion. The problem is that one cannot deduce vanishing of $R^*V(M)$ from $M$'s being derived cotorsion. Similarly, without some special hypothesis (e.g. that $A$ is a field), one cannot deduce that every cotorsion comodule is relative cotorsion. 

Here is a very simple example to demonstrate the obstruction: let $(A,\Gamma) = (\mathbb{Z},\mathbb{Z}[x]/x^2)$, with $x$ primitive and in degree $-1$. Since $\Gamma$ is projective and finite-rank over $A$, the covariant embedding $\iota:\gr\Comod(\Gamma)\rightarrow\gr\Mod(\Gamma^*)$ is an equivalence of categories, so we will identify the graded $\Gamma$-comodules with the graded modules over the ring $\mathbb{Z}[y]/y^2$ with $y$ in degree $1$. 

We have the free-forgetful adjunction between $\mathbb{Z}[y]/y^2$ and $\mathbb{Z}$, and since the free functor in this case is exact, its right adjoint, the forgetful functor, preserves injectives. Hence every injective $\mathbb{Z}[y]/y^2$-module is divisible as an abelian group. The {\em relative} injectives, by contrast, are all the $\mathbb{Z}[y]/y^2$-modules which are tensored up from $\mathbb{Z}$.

In the case $M = \mathbb{Z}[y]/y^2$, there are no nonzero $\mathbb{Z}[y]/y^2$-module maps from a divisible module to $M$, but there are nonzero maps from a relative injective to $M$, namely, the identity map $M\rightarrow M$. Hence $M$ is cotorsion, but not relative cotorsion, even though $\Gamma$ is projective over $A$.
\begin{proof}[Proof of Proposition \ref{cotorsion and relative cotorsion}]
\begin{enumerate}
\item
Every injective is a relative injective, so if all the maps from relative injectives to $M$ are zero, the same is true for maps from injectives to $M$. This proves the first claim.
\item One shows easily that $Itr(M) \cong 0$ if and only if there are no nonzero maps from suspensions of $\Gamma$ to $M$. That is, using Proposition \ref{I prop 1}: $V(M)$ is zero if and only if the relatively injective trace $Itr(M)$ is zero, i.e., if and only if $M$ is relative cotorsion. 
\item
For an arbitrary graded $\Gamma$-comodule $L$, the assumption that $L$ is projective over $A$ ensures that $\Ext^n_{\gr\Comod(\Gamma)}(L,M) \cong \Ext^n_{\text{rel}\gr\Comod(\Gamma)}(L,M)$ for all $n,M$; see \cite[pg. 54]{MR0251716} and \cite[Lemma 3]{MR686121} for one argument (attributed to Ravenel by the author of \cite{MR686121}), or \cite[Lemma A1.2.9(b) and Corollary A1.2.12]{MR860042} for another argument due to Ravenel. 

In the case $L = \Gamma$, this yields the isomorphism $R^*V(M)\cong R^*_{\text{rel}}V(M)$, hence one vanishes if and only if the other vanishes.
\end{enumerate}
\end{proof}

\begin{remark}
\label{cotorsion remark}
One can read the second point in Proposition \ref{cotorsion and relative cotorsion} as meaning that $V(M)$ and $Itr(M)$ each ``detect the failure of $M$ to be relative cotorsion.'' Nevertheless it is not generally true that $V(M)$ is isomorphic to $Itr(M)$. Since the construction of $Itr(M)$ as a colimit yields a canonical map from the relative injective $E(V(M))$ to the relatively injective trace $Itr(M)$, by precomposition with the unit\footnote{This notation $U$ for the forgetful functor was defined above, in \eqref{u 1}.} map $V(M) \hookrightarrow U(E(V(M))$ there is at least a natural map of graded $A$-modules $f: V(M) \rightarrow Itr(M)$. The map $f$ is easily seen to be surjective. To see that it is not always injective, consider the case $M = \Gamma$. We have bijections
\begin{align*}
 V\Gamma
  &\cong \hom_{\gr\Mod(A)}(A,V\Gamma) \\
  &\cong \hom_{\gr\Comod(\Gamma)}(EA,EA) \\
  &\cong \hom_{\gr\Mod(A)}(UEA,A) \\
  &\cong \hom_{\gr\Mod(A)}(\Gamma,A) \\
  &\cong \Gamma^*.
\end{align*}
For most choices of $\Gamma$ (e.g. the dual Steenrod algebra), the graded comodule $\Gamma^*$, unlike $Itr(\Gamma)$, cannot embed into $\Gamma$ in a grading-preserving way. In such cases $V\Gamma$ cannot agree with $Itr(\Gamma)$.
\end{remark}

\subsection{Topological interpretation of $V$}
\label{Topological interpretation of V}

The formula $\hom_{\gr\Comod(\Gamma)}(\Gamma,M) \cong V(M)$ from Proposition \ref{I prop 1} gives us an easy topological interpretation of $V$ and its derived functors via a generalized Adams spectral sequence. 

We need to explain a few notations relevant to Adams spectral sequences. Most of these definitions and notations are standard in stable homotopy theory, but we mention them here because we hope that this is a paper that may be of interest to some pure algebraists who might not know this material from stable homotopy. Suppose we are given a ring spectrum $E$ and a spectrum $Y$. 
\begin{itemize}
\item The ring spectrum $E$ gives rise to a generalized homology theory $E_*$, called ``$E$-homology,'' via Whitehead's construction \cite{MR137117}: for a spectrum $X$, $E_*(X)$ is defined as the homotopy groups of the smash product $E\wedge X$. For a pointed space $X$, $E_*(X)$ is defined as the homotopy groups of the smash product spectrum $E\wedge \Sigma^{\infty} X$, where $\Sigma^{\infty} X$ is the suspension spectrum of $X$.
\item In particular, $E_*(E) = \pi_*(E\wedge E)$ is the ring of stable co-operations in $E$-homology. The graded rings $\pi_*(E)$ and $E_*(E)$ form a graded Hopf algebroid $(\pi_*E,E_*E)$ in a natural way. 
\item If $E_*E$ is flat over $\pi_*E$ then the category $\gr\Comod(E_*E)$ of graded comodules over the graded Hopf algebroid $(\pi_*E,E_*E)$ is abelian, and the extended comodule functor $\gr\Mod(\pi_*E)\rightarrow \gr\Comod(E_*E)$ admits a right adjoint $V: \gr\Comod(E_*E)\rightarrow \gr\Mod(\pi_*E)$. This is a special case of the $V$ functor studied in \cref{V section}.
\item $\hat{Y}_E$ denotes the $E$-nilpotent completion of $Y$, i.e., the totalization/homotopy limit of the cosimplicial spectrum
\[ \xymatrix{ 
E\wedge Y \ar@<1ex>[r] \ar@<-1ex>[r] & 
 E\wedge E\wedge Y  \ar[l]\ar@<2ex>[r]\ar[r]\ar@<-2ex>[r] & 
 E\wedge E\wedge E\wedge Y  \ar@<1ex>[l] \ar@<-1ex>[l] \ar@<3ex>[r] \ar@<1ex>[r] \ar@<-1ex>[r] \ar@<-3ex>[r] & \dots \ar@<2ex>[l]\ar[l]\ar@<-2ex>[l] . }\]
There is a natural map $Y\rightarrow \hat{Y}_E$. One says that $Y$ is {\em $E$-nilpotently complete} if this map is a weak equivalence.
\item If $E$ is furthermore assumed to be connective (i.e., $\pi_*(E)$ is trivial in negative degrees), then Bousfield proves \cite[Propositions 2.4 and 2.5 and Theorem 3.1]{MR551009}:
\begin{itemize}
\item if $\pi_0(E) \cong \mathbb{Q}$, then a bounded-below spectrum $X$ is nilpotently $E$-complete if and only if the homotopy groups of $X$ are $\mathbb{Q}$-vector spaces,
\item if $\pi_0(E) \cong \mathbb{Z}_{(p)}$, then a bounded-below spectrum $X$ is nilpotently $E$-complete if and only if the homotopy groups of $X$ are $\mathbb{Z}_{(p)}$-modules,
\item and if $\pi_0(E)\cong \mathbb{F}_{p}$, then a bounded-below spectrum $X$ is nilpotently $E$-complete if and only if the homotopy groups of $X$ are $L_0\Lambda_p$-complete, i.e., the natural map from $\pi_*(X)$ to the zeroth left derived functor of $p$-adic completion on $\pi_*(X)$ is an isomorphism.
\end{itemize}
\end{itemize}
\begin{prop}\label{adams E2 description} 
Let $Y$ be a spectrum. 
Let $E$ be a ring spectrum such that the graded ring $E_*E$ is projective as a module over the ring $\pi_*(E)$, and such that\footnote{This condition on the $E$-Adams $E_2$-page is well-known to be satisfied for very many ring spectra $E$ of practical interest, for example, the Eilenberg-Mac Lane spectra $H\mathbb{F}_p$ and $H\mathbb{Q}$, the periodic complex $K$-theory spectrum $KU$ and its $p$-localization $KU_{(p)}$ and its $p$-local Adams summand $L \simeq E(1)$, and more generally, any of the Johnson--Wilson spectra $E(n)$. For other examples see \cite[Proposition III.13.4]{MR1324104}. A standard reference for a sufficient condition to ensure that this property on the $E$-Adams $E_2$-page is satisfied is \cite[sections III.13 and III.15]{MR1324104}. Adams's sufficient condition is that $E$ is a filtered homotopy colimit of finite spectra $E_i$ whose Spanier--Whitehead duals $DE_i$ have the property that $E_*(DE_i)$ is projective over $\pi_*(E)$ and the natural map $F^*(DE_i) \rightarrow \hom_{\pi_*(E)}(E_*(DE_i),\pi_*(F))$ is an isomorphism for all $E$-module spectra $F$. } the $E$-Adams spectral sequence for the function spectrum $F(E,Y)$ has $E_2$-page $E_2^{s,t} \cong \Ext_{\gr\Comod(E_*E)}^{s,t}(E_*E, E_*Y)$. 

Then that $E$-Adams spectral sequence takes the form
\begin{align}
\label{e-adams ss 1} E_2^{s,t} \cong (R^sV(E_{-*}Y))^{-t} &\Rightarrow [\Sigma^{t-s} E,\hat{Y}_E]\\
\nonumber d_r: E_r^{s,t} &\rightarrow E_r^{s+r,t+r-1}.
\end{align}

The spectral sequence is conditionally convergent in the sense of Boardman \cite{MR1718076}. It is also strongly convergent in Boardman's sense \cite{MR1718076}, and completely convergent in Bousfield's sense \cite{MR551009}, if the derived limit $\underset{r\rightarrow\infty}{\lim^1} E_r^{s,t}$ vanishes for all $s,t$.

In the case that $E$ is rational homology, or the $p$-local Adams summand $E(1)$, or any of the higher Johnson--Wilson theories $E(n)$, we have that the $E$-nilpotent completion $\hat{Y}_E$ is weakly equivalent to the Bousfield $E$-localization $L_E Y$, and furthermore the spectral sequence converges completely in the sense of Bousfield \cite{MR551009}.
\end{prop}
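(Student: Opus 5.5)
This is essentially a matter of substituting the formula for $V$ into the standard $E$-Adams spectral sequence for $F(E,Y)$ and tracking gradings and convergence. Under the hypotheses, the $E$-Adams spectral sequence for the function spectrum $F(E,Y)$ has $E_2^{s,t}\cong \Ext^{s,t}_{\gr\Comod(E_*E)}(E_*E,E_*Y)$. The plan is as follows.

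First, identify this $E_2$-page with $R^sV$. By Proposition \ref{I prop 1}, $V(-)\cong \hom_{\gr\Comod(E_*E)}(E_*E,-)$ as functors on graded comodules. Since $\gr\Comod(E_*E)$ has enough injectives (flatness makes it a Grothendieck category; alternatively, cobar/extended comodule resolutions), the right-derived functors of the corepresented functor $\hom(E_*E,-)$ are $\Ext^s(E_*E,-)$. Hence $R^sV(M)\cong\Ext^s_{\gr\Comod(E_*E)}(E_*E,M)$.

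If one prefers to compute $E_2$ via the standard relative (cobar) resolution, Proposition \ref{cotorsion and relative cotorsion}(3) applies, since $E_*E$ is projective over $\pi_*E$. It gives $R^sV\cong R^s_{\mathrm{rel}}V$, so the relative and absolute Ext agree and no ambiguity arises. The only care needed is with the grading convention of \cref{conventions}: homology is placed in nonpositive degrees. Writing $E_{-*}Y$ for the comodule and tracking internal degree $t$ gives the stated $(R^sV(E_{-*}Y))^{-t}$.

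Next, identify the abutment and convergence. The $E$-Adams spectral sequence for maps from $E$ into $Y$ arises from the $E$-Adams tower of $Y$, whose homotopy limit is $\hat{Y}_E$. Applying $[\Sigma^{*}E,-]$ to the tower gives a spectral sequence with abutment $[\Sigma^{t-s}E,\hat{Y}_E]$ and differentials $d_r:E_r^{s,t}\to E_r^{s+r,t+r-1}$. It is a half-plane spectral sequence with exiting differentials arising from a tower with limit the target, so Boardman's theorem gives conditional convergence. Boardman's criterion (vanishing of $\lim^1_r E_r^{s,t}$) then upgrades this to strong convergence, and Bousfield's complete convergence follows from the same $\lim^1$ condition.

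Finally, the last paragraph of the statement concerns $E=H\mathbb{Q}$, $E(1)$, and $E(n)$. Here I would cite the known results: Bousfield's identification $\hat{Y}_E\simeq L_EY$ for these $E$ (via nilpotence/vanishing lines; for $E(n)$ this is Hopkins–Ravenel's smashing and horizontal vanishing line result), together with the existence of a horizontal vanishing line at a finite page. A horizontal vanishing line makes the $E_r$-pages eventually constant, so $\lim^1=0$ and convergence is complete.

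The main obstacle is bookkeeping rather than mathematics: matching the sign and grading conventions, and justifying the $E_2$ identification with the absolute Ext rather than the relative Ext. The latter is dispatched by Proposition \ref{cotorsion and relative cotorsion}(3).
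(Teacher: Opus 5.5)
Your proposal is correct and follows essentially the same route as the paper's proof. The paper likewise identifies the assumed $E_2$-page with $R^sV$ via Proposition \ref{I prop 1} (deferring relative versus absolute $\Ext$ to the appendix), cites Bousfield's Section 6 for the convergence claims, and cites Bousfield's Theorem 6.10 together with the $E$-prenilpotence of every spectrum for Johnson--Wilson $E$ to get $\hat{Y}_E\simeq L_EY$ and complete convergence. One minor slip: in Boardman's terminology this is a half-plane spectral sequence with \emph{entering}, not exiting, differentials, which is exactly the case where conditional convergence plus vanishing of $\lim^1_r E_r^{s,t}$ is needed for strong convergence; and conditional convergence itself is just the observation that the fibers of $\hat{Y}_E$ over the finite stages of the completion tower have contractible homotopy limit.
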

The notation $(R^sV(E_{-*}Y))^{-t}$ in \eqref{e-adams ss 1} is supposed to denote the degree $-t$ summand in the graded abelian group $R^sV(E_{-*}Y)$. In that formula, the reason we see $-t$ rather than $t$, and $-*$ rather than $*$, is our cohomological grading conventions on comodules; see \cref{conventions}. 

To be clear, the right-derived functors $R^sV(E_{-*}Y)$ here are the usual, classical, {\em absolute} right-derived functors, not relative right-derived functors, but in fact the hypotheses of the proposition suffice to make the absolute right-derived functors $R^sV(E_{-*}Y)$ coincide with the relative right-derived functors $R^s_{\text{rel}}V(E_{-*}Y)$, by Proposition \ref{cotorsion and relative cotorsion}.
\begin{proof}[Proof of Proposition \ref{adams E2 description}]
The proposition is only a slight rephrasing of standard foundational work on the $E$-Adams spectral sequence
\begin{align}
\label{e-adams ss 2} 
 E_2^{s,t} \cong \left(\Ext_{\gr\Comod(E_*E)}^{s}(E_*X,E_*Y)\right)^{-t} &\Rightarrow [\Sigma^{t-s} X,\hat{Y}_E]
\end{align}
carried out in the 1970s, from \cite[section III.15]{MR1324104} (where one finds the claims about the $E_2$-page) and \cite[section 6]{MR551009} (where one finds the convergence claims). Again, the appearance of internal degree $-t$ rather than $t$ is a consequence of our cohomological grading conventions on comodules (\cref{conventions}), which are the reverse of the usual convention for gradings on homology comodules in topology, so one will find $t$ rather $-t$ in the analogue of formula \eqref{e-adams ss 2} in the literature.

In the case $X = E$, the $E_2$-page described as in \eqref{e-adams ss 2} reduces to what is described in \eqref{e-adams ss 1} as a consequence of Proposition \ref{I prop 1}. See \cref{rel ext and adams E2} for discussion, if desired, of relative vs. absolute $\Ext$-groups in the description of the $E$-Adams $E_2$-page.

In the case that $E$ is rational homology or, more generally, a Johnson--Wilson theory, the claim $\hat{Y}_E\simeq L_EY$ and the complete convergence claim follow from Bousfield's convergence analysis \cite[Theorem 6.10]{MR551009} together with the proof of Ravenel's conjecture that, for any Johnson--Wilson theory $E$, every spectrum is $E$-prenilpotent \cite{MR737778}, proven ultimately by Devinatz--Hopkins--Smith \cite{MR0960945},\cite{MR1652975},\cite{MR1192553}.
\end{proof}

The most interesting things we have to say in this paper about topological applications of $R^*V$ are a matter of using Proposition \ref{adams E2 description} in the case where $E$ is the Eilenberg-Mac Lane spectrum $H\mathbb{F}_p$, so that $E_*$ is classical mod $p$ homology $H_*(-; \mathbb{F}_p)$, and $E_*E$ is the dual mod $p$ Steenrod algebra. In \cref{Topological interpretation of V...} we also consider how Proposition \ref{adams E2 description} plays out for a few other choices of ring spectrum $E$, namely $E = H\mathbb{Q}$ and $E = E(1)$.

\subsection{$V$ for the coalgebra dual to a Poincar\'{e} algebra}
\label{poincare alg section}

We follow Moore--Peterson \cite{MR335572} and Margolis \cite[section 12]{MR738973} in the definitions of a Poincar\'{e} algebra and a Frobenius algebra. While the latter definition is standard and commonplace, to our knowledge the notion of a ``Poincar\'{e} algebra'' in this sense was original with Moore and Peterson.
\begin{definition}\label{def of poincare algebra}
Let $k$ be a field, and let $A$ be a finite-dimensional connected graded $k$-algebra.
\begin{itemize}
\item
We say that $A$ is a {\em Poincar\'{e} algebra} if there is a map of graded $k$-modules $e: A \rightarrow \Sigma^n k$, for some $n$, such that the resulting pairing
\[
 A^q \otimes_k A^{n-q} \stackrel{\nabla}{\longrightarrow} A^n \stackrel{e}{\longrightarrow} \Sigma^n k
\]
is nondegenerate.
\item 
We say that $A$ is a {\em Frobenius algebra} if for some $n$ there is an isomorphism of graded left $A$-modules $A \stackrel{\cong}{\longrightarrow} \Sigma^n A^*$.
\end{itemize}
\end{definition}
The module theory of a Poincar\'{e} algebra $A$ is quite constrained: for an $A$-module $M$---not necessarily finitely generated!---freeness, projectivity, flatness, and injectivity of $M$ are all equivalent, and if $M$ fails to be free, then the projective, weak, and injective dimensions of $M$ all coincide \cite[Theorem 4.2]{MR335572},\cite[Proposition 12.2.8]{MR738973}. This is really a consequence of well-known facts about Frobenius algebras, together with the theorem \cite{MR335572},\cite[Theorem 12.2.5]{MR738973} that a finite-dimensional connected graded $k$-algebra is Poincar\'{e} if and only if it is Frobenius. Finite-dimensional connected Hopf $k$-algebras, in particular, are Poincar\'{e} \cite{MR335572},\cite[Theorem 12.9]{MR738973}. 

It is also well-known (e.g. see \cite[subsection 1.4.7]{MR2012570}) that the covariant embedding $\iota: \gr\Comod(\Gamma)\rightarrow\gr\Mod(\Gamma^*)$ is an equivalence of categories if $\Gamma$ is finite-dimensional over a field. Hence, for a graded coalgebra $\Gamma^*$ whose dual is a Poincar\'{e} algebra, the functor $V: \gr\Mod(k)\rightarrow\gr\Comod(\Gamma)$ has extremely simple behavior:
\begin{align*}
 V(M) 
  &= \hom_{\gr\Comod(\Gamma)}(\Gamma,M) \text{\ by\ Proposition \ref{I prop 1}} \\
  &= \hom_{\gr\Mod(\Gamma^*)}(\iota\Gamma,\iota M) \text{\ by\ fullness\ of\ }\iota \\
  &= \hom_{\gr\Mod(\Gamma^*)}\left((\Gamma^*)^*,\iota M\right) \text{\ by\ finite-dimensionality\ of\ } \Gamma \\
  &= \hom_{\gr\Mod(\Gamma^*)}(\Sigma^{-n}\Gamma^*,\iota M) \text{\ for\ some\ $n$,\ since\ $\Gamma^*$\ is\ Frobenius} \\
  &= \Sigma^{n}M,
\end{align*}
where $n$ is the degree of the duality class (i.e., ``volume form'') in $\Gamma^*$, as in Definition \ref{def of poincare algebra}.

We find it useful to provide a very simple worked example, to clarify what is happening with the signs on the grading degrees. Let $\Gamma$ be the graded Hopf algebra $k[t]/t^2$, with $t$ primitive in degree $-1$. A homogeneous degree $0$ element of $VM$ is, by adjunction, given by a graded $\Gamma$-comodule map from $\Gamma = Ek$ to $M$. Such a map picks out a homogeneous degree $-1$ element of $M$, namely, the image of $t$. Hence $VM = \hom_{\gr\Comod(\Gamma)}(Ek,M) = \Sigma M$. 

\subsection{Recollections about $\mathcal{P}$-algebras}

In \cref{poincare alg section} we saw that the functor $V$ has extremely simple behavior in the case where the underlying coalgebra is a Poincar\'{e} algebra. One cannot expect such simple behavior for arbitrary connected graded algebras. There is an intermediate class between the two, the ``$\mathcal{P}$-algebras'' of Margolis \cite{MR738973}:

\begin{definition}\label{def of p-alg}
Let $k$ be a field. A connected graded $k$-algebra $A$ will be called a {\em $\mathcal{P}$-algebra} if $A$ is the union of an increasing chain of subalgebras $A(0)\subsetneq A(1) \subsetneq A(2) \subsetneq \dots$ such that
\begin{itemize}
\item each $A(n)$ is a Poincar\'{e} algebra (see Definition \ref{def of poincare algebra}),
\item and, for each $n$, $A(n+1)$ is flat as a right $A(n)$-module.
\end{itemize}
\end{definition}
While a $\mathcal{P}$-algebra is never Noetherian \cite[Proposition 13.1.2]{MR738973}, it is at least left coherent \cite[Proposition 13.1.1]{MR738973} and right coherent \cite[Corollary 13.1.7]{MR738973}. Margolis proves, in Theorems 13.2.5 and 13.3.12 of \cite{MR738973}, that $\mathcal{P}$-algebras enjoy some curious analogues of the nice homological properties of Poincar\'{e} algebras sketched in \cref{poincare alg section}:
\begin{theorem} {\bf (Margolis.)} \label{margolis thm 1}
Let $A$ be a $\mathcal{P}$-algebra, and let $M$ be a graded left $A$-module.
Then the following conditions are equivalent:
\begin{itemize}
\item $M$ is flat,
\item $M$ has projective dimension $\leq 1$,
\item $M$ has injective dimension $\leq 1$,
\item $M$ is free over $A(n)$ for each $n$.
\end{itemize}
If $M$ does not satisfy these conditions, then its projective, weak, and injective dimensions are all infinite.

Suppose furthermore that $M$ is bounded below, i.e., $M$ is zero in sufficiently low negative degrees. Then the following conditions are equivalent:
\begin{itemize}
\item $M$ is flat,
\item $M$ is projective,
\item $M$ is injective, 
\item $M$ is free over $A(n)$ for each $n$,
\item $M$ is free.
\end{itemize}
\end{theorem}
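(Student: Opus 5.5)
Since this theorem is due to Margolis (Theorems 13.2.5 and 13.3.12 of his book), the paper cites it rather than proving it. Here is how I would reconstruct the argument. The starting point is the filtration $A=\bigcup_n A(n)$ together with the good homological behavior of each Poincar\'{e} algebra $A(n)$ recalled in \cref{poincare alg section}: over $A(n)$, free, projective, flat and injective coincide. Each $A(n+1)$ is flat over $A(n)$, and $A(n)$ is self-injective, so $A(n+1)$ is free over $A(n)$. Hence $A$ itself is free over every $A(n)$, and restriction from $A$-modules to $A(n)$-modules preserves projectives and flats.

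For the first list of equivalences I would argue in a cycle. \emph{Flat $\Rightarrow$ free over each $A(n)$:} $A$ is flat (indeed free) over $A(n)$, so a flat $A$-module is flat over $A(n)$, hence free since $A(n)$ is Poincar\'{e}. \emph{Free over each $A(n)$ $\Rightarrow$ projective dimension $\le 1$:} write $A\otimes_{A(n)}-$ for induction and use the telescope (Milnor) short exact sequence
\[0\to \textstyle\bigoplus_n A\otimes_{A(n)}M \to \bigoplus_n A\otimes_{A(n)}M\to M\to 0,\]
which comes from $M=\colim_n A\otimes_{A(n)}M$. This colimit identification should follow from $A=\colim A(n)$. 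Each term $A\otimes_{A(n)}M$ is $A$-projective because $M$ is $A(n)$-free, so $\mathrm{pd}\,M\le 1$. \emph{Projective dimension $\le 1$ $\Rightarrow$ flat:} this is the hard direction. I would first show that the $A(n)$-restriction of an $A$-module of finite projective dimension is $A(n)$-free. Such a restriction has finite projective dimension over $A(n)$, and over a Frobenius algebra finite projective dimension forces projectivity, because projectives are injective and so a finite projective resolution splits from the top down. Freeness over every $A(n)$ then gives flatness via the same telescope, since a colimit of flats is flat. \emph{Injective dimension:} I would dualize, using that the $A(n)$ are also self-injective and that $A$ is coherent on both sides; for instance, relate injective dimension of $M$ to flat dimension of $M^*$ and reuse the telescope argument with $\lim$/$\lim^1$ in place of the colimit.

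The infinite-dimension dichotomy comes out of the same argument. If some finite dimension (projective, weak, or injective) held, the restriction to each $A(n)$ would have finite dimension, hence be $A(n)$-free, hence $M$ would satisfy all the conditions.

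For the bounded-below statement I would use connectedness. A bounded-below module over a connected algebra has a minimal free resolution, and projective equals free by the graded Nakayama lemma. So it suffices to show flat $\Rightarrow$ free: take a minimal surjection $F\to M$ with $F$ free on a basis of $k\otimes_A M$. Flatness gives $\mathrm{Tor}_1^A(k,M)=0$, and minimality then forces the kernel $K$ to satisfy $k\otimes_A K=0$, so $K=0$ by Nakayama, which needs bounded-below. Injectivity would follow from flatness plus the dual argument, using that free bounded-below modules are injective (a sum of copies of $A$, which is injective via coherence and self-injectivity of the $A(n)$, via Baer's criterion on finitely generated ideals).

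I expect the main obstacle to be the injective-dimension statements, especially showing that free bounded-below modules are injective when $A$ is not Noetherian. I would try Baer's criterion, reducing to graded ideals contained in some $A(n)\cdot$-generated piece.
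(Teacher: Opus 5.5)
Since the paper only cites Margolis here, I am measuring your sketch against what a complete proof needs.

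\textbf{The flat/projective half.} This part is sound. The telescope $0\to\bigoplus_n A\otimes_{A(n)}M\to\bigoplus_n A\otimes_{A(n)}M\to M\to 0$ works. So does the fact that finite projective or flat dimension over the self-injective algebra $A(n)$ forces freeness, and so does the graded Nakayama argument for bounded-below flat modules.

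One side issue: the hypothesis only makes $A(n+1)$ flat as a \emph{right} $A(n)$-module. Restricting projective and flat left modules needs left freeness. You get it by dualizing: $A(n+1)^*$ is then left $A(n)$-free, and $A(n+1)\cong\Sigma^m A(n+1)^*$ as left modules by the Frobenius property.

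\textbf{The injective-dimension half.} You only gesture at this part, and the detour through $M^*$ and coherence is unnecessary. Right flatness of $A$ over $A(n)$ says precisely that induction $A\otimes_{A(n)}-$ is exact, so restriction to $A(n)$ preserves injectives. Hence finite injective dimension over $A$ gives finite, hence zero, injective dimension over each $A(n)$, i.e.\ $A(n)$-freeness.

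Conversely, your $\lim/\lim^1$ idea works, but the vanishing of $\lim^1$ is the whole point and you must prove it. The bimodule telescope $0\to\bigoplus_n A\otimes_{A(n)}A\to\bigoplus_n A\otimes_{A(n)}A\to A\to 0$ splits as left $A$-modules. Applying $\hom_A(-,M)$ therefore gives, for every $M$, an exact sequence
\[
0\to M\to \prod_n \hom_{A(n)}(A,M)\to \prod_n \hom_{A(n)}(A,M)\to 0,
\]
and coinduction carries $A(n)$-injectives to $A$-injectives.

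\textbf{The genuine gap.} It is the bounded-below claim that free modules are injective.
\begin{itemize}
\item Baer's criterion must be checked on \emph{all} graded left ideals. Checking it only on finitely generated ones gives FP-injectivity, which is strictly weaker because a $\mathcal{P}$-algebra is never Noetherian.
\item For the same reason, a direct sum of injectives need not be injective (Bass--Papp). Compare the remark after Theorem~\ref{main thm 1 from derived prods paper} about coproducts of suspensions of the injective module $\iota(\Gamma)$. So ``a sum of copies of $A$'' is not automatic.
\item Your sketch never uses the bounded-below hypothesis, and that is exactly where the content lies.
\end{itemize}

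Here is one way to close the gap when $A$ is of finite type, which is the only case the paper uses.
\begin{enumerate}
\item FP-injectivity is easy. A finitely presented $A$-module has the form $A\otimes_{A(n)}N_0$, and $\Ext^1_A(A\otimes_{A(n)}N_0,F)\cong\Ext^1_{A(n)}(N_0,F)=0$ whenever $F$ is free over $A(n)$.
\item Upgrade this for $F=\Sigma^dA$. Take a graded left ideal $I$ and a map $f\colon I\to\Sigma^dA$. For each finitely generated subideal $J\subseteq I$, the set of $x$ in the relevant finite-dimensional homogeneous component of $A$ with $ax=f(a)$ for all $a\in J$ is a nonempty affine subspace. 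These subspaces form a downward-directed family, so they share a point $x$, and right multiplication by $x$ extends $f$. Hence $A$ is injective.
\item Now let $F=\bigoplus_i\Sigma^{d_i}A$ with all $d_i\geq c$. The inclusion of $F$ into the graded product $P=\prod_i\Sigma^{d_i}A$ is pure. $P$ is injective, and it is flat because it is free over each $A(n)$ (a graded product of $A(n)$-injectives). So $P/F$ is flat.
\item Because the $d_i$ are bounded below, $P/F$ is bounded below, hence free by your Nakayama argument. Thus $F\hookrightarrow P$ splits and $F$ is injective.
\end{enumerate}
Conversely, a bounded-below injective module is flat by the first part, hence free.
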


The motivating example of a $\mathcal{P}$-algebra is the mod $p$ Steenrod algebra, for each prime $p$. The $\mathcal{P}$-algebras were introduced by Margolis in \cite{MR341488} and in \cite{MR738973}, apparently because the results originally proved by Adams--Margolis in \cite{MR294450} and by Lin--Margolis in \cite{MR458426} generalize to all $\mathcal{P}$-algebras. In particular, Theorem \ref{margolis thm 1} was originally obtained by Lin--Margolis in \cite{MR458426} specifically for the Steenrod algebras, before Margolis's generalization of that result to $\mathcal{P}$-algebras in \cite{MR738973}. Aside from the Steenrod algebras, the other familiar examples of $\mathcal{P}$-algebras are exterior algebras on countably infinitely many generators in positive degrees, and connected Hopf algebras which are unions of their finite-dimensional Hopf subalgebras.

In \cref{Review of the Steenrod algebras} of the appendix we offer a quick review of the most important features of the mod $p$ Steenrod algebra. Pure algebraists who do not know the Steenrod algebras may find that review useful.

The definitions and recollections in this subsection will be used in \cref{Technical results on distinguished local cohomology} starting in Proposition \ref{better version of main thm 1}. In \cref{I in the case of a P-algebra} and \cref{I in the case of the Steenrod alg} we will prove basic homological properties of the behavior of $V: \gr\Comod(\Gamma)\rightarrow \gr\Mod(k)$ when $\Gamma^*$ is a $\mathcal{P}$-algebra.

\section{Ext in comodules as a case of Ext in modules}

Sections 2 and 3 consisted entirely of review, definitions, and a few straightforward observations. It is in this section that finally begin to put in some work and derive some new results.

\subsection{Technical results on distinguished local cohomology}
\label{Technical results on distinguished local cohomology}

The author apologizes for not having found a way to prove our main result comparing $\Ext$ in comodules to $\Ext$ in modules, Theorem \ref{ext iso 1}, without bringing in some annoyingly technical ideas from the author's own paper \cite{MR5014030}. We have at least confined these technical ideas to their own subsection.

In \cite{MR5014030} we made the following definitions: 
\begin{definition}
Suppose $k$ is a field and $\Gamma$ is a connected $k$-coalgebra. 
\begin{itemize}
\item
A homogeneous left ideal $I$ of $\Gamma^*$ is {\em strongly distinguished} if the graded $\Gamma^*$-module $\iota(\Gamma)$ contains a graded $\Gamma^*$-submodule isomorphic to a suspension of $\Gamma^*/\Gamma^*I$. 
\item 
A homogeneous left ideal $I$ of $\Gamma^*$ is {\em distinguished} if it contains an intersection of a finite set of strongly distinguished ideals. Write $\dist$ for the partially-ordered set of distinguished ideals of $\Gamma^*$.
\item 
Given a graded $\Gamma^*$-module $M$, we write $h^0_{dist}(M)$ for the graded subgroup \[ \coprod_{n\in\mathbb{Z}} \underset{I\in \dist}{\colim}\hom_{\gr\Mod(\Gamma^*)}\left( \Gamma^*/\Gamma^*I,M\right)\] of $M$. That is, $h^0_{dist}(M)$ is the left $I$-torsion in $M$, where $I$ ranges across all the distinguished ideals of $\Gamma^*$. Hence $h^0_{dist}$ is a functor $\gr\Mod(\Gamma^*)\rightarrow\gr\Ab$.
\item 
The {\em zeroth distinguished local cohomology functor} is the functor $H^0_{dist}: \gr\Mod(\Gamma^*)\rightarrow\gr\Mod(\Gamma^*)$ given by letting $H^0_{dist}(M)$ be the graded $\Gamma^*$-submodule of $M$ generated by the subgroup $h^0_{dist}(M)$ of $M$. That is, $H^0_{dist}(M)$ is the graded $\Gamma^*$-submodule generated by the left $I$-torsion in $M$, where $I$ ranges across all the distinguished ideals of $\Gamma^*$.
\item The {\em $n$th distinguished local cohomology functor} is the $n$th right derived functor of $H^0_{dist}$.
\end{itemize}
\end{definition}
Readers who are accustomed to the classical theory of local cohomology might be dismayed at the idea that $h^0_{dist}$ and $H^0_{dist}$ are not always and automatically identical. In classical local cohomology (e.g. as in the textbook \cite{MR3014449}, or as in the well-known applications in algebraic geometry in \cite{MR0222093}), one is used to the idea that, for an ideal $I$ in a commutative ring $R$ and for an $R$-module $M$, the $I$-power-torsion elements of $M$ comprise an $R$-submodule of $M$. 

However, for a noncommutative ring $R$, the left $I$-power-torsion elements of a left $R$-module $M$ will form an abelian subgroup of $M$, but not always a left $R$-submodule of $M$. This is the reason why, for a noncommutative algebra $\Gamma^*$, $h^0(M)$ might fail to be equal to $H^0(M)$, as subsets of $M$. See \cite[Remark 2.6]{MR5014030} for an explicit example, and see \cite[section 2]{MR5014030} for a discussion and development of some general theory. It turns out \cite[Theorem 5.9]{MR5014030} that, {\em when $\Gamma^*$ is the Steenrod algebra}, $h^0_{dist}(M)$ is already a $\Gamma^*$-submodule of $M$, and consequently $h^0_{dist}(M) = H^0_{dist}(M)$, but this is a surprisingly tricky thing to prove and uses some subtle properties of the Steenrod algebra.

In the literature on coalgebras (e.g. \cite{MR2012570} and \cite{MR1773034}), the modules in the essential image of the functor $\iota: \gr\Comod(\Gamma)\rightarrow\gr\Mod(\Gamma^*)$ are called the ``rational $\Gamma^*$-modules.'' Since $k$ is assumed a field, the category of rational graded $\Gamma^*$-modules is equivalent to the category of graded $\Gamma$-comodules. In terms of distinguished local cohomology, the rational graded $\Gamma^*$-modules are precisely the graded $\Gamma^*$-modules such that the natural inclusion $H^0_{dist}(M)\hookrightarrow M$ is an isomorphism \cite[Theorem 3.7]{MR5014030}.

\begin{lemma}\label{surj H0 lemma}
Let $B$ be a set, and let $d: B\rightarrow \mathbb{Z}$ be a function. Let $k$ be a field, let $\Gamma$ be a connected graded $k$-coalgebra, and let $M$ be a graded $\Gamma^*$-module.
The coproduct $\coprod_{b\in B} \Sigma^{d(b)} M$ embeds into the corresponding product, and we have the short exact sequence of graded $\Gamma^*$-modules
\begin{equation}\label{ses 209094} 
0 \rightarrow \coprod_{b\in B} \Sigma^{d(b)} M \rightarrow\prod_{b\in B} \Sigma^{d(b)} M \rightarrow Q \rightarrow 0,\end{equation}
where we write $Q$ as an abbreviation for the quotient module $\prod_{b\in B} \Sigma^{d(b)} M/\coprod_{b\in B} \Sigma^{d(b)} M$. The induced map 
\begin{align} \label{ses 209095}
 H^0_{dist}\left(\prod_{b\in B} \Sigma^{d(b)} M\right)
 &\rightarrow H^0_{dist}(Q)
\end{align}
is surjective.
\end{lemma}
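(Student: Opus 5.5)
Since $H^0_{dist}(X)$ is the $\Gamma^*$-submodule of $X$ generated by $h^0_{dist}(X)$, and the image of a submodule under a module map is the submodule generated by the image of its generators, it suffices to show that every element of $h^0_{dist}(Q)$ lifts to an element of $h^0_{dist}\left(\prod_{b\in B}\Sigma^{d(b)}M\right)$. The image of $H^0_{dist}$ of the product is then a submodule containing $h^0_{dist}(Q)$, hence containing $H^0_{dist}(Q)$. So the plan is a lifting argument for homogeneous torsion elements.

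Let $\bar{x}\in Q$ be homogeneous of some degree $n$ and annihilated by a distinguished ideal $I$. First I reduce to the case where $I$ is a finite intersection $I_1\cap\dots\cap I_r$ of strongly distinguished ideals, since $I$ contains such an intersection. Lift $\bar{x}$ to a homogeneous $x=(x_b)_{b\in B}$ in the product; the degree-$n$ part of the product is the product of the degree-$n$ parts, so this is possible. For each $f\in I$ we have $fx\in\coprod_b\Sigma^{d(b)}M$, so for each $f$ only finitely many coordinates $fx_b$ are nonzero. I want to modify $x$ by an element of the coproduct so that it becomes annihilated by some (possibly smaller) distinguished ideal.

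The key input I would use is finiteness. Each strongly distinguished ideal $J$ has $\Gamma^*/\Gamma^*J$ embedding, up to suspension, into $\iota(\Gamma)$, which is rational. Hence $\Gamma^*/\Gamma^*J$ is rational, and in particular its cyclic generator generates a finite-dimensional module, since elements of rational modules lie in finite-dimensional subcomodules. Because $\Gamma$ is connected and graded, I expect this to force $J$ to be finitely generated up to the relevant degree, or at least to contain all of $\Gamma^*$ above some degree. Then $I$ would contain a cofinite-dimensional homogeneous left ideal generated in finitely many degrees by finitely many homogeneous elements $f_1,\dots,f_m$ modulo the part of $\Gamma^*$ in high degree.

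Granting this, only finitely many coordinates $b$ have $f_ix_b\neq 0$ for some $i$. I then set those finitely many coordinates of $x$ to zero, which changes $x$ by an element of the coproduct, so the result is still a lift of $\bar{x}$. The new lift is killed by each $f_i$. For the high-degree part, I take a distinguished ideal $I'\subseteq I$ such that $I'x$ lands in degrees where the product coordinates are forced to behave uniformly. The simplest candidate is $I'=\Gamma^*I_{\geq N}$, if that is distinguished, for instance if $\Gamma^*_{\geq N}$ contains an intersection of strongly distinguished ideals.

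I expect the main obstacle to be justifying this finiteness: that every distinguished ideal contains a distinguished ideal which is finitely generated, or that annihilation by a distinguished ideal can be tested on finitely many elements. If the general definition does not give this directly, I would fall back on the characterization \cite[Theorem 3.7]{MR5014030}: $h^0_{dist}$-elements are exactly those generating rational, hence locally finite, submodules. I would then argue that $\Gamma^*\bar{x}$ is finite-dimensional, choose finitely many homogeneous elements spanning a complement, and use the coordinatewise description to find a lift generating a rational submodule.
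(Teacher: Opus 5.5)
Your reduction (it suffices to lift each element of $h^0_{dist}(Q)$ to $h^0_{dist}$ of the product) is the same as the paper's. The step you flag as the main obstacle, however, is a genuine gap. It cannot be closed, because the statement is false in this generality.

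You are right that distinguished ideals are cofinite-dimensional, so each contains $\Gamma^*_{>N}$ for some $N$. But $\Gamma^*_{>N}$ is typically not finitely generated as a left ideal: in the Steenrod algebra it contains every indecomposable $Sq^{2^j}$ with $2^j>N$. In fact no proper distinguished ideal of the Steenrod algebra is finitely generated. Meanwhile, $I\bar{x}=0$ in $Q$ only says that for each \emph{single} $g\in I$ the element $gx$ has finitely many nonzero coordinates, and that finite set depends on $g$. Nothing forces the coordinates to ``behave uniformly'' in high degrees, because $x_b$ sits in degree $n-d(b)$ of $M$ and $d$ is arbitrary. The paper's proof makes exactly the same unjustified move. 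It asserts that $f$ is $I$-torsion in $Q$ if and only if $f(b)$ is $I$-torsion for all but finitely many $b$. That equivalence holds when $I$ is finitely generated (take the union of the finite supports of $g_1f,\dots,g_mf$), but not in general.

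Here is a counterexample in exactly the situation where the lemma is used in Proposition \ref{better version of main thm 1}. Let $\Gamma$ be the mod $2$ dual Steenrod algebra, $M=\iota(\Gamma)$, $B=\{1,2,3,\dots\}$, $d(b)=2^b$. Let $f=(\xi_1^{2^b})_{b\in B}$, which is homogeneous of degree $0$ in the product.

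Each $\xi_1^{2^b}$ is primitive, so $a\cdot\xi_1^{2^b}=\langle a,\xi_1^{2^b}\rangle\cdot 1$ for $a$ of positive degree. For homogeneous $a$ this is nonzero only if $\deg a=2^b$. Hence $af$ lies in the coproduct for every $a$ in the augmentation ideal $\Gamma^*_{>0}$. That ideal is strongly distinguished, being the annihilator of $1\in\iota(\Gamma)$. So $\bar f\in h^0_{dist}(Q)$.

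Now take any lift $f+c$ with $c$ in the coproduct and homogeneous of degree $0$; this suffices, since $H^0_{dist}$ is a graded submodule. For every $b$ outside the finite support of $c$, $Sq^{2^b}(f+c)$ has $b$th coordinate $Sq^{2^b}\cdot\xi_1^{2^b}=1$. These are nonzero elements in infinitely many distinct degrees, so $\Gamma^*(f+c)$ is infinite-dimensional. But every element of $H^0_{dist}$ of the product generates a finite-dimensional submodule, because distinguished ideals are cofinite, as you observed. So $\bar f$ is not in the image of \eqref{ses 209095}.

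Your fallback via rational (locally finite) elements fails on the same example: $\Gamma^*\bar f=k\bar f$ is finite-dimensional, yet no lift is locally finite. Since the product of the injective modules $\Sigma^{2^b}\iota(\Gamma)$ is injective, this also gives $H^1_{dist}\bigl(\coprod_{b}\Sigma^{2^b}\iota(\Gamma)\bigr)\neq 0$. So the failure is not a harmless edge case; it undermines the use of the lemma in Proposition \ref{better version of main thm 1}.
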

\begin{proof}
We may describe an element of the product $\prod_{b\in B} \Sigma^{d(b)} M$ as a (perhaps infinite) tuple indexed by $B$, i.e., a function $f: B \rightarrow M$. Two such elements are equal in $Q$ if and only if they differ in only finitely many of their values. Consequently, given a left ideal $I$ of $\Gamma^*$, an element $f$ of $Q$ is $I$-torsion if and only if $f(b)$ is $I$-torsion for all but finitely many $b\in B$. More generally, an element $f$ of $\prod_{b\in B} \Sigma^{d(b)} M$, regarded as an element of $Q$, is in $h^0_{dist}(Q)$ if and only if there exists some distinguished ideal $I$ of $\Gamma^*$ such that $f(b)$ is $I$-torsion for all but finitely many $b\in B$. Let $B^{\prime}$ be a finite subset of $B$ such that $f(b)$ is $I$-torsion for all $b\notin B^{\prime}$. Then let $\tilde{f}: B\rightarrow M$ be given by the formula 
\begin{align*}
 \tilde{f}(b) &= \left\{ \begin{array}{ll} f(b) &\mbox{\ if\ } b\notin B^{\prime} \\ 0 &\mbox{\ if\ } b\in B^{\prime}.\end{array}\right. 
\end{align*}
The element $\tilde{f}$ of $\prod_{b\in B} \Sigma^{d(b)} M$ is also $I$-torsion, and $\tilde{f}$ represents the same element of $Q$ as $f$. Hence every element of $h^0_{dist}(Q)$ lifts to an element of $h^0_{dist}\left( \prod_{b\in B} \Sigma^{d(b)} M\right)$.

Hence the map \eqref{ses 209095} surjects onto a set of $\Gamma^*$-module generators of $H^0_{dist}(Q)$, namely the subset $h^0_{dist}(Q)$ of $H^0_{dist}(Q)$. Since \eqref{ses 209095} is also a $\Gamma^*$-module morphism, it must be surjective.
\end{proof}

The functor $\iota$ has a right adjoint $\tr: \gr\Comod(\Gamma)\rightarrow\gr\Mod(\Gamma^*)$, the ``rational trace'' functor. Despite the name, $\tr$ is not closely related to the injective trace functor $Itr$ defined in Definition \ref{m-r defs}. They are simply both special cases of the general notion of a trace functor, as in \cite[pg. 109]{MR1245487}. 
 
While $\iota$ is exact, the functor $\tr$ is in general only left exact. Its right-derived functors are described by \cite[Theorem 4.3]{MR5014030}:
\begin{theorem}\label{main thm 1 from derived prods paper}
Let $k$ be a commutative ring, and suppose that $\Gamma$ is a graded $k$-coalgebra which is projective as an $k$-module.
Then, for each nonnegative integer $n$ and each graded left $\Gamma^*$-module $M$, we have an isomorphism $H^n_{\dist}(M) \cong \iota\left( R^n\tr(M)\right)$, natural in the variable $M$.

Suppose furthermore that $k$ is a field, and that $\Gamma^*$ is finite-type\footnote{Recall, from \cref{conventions}, that ``finite-type'' means that $\Gamma^*$ is a finite-dimensional $k$-vector space in each individual degree.} and connected. Then the following claims are also each true:
\begin{itemize}
\item If $M$ is a bounded-above injective graded $\Gamma$-comodule, then the graded $\Gamma^*$-module $\iota(M)$ is injective. 
\item If $M$ is a bounded-above graded $\Gamma^*$-module, then the distinguished local cohomology groups $H^n_{\dist}(M)$ vanish for all $n>0$. 
\item Every bounded-above graded $\Gamma^*$-module is rational.
\end{itemize}
\end{theorem}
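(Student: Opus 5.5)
The first claim says that $H^n_{\dist}$ is $\iota\circ R^n\tr$; I would reduce it to the case $n=0$ and then derive. For $n=0$ the statement is that $H^0_{\dist}(M)$ and $\iota(\tr(M))$ are the same submodule of $M$, namely the largest rational submodule. I would prove this elementwise.

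An element $m$ lies in a rational submodule exactly when the cyclic module $\Gamma^*m\cong\Sigma^{d}\Gamma^*/\mathrm{Ann}(m)$ is rational. The key lemma is then: a cyclic module $\Gamma^*/J$ is rational if and only if $J$ contains a distinguished ideal.

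For the ``if'' direction, take a strongly distinguished ideal $I$. By definition $\Gamma^*/\Gamma^*I$ embeds, up to suspension, in $\iota(\Gamma)$. Rational modules are closed under submodules, finite coproducts and quotients. A finite intersection of strongly distinguished ideals gives a cyclic module embedding in a finite coproduct of such cyclic modules. Hence any $\Gamma^*/J$ with $J$ containing a distinguished ideal is rational.

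For the ``only if'' direction, let $m$ be an element of a comodule. Then $m$ generates a finite-dimensional subcomodule, which embeds in a finite sum of suspensions of $\Gamma$. So $\mathrm{Ann}(m)$ contains a finite intersection of annihilators of elements of $\iota(\Gamma)$. Each such annihilator is strongly distinguished, because the cyclic submodule of $\iota(\Gamma)$ it defines is of the required form.

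Given the lemma, $h^0_{\dist}(M)$ is exactly the set of elements lying in rational submodules. The submodule it generates is therefore the maximal rational submodule, that is, $\iota\tr(M)$. To get all $n$, I would compute $R^n\tr(M)$ from an injective resolution $M\to I^\bullet$ in $\gr\Mod(\Gamma^*)$. Since $\iota$ is exact it commutes with cohomology, so
\[ \iota R^n\tr(M)=H^n(\iota\tr I^\bullet)=H^n(H^0_{\dist}I^\bullet)=H^n_{\dist}(M). \]
Naturality is automatic from this construction.

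For the finite-type connected case I would treat the three bullets in the order: rationality, then vanishing, then injectivity.

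\emph{Rationality of bounded-above modules.} Suppose $M$ vanishes above degree $N$, and let $m\in M$ have degree $d$. Then the ideal $\Gamma^*_{>N-d}$ annihilates $m$. The quotient $\Gamma^*/\Gamma^*_{>N-d}$ is finite-dimensional by finite type, and it is the dual of the finite-dimensional subcoalgebra $\Gamma_{\geq d-N}$. Over a finite-dimensional coalgebra every module is rational, so $\Gamma^*m$ is rational. Hence $M$ is rational.

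\emph{Vanishing of higher $H^n_{\dist}$.} Let $M$ be bounded above. Embed it in the coinduced module $\hom_k(\Gamma^*,M)$, which is injective in $\gr\Mod(\Gamma^*)$. Since $\Gamma^*$ is concentrated in nonnegative degrees, this module is again bounded above. Iterating gives an injective resolution $I^\bullet$ of $M$ by bounded-above modules. By the previous paragraph every $I^j$ is rational, so $H^0_{\dist}I^j=I^j$. The complex $H^0_{\dist}I^\bullet$ is then just the exact resolution itself, and so $H^n_{\dist}(M)=0$ for $n>0$.

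\emph{Injectivity of $\iota(M)$.} Let $M$ be a bounded-above injective comodule. Embed $\iota(M)$ in a bounded-above injective module $I$ as above. Then $I$ is rational, so $I=\iota(C)$ for some comodule $C$. This $C$ is injective as a comodule, because $\tr$ is right adjoint to the exact functor $\iota$ and so preserves injectives. By injectivity of $M$, the inclusion $M\to C$ splits in comodules. Hence $\iota(M)$ is a summand of $I$, and so is injective.

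I expect the main obstacle to be the ``only if'' half of the cyclic lemma. Checking that annihilators of elements of $\iota(\Gamma)$ really produce strongly distinguished left ideals requires care with left versus right ideals and with the fact that $h^0_{\dist}$ need not be a submodule. If the direct argument there is awkward, I would fall back on describing $\tr(M)$ via $\hom_{\gr\Mod(\Gamma^*)}$ out of finite-dimensional subcomodules of $\iota(\Gamma)$.
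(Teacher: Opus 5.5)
The paper gives no proof of this statement. It is imported verbatim from \cite[Theorem 4.3]{MR5014030}, so there is no in-paper argument to compare yours against. Read as a self-contained replacement for that citation, your proposal is correct.

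Your argument runs as follows. The cyclic lemma ($\Gamma^*/J$ is rational iff $J$ is distinguished, for homogeneous generators, since $h^0_{\dist}$ is defined degreewise) identifies $H^0_{\dist}$ with $\iota\circ\tr$, and exactness of $\iota$ then gives the derived statement. Bounded-above modules are rational because each cyclic $\Gamma^*m$ is a module over $\Gamma^*/\Gamma^*_{>N-d}\cong(\Gamma_{\geq d-N})^*$, the dual of a finite-dimensional subcoalgebra. Vanishing and injectivity follow from resolving by bounded-above coinduced modules $\hom_k(\Gamma^*,-)$, together with the fact that $\tr$, being right adjoint to the exact functor $\iota$, preserves injectives. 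All of this uses only elementary comodule theory, and the injectivity argument via splitting $M\hookrightarrow\tr(I)$ is particularly clean.

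Two remarks.

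First, the first assertion is stated over an arbitrary commutative ring $k$. There, ``$m$ generates a finite-dimensional subcomodule embedding in finitely many copies of $\Gamma$'' is unavailable. The repair is easy. Write $\psi(m)=\sum_{i=1}^r\gamma_i\otimes n_i$ with $m$ and the $\gamma_i$ homogeneous, and use the comodule map $\Gamma\otimes_k F\to\Gamma\otimes_k N$ induced by $e_i\mapsto n_i$ on a suitably graded free module $F$. It sends $\sum_i\gamma_i\otimes e_i$ to $\psi(m)$. Since $\psi$ is injective (split by the counit), $\mathrm{Ann}(m)\supseteq\bigcap_i\mathrm{Ann}(\gamma_i)$, which is exactly what ``distinguished'' asks for. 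The closure of rational modules under submodules and quotients needed in your ``if'' direction holds because a projective $\Gamma$ is locally projective. (This paper only defines distinguished ideals over a field, so this point is cosmetic.)

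Second, the obstacle you anticipate does not arise. For a left ideal $I$ one has $\Gamma^*/\Gamma^*I=\Gamma^*/I$. For homogeneous $x\in\iota(\Gamma)$ one has $\Gamma^*x\cong\Sigma^{|x|}\Gamma^*/\mathrm{Ann}(x)$. So $\mathrm{Ann}(x)$ is strongly distinguished by definition.

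In fact your lemma shows that $h^0_{\dist}(M)$ is always already a submodule, for any graded coalgebra over a field. This is because $\Gamma^*(am)\subseteq\Gamma^*m$ and $\Gamma^*(m+m')\subseteq\Gamma^*m+\Gamma^*m'$. That is at odds with the paper's remark that $h^0_{\dist}=H^0_{\dist}$ is a subtle, Steenrod-specific fact.

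If you consult \cite{MR5014030}, check that its ``strongly distinguished'' really refers to honest left ideals. The notation $\Gamma^*/\Gamma^*I$ hints that $I$ might there be a one-sided generating set. In that case $\Gamma^*(I_1\cap I_2)$ can be strictly smaller than $\Gamma^*I_1\cap\Gamma^*I_2$, and your ``if'' direction would need rethinking. With the definitions as printed in this paper, your argument is complete.
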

One might like to drop the adjective ``bounded-above'' from various places in the statement of Theorem \ref{main thm 1 from derived prods paper}, and indeed we shall have to do something like this to prove the main results of this paper. One cannot drop the adjective ``bounded-above'' from the claim about $\iota$ preserving injectives, because $\iota$ does {\em not} generally send non-bounded-above injectives to injectives. In particular, it is proven in the preprint \cite{selfinjectivitypreprint}
 that, in the case that $\Gamma$ is the dual Steenrod algebra, the coproduct of an infinite {\em non-bounded-above} family of suspensions of $\iota\Gamma$ fails to be injective, even though it is $\iota$ of an injective graded $\Gamma$-comodule. 

We are at least able to improve the result on vanishing of distinguished local cohomology in Theorem \ref{main thm 1 from derived prods paper}, weakening the ``bounded-above'' hypothesis, as follows.
\begin{prop}\label{better version of main thm 1}
Let $k$ be a field, and let $\Gamma$ be a finite-type, connected graded $k$-coalgebra 
such that $\Gamma^*$ is a $\mathcal{P}$-algebra. 
If $M$ is a rational graded $\Gamma^*$-module, then $H^n_{dist}(M)$ is trivial for all $n>0$.
\end{prop}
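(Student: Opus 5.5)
The plan is to reduce to the case of injective comodules and then run a dimension-shifting argument using the surjectivity of Lemma \ref{surj H0 lemma}. Throughout write $A=\Gamma^*$. Since $k$ is a field, every injective graded $\Gamma$-comodule is a summand of a cofree one, i.e. of a coproduct $\coprod_{b\in B}\Sigma^{d(b)}\Gamma$. Also, $\iota$ is exact and preserves coproducts and summands, and $H^n_{dist}$ commutes with finite direct sums. So the first goal is to show that
\[ H^n_{dist}\Bigl(\coprod_{b\in B}\Sigma^{d(b)}\iota\Gamma\Bigr)=0 \quad\text{for all } n>0. \]

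\textbf{Step 1.} The comodule $\Gamma$ is bounded above, since it is concentrated in nonpositive degrees, and it is injective in $\gr\Comod(\Gamma)$. So by Theorem \ref{main thm 1 from derived prods paper}, $\iota\Gamma$ is an injective graded $A$-module, and hence so is every suspension of it. Products of injective modules are injective, so $P=\prod_{b}\Sigma^{d(b)}\iota\Gamma$ is injective.

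\textbf{Step 2.} Next I bring in Margolis's Theorem \ref{margolis thm 1}. Being injective, $\iota\Gamma$ has injective dimension $\leq 1$, so it is free over each $A(n)$. Freeness over $A(n)$ is preserved by coproducts and suspensions, so $C=\coprod_b\Sigma^{d(b)}\iota\Gamma$ is free over each $A(n)$. Applying Theorem \ref{margolis thm 1} again, $C$ has injective dimension $\leq 1$. Now consider the short exact sequence $0\to C\to P\to Q\to 0$ of Lemma \ref{surj H0 lemma}. Since $P$ is injective, $\Ext^n_A(-,Q)\cong\Ext^{n+1}_A(-,C)=0$ for $n\geq 1$, so $Q$ is injective. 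Thus $0\to C\to P\to Q\to 0$ is an injective resolution of $C$.

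\textbf{Step 3.} Computing with this resolution gives $H^n_{dist}(C)=0$ for $n\geq 2$. It also identifies $H^1_{dist}(C)$ with the cokernel of $H^0_{dist}(P)\to H^0_{dist}(Q)$, and that map is surjective by Lemma \ref{surj H0 lemma}. Hence $H^n_{dist}(\iota J)=0$ for all $n>0$ and every injective comodule $J$.

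\textbf{Step 4.} For a general rational $M$, choose an injective resolution $M\to J^\bullet$ in $\gr\Comod(\Gamma)$. By Step 3, $\iota J^\bullet$ is an exact resolution of $\iota M$ by $H^0_{dist}$-acyclic modules, so it computes $H^*_{dist}(\iota M)$. Each $\iota J^i$ is rational, so $H^0_{dist}(\iota J^i)=\iota J^i$ by \cite[Theorem 3.7]{MR5014030}. Therefore $H^*_{dist}(\iota M)$ is the cohomology of the complex $\iota J^\bullet$ itself, which is exact in positive degrees. This gives $H^n_{dist}(M)=0$ for $n>0$.

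The main obstacle is Step 2, which needs the $\mathcal{P}$-algebra hypothesis. A coproduct of non-bounded-above injectives need not be injective, as noted after Theorem \ref{main thm 1 from derived prods paper}, so the argument instead uses Margolis's dichotomy to bound the injective dimension of $C$ by $1$. The remaining degree, $H^1_{dist}(C)$, is exactly what Lemma \ref{surj H0 lemma} handles.
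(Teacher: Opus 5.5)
\textbf{The Step 3 surjectivity claim fails, and the proposition is false as stated.}

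Your argument follows the paper's proof almost step for step. You push freeness over each $A(n)$ through the coproduct where the paper pushes projective dimension. You read off $H^*_{dist}(C)$ from the length-one injective resolution $0\to C\to P\to Q\to 0$ where the paper uses the long exact sequence; your check that $Q$ is injective is correct.

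The step everything rests on is the surjectivity of $H^0_{dist}(P)\to H^0_{dist}(Q)$, quoted from Lemma \ref{surj H0 lemma}. This is false once the shifts $d(b)$ are unbounded above. The proof of that lemma asserts that the class of $f$ in $Q$ is $I$-torsion only if $f(b)$ is $I$-torsion for all but finitely many $b$. But the finite exceptional set depends on the element of $I$, and the relevant ideals (e.g.\ the augmentation ideal) are not finitely generated.

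Here is a concrete counterexample.
Let $k=\mathbb{F}_2$, let $\Gamma$ be the dual Steenrod algebra, and take $d(n)=2^n$ for $n\geq 0$. Then your $C=\coprod_n\Sigma^{2^n}\iota(\Gamma)$ is $\iota(J)$ for the injective comodule $J=\coprod_n\Sigma^{2^n}\Gamma$. Let $\tilde f\in P$ be the degree-$0$ element whose $n$th coordinate is $\xi_1^{2^n}$.

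Since $\xi_1^{2^n}$ is primitive, a homogeneous $x\in\Gamma^*$ of positive degree acts by $x\cdot\xi_1^{2^n}=\langle x,\xi_1^{2^n}\rangle\cdot 1$. This is nonzero only if $\deg x=2^n$, so $x\tilde f\in C$. Hence the class $[\tilde f]\in Q$ is killed by the augmentation ideal $\Gamma^*_{\geq 1}$. That ideal is strongly distinguished because $k\cdot 1$ is a submodule of $\iota(\Gamma)$. Thus $[\tilde f]\in H^0_{dist}(Q)$.

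On the other hand, the cyclic submodule of $\iota(\Gamma)$ generated by an element of degree $e$ is concentrated in degrees $e$ through $0$. So every strongly distinguished ideal, hence every distinguished ideal, contains some $\Gamma^*_{\geq N}$. Since $\Gamma^*_{\geq N}$ is two-sided, every element of $H^0_{dist}(P)$ is killed by some $\Gamma^*_{\geq N}$.

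But any preimage of $[\tilde f]$ has $n$th coordinate $\xi_1^{2^n}$ for all large $n$. The Steenrod square $Sq^{2^n}$, dual to $\xi_1^{2^n}$, sends $\xi_1^{2^n}$ to $1$. So $[\tilde f]$ is not in the image. Since $P$ is injective, your own resolution gives $H^1_{dist}(C)\neq 0$ for the rational module $C=\iota(J)$.

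So the gap cannot be closed along these lines. Equivalently, $C+k\tilde f\subseteq P$ is a non-split extension of $k$ by $C$, by the same coordinate argument. Hence $\Ext^1_{\gr\Mod(\Gamma^*)}(k,\iota(J))\neq 0$ while $\Ext^1_{\gr\Comod(\Gamma)}(k,J)=0$. In particular Theorem \ref{ext iso 1}, which uses this proposition through Lemma \ref{easy limit in comodules}, fails for $M=k$, $N=J$.

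What your argument does establish is vanishing of $H^n_{dist}$ for $n\geq 2$ on $\iota$ of every injective comodule. It also gives full vanishing when the family $d(b)$ is bounded above. But in that case $J$ is a bounded-above injective comodule, so $\iota(J)$ is already injective by Theorem \ref{main thm 1 from derived prods paper}, and there is nothing to prove.
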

The author does not know whether Proposition \ref{better version of main thm 1} remains true without the hypothesis that $\Gamma^*$ is a $\mathcal{P}$-algebra.

Before beginning the proof of Proposition \ref{better version of main thm 1}, we point out one obvious way to try to prove the proposition, which we expect some readers will be thinking of, but which runs into great difficulties. Here is the relevant line of reasoning:
\begin{enumerate}
\item Since $M$ is assumed rational, the natural map $H^0_{dist}(M) \hookrightarrow M$ is an isomorphism.
\item Hence we just need to know that $H^n_{dist}(H^0_{dist}(M))$ vanishes for $n>0$. One should try to relate $H^*_{dist}(H^*_{dist}(M))$ to $H^*_{dist}(M)$ by a Grothendieck spectral sequence of the form $E_2^{s,t} \cong H^s_{dist}(H^t_{dist}(M))\Rightarrow H^{s+t}_{dist}(M)$, like what one has in classical local cohomology (dual to \cite[Lemma 2.7]{MR1172439}). 
\item Since $H^0_{dist}$ is idempotent, some kind of analysis of the spectral sequence ought to yield a contradiction if there were anything nonzero in the higher distinguished local cohomology of $H^0_{dist}(M)$, i.e., of $M$.
\end{enumerate}
The problem with this approach lies in the second step: one does not have such a Grothendieck spectral sequence unless one knows that $H^0_{dist}$ sends injectives (or at least enough injectives to build a resolution of $M$) to $H^0_{dist}$-acyclics. But at this point in the paper, we do not know that this property of $H^0_{dist}$ holds; it is rather close to what Proposition \ref{better version of main thm 1} itself claims. Evidently a different approach is needed, so we take a different approach now.
\begin{proof}[Proof of Theorem \ref{better version of main thm 1}]
We begin by proving the claim in the special case that $M$ is a coproduct of suspensions of $\iota(\Gamma)$. Write such a graded $\Gamma^*$-module $M$ as the direct sum $\coprod_{b\in B} \Sigma^{d(b)}\iota(\Gamma)$ for some set $B$ and some function $d: B\rightarrow \mathbb{Z}$. 
Since $\iota(\Gamma)$ is injective, by Theorem \ref{margolis thm 1} $\iota(\Gamma)$ must also have projective dimension $\leq 1$. Projective dimension is preserved under coproducts, so $\coprod_{b\in B} \Sigma^{d(b)} \iota(\Gamma)$ has projective dimension $\leq 1$. Another application of Margolis's theorem ensures that $\coprod_{b\in B} \Sigma^{d(b)} \iota(\Gamma)$ also has injective dimension $\leq 1$, hence has vanishing $H^n_{dist}$ for $n>1$, which is most of what we want to check.

We still need to check that $H^1_{dist}\left( \coprod_{b\in B} \Sigma^{d(b)} \iota(\Gamma)\right)$ vanishes. The coproduct $\coprod_{b\in B} \Sigma^{d(b)} \iota(\Gamma)$ embeds into the corresponding product, i.e., we have the short exact sequence of graded $\Gamma^*$-modules
\begin{equation}\label{ses 209094b} 0 \rightarrow \coprod_{b\in B} \Sigma^{d(b)} \iota(\Gamma) \rightarrow\prod_{b\in B} \Sigma^{d(b)} \iota(\Gamma) \rightarrow Q \rightarrow 0,\end{equation}
where $Q$ is the quotient module, as in Lemma \ref{surj H0 lemma}.
Since $\Gamma$ is a bounded-above injective graded $\Gamma$-comodule\footnote{To see that $\Gamma$ is bounded above, we remind the reader of our grading conventions for connective coalgebras, from \cref{conventions}: connective coalgebras are concentrated in {\em nonpositive} degrees.}, Theorem \ref{main thm 1 from derived prods paper} tells us that $\iota(\Gamma)$ is an injective graded $\Gamma^*$-module. Hence $\prod_{b\in B} \Sigma^{d(b)} \iota(\Gamma)$ is a product of injectives, hence it is itself injective, and has vanishing $H^n_{dist}$ for $n>0$.

The long exact sequence induced in $H^n_{dist}$ by \eqref{ses 209094b} consequently reduces to a four-term exact sequence
\begin{equation}\label{les 2300}\xymatrix{
 0 \ar[r] & 
  H^0_{dist}\left(\coprod_{b\in B} \Sigma^{d(b)} \iota(\Gamma)\right) \ar[r] &
  H^0_{dist}\left(\prod_{b\in B} \Sigma^{d(b)} \iota(\Gamma)\right) \ar[r] & H^0_{dist}(Q) \ar`r_l[ll] `l[dll] [dll] \\
 & H^1_{dist}\left(\coprod_{b\in B} \Sigma^{d(b)} \iota(\Gamma)\right) \ar[r] &
  0, &
}\end{equation}
which, together with Lemma \ref{surj H0 lemma}, gives us the vanishing of $H^1_{dist}\left(\coprod_{b\in B} \Sigma^{d(b)} \iota(\Gamma)\right)$.

Hence the statement of the proposition is true when $M$ is a coproduct of suspensions of $\iota(\Gamma)$. Every graded $k$-vector space is a coproduct of suspensions of $k$, hence every injective graded $\Gamma$-comodule is a retract of a coproduct of suspensions of $\Gamma$. Since $\iota$ is a left adjoint, it sends coproducts of suspensions of $\Gamma$ to coproducts of suspensions of $\iota(\Gamma)$. Hence, for every injective graded $\Gamma$-comodule $J$, $\iota(J)$ is a retract of a graded $\Gamma^*$-module on which $H^n_{dist}$ vanishes for all $n>0$. Hence $H^n_{dist}(\iota(J))$ vanishes as well, for $n>0$. (We remind the reader that, while $\iota$ sends bounded-above injectives to injectives, it does not always send arbitrary injectives to injectives, by the discussion following the statement of Theorem \ref{main thm 1 from derived prods paper}. Otherwise we would not have had to give the above argument for vanishing of $H^n_{dist}(\iota(J))$ for $n>0$.)

At this point, we could successfully use a Grothendieck spectral sequence argument of the kind sketched before we began the proof. For variety, we will finish the proof in a way which does not refer to a spectral sequence, but is roughly the same argument. Let $M$ be any graded $\Gamma$-comodule. Choose an injective resolution $I^{\bullet}$ for $M$. By exactness of $\iota$, the cochain complex $\iota(I^{\bullet})$ is a resolution of $\iota(M)$ by $H^0_{dist}$-acyclic graded $\Gamma^*$-modules. (Everything in this proof before this paragraph was an argument for precisely the $H^0_{dist}$-acyclicity of each of the graded modules $\iota(I^i)$.) 
Hence the cohomology of the cochain complex $H^0_{dist}(\iota(I^{\bullet}))$ is $H^*_{dist}(\iota(M))$. But $H^0_{dist} = \iota\circ \tr$, by Theorem \ref{main thm 1 from derived prods paper}, and the other composite $\tr\circ\iota$ is the identity functor. Hence we have isomorphisms
\begin{align*}
 H^n_{dist}(\iota(M)) 
  &\cong H^n\left(H^0_{dist}(\iota(I^{\bullet}))\right) \\
  &\cong H^n\left((\iota\circ \tr\circ \iota)(I^{\bullet})\right) \\
  &\cong H^n\left(\iota(I^{\bullet})\right) \\
  &\cong \iota(H^n\left(I^{\bullet}\right)) \\
  &\cong \left\{ \begin{array}{ll} \iota(M) &\mbox{\ if\ } n=0,\\ 0 &\mbox{\ if\ }n>0,\end{array}\right.
\end{align*}
as desired.
\end{proof}

One consequence of Proposition \ref{better version of main thm 1}, together with the natural equivalence $\iota\circ R^n\tr \simeq H^n_{dist}$ from Theorem \ref{main thm 1 from derived prods paper}, is a simple description of composites of distinguished local cohomology functors: we get the formula
\begin{align*}
 H^i_{dist}\circ H^j_{dist} &\simeq \left\{ \begin{array}{ll} 
  H^j_{dist} &\mbox{\ if\ } i=0,\\
  0 &\mbox{\ if\ } i>0.\end{array}\right.
\end{align*}
That seems worth pointing out, but we will not need to use it. A more pressing application of Proposition \ref{better version of main thm 1} is its role in proving the next lemma.

In Lemma \ref{easy limit in comodules}, we will use the notation $\lim^{\Gamma}$ for a limit {\em taken in the category of $\Gamma$-comodules.} This notation is standard, at least among a handful of topologists who write about such limits (e.g. in \cite{MR2337861} and in \cite{MR5014030}), and it is intended to avoid confusion with the limit taken in the underlying category of $k$-modules. Such confusion is a danger because the forgetful functor from $\Gamma$-comodules to $k$-modules generally does {\em not} preserve limits unless some extra hypothesis is assumed (e.g. that $\Gamma$ is finite-dimensional over $k$).
\begin{lemma}\label{easy limit in comodules}
Let $k$ be a field, and let $\Gamma$ be a finite-type, connected 
graded $k$-coalgebra such that $\Gamma^*$ is a $\mathcal{P}$-algebra. Given a graded $\Gamma$-comodule $M$ and an integer $n$, let $M_{\leq n}$ denote the graded $\Gamma$-comodule which is the quotient of $M$ in which all the homogeneous elements of $M$ of degree $>n$ are set to zero. Then the following claims are each true:
\begin{enumerate}
\item The natural map from $M$ to the limit of the sequence
\begin{equation}\label{seq 3003} \dots \rightarrow M_{\leq 3} \rightarrow M_{\leq 2}\rightarrow M_{\leq 1}\rightarrow\dots,\end{equation}
in the category of graded $\Gamma$-comodules, is an isomorphism.
\item If $\Gamma$ is finite-type and its dual algebra $\Gamma^*$ is a $\mathcal{P}$-algebra, then the $j$th derived limit $R^j\underset{n\rightarrow\infty}{\lim^{\Gamma}} M_{\leq n}$ vanishes for all $j>0$.
\end{enumerate}
\end{lemma}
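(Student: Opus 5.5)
Part (1) is a computation of the limit in comodules through the trace functor; part (2) compares the derived functors of $\lim^{\Gamma}$ with derived functors of the ordinary limit of modules, using Proposition \ref{better version of main thm 1}.

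For (1), I would use that $\iota$ has right adjoint $\tr$. Because of this, limits in $\gr\Comod(\Gamma)$ are computed by taking the limit in $\gr\Mod(\Gamma^*)$ and applying $\tr$: $\lim^{\Gamma} = \tr\circ\lim\circ\iota$. The limit of the sequence \eqref{seq 3003} in graded $\Gamma^*$-modules is computed degreewise. In each fixed degree $m$ the sequence $(M_{\leq n})^m$ is eventually constant and equal to $M^m$, so the degreewise limit is $M$ itself, with the evident $\Gamma^*$-action. Hence $\lim\iota(M_{\leq n})\cong \iota(M)$. This module is rational, so $\tr$ returns $M$, which gives (1). I would check that the action is the expected one: $\Gamma^*$ acts on each $M_{\leq n}$ compatibly, and the limit action agrees with that on $M$ because in each degree the sequence stabilizes.

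For (2), I would write $\lim^{\Gamma}$ as the composite of two functors, $\lim\colon \gr\Mod(\Gamma^*)^{\mathbb{N}^{\op}}\to\gr\Mod(\Gamma^*)$ precomposed with $\iota$, followed by $\tr$. I would then compute $R^j\lim^{\Gamma}$ from an injective resolution in $\gr\Comod(\Gamma)^{\mathbb{N}^{\op}}$. Injectives there are (retracts of) towers of the form $\prod_n$-induced objects built from injective comodules $J_n$, i.e.\ towers $n\mapsto \prod_{i\geq n}J_i$ with projection maps, where the product is taken in comodules. The plan is to show two things. First, such towers, after applying $\iota$, are $\lim$-acyclic, which holds because the transition maps are split surjections, so Mittag-Leffler applies degreewise. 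Second, their module limits are $\tr$-acyclic. This gives a Grothendieck spectral sequence
\[ E_2^{s,t}\cong R^s\tr\bigl(R^t\lim \iota(M_{\leq\bullet})\bigr)\Rightarrow R^{s+t}\lim^{\Gamma}M_{\leq \bullet}. \]
The tower $\iota(M_{\leq n})$ has degreewise surjective transition maps (eventually constant in each degree), so $R^t\lim$ vanishes for $t>0$. The spectral sequence then collapses to $R^s\tr(\iota M)$. By Theorem \ref{main thm 1 from derived prods paper}, $\iota R^s\tr(\iota M)\cong H^s_{dist}(\iota M)$, and by Proposition \ref{better version of main thm 1} this vanishes for $s>0$ since $\iota M$ is rational. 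Because $\iota$ is faithful, $R^s\tr(\iota M)=0$, and $R^j\lim^{\Gamma}$ vanishes for $j>0$.

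The main obstacle is the acyclicity claim needed for the Grothendieck spectral sequence. The underlying module of a product of injective comodules is $\tr$ of the module product, and $\iota$ of an injective comodule need not be injective. So I must show that $\lim\iota$ of an injective tower is $H^0_{dist}$-acyclic. I would first try to bypass this by choosing the resolution carefully. Its terms give towers whose module limit is a module product $\prod_n\iota(J_n)$ of comodules, which contains the comodule product as its trace. I would then hope to reduce to the fact, from the proof of Proposition \ref{better version of main thm 1}, that products of suspensions of $\iota\Gamma$ are injective modules (since $\iota\Gamma$ is injective) together with Lemma \ref{surj H0 lemma}. If that fails, I would instead argue directly: apply $\tr$ to the module-level short exact sequence $0\to \lim\to\prod\to\prod\to0$ defining $\lim$ and $\lim^1$ for $\iota(M_{\leq\bullet})$. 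This identifies $R^j\lim^{\Gamma}$ with pieces of $R^*\tr$ of rational or product modules, which are controlled by the same vanishing of $H^{>0}_{dist}$.
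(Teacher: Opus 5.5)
Your part (1) is correct and differs from the paper only in packaging. The paper checks the universal property by hand: a cone $T\to M_{\leq n}$ gives $f(t)=f_n(t)$ for any $n>\deg t$. You instead use that $\tr$ is a right adjoint with $\tr\circ\iota\cong\id$, so $\lim^{\Gamma}\cong\tr\circ\lim\circ\iota$, together with the degreewise computation $\lim_n\iota M_{\leq n}\cong\iota M$. Your version also yields the identification the paper uses in part (2).

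For part (2), your final step is the paper's. The paper cites \cite[Theorem 7.1]{MR5014030}: for a tower of bounded-above comodules with $R^1\lim\iota N_\bullet=0$, one has $\iota R^j\lim^{\Gamma}N_\bullet\cong H^j_{dist}(\lim\iota N_\bullet)$. It then applies Proposition \ref{better version of main thm 1} to $\lim\iota M_{\leq n}\cong\iota M$. You are in effect reproving that comparison, and the step that carries all its content is left unproved. That step is the $\tr$-acyclicity of $\lim\iota$ of the injective towers in your resolution, and you only say you would ``hope'' to establish it. As stated it is genuinely unclear. For an injective tower, $\lim\iota I\cong\prod_i\iota K_i$ with $K_i$ injective comodules, and if the $K_i$ are not bounded above then $\iota K_i$ need not be injective. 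You would then need extra input, e.g.\ an injective-dimension bound for $\prod_n\bigoplus_b\Sigma^{d(b)}\iota\Gamma$ and a version of Lemma \ref{surj H0 lemma} for $\prod_n Q_n$, and the proposal supplies neither. The fallback via $0\to\lim\to\prod\to\prod\to 0$ does not escape the problem. Relating $R^j\lim^{\Gamma}$ to $R^*\tr$ of module products again requires knowing that $\tr\circ\prod\circ\iota$ computes derived functors on comodules, which is the same acyclicity question. Also, $\prod_n\iota M_{\leq n}$ need not be rational or bounded above, so neither Proposition \ref{better version of main thm 1} nor Theorem \ref{main thm 1 from derived prods paper} applies to it directly.

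The missing idea is to use that each $M_{\leq n}$ is bounded above; this is exactly the hypothesis in \cite[Theorem 7.1]{MR5014030}.
\begin{itemize}
\item Resolve the tower by extended comodules: set $I^0(n)=\bigoplus_{0\leq i\leq n}\Gamma\otimes_k M_{\leq i}$ with projections as transition maps, and iterate on the cokernels, which are again levelwise bounded above. This is the construction of Lemma \ref{enough injs lemma}, and these towers are injective by Lemma \ref{injectivity lemma}. (Injective towers look like $n\mapsto\prod_{i\leq n}J_i$, finite products, not $\prod_{i\geq n}J_i$ as you wrote.)
\item Every kernel $K^j_i$ is then a bounded-above injective comodule. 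So $\iota K^j_i$ is an injective module by Theorem \ref{main thm 1 from derived prods paper}, and $\lim\iota I^j\cong\prod_i\iota K^j_i$ is injective.
\item The towers $\iota I^j$ have surjective transition maps, so they are $\lim$-acyclic. Hence $\lim\iota I^\bullet$ computes $R^*\lim\iota M_{\leq\bullet}$, which is $\iota M$ in degree $0$ and zero above by Mittag--Leffler. So $\lim\iota I^\bullet$ is an injective resolution of $\iota M$.
\item Therefore $R^j\lim^{\Gamma}M_{\leq\bullet}=H^j(\tr\lim\iota I^\bullet)\cong R^j\tr(\iota M)$.
\item Finally, $\iota R^j\tr(\iota M)\cong H^j_{dist}(\iota M)=0$ for $j>0$, by Theorem \ref{main thm 1 from derived prods paper} and Proposition \ref{better version of main thm 1}, and faithfulness of $\iota$ finishes.
\end{itemize}
With this step filled in, your argument becomes a self-contained proof of the special case of \cite[Theorem 7.1]{MR5014030} that the paper imports as a black box.
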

The analogue of Lemma \ref{easy limit in comodules} in a category of graded {\em modules} is a triviality. The reason that Lemma \ref{easy limit in comodules} merits a bit of care is that limits {\em in categories of comodules} are trickier and much more difficult to calculate than limits in categories of modules; e.g. Sadofsky's preprint \cite{sadofsky2001homology} gave a means of calculating the homology of unbounded infinite products of spectra if one could calculate the derived functors of infinite products (which are nontrivial!) in the category of graded comodules over the dual Steenrod algebra, but there were essentially no tools for making such calculations in nontrivial cases until the paper \cite{MR5014030}.
\begin{proof}[Proof of Lemma \ref{easy limit in comodules}]
\begin{enumerate}
\item
The claim is simply that the graded comodule $M$ has the universal property of the limit of the sequence \eqref{seq 3003} in the category $\gr\Comod(\Gamma)$. It is routine to verify this claim: if we have some other cone over \eqref{seq 3003} (in the sense of Mac Lane's book \cite{MR1712872}), i.e., some other graded $\Gamma$-comodule $T$ equipped with compatible maps $f_n: T\rightarrow M_{\leq n}$, we see that we get a unique graded $\Gamma$-comodule map $f: T \rightarrow M$ compatible with the maps $\dots ,f_2,f_1,f_0,\dots$. In particular, $f$ is simply the map given as follows: for every homogeneous element $t$ of $T$, we let $f(t)$ be equal to $f_n(t)$ for any integer $n$ greater than the degree of $t$.  
%
\item 
We use one of the main theorems from \cite{MR5014030}: under the hypotheses that $\Gamma$ is finite-type and $\Gamma^*$ is connected, if we are given a sequence
\begin{equation}\label{seq fl24} \dots \rightarrow N_2 \rightarrow N_1 \rightarrow N_0\end{equation}
of morphisms of bounded-above\footnote{We emphasize that in neither the current situation nor in \cite[Theorem 7.1]{MR5014030} are the comodules required to be {\em uniformly} bounded-above.} graded $\Gamma$-comodules such that $R^1\lim$ vanishes on the resulting sequence of graded $\Gamma^*$-modules
\[ \dots \rightarrow \iota N_2 \rightarrow \iota N_1 \rightarrow \iota N_0,\]
then \cite[Theorem 7.1]{MR5014030} there is an isomorphism of $\Gamma^*$-modules 
\begin{align*}
\iota R^j\lim_n{}^{\Gamma}N_n &\cong H^j_{\dist}(\lim_n \iota N_n).\end{align*}

Consider the special case of \cite[Theorem 7.1]{MR5014030} in which the sequence \eqref{seq fl24} is the sequence \eqref{seq 3003}. The comodules in the sequence \eqref{seq 3003} are certainly all bounded above. The morphisms $\dots\rightarrow \iota M_{\leq 1}\rightarrow\iota M_{\leq 0}$ are all surjective, hence by the usual Mittag--Leffler criterion, the $R^1\lim$ vanishing hypothesis is satisfied.  Hence \cite[Theorem 7.1]{MR5014030} yields the isomorphism
\begin{align}\label{iso fl26}
 \iota R^j\lim_n{}^{\Gamma}M_{\leq n} &\cong H^j_{\dist}(\lim_n \iota M_{\leq n}).
\end{align}
Since $\iota M\cong \lim_n \iota M_{\leq n}$, the right-hand side of \eqref{iso fl26} is the distinguished local cohomology of a rational module, hence vanishes for $j>0$ by Proposition \ref{better version of main thm 1} (here is the place we use the assumption that $\Gamma^*$ is a $\mathcal{P}$-algebra). Hence $R^j\lim^{\Gamma}_nM_{\leq n} \cong 0$ for $j>0$, as desired.
\end{enumerate}
\end{proof}
Lemma \ref{easy limit in comodules} will be used below in our proof of Theorem \ref{ext iso 1}.

\subsection{Comodule Ext is module Ext}
\label{Comodule Ext is module Ext}

By theorems of Positselski \cite[Theorems 5.1 and 6.1]{MR5038878}, in the ungraded setting\footnote{These theorems of Positselski do not involve a grading on the coalgebra, but they do include the assumption that the coalgebra $\Gamma$ is conilpotent, i.e., it is a union of finite-dimensional subcoalgebras whose dual algebras are each nilpotent. This assumption is automatically satisfied by the coalgebras considered in this section and in much of this paper, since connected graded coalgebras are always conilpotent.} the functor $\iota$ induces an isomorphism
\begin{align}
\label{non-iso 0} \Ext^n_{\Comod(\Gamma)}(M,N) &\stackrel{\cong}{\longrightarrow} \Ext^n_{\Mod(\Gamma^*)}\left(\iota(M),\iota(N)\right)
\end{align}
for all $M,N$ if and only if the $\Ext$-group $\Ext^i_{\Mod(\Gamma^*)}(k,k)$ is a finite-dimensional $k$-vector space for each $i$.
In our motivating cases, the coalgebras dual to $\mathcal{P}$-algebras and more specifically Steenrod algebras, Positselski's finiteness criterion is not satisfied, 
so the map \eqref{non-iso 0} fails to be an isomorphism in our cases of interest.

Nevertheless, in this section we shall prove that for a certain class of coalgebras (which includes those whose duals are finite-type $\mathcal{P}$-algebras, and specifically the Steenrod algebras), the map of $\Ext$-groups in the {\em graded} categories,
\begin{align}
\label{non-iso 1} \Ext^n_{\gr\Comod(\Gamma)}(M,N) &\stackrel{}{\longrightarrow} \Ext^n_{\gr\Mod(\Gamma^*)}\left(\iota(M),\iota(N)\right),
\end{align}
{\em is} an isomorphism. The theorem to that effect is Theorem \ref{ext iso 1}. We first need a definition and several lemmas.

\begin{definition}\label{def of S}
Let $\Gamma$ be a connected graded coalgebra, and let $A$ be a connected graded algebra.
\begin{itemize}
\item
We will write $S_{A}$ for the functor $\gr\Mod(A) \rightarrow \gr\Mod(A)^{\mathbb{N}^{\op}}$ which sends a graded $A$-module $X$ to the sequence $\dots \rightarrow X_{\leq 2} \rightarrow X_{\leq 1}\rightarrow X_{\leq 0}$.
\item 
Similarly, write $S_{\Gamma}$ for the functor $\gr\Comod(\Gamma) \rightarrow \gr\Comod(\Gamma)^{\mathbb{N}^{\op}}$ which sends a graded $\Gamma$-comodule $X$ to the sequence $\dots \rightarrow X_{\leq 2} \rightarrow X_{\leq 1}\rightarrow X_{\leq 0}$, where just as in the module case (and as in Lemma \ref{easy limit in comodules}), we write $X_{\leq j}$ for the graded $\Gamma$-comodule quotient of $X$ in which all elements of degree $>j$ are set to zero.
\end{itemize}
\end{definition}
A circumspect reader can verify that $X_{\leq j}$ is indeed a well-defined graded comodule quotient of $X$, since $\Gamma$ is concentrated in nonpositive degrees. 

It is easy to see that $S_{\Gamma^*}\circ \iota \simeq \iota\circ S_{\Gamma}$, since the natural map between these two composites is an isomorphism on the underlying graded $k$-vector spaces.

Here is a very general lemma about injective objects in categories of inverse sequences, proven by standard techniques. Surely this lemma must have been known decades ago, but we know no reference for it.
\begin{lemma}\label{injectivity lemma}
Let $\mathcal{C}$ be an abelian category, and let
\[ \dots \rightarrow I_2\rightarrow I_1\rightarrow I_0\]
be a sequence in $\mathcal{C}$. Suppose that each $I_i$ is an injective object of $\mathcal{C}$, and suppose that each of the maps $I_{n+1}\rightarrow I_n$ in the sequence is a split epimorphism with injective kernel. Then $I_{\bullet}$ is an injective object in the category $\mathcal{C}^{\mathbb{N}^{\op}}$ of inverse sequences in $\mathcal{C}$.
\end{lemma}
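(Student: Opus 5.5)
The plan is to reduce the lemma to the standard description of injectives in the category of inverse sequences via the right adjoints to evaluation. For each $m\geq 0$, the evaluation functor $\mathrm{ev}_m: \mathcal{C}^{\mathbb{N}^{\op}}\rightarrow\mathcal{C}$, $X_\bullet\mapsto X_m$, is exact. It has a right adjoint $R_m$ sending an object $J$ to the sequence which is $J$ in degrees $\geq m$, zero in degrees $<m$, with identity maps in degrees $\geq m$. Indeed, a map $X_\bullet\rightarrow R_m(J)$ is determined by its degree-$m$ component $X_m\rightarrow J$, because compatibility forces the degree-$n$ component for $n\geq m$ to be the composite $X_n\rightarrow X_m\rightarrow J$. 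Since $R_m$ is right adjoint to an exact functor, $R_m$ preserves injectives. In particular, $R_m(J)$ is injective in $\mathcal{C}^{\mathbb{N}^{\op}}$ whenever $J$ is injective in $\mathcal{C}$.

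Next I will decompose $I_\bullet$ using the splitting hypothesis. Write $K_0 = I_0$, and for $n\geq 0$ let $K_{n+1}$ be the kernel of the split epimorphism $I_{n+1}\rightarrow I_n$; by hypothesis each $K_n$ is injective. Choose splittings inductively, so that $I_{n}\cong K_0\oplus K_1\oplus\dots\oplus K_n$, compatibly with the maps, and with $I_{n+1}\rightarrow I_n$ becoming the projection that kills $K_{n+1}$. Under this identification, in degree $n$ the summand $K_m$ (for $m\leq n$) is carried identically down the sequence as far as degree $m$ and killed below it. So, as an inverse sequence, $K_m$ sits in degrees $\geq m$, is zero below, and has identity maps. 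This is exactly $R_m(K_m)$, and hence
\[ I_\bullet \cong \prod_{m\geq 0} R_m(K_m). \]
The subtle point is that $I_\bullet$ is a \emph{product} and not a coproduct of the $R_m(K_m)$: in each fixed degree $n$ only the finitely many factors with $m\leq n$ are nonzero, so the degreewise product is the finite direct sum $K_0\oplus\dots\oplus K_n$. This requires no hypothesis on infinite products in $\mathcal{C}$, since limits in $\mathcal{C}^{\mathbb{N}^{\op}}$ are computed degreewise.

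Finally, a product of injectives is injective in any abelian category in which the product exists, because $\hom(-,\prod_m R_m(K_m))\cong\prod_m\hom(-,R_m(K_m))$ and a product of surjections of abelian groups is surjective. So $I_\bullet$ is injective. The step I expect to be the main obstacle is verifying that the isomorphism with $\prod_m R_m(K_m)$ really is an isomorphism of inverse sequences and not just a degreewise isomorphism, that is, checking that the chosen splittings make every square commute. To handle this, I will build the splittings by induction on $n$, at each stage taking the splitting of $I_{n+1}\rightarrow I_n$ to define the identification $I_{n+1}\cong I_n\oplus K_{n+1}$. With that choice the transition map is the projection by construction, so commutativity is automatic.
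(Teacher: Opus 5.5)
Your proof is correct, but it takes a different route from the paper's.

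The paper never identifies $I_\bullet$ structurally. Instead it checks the extension property directly. Given a monomorphism $f_\bullet\colon X_\bullet\to Y_\bullet$ and a map $u_\bullet\colon X_\bullet\to I_\bullet$, it builds the extension $\ell_\bullet$ one degree at a time:
\begin{itemize}
\item $\ell_0$ comes from injectivity of $I_0$;
\item for $n\geq 1$, it sets $\ell_n=\sigma\circ\ell_{n-1}\circ\psi+i\circ\tilde\ell$.
\end{itemize}
Here $\sigma$ is a section of $g\colon I_n\to I_{n-1}$. The map $\tilde\ell$ extends, along $f_n$, the ``error term'' $u_n-\sigma g u_n$. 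That term lands in $\ker g$, and the extension exists because $\ker g$ is injective.

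Your argument uses exactly the same two ingredients: the splittings, and injectivity of $I_0$ and of the kernels. You package them into the decomposition $I_\bullet\cong\prod_{m\geq 0}R_m(K_m)$. After that, injectivity follows formally: right adjoints of exact functors preserve injectives, and products of injectives are injective.

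Unwinding your adjunctions essentially recovers the paper's formula. The map $1-\sigma g$ is the projection of $I_n$ onto the $K_n$-summand. So the $R_n(K_n)$-component of the extension is the paper's $\tilde\ell$. The components in the factors $R_m(K_m)$ with $m<n$ are inherited from degree $n-1$, which is what $\sigma\circ\ell_{n-1}\circ\psi$ does.

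Your version gives an explicit, conceptual description of these injectives, and makes clear why split epimorphisms are the natural hypothesis. You also correctly handle the one potential pitfall, existence of the infinite product: you note it is degreewise a finite biproduct and that limits in $\mathcal{C}^{\mathbb{N}^{\op}}$ are computed degreewise. The paper's version is fully elementary, with no adjoint functors or limits in the functor category. The price is a diagram chase, which it leaves to the reader as ``routine.''
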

\begin{proof}
Suppose we have a monomorphism $f_{\bullet}: X_{\bullet}\rightarrow Y_{\bullet}$ in $\mathcal{C}^{\mathbb{N}^{\op}}$, and a morphism $u_{\bullet}: X_{\bullet}\rightarrow I_{\bullet}$. To verify that $I_{\bullet}$ satisfies the universal property of an injective object, we need to verify that there exists a morphism of sequences $\ell_{\bullet}: Y_{\bullet}\rightarrow I_{\bullet}$ making the usual diagram commute:
\[\xymatrix{
X_{\bullet}\ar[r]^{f_{\bullet}} \ar[d]_{u_{\bullet}} & Y_{\bullet}\ar@{-->}[ld]^{\ell_{\bullet}} & \\
I_{\bullet}. &
}\]

We construct $\ell_{\bullet}$ by induction. For the initial step, one gets $\ell_{0}: Y_0\rightarrow I_0$ by a simple application of the universal property of the injective object $I_0$ to the diagram
\[\xymatrix{
X_{0}\ar[r]^{f_{0}} \ar[d]_{u_{0}} & Y_{0}\ar@{-->}[ld]^{\ell_{0}} \\
I_0. &
}\]
For the inductive step, suppose we have already constructed $\ell_j$ for $j=0,\dots, n-1$. To construct $\ell_n$, consider the diagram
\begin{equation}\label{comm diag 059459}\xymatrix{
& X_n\ar[r]^{f_n} \ar@/_/[rr]_{\phi} \ar[d]_{u_n} & Y_n\ar@{-->}[ld]^{\ell_n} \ar@/^/[rr]^{\psi} & X_{n-1}\ar[d]_{u_{n-1}}\ar[r]_{f_{n-1}} & Y_{n-1} \ar[ld]^{\ell_{n-1}} \\
\ker g \ar[r]_i & I_n \ar[rr]^g && I_{n-1} \ar@/^/[ll]^{\sigma} & ,
}\end{equation}
whose solid arrows comprise a commutative diagram except for the arrow $\sigma$, which satisfies only the commutativity condition $g\circ \sigma = \id_{I_{n-1}}$. We need to build a map $\ell_n$ filling in the dotted arrow to make the diagram commute.

We build the map $\ell_n$ in three steps. First, since $g\circ (u_n - \sigma\circ g\circ u_n)=0$, the map $u_n - \sigma\circ g\circ u_n: X_n\rightarrow I_n$ lifts to a map $k: X_n\rightarrow \ker g$ such that $i\circ k= u_n - \sigma\circ g\circ u_n$. 

Second, consider the diagram
\[\xymatrix{
 X_n\ar[r]^{f_n} \ar[d]_{k} & Y_n \ar@{-->}[ld]^{\tilde{\ell}} \\ \ker g. & 
}\]
Since $\ker g$ is injective, we have a map $\tilde{\ell}$ filling in the diagram and making it commute.

Third, let $\ell_n$ be the map $\sigma\circ \ell_{n-1}\circ \psi + i\circ \tilde{\ell}$. It is routine to check that this choice of $\ell_n$ satisfies the equations  $\ell_n \circ f_n = u_n$ and $g\circ \ell_n = \ell_{n-1}\circ \psi$, so that \eqref{comm diag 059459} indeed commutes, completing the inductive step.
\end{proof}

While $\iota$ does not preserve injectives, it at least sends {\em bounded-above} injectives to injectives, by Theorem \ref{main thm 1 from derived prods paper}. Here is a useful variant on that observation:
\begin{lemma}\label{enough injs lemma}
Let $k$ be a field, and let $\Gamma$ be a connected graded $k$-coalgebra. Let $N_{\bullet}$ denote some sequence
\begin{equation*}
\dots 
\rightarrow N_2 
\rightarrow N_1 
\rightarrow N_0
\end{equation*}
of bounded-above graded $\Gamma$-comodules\footnote{To be clear, we do {\em not} assume that the sequence $N_{\bullet}$ is {\em uniformly} bounded above.}. 

Then the sequence $N_{\bullet}$ admits an injective resolution $I^{\bullet}(N_{\bullet})$ in the category $\gr\Comod(\Gamma)^{\mathbb{N}^{\op}}$ of inverse sequences of graded $\Gamma$-comodules, such that every comodule in every sequence $I^j(N_{\bullet})$ in the resolution is bounded above.
\end{lemma}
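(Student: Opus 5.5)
The plan is to build the resolution levelwise from the bounded-above cofree comodules, using the criterion of Lemma \ref{injectivity lemma}. The key point is this. If $N$ is a bounded-above graded $\Gamma$-comodule, then the coaction $N\rightarrow \Gamma\otimes_k N = E(U(N))$ is a monomorphism into an injective comodule. Since $\Gamma$ is concentrated in nonpositive degrees and $N$ is bounded above (say zero above degree $m$), the comodule $\Gamma\otimes_k N$ is also zero above degree $m$, so it is bounded above.

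Given the sequence $N_\bullet$, I will define the first term $I^0_\bullet$ by the standard ``cofree on each level, accumulated'' construction. Set
\[ I^0_n = \prod_{j=0}^{n} \Gamma\otimes_k N_j ,\]
with transition maps $I^0_{n+1}\rightarrow I^0_n$ given by the projections. Each of these projections is a split epimorphism whose kernel $\Gamma\otimes_k N_{n+1}$ is injective. Each $I^0_n$ is a finite product, hence a finite coproduct, of bounded-above injectives, so it is a bounded-above injective. By Lemma \ref{injectivity lemma}, $I^0_\bullet$ is injective in $\gr\Comod(\Gamma)^{\mathbb{N}^{\op}}$.

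The map $N_\bullet\rightarrow I^0_\bullet$ has $j$th component, for $j\leq n$, equal to the composite $N_n\rightarrow N_j\rightarrow \Gamma\otimes_k N_j$ of the transition map with the coaction. These components are compatible with the transition maps, so they define a map of inverse sequences. The map is a monomorphism levelwise, because its $j=n$ component is the injective coaction map. Monomorphisms in the functor category are detected levelwise, since kernels there are computed levelwise.

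Next I will form the cokernel sequence $C_\bullet = I^0_\bullet/N_\bullet$, again computed levelwise. Each $C_n$ is a quotient of the bounded-above comodule $I^0_n$, so it is bounded above. I will then iterate the construction on $C_\bullet$ to obtain $I^1_\bullet$, and continue inductively, splicing the short exact sequences together to get the resolution $I^\bullet(N_\bullet)$. Every term of every sequence in the resolution is bounded above by construction. The resolution is exact because exactness in $\gr\Comod(\Gamma)^{\mathbb{N}^{\op}}$ is levelwise.

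I expect the main obstacle to be purely bookkeeping. One must check that the finite product $\prod_{j\leq n}\Gamma\otimes_k N_j$ is really injective and bounded above. This holds because the product is finite, and a finite product of bounded-above comodules is bounded above by the maximum of their bounds, so no uniform bound on the sequence is needed. One must also check that the compatibility of the map $N_\bullet\rightarrow I^0_\bullet$ with the transition maps genuinely holds.
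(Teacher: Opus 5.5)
Your proposal is correct and is essentially the paper's proof. The paper builds $I^0(N_{\bullet})(n+1)=E(U(N_{n+1}))\oplus I^0(N_{\bullet})(n)$ inductively. Unrolled, this is exactly your $\prod_{j\leq n}\Gamma\otimes_k N_j$ with the same components $N_n\rightarrow N_j\rightarrow \Gamma\otimes_k N_j$, and the paper then uses Lemma \ref{injectivity lemma}, bounded-aboveness of the cofree pieces, and iteration on the levelwise cokernels just as you do.
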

\begin{proof}
As in \cref{u 1}, we write $U: \gr\Comod(\Gamma)\rightarrow\gr\Mod(k)$ for the forgetful functor. As in \cref{E 1}, we write $E: \gr\Mod(k)\rightarrow\gr\Comod(\Gamma)$ for the right adjoint of $U$, the extended comodule functor $E(V) = \Gamma\otimes_k V$. 

We will use $U$ and $E$ to construct $I^{\bullet}(N_{\bullet})$ by induction. Each object $I^j(N_{\bullet})$ of $I^{\bullet}(N_{\bullet})$ is not merely a single comodule, but a {\em sequence} of comodules, since $I^{\bullet}(N_{\bullet})$ is an injective resolution for $N_{\bullet}$ in the category of {\em sequences} of comodules. We need some notation for this: we will write $I^j(N_{\bullet})$ for the $j$th object (i.e., sequence) in the injective resolution, and we will write $I^j(N_{\bullet})(m)$ for the graded $\Gamma$-comodule which is the $m$th term in that sequence.

Now begin with the comodule $N_0$. Let $I^0(N_{\bullet})(0)$ be the extended $\Gamma$-comodule $E(U(N_0)) = \Gamma\otimes_k N_0$. The unit map of the adjunction yields a natural injection $N_0 \hookrightarrow E(U(N_0))$. Recall our conventions from \cref{conventions}: connected coalgebras are concentrated in {\em nonpositive} degrees, so $E(U(N_0))$ also vanishes in positive degrees. Hence $E(U(N_0))$, like $N_0$, is bounded above. 
Thus far, we have a diagram
\begin{equation}\label{diag 039490}\xymatrix{
 \dots \ar[r]^{\nu_3} & N_3\ar[r]^{\nu_2} & N_2\ar[r]^{\nu_1} & N_1\ar[r]^{\nu_0} & N_0\ar@{^{(}->}[d]^{f_0} \\
  & & & & I^0(N_{\bullet})(0),
}\end{equation}
Our next task is to construct a bounded-above injective comodule $I^0(N_{\bullet})(1)$ into which $N_1$ embeds and which will fit into the diagram \eqref{diag 039490}. 

We should set this up as the inductive step in an induction. Suppose we have already constructed a commutative diagram of graded $\Gamma$-comodules
\begin{equation}\label{diag 039491}\xymatrix{
 \dots \ar[r]^{\nu_{n+1}} & N_{n+1}\ar[r]^{\nu_n} & N_n\ar[r]^{\nu_{n-1}}\ar@{^{(}->}[d]^{f_n} & \dots \ar[r]^{\nu_0} & N_0 \ar@{^{(}->}[d]^{f_0} \\
  &  & I^0(N_{\bullet})(n) \ar@{->>}[r]^(.6){g_{n-1}}& \dots\ar@{->>}[r]^(.4){g_0} & I^0(N_{\bullet})(0)
}\end{equation}
in which the vertical maps are all injective, in which all the graded comodules on the bottom row are injective and bounded-above, and in which the horizontal maps in the bottom row are all split epimorphisms\footnote{As a reminder, in comodule categories, monomorphisms and epimorphisms behave in the expected and familiar way: the monomorphisms are just the morphisms which are one-to-one on the underlying graded vector spaces, and the epimorphisms are just the morphisms which are surjective on the underlying graded vector spaces.}. We want to extend diagram \eqref{diag 039491} by filling in its lower-left corner with a graded comodule $I^0(N_{\bullet})(n+1)$, a graded comodule monomorphism $f_{n+1}:N_{n+1} \hookrightarrow I^0(N_{\bullet})(n+1)$, and a split epimorphism of graded comodules $g_{n}: I^0(N_{\bullet})(n+1) \twoheadrightarrow I^0(N_{\bullet})(n)$ making the diagram commute.

To accomplish this, let $I^0(N_{\bullet})(n+1)$ be the direct sum $E(U(N_{n+1}))\oplus I^0(N_{\bullet})(n)$, and define the relevant maps as follows:
\begin{align*}
 f_{n+1}: N_{n+1}&\rightarrow E(U(N_{n+1}))\oplus I^0(N_{\bullet})(n) \\
 f_{n+1}(x) &= \left(\eta_{n+1}(x),(f_n\circ \nu_n)(x)\right)\\
 g_{n}: E(U(N_{n+1}))\oplus I^0(N_{\bullet})(n) &\rightarrow I^0(N_{\bullet})(n) \\
 g_{n}(y,z) &= z,
\end{align*}
where $\eta_{n+1}: N_{n+1}\rightarrow E(U(N_{n+1}))$ is the counit map of the adjunction $U\dashv E$ evaluated on $N_{n+1}$, i.e., $\eta_{n+1}(x) = 1\otimes x \in \Gamma\otimes_k N_{n+1}$. It is easy to see that $f_{n+1}$ is monic since its left-hand component is monic, to see that $g_{n}$ is split epic, and to see that these choices indeed make the diagram
\begin{equation*}
\xymatrix{
 \dots \ar[r]^{\nu_{n+1}} & N_{n+1}\ar[r]^{\nu_n}\ar@{^{(}->}[d]^{f_{n+1}} & N_n\ar[r]^{\nu_{n-1}}\ar@{^{(}->}[d]^{f_n} & \dots \ar[r]^{\nu_0} & N_0 \ar@{^{(}->}[d]^{f_0} \\
  & I^0(N_{\bullet})(n+1)\ar@{->>}[r]^{g_{n}} & I^0(N_{\bullet})(n) \ar@{->>}[r]^{g_{n-1}}& \dots\ar@{->>}[r]^{g_0} & I^0(N_{\bullet})(0)
}\end{equation*}
commute. We easily see also that $I^0(N_{\bullet})(n+1)$ is bounded above, since $E(U(N_{n+1}))$ and $I^0(N_{\bullet})(n)$ are each bounded above.

That completes the inductive step. At this point we have shown that every inverse sequence $N_{\bullet}$ of bounded-above graded $\Gamma$-comodules admits a monomorphism into some injective object $I^0(N_{\bullet})$ in the category of inverse sequences of graded $\Gamma$-comodules, such that $I^0(N_{\bullet})$ is levelwise bounded-above. From here we apply the usual process to construct an injective resolution in any abelian category with enough injectives: take the cokernel of the monomorphism $N_{\bullet}\stackrel{d^{-1}}{\longrightarrow} I^0(N_{\bullet})$ to obtain another inverse sequence of bounded-above graded $\Gamma$-comodules, and iterate, letting $I^1(N_{\bullet})$ be $I^0(\coker d^{-1})$, to obtain the injective resolution
\[ 0\rightarrow I^0(N_{\bullet}) \stackrel{d^0}{\longrightarrow} I^1(N_{\bullet}) \stackrel{d^1}{\longrightarrow} \dots \]
of $N_{\bullet}$ in $\gr\Comod(\Gamma)^{\mathbb{N}^{\op}}$. 

One more note is necessary, to explain why each of the sequences $I^{j}(N_{\bullet})$ is not merely levelwise injective, but is in fact an injective object in the category $\gr\Comod(\Gamma)^{\mathbb{N}^{\op}}$. This is a consequence of Lemma \ref{injectivity lemma}, since the transition maps in each sequence $I^j(N_{\bullet})$ are indeed split surjections whose kernel comodules are each injective.
\end{proof}
\begin{remark}
To be clear, what Lemma \ref{enough injs lemma} shows is a bit stronger than just showing that the category of inverse sequences of bounded-above graded $\Gamma$-comodules has enough injectives, since 
\[ \parbox{270pt}{``injective object in the category of inverse sequences of bounded-above graded $\Gamma$-comodules''}\]
is weaker than 
\[ \parbox{270pt}{``levelwise bounded-above injective object in the category of inverse sequences of graded $\Gamma$-comodules.''}\]
\end{remark}

We may apply the covariant embedding functor $\iota: \gr\Comod(\Gamma)\rightarrow \gr\Mod(\Gamma^*)$ levelwise to any inverse sequence. We will write $\iota^{\mathbb{N}^{\op}}$ for the resulting full, faithful, exact functor $\gr\Comod(\Gamma)^{\mathbb{N}^{\op}}\rightarrow \gr\Mod(\Gamma^*)^{\mathbb{N}^{\op}}$.
\begin{lemma}\label{enough injs lemma 2}
Maintain the same hypotheses as in Lemma \ref{enough injs lemma}. If we apply the functor $\iota^{\mathbb{N}^{\op}}$ levelwise to the cochain complex $I^{\bullet}(N_{\bullet})$ constructed in Lemma \ref{enough injs lemma}, the resulting cochain complex of inverse sequences $\iota^{\mathbb{N}^{\op}}\left(I^{\bullet}(N_{\bullet})\right)$ is an injective resolution of $\iota^{\mathbb{N}^{\op}}(N_{\bullet})$ in $\gr\Comod(\Gamma)^{\mathbb{N}^{\op}}$.
\end{lemma}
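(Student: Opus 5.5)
The argument has three steps: exactness of $\iota^{\mathbb{N}^{\op}}$, injectivity of each term levelwise, and then Lemma \ref{injectivity lemma} to pass from levelwise injectivity to injectivity of inverse sequences. I read the target category in the statement as $\gr\Mod(\Gamma^*)^{\mathbb{N}^{\op}}$, since that is where $\iota^{\mathbb{N}^{\op}}$ lands. I also assume the standing hypotheses needed for Theorem \ref{main thm 1 from derived prods paper}: $k$ is a field and $\Gamma$ is finite-type and connected.

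\emph{Step 1: the complex is still a resolution.} The functor $\iota$ is exact, and exactness in inverse sequences is checked levelwise. So applying $\iota^{\mathbb{N}^{\op}}$ to the exact augmented complex $0\rightarrow N_{\bullet}\rightarrow I^0(N_{\bullet})\rightarrow I^1(N_{\bullet})\rightarrow\cdots$ gives an exact augmented complex. Hence $\iota^{\mathbb{N}^{\op}}(I^{\bullet}(N_{\bullet}))$ is a resolution of $\iota^{\mathbb{N}^{\op}}(N_{\bullet})$, and it remains to show that each term $\iota^{\mathbb{N}^{\op}}(I^j(N_{\bullet}))$ is injective in $\gr\Mod(\Gamma^*)^{\mathbb{N}^{\op}}$.

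\emph{Step 2: levelwise injectivity.} By the construction in Lemma \ref{enough injs lemma}, each sequence $I^j(N_{\bullet})$ is built by applying the same procedure to an inverse sequence of bounded-above comodules. That inverse sequence is $N_{\bullet}$ for $j=0$, and the cokernel of the previous stage otherwise; these cokernels are bounded above because they are quotients of bounded-above comodules. So every term $I^j(N_{\bullet})(m)$ is a finite direct sum of extended comodules $E(U(X))=\Gamma\otimes_k X$ with $X$ bounded above. Every transition map $g_m$ is the projection
\[ E(U(N'_{m+1}))\oplus I^j(N_{\bullet})(m)\rightarrow I^j(N_{\bullet})(m), \]
which is split, with kernel $E(U(N'_{m+1}))$. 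Here $N'_{\bullet}$ denotes whichever sequence was used at stage $j$. Both the terms and the kernels are bounded-above injective graded $\Gamma$-comodules. By Theorem \ref{main thm 1 from derived prods paper}, $\iota$ sends bounded-above injective comodules to injective graded $\Gamma^*$-modules. Therefore every $\iota(I^j(N_{\bullet})(m))$ and every $\iota(\ker g_m)$ is injective in $\gr\Mod(\Gamma^*)$.

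\emph{Step 3: apply Lemma \ref{injectivity lemma}.} Since $\iota$ is additive, it preserves split epimorphisms and their splittings. Since $\iota$ is exact, it preserves kernels, so $\ker\iota(g_m)=\iota(\ker g_m)$. Thus the sequence $\iota^{\mathbb{N}^{\op}}(I^j(N_{\bullet}))$ satisfies the hypotheses of Lemma \ref{injectivity lemma} with $\mathcal{C}=\gr\Mod(\Gamma^*)$: levelwise injective, with split epimorphic transition maps and injective kernels. Hence it is injective in $\gr\Mod(\Gamma^*)^{\mathbb{N}^{\op}}$.

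The only step requiring real care is Step 2. It depends on the bounded-above control built into Lemma \ref{enough injs lemma}, namely that every kernel is an extended comodule on a bounded-above vector space. Without this, $\iota$ need not preserve injectivity, as the discussion after Theorem \ref{main thm 1 from derived prods paper} shows. I would make explicit, by induction on $j$, that the cokernel sequences stay levelwise bounded above, since the later stages of the construction reuse the $j=0$ construction on these cokernels.
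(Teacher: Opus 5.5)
Your proposal is correct and follows essentially the same route as the paper's proof. Both use Theorem \ref{main thm 1 from derived prods paper} to get injectivity of each $\iota(I^j(N_{\bullet})(m))$ and of each kernel $\iota(\ker g_m)$, since these are $\iota$ of bounded-above injective comodules. Both then note that $\iota$ preserves split epimorphisms and conclude with Lemma \ref{injectivity lemma} in $\gr\Mod(\Gamma^*)^{\mathbb{N}^{\op}}$.

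The paper leaves three points implicit that you handle correctly: the exactness argument showing the image is still a resolution, the reading of the target category as $\gr\Mod(\Gamma^*)^{\mathbb{N}^{\op}}$, and the finite-type hypothesis that Theorem \ref{main thm 1 from derived prods paper} requires.
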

\begin{proof}
Each of the sequences $I^j(N_{\bullet})$ is levelwise bounded-above, and by Theorem \ref{main thm 1 from derived prods paper}, $\iota$ sends each of those bounded-above injective comodules to an injective graded $\Gamma^*$-module. Hence the cochain complex $\iota^{\mathbb{N}^{\op}}(I^j(N_{\bullet}))$, i.e.,
\begin{equation}\label{seq 03774} \dots
 \stackrel{\iota(g_2)}{\longrightarrow}
 \iota\left(I^j(N_{\bullet})(2)\right)
 \stackrel{\iota(g_1)}{\longrightarrow}
 \iota\left(I^j(N_{\bullet})(1)\right)
 \stackrel{\iota(g_0)}{\longrightarrow}
 \iota\left(I^j(N_{\bullet})(0)\right),
\end{equation}
is levelwise injective. Each of the transition maps $\iota(g_n)$ in \eqref{seq 03774} is $\iota$ applied to a split epimorphism, hence is itself a split epimorphism. 
The kernel of each transition map is $\iota$ applied to a bounded-above injective comodule, hence is also injective by Theorem \ref{main thm 1 from derived prods paper}.
Hence \eqref{seq 03774} is indeed an injective object in $\gr\Mod(\Gamma^*)^{\mathbb{N}^{\op}}$, as desired, by Lemma \ref{injectivity lemma}.
\end{proof}

\begin{observation}\label{enough injs observation}
For any graded ring $A$, we have the category $\gr\Mod(A)^{\mathbb{N}^{\op}}$ of inverse sequences of graded $A$-modules. For any such sequence 
\[ M_{\bullet} = \left( \dots \rightarrow M_2 \rightarrow M_1 \rightarrow M_0\right), \] 
there exists a monomorphism from $M_{\bullet}$ to an injective object $I_{\bullet}$ in the category $\gr\Mod(A)^{\mathbb{N}^{\op}}$, such that each of the transition maps 
\[  \dots \rightarrow I_2 \rightarrow I_1 \rightarrow I_0\]
are split epimorphisms in $\gr\Mod(A)$ with injective kernel. The construction of $I_{\bullet}$ is by the same method as in the construction of $I^0(N_{\bullet})$ in the proof of Lemma \ref{enough injs lemma}, but slightly easier, since here we make no claim about any (co)modules being bounded above. All that was necessary in the construction of $I^0(N_{\bullet})$ was that one is able to embed each object (here, a graded $A$-module; in Lemma \ref{enough injs lemma}, a graded comodule) into an injective object, {\em functorially}. Such a functorial embedding in Lemma \ref{enough injs lemma} was given by the unit map $\id\rightarrow E\circ U$ of the adjunction $U\dashv E$. In $\gr\Mod(A)$, the functorial embedding is given by sending each module $M$ to the product $G^{\hom_{\gr\Mod(A)}(M,G)}$ of copies of $G$, one copy for each morphism in $\hom_{\gr\Mod(A)}(M,G)$, where $G$ is an injective cogenerator for $\gr\Mod(A)$; a typical choice of injective cogenerator in a module category is the character module $\hom_{\mathbb{Z}}(A,\mathbb{Q}/\mathbb{Z})$. This classical construction dates back to Baer in 1940 \cite{MR2886}; see \cite[sections 1 and 2]{MR1967087} for a nice modern discussion.
\end{observation}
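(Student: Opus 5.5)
The plan is to copy the inductive construction from the proof of Lemma \ref{enough injs lemma} word for word, replacing the unit map $\id\rightarrow E\circ U$ by a monomorphism $M\hookrightarrow J(M)$ into an injective graded $A$-module. Once that is done, Lemma \ref{injectivity lemma} shows the resulting sequence is injective in $\gr\Mod(A)^{\mathbb{N}^{\op}}$. So the work has two parts: producing $J(M)$ in the graded setting, and the bookkeeping of the induction.

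\emph{Step 1: the embedding $J$.} The one point needing care is that the injective cogenerator must be \emph{graded}. I take $G$ to be the graded character module of $A$, meaning the graded right-dual of $A$ with $G^n=\hom_{\mathbb{Z}}(A^{-n},\mathbb{Q}/\mathbb{Z})$ and the action induced by right multiplication on $A$.
\begin{itemize}
\item \emph{Injectivity.} There is a natural isomorphism $\hom_{\gr\Mod(A)}(X,G)^0\cong\hom_{\mathbb{Z}}(X^0,\mathbb{Q}/\mathbb{Z})$, the degree-$0$ part of the graded hom-tensor adjunction. The right side is exact in $X$ because $\mathbb{Q}/\mathbb{Z}$ is divisible, so $G$ is injective in $\gr\Mod(A)$, and so are all its suspensions.
\item \emph{Cogeneration.} Let $x\in M$ be nonzero and homogeneous of degree $d$. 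Choose a character $\chi$ of $M^d$ with $\chi(x)\neq 0$. The formula $\varphi(m)(a)=\chi(am)$, for $m$ and $a$ homogeneous of complementary degrees, gives a graded $A$-map from $M$ to a suspension of $G$ that does not kill $x$.
\end{itemize}
I then set $J(M)=\prod_{\varphi}\Sigma^{d(\varphi)}G$, with one factor for each homogeneous map $\varphi$. This is a product of injectives, hence injective, and the canonical map $M\rightarrow J(M)$ is a monomorphism by cogeneration. (Strict functoriality of $J$ is not actually needed below; any choice of embedding for each object suffices.)

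\emph{Step 2: the induction.} Set $I_0=J(M_0)$, with $f_0$ the embedding. Given $f_n:M_n\hookrightarrow I_n$, define
\[ I_{n+1}=J(M_{n+1})\oplus I_n,\qquad f_{n+1}(x)=(\eta(x),f_n\nu_n(x)), \]
and let $g_n:I_{n+1}\rightarrow I_n$ be the projection onto the second summand. Then:
\begin{itemize}
\item $f_{n+1}$ is monic because its first component $\eta$ is monic;
\item $g_n f_{n+1}=f_n\nu_n$, so the squares commute;
\item $g_n$ is a split epimorphism with kernel $J(M_{n+1})$, which is injective;
\item $I_{n+1}$ is injective, being a finite direct sum of injectives.
\end{itemize}
Hence $f_\bullet$ is a monomorphism of inverse sequences, since monomorphisms in $\gr\Mod(A)^{\mathbb{N}^{\op}}$ are detected levelwise. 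Lemma \ref{injectivity lemma} then says $I_\bullet$ is an injective object of $\gr\Mod(A)^{\mathbb{N}^{\op}}$.

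The only step I expect to require real checking is the graded cogenerator in Step 1, specifically the degree conventions for $G$ and for the map $\varphi$. Everything else is formal and mirrors the argument of Lemma \ref{enough injs lemma}.
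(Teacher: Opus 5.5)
Your proposal is correct and follows the paper's construction. You embed each $M_{n+1}$ into a product of suspensions of an injective cogenerator, set $I_{n+1}=J(M_{n+1})\oplus I_n$ exactly as in Lemma \ref{enough injs lemma}, and conclude with Lemma \ref{injectivity lemma}. Two of your refinements of the paper's sketch are also accurate: using the degreewise character module $G^n=\hom_{\mathbb{Z}}(A^{-n},\mathbb{Q}/\mathbb{Z})$ in place of the ungraded $\hom_{\mathbb{Z}}(A,\mathbb{Q}/\mathbb{Z})$, and observing that functoriality of the embedding is never actually used.
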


\begin{remark} \label{motivational remark}
Theorem \ref{ext iso 1} will make use of Lemma \ref{continuous ext is ext}, which we are about to state and prove. Lemma \ref{continuous ext is ext} shows that, over a connected graded algebra over a field, the $\Ext$-groups agree with the {\em continuous} $\Ext$-groups. In this remark we explain some historical and mathematical background behind such ``continuous $\Ext$-groups.'' Readers less interested in reading such background material can skip ahead to Definition \ref{def of cont ext}.

One of the fundamental insights of Jack Morava in the 1970s (see \cite{MR782555}) which began the subject of ``chromatic homotopy theory'' is that the {\em continuous} cohomology of the Morava stabilizer group $\Aut(\mathbb{G}_{1/n})$, with appropriate coefficients, agrees with $\Ext$-groups in the category of comodules over the Hopf algebroids $(E(n)_*,E(n)_*E(n))$ or $(K(n)_*,K(n)_*\otimes_{BP_*BP}K(n)_*)$, depending on the coefficients in question. Here $E(n)_*$ and $K(n)_*$ and $BP_*$ are Johnson--Wilson $E$-theory, Morava $K$-theory, and Brown--Peterson homology, respectively. It would take us too far afield here to provide a review of what these generalized homology theories are, but we recommend \cite[section 6.1]{MR860042} for an account of several variants of such an isomorphism. 

It is more immediately relevant to recall some properties of the {\em continuous cohomology} of a profinite group, after Tate \cite{MR429837}. This is the same notion of continuous cohomology invoked in the previous paragraph. Continuous cohomology of a profinite group is {\em not} the same as the usual profinite group cohomology which one most commonly finds in standard textbooks like \cite{MR0554237}. Given a profinite group $G$ and a discrete $G$-module $M$, the ``usual'' cohomology $H^*(G; M)$ is defined as the colimit
\[ \colim_{N\trianglelefteq G} H^*(G/N; M^N),\]
where the colimit is taken over all the finite-index normal subgroups $N$ of $G$.

By contrast, the {\em continuous} cohomology is defined even when the coefficient module $M$ is not discrete, but rather has a nontrivial topology, such that the action of $G$ on $M$ remains continuous. Continuous cohomology is generally defined as the cohomology of the cochain complex that one obtains by taking the usual bar resolution of $\mathbb{Z}_{triv}$, and then taking only the {\em continuous} $M$-valued cochains. Here is an alternative description, in line with Jannsen's approach in \cite{MR0929536}, which applies when the continuous $G$-module $M$ is pro-discrete. Write $M$ as the limit $\lim_n M_n$ of some Mittag-Leffler sequence 
\begin{equation}\label{seq 0294}\dots \rightarrow M_2\rightarrow M_1 \rightarrow M_0\end{equation}
of morphisms of discrete $G$-modules. On the category $\Mod(G)^{\mathbb{N}^{\op}}$ of inverse sequences of discrete $G$-modules, we have the functor $H^0: \Mod(G)^{\mathbb{N}^{\op}} \rightarrow \Ab^{\mathbb{N}^{\op}}$ given by the profinite group cohomology $H^0(G; -) \simeq \colim_{N\trianglelefteq G} H^0(G/N; (-)^N)$. We also have the limit functor $\lim: \Ab^{\mathbb{N}^{\op}} \rightarrow \Ab$. The $n$th continuous cohomology of the pro-discrete $G$-module $M$ is the $n$th right-derived functor of the composite $\lim\circ H^0$, applied to the sequence \eqref{seq 0294}.

Back to the setting of this paper, where we work with some connected graded coalgebra $\Gamma$ rather than a group algebra $k[G]$, or its completion $k[[G]]$, or a linear dual thereof. As in the case of group cohomology, it is true that $\Ext$ in the category of graded $\Gamma$-comodules agrees with a suitable notion of continuous $\Ext$ in the category of graded $\Gamma^*$-modules (this will be proven along the way, during the proof of Theorem \ref{ext iso 1}). The right notion of {\em continuous} $\Ext^n_{\Gamma^*}(M,N)$ in this situation, analogous to the functor $R^n(\lim\circ H^0)$ from continuous group cohomology, is defined as follows.
\end{remark}
\begin{definition}\label{def of cont ext}
Let $k$ be a field, and let $A$ be a connected graded $k$-algebra. We define the $i$th {\em continuous $\Ext$-group} $\Ext_{A,c}^i(M,N)$ as the $i$th right derived functor of the functor 
\begin{align}
\label{continuous ext def} \lim\circ \hom_{\gr\Mod(A)}(M,-): \gr\Mod(A)^{\mathbb{N}^{\op}} &\rightarrow \gr\Ab 
\end{align}
applied to the sequence $S_A(N) = \left( \dots N_{\leq 2}\rightarrow N_{\leq 1}\rightarrow N_{\leq 0}\right)$ defined in Definition \ref{def of S}.
\end{definition}
The sequence $S_{A}(N)$ plays a role for continuous $\Ext$ which is like the role played by the expression of a pro-discrete $G$-module as a limit of a sequence of discrete $G$-modules, in the theory of continuous group cohomology, described in Remark \ref{motivational remark}.

However, unlike what happens in continuous group cohomology, the continuous $\Ext$-group defined in \eqref{continuous ext def} is isomorphic to an ordinary $\Ext$-group. We prove this isomorphism now, in Lemma \ref{continuous ext is ext}.

\begin{lemma}\label{continuous ext is ext}
Let $A$ be a connected graded $k$-algebra, and let $M,N$ be graded $A$-modules. Then, for every integer $n$, we have an isomorphism
\begin{align}
\label{ext iso 2} \Ext^n_A(M,N) &\cong \Ext^n_{A,c}(M,N)
\end{align}
natural in the variables $M$ and $N$.
\end{lemma}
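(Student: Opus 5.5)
The approach is a Grothendieck spectral sequence for the composite functor $\lim\circ\hom_{\gr\Mod(A)}(M,-)$, evaluated on the sequence $S_A(N)$. Write $\hom(M,-)^{\mathbb{N}^{\op}}$ for the levelwise functor $\gr\Mod(A)^{\mathbb{N}^{\op}}\rightarrow\gr\Ab^{\mathbb{N}^{\op}}$. Its right derived functors are computed levelwise: by Observation \ref{enough injs observation}, every inverse sequence embeds into an injective inverse sequence that is levelwise injective. Hence the derived functors of $\hom(M,-)^{\mathbb{N}^{\op}}$ applied to $S_A(N)$ form the inverse sequence $\dots\rightarrow\Ext^t_A(M,N_{\leq 1})\rightarrow\Ext^t_A(M,N_{\leq 0})$.

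To obtain the spectral sequence, we must check that $\hom(M,-)^{\mathbb{N}^{\op}}$ sends injectives to $\lim$-acyclics. An injective inverse sequence of the kind produced in Observation \ref{enough injs observation} has split-epimorphic transition maps, and every injective is a retract of one of these. Applying $\hom(M,-)$ preserves split epimorphisms, so the image is Mittag-Leffler and $\lim^1$ vanishes; higher $\lim^j$ vanish for inverse sequences of abelian groups anyway. This yields a spectral sequence
\[ E_2^{s,t}\cong {\lim_n}^s \Ext^t_A(M,N_{\leq n}) \Rightarrow \Ext^{s+t}_{A,c}(M,N),\]
concentrated in the columns $s=0,1$. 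It therefore degenerates into short exact sequences
\[ 0\rightarrow {\lim_n}^1\Ext^{t-1}_A(M,N_{\leq n})\rightarrow \Ext^t_{A,c}(M,N)\rightarrow \lim_n\Ext^t_A(M,N_{\leq n})\rightarrow 0.\]

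Next, compare with the Milnor sequence for ordinary $\Ext$. Since $N\cong\lim_n N_{\leq n}$ in $\gr\Mod(A)$ and the transition maps are surjective, we have $R^1\lim_n N_{\leq n}=0$; this holds degreewise, and each degree of the sequence eventually stabilizes. Thus $N$ is the homotopy limit of the tower, and $R\hom(M,-)$ commutes with it. This gives the same short exact sequence with $\Ext^t_A(M,N)$ in the middle, natural in $M$ and $N$.

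An alternative, cleaner route avoids comparing two exact sequences. Choose an injective resolution of $S_A(N)$ by split-epi towers $I^\bullet_\bullet$ as in Observation \ref{enough injs observation}. Then $\lim_n I^j_n$ is a product-like limit of injectives along split epis, hence injective in $\gr\Mod(A)$. The complex $\lim_n I^\bullet_n$ is a resolution of $N$ because $\lim$ is exact on towers with $\lim^1$ vanishing, and all the towers here are Mittag-Leffler; the exactness can be checked degreewise. Finally, $\lim_n\hom(M,I^\bullet_n)=\hom(M,\lim_n I^\bullet_n)$ since $\hom$ commutes with limits. This identifies $\Ext_{A,c}$ with $\Ext_A$ directly and naturally, and I would present the proof this way.

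The main obstacle is the step showing that $\lim_n I^\bullet_n$ is exact and resolves $N$. Cycles and cokernels in the resolution must themselves form Mittag-Leffler towers, since otherwise $\lim^1$ terms could appear. I would handle this by noting that all the towers $S_A(N)$, the $I^j$, and the successive cokernels can be arranged to have surjective transition maps: the cokernel of a map of surjective towers into a split tower is again surjective. An induction along the resolution then keeps every relevant tower epimorphic, so $\lim^1$ vanishes throughout.
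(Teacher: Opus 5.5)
Your preferred second route is correct and is essentially the paper's proof unpacked by hand. The paper runs the Grothendieck spectral sequence for $\hom_{\gr\Mod(A)}(M,\lim(-))$, using that $\lim$ preserves injectives (via its exact left adjoint $\Delta$, where you instead use the split-epi form of the towers) and that $R^{t}\lim S_A(N)=0$ for $t>0$ by Mittag--Leffler; this is exactly your statement that $\lim_n I^\bullet_n$ is an injective resolution of $N$. You were right to set aside the first route, since two short exact sequences with the same outer terms do not identify their middle terms without a compatible comparison map.
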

\begin{proof}
Since $\lim$ sends injectives to injectives\footnote{This is elementary, well-known, and quite general, but we include a proof because the author is tired of his own habit of forgetting the cute proof of this fact and then wasting time piecing together a more laborious one. Here is the cute proof: recall that, for any category $\mathcal{C}$, $\Delta: \mathcal{C}\rightarrow \mathcal{C}^{\mathbb{N}^{\op}}$ is a standard notation for the functor that sends an object $M$ to the constant sequence $\dots\stackrel{\id}{\longrightarrow} M\stackrel{\id}{\longrightarrow} M$. Suppose $\mathcal{C}$ is abelian, complete, and co-complete. Then the functor $\Delta$ is exact, since it has both left and right adjoints (namely, the $\colim$ and $\lim$ functors). Hence $\lim$ has an exact left adjoint, hence $\lim$ sends injectives to injectives.}, we get a Grothendieck spectral sequence
\begin{align*}
 E_2^{s,t} &\cong \left(R^s\hom_{\gr\Mod(A)}(M,-)\right)(R^t\lim)\left(S(N)\right) \\
 &\Rightarrow R^{s+t}\left(\lim\hom_{\gr\Mod(A)}(M,-)\right)\left(S(N)\right) \\
 &\cong \Ext^{s+t}_{A,c}(M,N).
\end{align*}
The derived limit $R^1\lim$ of the sequence $S(N) = \left(\dots \rightarrow N_{\leq 1} \rightarrow N_{\leq 0}\right)$ vanishes due to the Mittag--Leffler condition, so the spectral sequence collapses immediately to the $(t=0)$-line, yielding the isomorphism \eqref{ext iso 2}.
\end{proof}

\begin{theorem}\label{ext iso 1}
Let $k$ be a field, and let $\Gamma$ be a finite-type connected graded $k$-coalgebra whose dual $\Gamma^*$ is a $\mathcal{P}$-algebra. Let $M,N$ be graded $\Gamma$-modules. Then, for each integer $n$, we have an isomorphism 
\begin{align}
\label{ext iso 1 1} \Ext^n_{\gr\Comod(\Gamma)}(M,N) &\stackrel{\cong}{\longrightarrow} \Ext^n_{\gr\Mod(\Gamma^*)}(\iota(M),\iota(N)),
\end{align}
natural in the variables $M$ and $N$.
\end{theorem}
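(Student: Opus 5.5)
We will prove the theorem by computing both sides as right-derived functors of one composite functor on inverse sequences, and then using the lemmas of \cref{Comodule Ext is module Ext} to compare the two derived functors. On the comodule side, Lemma \ref{easy limit in comodules}(1) gives $N\cong \lim^{\Gamma}_n N_{\leq n}$, so
\[ \hom_{\gr\Comod(\Gamma)}(M,N)\cong \lim_n \hom_{\gr\Comod(\Gamma)}(M,N_{\leq n}) = \left(\lim\circ\hom_{\gr\Comod(\Gamma)}(M,-)\right)(S_\Gamma(N)),\]
because $\hom(M,-)$ commutes with limits taken in $\gr\Comod(\Gamma)$. We rewrite this composite as $\hom_{\gr\Comod(\Gamma)}(M,-)\circ\lim^{\Gamma}$. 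The functor $\lim^\Gamma$ sends injectives to injectives, since its left adjoint $\Delta$ is exact. This gives a Grothendieck spectral sequence
\[ E_2^{s,t}\cong \Ext^s_{\gr\Comod(\Gamma)}\bigl(M, R^t{\lim}^\Gamma S_\Gamma(N)\bigr)\Rightarrow R^{s+t}\bigl(\lim\circ\hom_{\gr\Comod(\Gamma)}(M,-)\bigr)(S_\Gamma(N)).\]
Lemma \ref{easy limit in comodules}(2) kills $t>0$, and this is the only place the $\mathcal{P}$-algebra hypothesis enters. The spectral sequence therefore collapses, and $\Ext^n_{\gr\Comod(\Gamma)}(M,N)$ is identified with a ``continuous comodule Ext'': the $n$th derived functor of $\lim\circ\hom_{\gr\Comod(\Gamma)}(M,-)$ on $\gr\Comod(\Gamma)^{\mathbb{N}^{\op}}$, evaluated at $S_\Gamma(N)$.

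Next we compute that derived functor using the resolution of Lemma \ref{enough injs lemma}. The sequence $S_\Gamma(N)$ consists of bounded-above comodules, so it has an injective resolution $I^\bullet$ in $\gr\Comod(\Gamma)^{\mathbb{N}^{\op}}$ that is levelwise bounded above. By Lemma \ref{enough injs lemma 2}, applying $\iota^{\mathbb{N}^{\op}}$ to it yields an injective resolution of $\iota^{\mathbb{N}^{\op}}S_\Gamma(N)\cong S_{\Gamma^*}(\iota N)$ in $\gr\Mod(\Gamma^*)^{\mathbb{N}^{\op}}$. The lemma's statement says ``in $\gr\Comod(\Gamma)^{\mathbb{N}^{\op}}$'', but its proof shows that injectivity holds in the module category, which is what we need. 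Full faithfulness of $\iota$ gives an isomorphism of cochain complexes
\[ \lim\hom_{\gr\Comod(\Gamma)}(M,I^\bullet)\cong \lim\hom_{\gr\Mod(\Gamma^*)}(\iota M,\iota^{\mathbb{N}^{\op}}I^\bullet).\]
The left side computes the continuous comodule Ext above. The right side computes $\Ext^n_{\Gamma^*,c}(\iota M,\iota N)$ as in Definition \ref{def of cont ext}, because it is the derived functor of the same composite applied to $S_{\Gamma^*}(\iota N)$ and computed via an injective resolution. Lemma \ref{continuous ext is ext} then identifies this with $\Ext^n_{\gr\Mod(\Gamma^*)}(\iota M,\iota N)$.

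Naturality comes from the functoriality of the constructions. Naturality in $N$ follows from the comparison theorem for injective resolutions. We should also check that the composite isomorphism is the map induced by the exact functor $\iota$. In degree $0$ this is fullness, and in higher degrees it follows from $\delta$-functor universality, since both sides are effaceable universal $\delta$-functors in $N$. The step we expect to be the main obstacle is the first one: justifying that $\hom(M,-)\circ\lim^\Gamma$ has the right Grothendieck spectral sequence and that $R^t\lim^\Gamma$ of $S_\Gamma(N)$ vanishes. Lemma \ref{easy limit in comodules} gives the vanishing, so what remains is routine bookkeeping: matching the target of $\lim^\Gamma$ with the correct derived functors and checking that $S_\Gamma$ is compatible with $\iota$.
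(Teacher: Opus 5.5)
Your proposal is correct and is essentially the paper's proof run in the opposite direction. Both identify each side with a continuous Ext on inverse sequences via Lemma \ref{easy limit in comodules} (the only use of the $\mathcal{P}$-algebra hypothesis) and Lemma \ref{continuous ext is ext}, and both bridge the two using the levelwise bounded-above resolution of Lemma \ref{enough injs lemma 2}. Your direct comparison $\lim\hom(M,I^\bullet)\cong\lim\hom(\iota M,\iota I^\bullet)$ is just the unwinding of the paper's collapsing Grothendieck spectral sequence for $\lim\hom(\iota M,-)\circ\iota$.

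You are also right that Lemma \ref{enough injs lemma 2} should say $\gr\Mod(\Gamma^*)^{\mathbb{N}^{\op}}$. Your $\delta$-functor argument, showing that the isomorphism is the map induced by $\iota$, makes explicit something the paper leaves implicit.
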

\begin{proof}
We would like to use a Grothendieck spectral sequence 
\begin{align}
\label{gss 1001} R^*\left(\lim \hom_{\gr\Mod(\Gamma^*)}\left(\iota(M), -\right)\right)\left( R^*\iota\right)(-) &\Rightarrow R^*\left(\lim \hom_{\gr\Mod(\Gamma^*)}\left(\iota(M), \iota(-)\right)\right)
\end{align}
for calculating the derived functors of the composite functor
\begin{align*}
 \lim \hom_{\gr\Mod(\Gamma^*)}\left(\iota(M), \iota(-)\right) : \gr\Comod(\Gamma)^{\mathbb{N}^{\op}} &\rightarrow \gr\Ab.
\end{align*}
However, since $\iota$ does not send injectives to injectives (see discussion preceding Proposition \ref{better version of main thm 1}), one cannot rely on any such spectral sequence existing in general. However, if the object of $\gr\Comod(\Gamma)^{\mathbb{N}^{\op}}$ which we input into the functor $\lim \hom_{\gr\Mod(\Gamma^*)}\left(\iota(M), \iota(-)\right)$ admits a resolution by injectives which {\em are} sent by $\iota$ to injectives, then we indeed get a Grothendieck spectral sequence of the form of \eqref{gss 1001} {\em when evaluated on that object of } $\gr\Comod(\Gamma)^{\mathbb{N}^{\op}}$.
 That is the reason we have bothered to prove Lemma \ref{enough injs lemma 2}: it is precisely what guarantees that the sequence $S_{\Gamma}(N)$ from Definition \ref{def of S}, for any graded $\Gamma$-comodule $N$, admits an injective resolution in $\gr\Comod(\Gamma)^{\mathbb{N}^{\op}}$ {\em which is sent by $\iota$ to an injective resolution of $\iota(S_{\Gamma}(N))$.}

The resulting spectral sequence \eqref{gss 1001} collapses by the exactness of $\iota$, yielding isomorphism \eqref{hard iso 1} in the chain of isomorphisms
\begin{align}
\nonumber&\Ext^s_{\gr\Mod(\Gamma^*)}(\iota(M),\iota(N)) \\
\label{iso 001 0} & \cong \Ext^s_{\Gamma^*,c}(\iota(M),\iota(N)) \\
\nonumber &\cong R^s\left(\lim \hom_{\gr\Mod(\Gamma^*)}\left(\iota(M), -\right)\right)\left(S_{\Gamma^*}(\iota(N))\right)\\
\nonumber  &\cong R^s\left(\lim \hom_{\gr\Mod(\Gamma^*)}\left(\iota(M), -\right)\right)\left(\iota(S_{\Gamma}(N))\right) \\
\label{hard iso 1}  &\cong R^s\left(\lim \hom_{\gr\Mod(\Gamma^*)}\left(\iota(M), \iota(-)\right)\right)\left(S_{\Gamma}(N)\right) \\
\nonumber &\cong R^s\left(\lim \hom_{\gr\Comod(\Gamma)}\left(M, (\tr\circ \iota)(-)\right)\right)\left(S_{\Gamma}(N)\right)\\
\nonumber &\cong R^s\left(\lim \hom_{\gr\Comod(\Gamma)}\left(M, -\right)\right)\left(S_{\Gamma}(N)\right)\\
\label{iso 001 2} &\cong R^s\left(\hom_{\gr\Comod(\Gamma)}\left(M, \lim{}^{\Gamma}(-)\right)\right)\left(S_{\Gamma}(N)\right).
\end{align}
Isomorphism \eqref{iso 001 0} is due to Lemma \ref{continuous ext is ext}. 
Meanwhile, we may simplify \eqref{iso 001 2} by means of another Grothendieck spectral sequence: 
since $\lim^{\Gamma}$ sends injectives to injectives  (see footnote in proof of Lemma \ref{continuous ext is ext} for the argument, which is well-known), we have a Grothendieck spectral sequence 
\begin{align}
\label{gss 1000000} E_2^{s,t} &\cong \Ext_{\gr\Comod(\Gamma)}^s\left(M, R^t\lim{}^{\Gamma}(S_{\Gamma}(N))\right) \\
\nonumber &\Rightarrow R^{s+t}\left(\hom_{\gr\Comod(\Gamma)}\left(M, \lim{}^{\Gamma}(-)\right)\right)(S_{\Gamma}(N)).
\end{align}
The spectral sequence also collapses on to the $t=0$ line, by the second part of Lemma \ref{easy limit in comodules}. The $E_2$-page of the spectral sequence is $\Ext_{\gr\Comod(\Gamma)}^s\left(M, N\right)$, by the first part of Lemma \ref{easy limit in comodules}.
Hence the chain of isomorphisms \eqref{iso 001 0} through \eqref{iso 001 2} yields an isomorphism between the two sides of \eqref{ext iso 1 1}.
\end{proof}
To be clear, the only place in the proof of Theorem \ref{ext iso 1} where we have used the assumption that $\Gamma^*$ is a $\mathcal{P}$-algebra is when we invoked Lemma \ref{easy limit in comodules}, which includes in its hypotheses that $\Gamma^*$ is a $\mathcal{P}$-algebra.

The simplest topological corollary of Theorem \ref{ext iso 1} is the following description of the Adams $E_2$-page, which has the virtue of being phrased entirely in terms of modules, not comodules, but also not requiring any finiteness hypotheses whatsoever, unlike the traditional description \eqref{old adams e2 1} of the Adams $E_2$-page in terms of $\Ext$ over the Steenrod algebra.
\begin{corollary}\label{adams e2 cor}
Let $p$ be a prime, and let $X,Y$ be spectra. Then the $E_2$-page of the Adams spectral sequence converging to $\left[ \Sigma^{t-s}X,\hat{Y}_{H\mathbb{F}_p}\right]$ is 
\begin{align*}
 E_2^{s,t} &\cong \Ext_{\gr\Mod(\Gamma^*)}^s\left( \iota H_{-*}(X;\mathbb{F}_p), \iota H_{-*}(Y;\mathbb{F}_p)\right)^{-t},
\end{align*}
where $\Gamma^*$ is the mod $p$ Steenrod algebra. In keeping with our conventions in this paper (\cref{conventions}), we use {\em cohomological} gradings on the homology comodules $H_*(X; \mathbb{F}_p)$ and $H_*(Y;\mathbb{F}_p)$, which is why we see $-t$ rather than $t$, and $-*$ rather than $*$, in the $E_2$-page description.

Rephrasing the result using the more traditional notations from stable homotopy theory, including the {\em homological} grading convention on homology comodules: we equip $H_*(X;\mathbb{F}_p)$ and $H_*(Y;\mathbb{F}_p)$ each with the adjoint action of the Steenrod algebra $\Gamma^*$, and then the Adams $E_2$-page is given by
\begin{align*}
 E_2^{s,t} &\cong \Ext_{\Gamma^*}^{s,t}\left(  H_*(X;\mathbb{F}_p),  H_*(Y;\mathbb{F}_p)\right).
\end{align*}
\end{corollary}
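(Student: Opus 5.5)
The plan is to combine the Adams spectral sequence $E_2$-page description in terms of comodule $\Ext$ with Theorem \ref{ext iso 1}, specialized to the mod $p$ dual Steenrod algebra.

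First, recall from the standard foundational work cited in the proof of Proposition \ref{adams E2 description} that, for $E = H\mathbb{F}_p$, the $E$-Adams spectral sequence converging to $[\Sigma^{t-s}X,\hat{Y}_{H\mathbb{F}_p}]$ has $E_2$-page
\[ E_2^{s,t}\cong \Ext^s_{\gr\Comod(\Gamma)}\left(H_{-*}(X;\mathbb{F}_p),H_{-*}(Y;\mathbb{F}_p)\right)^{-t}. \]
This holds for arbitrary spectra $X,Y$, with no finiteness hypotheses, because $\Gamma = (H\mathbb{F}_p)_*H\mathbb{F}_p$ is free over $\mathbb{F}_p$ and $H\mathbb{F}_p$ satisfies Adams's condition. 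Moreover, $\Gamma$ is projective over the field $\mathbb{F}_p$, so by Proposition \ref{cotorsion and relative cotorsion} and the references there, relative and absolute $\Ext$ agree. I would cite \cite[section III.15]{MR1324104} for this and keep track of the sign on the internal degree coming from the conventions in \cref{conventions}.

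Second, verify the hypotheses of Theorem \ref{ext iso 1} for $\Gamma$. Here $k=\mathbb{F}_p$ is a field. With cohomological grading, $\Gamma$ is connected: it is concentrated in nonpositive degrees with one-dimensional degree-$0$ part. It is finite-type, being finite-dimensional in each degree by Milnor's description. Its dual $\Gamma^*$ is the mod $p$ Steenrod algebra, which is a $\mathcal{P}$-algebra by Margolis. For the filtration one takes the sub-Hopf algebras $A(n)$, which are finite-dimensional connected Hopf algebras and hence Poincar\'e, and $A(n+1)$ is free, hence flat, over $A(n)$ by Milnor--Moore. Theorem \ref{ext iso 1} then gives a natural isomorphism
\[ \Ext^s_{\gr\Comod(\Gamma)}(M,N)\cong \Ext^s_{\gr\Mod(\Gamma^*)}(\iota M,\iota N), \]
compatible with the internal grading. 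Taking $M=H_{-*}(X;\mathbb{F}_p)$, $N=H_{-*}(Y;\mathbb{F}_p)$ and reading off the degree $-t$ summand yields the first formula.

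Third, for the rephrasing, regrade: reverse all degrees so that homology sits in homological degree. Then $\iota H_{-*}$ becomes $H_*$ with the adjoint Steenrod action, and the degree $-t$ summand becomes internal degree $t$.

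The only real content is Theorem \ref{ext iso 1}, which is already proven. The main point needing care is the claim that the comodule-$\Ext$ description of the $E_2$-page holds for completely arbitrary $X$ and $Y$, together with the grading bookkeeping. I would handle the former by citing Adams's universal coefficient condition for $H\mathbb{F}_p$, which holds because $H\mathbb{F}_p\wedge X$ is a wedge of suspensions of $H\mathbb{F}_p$.
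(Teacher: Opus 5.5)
Your proposal is correct and follows the same route as the paper, which states this corollary without a separate proof as an immediate consequence of Theorem \ref{ext iso 1}. The paper's argument is exactly yours: the $H\mathbb{F}_p$-Adams $E_2$-page is comodule $\Ext$ for arbitrary $X,Y$ (no finiteness needed, since every module over $\mathbb{F}_p$ is projective), and Theorem \ref{ext iso 1} for the mod $p$ dual Steenrod algebra then converts it to module $\Ext$ degreewise.
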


\section{No nonzero projectives in graded comodules over the dual Steenrod algebra}

The isomorphism between comodule $\Ext$ and module $\Ext$, provided by Theorem \ref{ext iso 1}, gives us a useful perspective from which to look at projectives in the category of graded $\Gamma$-comodules. In this section we use that perspective to prove the old conjecture that there are no nonzero such projectives in the case that $\Gamma$ is the dual Steenrod algebra.

We first need some lemmas. 
\begin{lemma}\label{mitchell iso lemma}
Let $k$ be a field, and let $\Gamma$ be a connected graded $k$-coalgebra which is not bounded below.
Write $I_j$ for the homogeneous left ideal of $\Gamma^*$ generated by all homogeneous elements of degree $\geq j$. Suppose that $\Gamma^*$ has the property that, for every graded $\Gamma^*$-module $M$, we have an isomorphism of abelian groups
\begin{align}
\label{mitchell iso 1}
 H^n_{dist}(M) &\cong \underset{j\rightarrow\infty}{\colim} \Ext^n_{\gr\Mod(\Gamma^*)}(\Gamma^*/I_j,M),
\end{align}
natural in the variable $M$, for each integer $n$.

Then, for any graded $\Gamma$-comodule $M$, the $\Gamma^*$-module $\iota(M)$ has no nonzero free summands.
\end{lemma}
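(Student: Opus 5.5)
The plan is to reduce the statement to the single claim that the free module $\Gamma^*$ is not rational, and then to prove that claim by a degree argument.

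\emph{Reduction.} Suppose, for contradiction, that $\iota(M)$ has a nonzero free summand. Then some suspension $\Sigma^d\Gamma^*$ is a direct summand of $\iota(M)$. Such a summand is the image, equivalently the kernel of $\id-e$, of an idempotent $\Gamma^*$-module endomorphism $e$ of $\iota(M)$. Since $\iota$ is full, $e$ is $\iota$ of a comodule endomorphism. By \cref{Generalities on...}, the rational modules are closed under kernels and cokernels, so $\Sigma^d\Gamma^*$ is rational. Rationality is also stable under suspension. Hence $\Gamma^*$ itself would be rational. In the language of the hypothesis, this says the inclusion $H^0_{dist}(\Gamma^*)\hookrightarrow\Gamma^*$ is an isomorphism, using \cite[Theorem 3.7]{MR5014030}. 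So it suffices to show that $\Gamma^*$ is not rational.

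\emph{Main step: $\Gamma^*$ is not rational.} I would use the degree-$0$ case of \eqref{mitchell iso 1}. A class in $\colim_j\hom_{\gr\Mod(\Gamma^*)}(\Gamma^*/I_j,\Gamma^*)$ is represented by an element $x\in\Gamma^*$ with $I_jx=0$ for some $j$. In particular $x$ is killed by every homogeneous element of degree $\geq j$. Rationality of $\Gamma^*$ would then force the unit $1$ to be annihilated by all homogeneous elements of sufficiently large degree.

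The same conclusion can also be reached directly, without passing through \eqref{mitchell iso 1}, using the comodule structure. If $1\in\Gamma^*$ lies in a rational module, write its coaction as a finite sum $\psi(1)=\sum_i\gamma_i\otimes m_i$. For homogeneous $a\in\Gamma^*$ we have $a\cdot 1=\sum_i a(\gamma_i)m_i$. The $\gamma_i$ lie in finitely many degrees, so $a(\gamma_i)=0$ as soon as $\deg a$ exceeds the negatives of those degrees. Either way, there is some $j$ with $a\cdot 1=0$ for all homogeneous $a$ of degree $\geq j$.

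\emph{Contradiction.} Since $\Gamma$ is not bounded below, and $\Gamma^*$ is graded so that functionals on $\Gamma^{-n}$ sit in degree $n$, the algebra $\Gamma^*$ has nonzero homogeneous elements $a$ in arbitrarily large degrees. For such an $a$ of degree $\geq j$ we would get $a=a\cdot1=0$, which is impossible. Hence $\Gamma^*$ is not rational, and so $\iota(M)$ has no nonzero free summands.

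\emph{Main obstacle.} The delicate point is the reduction step. One must check that the summand inclusion and projection are morphisms between rational modules, which is where fullness of $\iota$ is used. One must also make sure that "free summand" means a summand isomorphic to a suspension of $\Gamma^*$, so that the grading is respected. A further care point, if the argument goes through \eqref{mitchell iso 1}, is that this isomorphism is only stated to be an isomorphism of abelian groups. I would therefore rely on its naturality, or fall back on the direct coaction argument, to identify the elements of $H^0_{dist}(\Gamma^*)$ with elements killed by some $I_j$.
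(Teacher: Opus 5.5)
Your proof is correct, and its main line through \eqref{mitchell iso 1} is essentially the paper's argument. The paper takes the element $1$ of a summand $\Sigma^i\Gamma^*$ of $\iota(M)$ and notes that it is not left $I_j$-torsion for any $j$, because $\Gamma^*$ is not bounded above. It concludes that $1$ is not in $H^0_{dist}(\iota(M))$, and then uses the characterization of rational modules by $H^0_{dist}(M)=M$ from \cite{MR5014030}. Your preliminary reduction to ``$\Gamma^*$ is rational'' is harmless but unnecessary. The same reasoning applies directly to $1\in\Sigma^d\Gamma^*\subseteq\iota(M)$.

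Your fallback coaction argument, however, is a genuinely different and more elementary route. Since $\psi(m)$ is a finite sum, every element of any rational module is killed by all homogeneous elements of $\Gamma^*$ of sufficiently large degree. That alone contradicts $a\cdot 1=a\neq 0$. This uses neither local cohomology, nor \cite[Theorem 3.7]{MR5014030}, nor \eqref{mitchell iso 1}. So it proves the lemma for every connected graded coalgebra that is not bounded below.

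It also sidesteps the point you rightly flagged. \eqref{mitchell iso 1} is stated only as an abstract natural isomorphism of abelian groups. The paper's step from ``not $I_j$-torsion'' to ``not in $H^0_{dist}$'' tacitly reads it as an equality of subsets of $M$.

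This lemma is the only place the hypothesis \eqref{mitchell iso 1} is used in Theorem \ref{no nonzero projs}, and through it in Proposition \ref{vanishing above R1V}. Your argument therefore suggests that hypothesis could be removed there too. The paper's route buys only brevity, given the local-cohomology machinery it has already set up.
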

\begin{proof}
If $M$ is a graded $\Gamma^*$-module with a nonzero free summand, then in particular it has a graded $\Gamma^*$-module summand of the form $\Sigma^i \Gamma^*$ for some $i$. Since $\Gamma^*$ is not bounded above, the element $1\in \Sigma^i \Gamma^*$ is not left $I_j$-torsion for any $j$. Consequently $1$ cannot be in the image of the natural map $H^0_{dist}(M) \hookrightarrow M$. Hence the natural map $H^0_{dist}(M) \hookrightarrow M$ is not an isomorphism, hence $M$ is not a rational graded $\Gamma^*$-module by \cite[Theorem 3.7]{MR2012570}, i.e., $M$ cannot be isomorphic to $\iota(N)$ for any graded $\Gamma$-comodule $N$.
\end{proof}

In \cite[Theorem 5.9]{MR5014030}, it is proven that the isomorphism \eqref{mitchell iso 1} holds when $\Gamma^*$ is the mod $p$ Steenrod algebra, for any prime $p$. More generally, \cite[Theorem 5.9]{MR5014030} shows that condition \eqref{mitchell iso 1} holds when $\Gamma$ is any finite-type ``Mitchell coalgebra'' over a field. The definition of a Mitchell coalgebra is rather technical, and its only motivating examples are the dual Steenrod algebras, so we do not reproduce the definition of a Mitchell coalgebra here. Interested readers can consult \cite[Definition 5.3]{MR5014030} for that definition.

\begin{lemma}\label{projectivity lemma}
Let $k$ be a field, and let $\Gamma$ be a connected graded $k$-coalgebra. Let $P$ be a graded $\Gamma^*$-module such that $\Ext^s_{\gr\Mod(\Gamma^*)}(P,N)$ vanishes for all $s>0$ for all {\em bounded-above} graded $\Gamma^*$-modules $N$. Then $P$ is a projective graded $\Gamma^*$-module. 
\end{lemma}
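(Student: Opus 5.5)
The plan is to show directly that $\Ext^1_{\gr\Mod(\Gamma^*)}(P,N)=0$ for \emph{every} graded $\Gamma^*$-module $N$, not only the bounded-above ones. That gives projectivity of $P$. The bridge from bounded-above $N$ to arbitrary $N$ is the sequence $S_{\Gamma^*}(N)=\left(\dots\rightarrow N_{\leq 1}\rightarrow N_{\leq 0}\right)$ of Definition \ref{def of S}, together with the identification of $\Ext$ with continuous $\Ext$ in Lemma \ref{continuous ext is ext}. Each $N_{\leq j}$ is bounded above, so the hypothesis on $P$ applies to every term of this sequence.

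First, I would set up a Grothendieck spectral sequence for the composite $\lim\circ\hom_{\gr\Mod(\Gamma^*)}(P,-)$ on $\gr\Mod(\Gamma^*)^{\mathbb{N}^{\op}}$. This time $\hom(P,-)$ is applied levelwise first and $\lim$ second, the opposite order from the proof of Lemma \ref{continuous ext is ext}. It requires $\hom(P,-)$ to send injective inverse sequences to $\lim$-acyclic inverse sequences of graded abelian groups.

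By Observation \ref{enough injs observation}, there are enough injectives whose transition maps are split epimorphisms, and it suffices to check the condition on these. Applying $\hom(P,-)$ to a split epimorphism gives a surjection. The resulting sequence is therefore Mittag--Leffler, degreewise, and so $\lim$-acyclic.

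This yields
\[ E_2^{s,t}\cong R^s\lim_j \Ext^t_{\gr\Mod(\Gamma^*)}(P,N_{\leq j}) \Rightarrow \Ext^{s+t}_{\Gamma^*,c}(P,N)\cong \Ext^{s+t}_{\gr\Mod(\Gamma^*)}(P,N), \]
where the last isomorphism is Lemma \ref{continuous ext is ext}. By hypothesis, $E_2^{s,t}=0$ for $t>0$. Also $R^s\lim=0$ for $s\geq 2$ on $\mathbb{N}^{\op}$-indexed sequences of abelian groups, working degree by degree. Hence $\Ext^1(P,N)\cong R^1\lim_j\hom_{\gr\Mod(\Gamma^*)}(P,N_{\leq j})$.

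It then remains to show that this $R^1\lim$ vanishes, which is the main point. Because $\Gamma^*$ is connected, the kernel of $N_{\leq j+1}\rightarrow N_{\leq j}$ is the degree $j+1$ part $N^{j+1}$, regarded as a $\Gamma^*$-module concentrated in a single degree. In particular it is bounded above, so $\Ext^1(P,N^{j+1})=0$ by hypothesis. The long exact sequence for
\[ 0\rightarrow N^{j+1}\rightarrow N_{\leq j+1}\rightarrow N_{\leq j}\rightarrow 0 \]
then shows that $\hom(P,N_{\leq j+1})\rightarrow\hom(P,N_{\leq j})$ is surjective in each degree. By the Mittag--Leffler criterion, $R^1\lim=0$.

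Thus $\Ext^1(P,N)=0$ for all $N$, and $P$ is projective. The only delicate step is the $\lim$-acyclicity needed for the spectral sequence. If that step proves awkward, I could bypass the spectral sequence entirely: take an injective resolution of $S_{\Gamma^*}(N)$ of the split type from Observation \ref{enough injs observation}, then compare the Milnor-type $\lim$/$\lim^1$ short exact sequences for the resulting cochain complexes directly.
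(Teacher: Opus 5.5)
Your argument is correct, and it is more economical than the paper's. Both proofs rest on the same pair of Grothendieck spectral sequences for $\hom_{\gr\Mod(\Gamma^*)}(P,\lim(-))=\lim\hom_{\gr\Mod(\Gamma^*)}(P,-)$ on towers. The difference is which tower they are applied to.

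\textbf{The paper's route.} It uses the subobject tower $\dots\subseteq N^{>1}\subseteq N^{>0}$, whose limit is $0$. The two spectral sequences show that $\lim_j$ and $\lim^1_j$ of $\Ext^*_{\gr\Mod(\Gamma^*)}(P,N^{>j})$ vanish. That fact is then fed into one of two things:
\begin{itemize}
\item a filtration spectral sequence with $E_1$-term $\Ext^s_{\gr\Mod(\Gamma^*)}(P,N^t)$, whose strong convergence has to be checked with Boardman's criteria; or
\item the long exact sequences coming from $0\to N^{>j}\to N\to N_{\leq j}\to 0$ and $0\to N^{>j}\to N^{>j-1}\to N^j\to 0$.
\end{itemize}

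\textbf{Your route.} You apply the spectral sequences to the quotient tower $S_{\Gamma^*}(N)$, whose limit is $N$ itself. One of the two spectral sequences is already packaged as Lemma \ref{continuous ext is ext}. The hypothesis then kills every row $t>0$ at once, giving $\Ext^n_{\gr\Mod(\Gamma^*)}(P,N)\cong R^n\lim_j\hom_{\gr\Mod(\Gamma^*)}(P,N_{\leq j})$. A second use of the hypothesis, on the one-degree kernels $N^{j+1}$, makes the tower of hom-groups Mittag--Leffler, so the $\lim^1$ vanishes.

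\textbf{What each buys.} Your version removes the filtration spectral sequence and all the convergence bookkeeping, and it shows exactly where the hypothesis enters. The only cost is the appeal to Lemma \ref{continuous ext is ext}, which is already proved at that point in the paper.

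Your $\lim$-acyclicity step is the same as the paper's use of Observation \ref{enough injs observation}. You could strengthen it slightly: every injective tower is a retract of one with split-epimorphic transition maps. Hence $\hom_{\gr\Mod(\Gamma^*)}(P,-)$ sends \emph{all} injective towers to $\lim$-acyclic ones, and your fallback plan is unnecessary.
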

We give two proofs of Lemma \ref{projectivity lemma}, with significant similarities to one another. The first proof will be more to the taste of a reader who wants to organize everything in life into a spectral sequence, while the second proof exercises some restraint by only invoking two spectral sequences instead of three.
\begin{proof}[First proof of Lemma \ref{projectivity lemma}]
Write $N^{>j}$ for the graded $\Gamma$-subcomodule of $N$ which is $k$-linearly spanned by all homogeneous elements of degree $>j$. We have a short exact sequence of graded $\Gamma$-comodules
\begin{equation}\label{ses 028} 0 \rightarrow N^{>0} \rightarrow N \rightarrow N_{\leq 0} \rightarrow 0.\end{equation}

By hypothesis, $\Ext^s_{\gr\Mod(\Gamma^*)}(P,N_{\leq 0})$ is zero for all $s>0$. We claim that $\Ext^s_{\gr\Mod(\Gamma^*)}(P,N^{>0})$ is {\em also} trivial for all $s>0$. We have the exhaustive, complete (in the graded category), Hausdorff filtration of $N^{>0}$
\begin{equation}\label{filt 029} \dots\subseteq N^{>2}\subseteq N^{>1} \subseteq N^{>0}.\end{equation}
If we agree to write $N^j$ for the degree $j$ summand of the graded $\Gamma^*$-comodule $\Gamma$, then the filtration quotient $N^{>j-1}/N^{>j}$ is isomorphic to $N^j$, which is bounded-above for each $j$. 
Consequently, in the spectral sequence
\begin{align}
\label{ss 203000} E_1^{s,t} \cong \Ext^s_{\gr\Mod(\Gamma^*)}(P,N^t) &\Rightarrow \Ext^s_{\gr\Mod(\Gamma^*)}(P,N^{>0}) \\
\nonumber d_r: E_r^{s,t} &\rightarrow E_r^{s+1,t+r}
\end{align}
obtained by applying $\Ext^*_{\gr\Mod(\Gamma^*)}(P,-)$ to \eqref{filt 029}, 
we have immediate collapse to the $(s=0)$-line, yielding the claimed triviality of $\Ext^s_{\gr\Mod(\Gamma^*)}(P,N^{>0})$ for all $s>0$---once we are certain that the spectral sequence really has the right convergence properties.

To be careful, we should check the convergence properties of the spectral sequence. We use Boardman's techniques for convergence analysis, from \cite{MR1718076}. Spectral sequence \eqref{ss 203000} arises from the unrolled exact couple \begin{equation*}
\xymatrix{
\vdots \ar[rd] & \\
 & 0 \ar[ld] \\
\Ext^*_{\gr\Mod(\Gamma^*)}(P,N^{>0})\ar[uu]^{\id}\ar[rd] \\
 & 0 \ar[ld] \\
\Ext^*_{\gr\Mod(\Gamma^*)}(P,N^{>0})\ar[uu]^{\id}\ar[rd] \\
 & \Ext^*_{\gr\Mod(\Gamma^*)}(P,N^1) \ar[ld] \\
\Ext^*_{\gr\Mod(\Gamma^*)}(P,N^{>1})\ar[uu]\ar[rd]\\
 & \Ext^*_{\gr\Mod(\Gamma^*)}(P,N^2) \ar[ld] \\
\Ext^*_{\gr\Mod(\Gamma^*)}(P,N^{>2})\ar[uu]\ar[rd]\\
 & \Ext^*_{\gr\Mod(\Gamma^*)}(P,N^3) \ar[ld] \\
\vdots\ar[uu]
}\end{equation*}
which we clearly intend to converge to the colimit, which is constant: $\Ext^*_{\gr\Mod(\Gamma^*)}(P,N^{>0})$. To satisfy Boardman's ``conditional convergence to the colimit'' \cite[Definition 5.10]{MR1718076}, we should check that the limit $\lim_j \Ext^*_{\gr\Mod(\Gamma^*)}(P,N^{>j})$ and derived limit $\lim_j^1 \Ext^*_{\gr\Mod(\Gamma^*)}(P,N^{>j})$ are each trivial.

To that end, we have a Grothendieck spectral sequence
\begin{align}
\label{gss 1} {}^IE_2^{s,t}& \cong \left(R^s\hom_{\gr\Mod(\Gamma^*)}(P,-)\right)\left( R^t \lim\right)\left(N^{>\bullet}\right) \\
 &\Rightarrow R^{s+t}\left( \hom_{\gr\Mod(\Gamma^*)}(P,\lim(-))\right)\left(N^{>\bullet}\right) 
\end{align}
where $N^{>\bullet}$ denotes the sequence $\dots \rightarrow N^{>2} \rightarrow N^{>1}$. This spectral sequence collapses to the $(t=0)$-line, since the limit is calculated in the graded category. For why $\lim$ sends injectives to acyclics, so that the spectral sequence exists, see the footnote in the proof of Lemma \ref{continuous ext is ext}. The limit $\lim_j N^{>j}$ is trivial, so we have isomorphisms
\begin{align}
\nonumber 0 = \Ext^s_{\gr\Mod(\Gamma^*)}(P,\lim_j N^{>j})
 &\cong R^{s}\left( \hom_{\gr\Mod(\Gamma^*)}(P,\lim(-))\right)\left(N^{>\bullet}\right) \\
\label{iso 10000000} &\cong R^{s}\left( \lim\hom_{\gr\Mod(\Gamma^*)}(P,-)\right)\left( N^{>\bullet}\right) 
\end{align}
for all $s$.
We have a second Grothendieck spectral sequence
\begin{align}
\label{gss 1000003}  {}^{II}E_2^{s,t} &\cong \left(R^s\lim\right)\left(R^t\hom_{\gr\Mod(\Gamma^*)}(P,-)\right)\left( N^{>\bullet}\right) \\
\nonumber &\Rightarrow R^{s+t}\left( \lim\hom_{\gr\Mod(\Gamma^*)}(P,-)\right)\left(N^{>\bullet}\right) 
\end{align}
As usual, to know that this Grothendieck spectral sequence exists, we are supposed to check that $\hom_{\gr\Mod(\Gamma^*)}(P,-)$ sends injectives to $\lim$-acyclics---or at least that every object in $\gr\Mod(\Gamma^*)^{\mathbb{N}^{\op}}$ admits an injective resolution which is sent by $\hom_{\gr\Mod(\Gamma^*)}(P,-)$ to a $\lim$-acyclic object of $\Ab^{\mathbb{N}^{\op}}$. To verify that the latter condition indeed holds, use Observation \eqref{enough injs observation} to embed any given object $X_{\bullet}$ of $\hom_{\gr\Mod(\Gamma^*)}(P,-)$ into an injective object $I_{\bullet}$ of the category of sequences $\gr\Mod(\Gamma^*)^{\mathbb{N}^{\op}}$, such that the transition maps in the sequence $I_{\bullet}$ are all split epimorphisms. Since split epimorphisms are preserved by any functor whatsoever, the sequence $\hom_{\gr\Mod(\Gamma^*)}(P,I_{\bullet})$ is Mittag--Leffler and hence $\lim$-acyclic. One can iterate this construction, as we did in the proof of Lemma \ref{enough injs lemma}, to build a resolution of $X_{\bullet}$,  by injectives which are each sent by the functor $\hom_{\gr\Mod(\Gamma^*)}(P,-)$ to $\lim$-acyclic sequences.

We conclude that we indeed have the Grothendieck spectral sequence \cref{gss 1000003}. That spectral sequence collapses immediately for degree reasons (its $E_2$-page is trivial unless $s=0$ or $s=1$), yielding a short exact sequence
\[
\hspace{-55pt} 0 \rightarrow
 \lim_j{}^1 \Ext^{n-1}_{\gr\Mod(\Gamma^*)}(P,N^{>j}) \rightarrow
 R^{n}\left( \lim\hom_{\gr\Mod(\Gamma^*)}(P,-)\right)\left(N^{>\bullet}\right) \rightarrow
 \lim_j \Ext^{n}_{\gr\Mod(\Gamma^*)}(P,N^{>j}) \rightarrow
 0
\]
for each $n$. In \eqref{iso 10000000} we showed that the middle term in this short exact sequence is trivial for all $n$, so the terms on the ends must also be trivial. Those terms on the ends are precisely the limit and derived limit terms whose triviality gives us conditional convergence of the spectral sequence \eqref{ss 203000}. 

While \eqref{ss 203000} appears to be a first-quadrant spectral sequence, its differentials do not have the property that, for any single bidegree, any sufficiently long differential into or out of that bidegree is zero. Hence \eqref{ss 203000} is {\em not} first quadrant in Boardman's sense. Instead, using Boardman's conventions, \eqref{ss 203000} is a half-plane spectral sequence with entering differentials, and we have just checked that it is conditionally convergent to the colimit. By \cite[Theorem 7.3]{MR1718076}, such a spectral sequence is strongly convergent if the derived limit $\lim_r^1 E_r^{*,*}$ of its pages is zero. This condition is satisfied by \eqref{ss 203000} since its $E_1$-page is concentrated on the $(s=0)$-line, so it collapses immediately and hence $\lim_r^1 E_r^{*,*}$ is the derived limit of a constant sequence.

Consequently, from the long exact sequence obtained by applying $\Ext^*_{\gr\Mod(\Gamma^*)}(P,-)$ to \eqref{ses 028}, we get the vanishing of $\Ext^s_{\gr\Mod(\Gamma^*)}(P,N)$ for all $s>0$ and for all $N$. Hence $P$ is projective in $\gr\Mod(\Gamma^*)$.
\end{proof}
\begin{proof}[Second proof of Lemma \ref{projectivity lemma}]
For each $j$, we have the short exact sequences of graded $\Gamma$-comodules
\begin{equation}\label{ses 028a} 0 \rightarrow N^{>j} \rightarrow N \rightarrow N_{\leq j} \rightarrow 0\mbox{\ \ and}\end{equation}
\begin{equation}\label{ses 028b} 0 \rightarrow N^{>j} \rightarrow N^{>j-1} \rightarrow N^j \rightarrow 0.\end{equation}
We have long exact sequences induced in $\Ext^s_{\gr\Mod(\Gamma^*)}(P,-)$ by \eqref{ses 028a} and \eqref{ses 028b}. From those long exact sequences and from the vanishing of $\Ext^s_{\gr\Mod(\Gamma^*)}(P,-)$ on bounded-above modules for $s>0$, we have isomorphisms
\begin{align*}
\dots\stackrel{\cong}{\longrightarrow} 
 \Ext^s_{\gr\Mod(\Gamma^*)}(P,N_{\leq 1}) &\stackrel{\cong}{\longrightarrow}\Ext^s_{\gr\Mod(\Gamma^*)}(P,N_{\leq 0}) \\
&\stackrel{\cong}{\longrightarrow}\Ext^s_{\gr\Mod(\Gamma^*)}(P,N_{\leq -1})\\
&\stackrel{\cong}{\longrightarrow}\dots \\
 &\stackrel{\cong}{\longrightarrow}\Ext^s_{\gr\Mod(\Gamma^*)}(P,N)
\end{align*}
for all $s>1$, and surjections
\begin{align*}
\dots \twoheadrightarrow\Ext^1_{\gr\Mod(\Gamma^*)}(P,N_{\leq 1}) 
&\twoheadrightarrow\Ext^s_{\gr\Mod(\Gamma^*)}(P,N_{\leq 0})\\
&\twoheadrightarrow\Ext^s_{\gr\Mod(\Gamma^*)}(P,N_{\leq -1})\\
&\twoheadrightarrow\dots \\
&\twoheadrightarrow\Ext^s_{\gr\Mod(\Gamma^*)}(P,N).
\end{align*}
Hence, if we can show that the limit $\lim_{j\rightarrow\infty} \Ext^s_{\gr\Mod(\Gamma^*)}(P,N_{\leq j})$ vanishes for all $s\geq 1$, then $\Ext^s_{\gr\Mod(\Gamma^*)}(P,N)$ must vanish for all $s\geq 1$ as well. For this limit vanishing, one can use the two Grothendieck spectral sequences \eqref{gss 1} and \eqref{gss 1000003} from the first proof of Lemma \ref{projectivity lemma}, as we did to show the vanishing of $\lim_{j\rightarrow\infty} \Ext^s_{\gr\Mod(\Gamma^*)}(P,N_{\leq j})$ in that proof. All that we have avoided in this second proof of the lemma is direct use of the first spectral sequence in the first proof, \eqref{ss 203000}.
\end{proof}

Finally we are ready to state and prove the main theorem of this section, Theorem \ref{no nonzero projs}. The motivating example of a coalgebra $\Gamma$ satisfying all the hypotheses of Theorem \ref{no nonzero projs} is the mod $p$ dual Steenrod algebra for any prime $p$. 
\begin{theorem}\label{no nonzero projs}
Let $k$ be a field, and let $\Gamma$ be a finite-type connected graded $k$-coalgebra which satisfies the hypothesis of Lemma \ref{mitchell iso lemma} and whose dual $\Gamma^*$ is a $\mathcal{P}$-algebra. Then there are no nonzero projective graded $\Gamma$-comodules.
\end{theorem}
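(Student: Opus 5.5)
Let $P$ be a projective graded $\Gamma$-comodule; the plan is to show that $\iota(P)$ is a projective graded $\Gamma^*$-module, and then use Margolis's structure theory together with Lemma \ref{mitchell iso lemma} to force $P=0$.

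\emph{Step 1: $\iota(P)$ is projective.} Since $P$ is projective in $\gr\Comod(\Gamma)$, we have $\Ext^s_{\gr\Comod(\Gamma)}(P,N)=0$ for all $s>0$ and all graded comodules $N$. Theorem \ref{ext iso 1} then gives $\Ext^s_{\gr\Mod(\Gamma^*)}(\iota(P),\iota(N))=0$ for all $s>0$. By Theorem \ref{main thm 1 from derived prods paper}, every bounded-above graded $\Gamma^*$-module is rational, i.e.\ of the form $\iota(N)$. Hence $\Ext^s_{\gr\Mod(\Gamma^*)}(\iota(P),N')=0$ for every bounded-above graded $\Gamma^*$-module $N'$ and every $s>0$. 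Lemma \ref{projectivity lemma} now shows that $\iota(P)$ is a projective graded $\Gamma^*$-module.

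\emph{Step 2: a nonzero projective has a free summand.} Graded projective modules over a connected graded $k$-algebra are free: a projective $Q$ is a summand of a free module, so it is flat, and the standard graded Nakayama argument (lift a $k$-basis of $Q/\Gamma^{*}_{>0}Q$) shows it is free. Nakayama applies without any boundedness hypothesis on $Q$ because $Q$ is a summand of a free module, so $Q/\Gamma^*_{>0}Q=0$ forces $Q=0$. Alternatively, one can invoke the general fact (Kaplansky) that projectives over a graded connected algebra are free. Either way, if $P\neq 0$ then $\iota(P)$ is a nonzero free graded $\Gamma^*$-module, hence in particular has a nonzero free summand.

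\emph{Step 3: contradiction.} Since $\Gamma^*$ is a $\mathcal{P}$-algebra, it is infinite-dimensional and connected, so $\Gamma$ is not bounded below. Lemma \ref{mitchell iso lemma} therefore applies under the theorem's hypothesis, and says that $\iota(P)$ has no nonzero free summand. This contradicts Step 2, so $P=0$.

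The main obstacle I expect is Step 2, justifying freeness of projectives without a bounded-below assumption. I would first try the summand-of-free Nakayama argument above. If that proves delicate, I would fall back on the following: a nonzero projective $Q$ admits a surjection $F\to Q$ from a free module $F$, which splits. Composing the inclusion of a cyclic free summand $\Sigma^i\Gamma^*\subseteq F$ with a suitable projection then yields a nonzero map $Q\to\Sigma^i\Gamma^*$. One would then need to argue that such a map forces a free summand, which is exactly what the proof of Lemma \ref{mitchell iso lemma} uses. The remaining checks, namely the use of Theorem \ref{ext iso 1} with $\iota(P)$ in the first variable and the application of Lemma \ref{projectivity lemma}, should be routine.
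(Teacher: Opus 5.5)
Your proposal follows the paper's proof step for step. Theorem~\ref{ext iso 1}, together with rationality of bounded-above modules (Theorem~\ref{main thm 1 from derived prods paper}), feeds Lemma~\ref{projectivity lemma} to make $\iota(P)$ projective. Projectives over a connected algebra are free, and Lemma~\ref{mitchell iso lemma} then rules out free summands. Your check that $\Gamma$ is not bounded below is a harmless addition.

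One caveat concerns Step~2, which the paper simply cites (Margolis, \S11.2): your self-contained Nakayama sketch does not work. The lifting argument needs Nakayama for the \emph{cokernel} $C$ of the lifted map $G\to Q$. But $C$ is only a quotient of $Q$, so the Nakayama property you verified for summands of free modules does not apply to it. Once $C$ is unbounded below, $C=\Gamma^*_{>0}C$ does not force $C=0$.

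This really happens for the algebras in question. Over the mod~$2$ Steenrod algebra, let $Q$ be free on $e_0,e_1,\dots$ with $|e_n|=1-2^n$. The elements $e_n+Sq^{2^n}e_{n+1}$ lift a basis of $Q/\Gamma^*_{>0}Q$. Yet they do not generate $e_0$, since that would force $Sq^1Sq^2\cdots Sq^{2^N}=Sq^{2^{N+1}-1}$ to vanish for some $N$.

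So Step~2 should rest on the graded Kaplansky theorem you mention (graded projectives over a connected graded algebra are free), just as the paper rests it on Margolis. Your last-paragraph fallback does not help either. Extracting a free summand from a nonzero map $Q\to\Sigma^i\Gamma^*$ is essentially the freeness statement itself. Moreover, the proof of Lemma~\ref{mitchell iso lemma} starts from a free summand rather than producing one.
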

\begin{proof}
Let $P$ be a projective graded $\Gamma$-comodule. By Theorem \ref{ext iso 1}, we have the isomorphism
\begin{align*}
 \Ext^n_{\gr\Comod(\Gamma)}(P,M) &\cong 
 \Ext^n_{\gr\Mod(\Gamma^*)}(\iota(P),\iota(M))
\end{align*}
for all $n$ and all graded $\Gamma$-comodules $M$. Hence $\Ext^n_{\gr\Mod(\Gamma^*)}(\iota(P),N)$ vanishes for all $n>0$ and all rational $\Gamma^*$-modules $N$. Every bounded-above graded $\Gamma^*$-module is rational, by Theorem \ref{main thm 1 from derived prods paper}, so $\Ext^n_{\gr\Mod(\Gamma^*)}(\iota(P),N)$ vanishes for all bounded-above $N$ and all $n>0$. Hence, by Lemma \ref{projectivity lemma}, the graded $\Gamma^*$-module $\iota(P)$ is projective. It is well-known (e.g. see \cite[section 11.2]{MR738973}) that over a connected algebra, projective graded modules are free. Hence the graded $\Gamma^*$-module $\iota(P)$ is free. But by Lemma \ref{mitchell iso lemma}, $\iota(P)$ can have no nonzero free summands. Hence $\iota(P)\cong 0$, and since $\iota$ is faithful, $P\cong 0$.
\end{proof}

\section{$V$ for a $\mathcal{P}$-algebra}

We resume our analysis of the behavior of the right adjoint $V: \gr\Comod(\Gamma)\rightarrow\gr\Mod(k)$ of the extended comodule functor for certain classes of connected graded $k$-coalgebras $\Gamma$. In \cref{poincare alg section}, we carried out this analysis when the dual of $\Gamma$ is a Poincar\'{e} algebra. In \cref{I in the case of a P-algebra} we will carry out the analysis when the dual of $\Gamma$ is a $\mathcal{P}$-algebra, as defined in Definition \ref{def of p-alg}. As a special case, in \cref{I in the case of the Steenrod alg} we specialize to the case where $\Gamma^*$ is the mod $p$ Steenrod algebra for some prime $p$. We derive various topological consequences which generalize the 1976 theorem of Lin \cite{MR0402738}.

\subsection{General behavior of $V$ for a $\mathcal{P}$-algebra}
\label{I in the case of a P-algebra}

\begin{observation}\label{margolis observation}
In the proof of Proposition \ref{cotorsion vanishes on boundeds}, we will make use of the following observation of Margolis \cite[item (4) from list on pg. 193]{MR738973}: if $A$ is a $\mathcal{P}$-algebra, the union of a nested sequence $A(0)\subsetneq A(1)\subsetneq \dots$ as in Definition \ref{def of p-alg}, then the top nonvanishing degree of $A(n+1)$ must be strictly greater than the top nonvanishing degree of $A(n)$. This is because $A(n)\subsetneq A(n+1)$, both are connected $k$-algebras, and $A(n+1)$ is flat over $A(n)$ and hence also free over $A(n)$ since $A(n)$ is a Poincar\'{e} algebra.
\end{observation}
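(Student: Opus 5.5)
The plan is to use the fact, cited in \cref{poincare alg section}, that over a Poincar\'{e} algebra a flat module is free, and then read off the top degree from a homogeneous free basis. All of $A(n)$ and $A(n+1)$ are finite-dimensional connected graded $k$-algebras, so each has a well-defined top nonvanishing degree. Write $t_n$ for the top nonvanishing degree of $A(n)$, and choose a nonzero element $v\in A(n)^{t_n}$.

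\textbf{Step 1: $A(n+1)$ is graded free over $A(n)$.} By Definition \ref{def of p-alg}, $A(n+1)$ is flat as a right $A(n)$-module. Since $A(n)$ is Poincar\'{e}, the Moore--Peterson/Margolis result recalled in \cref{poincare alg section} says flat modules are free. That result is stated for left modules. Either it applies verbatim to right modules, or one applies it over $A(n)^{\op}$, which is again a finite-dimensional connected graded algebra with a nondegenerate pairing and hence Poincar\'{e}. I expect this left/right bookkeeping to be the only point needing any care. The outcome is a decomposition of graded right $A(n)$-modules
\[ A(n+1)\cong \textstyle\coprod_i \Sigma^{d_i}A(n). \]

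\textbf{Step 2: locating the basis degrees.} Take a minimal homogeneous basis, so that the basis elements correspond to a homogeneous basis of $A(n+1)\otimes_{A(n)}k$. This quotient is concentrated in nonnegative degrees. In degree $0$ it is a quotient of $A(n+1)^0=k$, so it has dimension at most one there. Hence exactly one basis element lies in degree $0$, namely (up to scalar) $1$, which generates the summand $A(n)$. All other basis elements lie in strictly positive degrees. Because $A(n)\subsetneq A(n+1)$, the basis has at least two elements. So there is a homogeneous basis element $x$ of degree $d>0$.

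\textbf{Step 3: a nonzero element above degree $t_n$.} Since the summand $x\cdot A(n)$ is isomorphic to $\Sigma^{d}A(n)$ as a right module, the element $xv$ is nonzero. Its degree is $d+t_n>t_n$. Therefore the top nonvanishing degree of $A(n+1)$ is at least $d+t_n$, which is strictly greater than $t_n$, as claimed.
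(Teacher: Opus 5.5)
Your proof is correct and takes the same route as the paper, which justifies the observation in one line: $A(n+1)$ is flat, hence free, over the Poincar\'{e} algebra $A(n)$, and the proper inclusion of connected algebras then forces a basis element $x$ in positive degree. You have filled in the details the paper leaves implicit: the left/right bookkeeping, the location of the basis degrees via $A(n+1)\otimes_{A(n)}k$, and the nonvanishing of $xv$ in degree $d+t_n$.
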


\begin{prop}\label{cotorsion vanishes on boundeds}
Let $\Gamma$ be a connected finite-type graded $k$-algebra whose $k$-linear dual $\Gamma^*$ is a $\mathcal{P}$-algebra. 
Let $M$ be a bounded-below graded $\Gamma$-comodule. Then $V(M)$ is trivial, i.e., $M$ is cotorsion.
\end{prop}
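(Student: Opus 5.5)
The plan is to show directly that there are no nonzero graded comodule maps from any suspension of $\Gamma$ to $M$. By Proposition \ref{I prop 1}, the degree $d$ part of $V(M)$ is the group of graded $\Gamma$-comodule maps $f\colon \Sigma^d\Gamma\rightarrow M$. The argument has two steps. First, every nonzero such $f$ is injective. Second, $\Gamma$ is not bounded below, so an injective $f$ cannot land in the bounded-below comodule $M$.

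For the injectivity step, I would use that $\Gamma$ has simple socle $k$, sitting in degree $0$ and spanned by the counit-dual element $1$. Every nonzero graded comodule contains a finite-dimensional nonzero subcomodule, by local finiteness of comodules. Hence it contains a simple graded subcomodule. Since $\Gamma$ is connected, the simple graded comodules are exactly the suspensions $\Sigma^i k$. The adjunction $U\dashv E$ from \eqref{E 1}, applied with $\Gamma=E(k)$, gives
\[ \hom_{\gr\Comod(\Gamma)}(\Sigma^i k,\Gamma)\cong \hom_{\gr\Mod(k)}(\Sigma^i k,k). \]
The right-hand side is $k$ when $i=0$ and zero otherwise. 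So every nonzero graded subcomodule of $\Gamma$ contains the one-dimensional subcomodule spanned by $1$. Now let $f$ be nonzero. Its kernel is a graded subcomodule of $\Sigma^d\Gamma$. If the kernel were nonzero, it would contain (the suspension of) $1$. But $1$ generates $\Gamma$ under the adjoint $\Gamma^*$-action, since $\iota(\Gamma)$ is cyclic on it via the coproduct, so $f$ would be zero. Therefore $\ker f=0$. If the cyclicity claim is awkward to verify, I would instead argue directly that $f(1)=0$ forces $f=0$: a comodule map out of the extended comodule $E(k)$ is determined by the composite with the counit, by the adjunction.

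For the unboundedness step, Observation \ref{margolis observation} says the top degrees of the Poincar\'{e} subalgebras $A(n)\subset\Gamma^*$ strictly increase. So $\Gamma^*$ is nonzero in arbitrarily large positive degrees. By the grading convention of \cref{Generalities on...}, $\Gamma$ is then nonzero in arbitrarily negative degrees, that is, $\Gamma$ is not bounded below. An injective map $\Sigma^d\Gamma\hookrightarrow M$ would make $M$ nonzero in arbitrarily negative degrees, contradicting bounded-belowness. Hence $V(M)=0$ in every degree. The equivalence with $M$ being cotorsion then follows from Proposition \ref{cotorsion and relative cotorsion}, since $k$ is a field.

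I expect the main obstacle to be the socle argument, specifically the claim that a nonzero kernel must contain $1$ and that this kills $f$. I would handle it using the adjunction as above rather than any module-theoretic properties of $\mathcal{P}$-algebras. The $\mathcal{P}$-algebra hypothesis is only used to guarantee that $\Gamma$ is not bounded below.
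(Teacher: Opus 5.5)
Your first step is false: a nonzero graded comodule map $f\colon\Sigma^d\Gamma\to M$ need not be injective. Your socle computation is correct, so $\ker f\neq 0$ does force $\Sigma^d 1\in\ker f$. But neither of your reasons for then concluding $f=0$ holds.

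\begin{itemize}
\item The class $1$ does not generate $\iota(\Gamma)$. The adjoint action gives $\phi\cdot 1=\phi(1)\,1$, so $\Gamma^*\cdot 1=k\cdot 1$. This is forced by degrees, since $\Gamma^*$ raises degree and $1$ sits in the top degree of $\Gamma$.
\item The adjunction $U\dashv E$ determines maps \emph{into} $E(k)$ by their composite with the counit. Maps \emph{out of} $E(k)=\Gamma$ are exactly what $V$ records, and they are not determined by their value on $1$.
\end{itemize}

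Concretely, Remark \ref{cotorsion remark} gives $V(\Gamma)\cong\Gamma^*$. So for every $n>0$ with $(\Gamma^*)^n\neq 0$ there are nonzero comodule maps $\Sigma^n\Gamma\to\Gamma$. Each of them kills $\Sigma^n 1$, because $\Sigma^n1$ sits in degree $n>0$ and $\Gamma$ vanishes in positive degrees. What your socle argument actually proves is only that $f$ is injective if and only if $f(\Sigma^d 1)\neq 0$. That rules out the maps not killing $\Sigma^d 1$, and nothing more.

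The remaining maps cannot be handled by unboundedness of $\Gamma$ alone, so the $\mathcal{P}$-algebra hypothesis has to do more than you assign it. Take $\Gamma^*=k[x,y]/(x^2,xy)$ with $x,y$ in degree $1$. Then $\Gamma$ is connected, finite-type, and not bounded below. But $x$ is annihilated by every positive-degree element, so dualizing the ideal $kx$ gives a nonzero comodule surjection $\Gamma\to\Sigma^{-1}k$. Thus $V(\Sigma^{-1}k)\neq 0$ even though $\Sigma^{-1}k$ is bounded below.

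The missing idea is a reason why $\Gamma$ has no nonzero bounded-below quotient comodules. The paper gets this from the $\mathcal{P}$-structure:
\begin{itemize}
\item $\iota(\Gamma)$ is free over each Poincar\'{e} subalgebra $A(d)$.
\item $\Sigma^j\iota(\Gamma)$ vanishes above degree $j$. So each free summand $\Sigma^m A(d)$ has its generator in degree $m\leq j-t_d$, where $t_d$ is the top degree of $A(d)$.
\item By Observation \ref{margolis observation}, one can choose $d$ so that $j-t_d$ lies below the bottom degree of $M$.
\end{itemize}
Then every $A(d)$-module generator of $\Sigma^j\iota(\Gamma)$ maps to zero, and hence so does the whole map.
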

\begin{proof}
We use the covariant embedding $\iota$ of $\gr\Comod(\Gamma)$ into $\gr\Mod(\Gamma^*)$: since $M$ is bounded below, so is $\iota(M)$. Choose an integer $n$ such that $\iota(M)$ is trivial in degrees $<n$. 

Now choose some integer $j$. Suppose that $f: \Sigma^j \iota(\Gamma)\rightarrow M$ is a graded $\Gamma^*$-comodule morphism from $\Sigma^j \iota(\Gamma)$ to $M$. Since $V(M) \cong \hom_{\gr\Mod(\Gamma^*)}(\iota(\Gamma),\iota(M))$ as a consequence of Proposition \ref{I prop 1} and Theorem \ref{ext iso 1}, we aim to show that $f$ must be the zero morphism. 

For any nonnegative integer $d$, the underlying $A(d)$-module of $\iota(\Gamma)$ is free, since the same is true for $\Gamma^* = \iota(\Gamma)^*$  \cite[item (3) from list on pg. 193]{MR738973} and since $A(d)$ is Poincar\'{e}. 
When we split the underlying graded $A(d)$-module of $\Sigma^j\iota(\Gamma)$ into a direct sum of suspensions of $A(d)$, each of those summands can only map nontrivially to $M$ if it sends the generator of the summand to a nonzero element. 

But $\Sigma^j\iota(\Gamma)$ is {\em free} over $A(d)$, and furthermore vanishes in degrees $>j$ since $\Gamma$ is concentrated in nonpositive degrees. Hence, if the top degree of $A(d)$ is in a sufficiently high degree, then the generators of the underlying graded $A(d)$-module of $\Sigma^j\iota(\Gamma)$ must all live in degree $< n$, hence cannot map nontrivially into $M$ (since $M$ is trivial in degrees $<n$).

Hence all we need is to know that, for every integer $N$, we may choose $d$ so that the top degree of $A(d)$ is greater than $N$. The existence of such an integer $d$ is guaranteed by Observation \ref{margolis observation}.\end{proof}
Proposition \ref{cotorsion vanishes on boundeds} shows that, under appropriate hypotheses, a bounded-below comodule $M$ is cotorsion, i.e., $V(M)\cong 0$ . It is natural to ask whether it is also true that a bounded-below comodule is {\em derived} cotorsion, i.e., $R^*V(M)\cong 0$. For general $\Gamma$, the author does not know the answer to that question. However, in the case of greatest topological interest, the case where the coalgebra $\Gamma$ is the mod $p$ dual Steenrod algebra for some prime number $p$, we shall prove in Corollary \ref{top thm 1 cor 2} that bounded-below $\Gamma$-comodules are indeed derived cotorsion. Perhaps surprisingly, the proof is topological, using some ideas from chromatic homotopy theory. The author does not know a purely algebraic proof.

\begin{prop}\label{vanishing above R1V}
Let $k$ be a field, and let $\Gamma$ be a finite-type connected graded $k$-coalgebra  which satisfies the hypothesis of Lemma \ref{mitchell iso lemma} and whose dual algebra $\Gamma^*$ is a $\mathcal{P}$-algebra.
Then the functor $V: \gr\Comod(\Gamma) \rightarrow \gr\Mod(k)$ is left exact, but not right exact. Its right-derived functors $R^0V \cong V$ and $R^1V$ are not always zero, but its higher derived functors, $R^nV$ for all $n>1$, are zero.
\end{prop}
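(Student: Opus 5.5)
Since $V$ is corepresented by $\Gamma$ (Proposition \ref{I prop 1}), we have $R^nV(M)\cong \Ext^n_{\gr\Comod(\Gamma)}(\Gamma,M)$. Theorem \ref{ext iso 1} then identifies this with $\Ext^n_{\gr\Mod(\Gamma^*)}(\iota(\Gamma),\iota(M))$. Left exactness is automatic, since $V$ is a right adjoint (equivalently, a corepresentable functor). Everything else will be read off from module-theoretic properties of $\iota(\Gamma)$ over the $\mathcal{P}$-algebra $\Gamma^*$.

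\textbf{Vanishing of $R^nV$ for $n>1$.} The plan is to reuse the argument in the proof of Proposition \ref{better version of main thm 1}. Since $\Gamma$ is bounded above and injective as a graded comodule, Theorem \ref{main thm 1 from derived prods paper} says $\iota(\Gamma)$ is an injective graded $\Gamma^*$-module. By Margolis's Theorem \ref{margolis thm 1}, an injective module is in particular of injective dimension $\leq 1$, hence flat, hence of projective dimension $\leq 1$. Therefore $\Ext^n_{\gr\Mod(\Gamma^*)}(\iota(\Gamma),X)=0$ for every graded $\Gamma^*$-module $X$ and every $n>1$. Taking $X=\iota(M)$ and using the identification above gives $R^nV\cong 0$ for $n>1$.

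\textbf{Nonvanishing of $V$ and $R^1V$.} For $V$ we can take $M=\Gamma$: Remark \ref{cotorsion remark} gives $V(\Gamma)\cong\Gamma^*\neq 0$. For $R^1V$ we must show that $\iota(\Gamma)$ does not have projective dimension $0$ when measured against rational modules. The plan is to argue by contradiction. Suppose $R^1V(M)=0$ for all $M$. Then $\Ext^1_{\gr\Mod(\Gamma^*)}(\iota(\Gamma),N)=0$ for every rational $N$, and hence for every bounded-above $N$, since bounded-above modules are rational by Theorem \ref{main thm 1 from derived prods paper}. Lemma \ref{projectivity lemma} requires vanishing of $\Ext^s$ for all $s>0$, and the case $s>1$ is already covered by projective dimension $\leq 1$. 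So the lemma applies and shows that $\iota(\Gamma)$ is projective, hence free, since projective graded modules over a connected algebra are free. But $\iota(\Gamma)\neq 0$ is the image of a comodule, so by Lemma \ref{mitchell iso lemma} it has no nonzero free summands, which is a contradiction. Hence $R^1V(M)\neq 0$ for some $M$. This is exactly the argument of Theorem \ref{no nonzero projs} applied to the non-projective comodule $\Gamma$.

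\textbf{$V$ is not right exact.} This follows from $R^1V\neq 0$. Embed the comodule $M$ with $R^1V(M)\neq 0$ into an injective comodule $J$, giving a short exact sequence $0\to M\to J\to J/M\to 0$. Since $R^1V(J)=0$, the long exact sequence shows $V(J)\to V(J/M)$ has cokernel $R^1V(M)\neq 0$, so it is not surjective.

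The main obstacle is the hypotheses in Lemma \ref{projectivity lemma}. We need to confirm that its vanishing for all $s>0$ against bounded-above modules really follows from vanishing on rational modules. This rests on the fact, from Theorem \ref{main thm 1 from derived prods paper}, that bounded-above graded $\Gamma^*$-modules are rational, so they lie in the image of $\iota$, where Theorem \ref{ext iso 1} applies.
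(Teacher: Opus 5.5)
Your proof is correct and follows the paper's argument. You identify $R^nV(M)\cong\Ext^n_{\gr\Mod(\Gamma^*)}(\iota(\Gamma),\iota(M))$ via Proposition \ref{I prop 1} and Theorem \ref{ext iso 1}, use projective dimension $\leq 1$ of the injective module $\iota(\Gamma)$ (Margolis) for vanishing above degree $1$, and use the absence of nonzero projective comodules for $R^1V\neq 0$. The only differences are that you re-run the proof of Theorem \ref{no nonzero projs} for $\Gamma$ rather than citing it (once $R^nV=0$ for $n\geq 2$, vanishing of $R^1V$ would make $\Gamma$ projective), and that you spell out $V(\Gamma)\neq 0$ and the failure of right exactness, which the paper leaves implicit.
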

\begin{proof}
Left exactness follows from $V$ being a right adjoint. 
By Proposition \ref{I prop 1}, $R^nV(M)\cong\Ext^n_{\gr\Comod(\Gamma)}(\Gamma,M)$, so if $R^nV(M)$ were to vanish for all $n>0$ and all $M$, then $\Gamma$ would be projective in $\gr\Comod(\Gamma)$, which cannot be the case, by Theorem \ref{no nonzero projs}. Hence $R^nV(M)$ is nonzero for {\em some} $n>0$ and {\em some} $M$.

Hence our claim that $R^nV(M)$ is trivial for all $n>1$ implies our claim that $R^1V(M)$ is nontrivial for some $M$. To show that $R^nV(M)$ is trivial for all $n>1$, we use Theorem \ref{ext iso 1} in conjunction with Proposition \ref{I prop 1}:
\begin{align}
\label{ext rnv formula 1} R^nV(M) &\cong \Ext^n_{\gr\Mod(\Gamma^*)}\left(\iota(\Gamma),\iota(M)\right).
\end{align}
Since $\Gamma$ is a bounded-above injective in $\gr\Comod(\Gamma)$, Theorem \ref{main thm 1 from derived prods paper} gives us that $\iota(\Gamma)$ is injective in $\gr\Mod(\Gamma^*)$. By Theorem \ref{margolis thm 1}, the graded $\Gamma^*$-module $\iota(\Gamma)$ must have projective dimension $\leq 1$, hence $R^nV(M)\cong 0$ for $n>1$ by \eqref{ext rnv formula 1}. (In fact $\iota(\Gamma)$ must have projective dimension exactly $1$, by making one more use of Theorem \ref{no nonzero projs}. But the proof is finished without this observation.)
\end{proof}

\subsection{$V$ for the mod $p$ dual Steenrod algebra}
\label{I in the case of the Steenrod alg}
We now restrict attention further: throughout this section, fix a prime $p$, and let $\Gamma$ denote the mod $p$ dual Steenrod algebra.

Recall the theorems of Margolis and T.Y. Lin, which characterize maps from Eilenberg-Mac Lane spectra into bounded-below spectra, under appropriate hypotheses:
\begin{theorem} \label{margolis top thm} {\bf (Margolis \cite[Theorem 3]{MR341488}.)} 
Let $Y$ be a bounded-below finite-type spectrum. Let $\Gamma^*$ denote the mod $p$ Steenrod algebra. Then we have an isomorphism of graded abelian groups
\begin{align*}
 \left[ \Sigma^* H\mathbb{F}_p,Y\right]
  &\cong \hom_{\gr\Mod(\Gamma^*)}\left( H^*(Y;\mathbb{F}_p), \Gamma^*\right).
\end{align*}
\end{theorem}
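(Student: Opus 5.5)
The plan is to deduce Theorem \ref{margolis top thm} from the machinery already built. We combine the Adams spectral sequence of Proposition \ref{adams E2 description} for $E = H\mathbb{F}_p$ with the vanishing result Proposition \ref{vanishing above R1V}, and then identify the surviving terms by linear duality, which is harmless in the finite-type bounded-below setting. Throughout, $\Gamma$ is the mod $p$ dual Steenrod algebra, which satisfies the hypotheses of Proposition \ref{vanishing above R1V} (it is finite-type, $\Gamma^*$ is a $\mathcal{P}$-algebra, and it is Mitchell, so \eqref{mitchell iso 1} holds by \cite[Theorem 5.9]{MR5014030}).

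\textbf{Step 1: reduce to $p$-complete targets.} Since $Y$ is bounded below and finite-type, the cofiber $C$ of $Y \rightarrow \hat{Y}_{H\mathbb{F}_p} \simeq Y^{\wedge}_p$ has homotopy groups that are $\mathbb{Q}$-vector spaces: they are built from $\mathbb{Z}_p/\mathbb{Z}$-type quotients of finitely generated groups. Maps from $H\mathbb{F}_p$ into a rational spectrum are zero, because $p$ acts invertibly on $F(H\mathbb{F}_p,C)$ but also annihilates it. Hence $[\Sigma^*H\mathbb{F}_p,Y]\cong[\Sigma^*H\mathbb{F}_p,\hat{Y}_{H\mathbb{F}_p}]$. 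Also $H^*(Y)$ is unchanged by this replacement.

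\textbf{Step 2: collapse of the spectral sequence.} The spectral sequence \eqref{e-adams ss 1} has $E_2^{s,t}\cong (R^sV(H_{-*}Y))^{-t}$. By Proposition \ref{vanishing above R1V}, this is concentrated on the lines $s=0,1$. Every $d_r$ with $r\geq 2$ raises $s$ by at least $2$, so the spectral sequence collapses at $E_2$. The $\lim^1$ term vanishes trivially, so convergence is strong. This yields the short exact sequence $0\rightarrow R^1V(H_{-*}\Sigma Y)\rightarrow[\Sigma^*H\mathbb{F}_p,Y]\rightarrow V(H_{-*}Y)\rightarrow 0$. In fact this step is essentially Theorem \ref{margolis letteredthm}, and it does not need finite type.

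\textbf{Step 3: identify the two terms when $Y$ is finite-type.} By Proposition \ref{I prop 1} and Theorem \ref{ext iso 1}, $R^sV(M)\cong\Ext^s_{\gr\Mod(\Gamma^*)}(\iota\Gamma,\iota M)$. Here $M=H_{-*}(Y;\mathbb{F}_p)$ is finite-type and bounded above in our cohomological grading. I would then apply $k$-linear duality, which is an anti-equivalence between finite-type bounded-above and finite-type bounded-below graded $\Gamma^*$-modules and preserves $\Ext$ between such modules. Since $\iota\Gamma$ is finite-type and bounded above with $(\iota\Gamma)^*\cong\Gamma^*$, this converts $R^sV(M)$ into $\Ext^s_{\gr\Mod(\Gamma^*)}(H^*(Y;\mathbb{F}_p),\Gamma^*)$.

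One point needs care in this conversion. The adjoint action on $M$ dualizes to the usual action on $M^*$ only up to the conjugation $\chi$ of the Hopf algebra. This twist should be absorbed by $\chi$ being an anti-automorphism of $\Gamma^*$, so that left and right module structures get matched consistently on both arguments. With that settled, $s=0$ gives $V(H_{-*}Y)\cong\hom_{\gr\Mod(\Gamma^*)}(H^*Y,\Gamma^*)$.

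\textbf{Step 4: vanishing of $R^1V$ (the main obstacle).} It remains to show that $\Ext^1_{\gr\Mod(\Gamma^*)}(N,\Gamma^*)=0$ for the bounded-below finite-type module $N=H^*(\Sigma Y)$. I would use Margolis's theory, Theorem \ref{margolis thm 1}. Since $\Gamma^*$ is bounded below and free, it is injective among bounded-below modules. Concretely, I would take a resolution of $N$ by bounded-below finite-type free modules. Any extension of $N$ by $\Gamma^*$ is bounded below, and so it splits by the Lin--Margolis self-injectivity of the Steenrod algebra with respect to bounded-below modules \cite{MR458426}. If that route stalls, the fallback is to check directly that $\iota\Gamma$ has a length-one projective resolution whose dual is exact against bounded-below finite-type modules.
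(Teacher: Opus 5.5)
Your proposal is correct and has the same skeleton as the paper's derivation of Margolis's theorem from Theorem~\ref{top thm 1}, given in the remarks right after that theorem's statement. The shared steps are:
\begin{enumerate}
\item collapse of the $H\mathbb{F}_p$-Adams spectral sequence onto the lines $s=0,1$;
\item identification of the $V$-term with $\hom_{\gr\Mod(\Gamma^*)}(H^*(Y;\mathbb{F}_p),\Gamma^*)$ by linear duality;
\item vanishing of $R^1V$, via Margolis's injectivity of $\Gamma^*$ among bounded-below modules.
\end{enumerate}

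The one genuine difference is in your Step 3. You pass to module $\Ext$ through $\iota$ and Theorem~\ref{ext iso 1}, and then dualize inside $\gr\Mod(\Gamma^*)$. The paper instead dualizes comodule $\Ext$ directly, as in \eqref{dualization iso 1}. That route is classical and needs no input from Theorem~\ref{ext iso 1} for the identification.

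Your route works, but the claim that linear duality preserves $\Ext$ between finite-type bounded-above and bounded-below modules needs a one-line justification. Dualize a finite-type bounded-below free resolution of $H^*(Y;\mathbb{F}_p)$. This gives a resolution of $\iota H_{-*}(Y;\mathbb{F}_p)$ by bounded-above finite-type sums of suspensions of $\iota\Gamma$. These are injective by Theorem~\ref{main thm 1 from derived prods paper}, so the resolution computes $\Ext$.

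Correct one claim in Step 1. The cofiber $C$ of $Y\to Y^{\wedge}_p$ is not rational in general. For example, $Y=S/\ell$ with $\ell\neq p$ gives $C\simeq\Sigma S/\ell$. It is, however, always $S/p$-acyclic, so $p$ acts invertibly on $C$. That is the only property your argument for the contractibility of $F(H\mathbb{F}_p,C)$ actually uses.

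Finally, Step 4 is not really an obstacle. The Yoneda splitting argument you give already settles the $\Ext^1$ vanishing, so the fallback is unnecessary.
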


\begin{theorem} \label{lins thm} {\bf (Lin \cite[Theorem 4.2]{MR0402738}.)} Let $Y$ be the suspension spectrum of a finite-type CW-complex.
Let $A$ be an abelian group. Then we have an isomorphism of graded abelian groups
\begin{align*} \left[ \Sigma^* HA,Y\right] 
 &\cong \Ext_{\mathbb{Z}}^1\left( A\otimes_{\mathbb{Z}}\mathbb{Q},H_{*+1}(Y;\mathbb{Z})\right).
\end{align*}
\end{theorem}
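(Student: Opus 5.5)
Lin's theorem is classical and the paper recalls it only for comparison, but here is how I would prove it using the machinery developed above. The plan is to split $A$ into its torsion and rational parts. Then I would show that all torsion contributions vanish and compute the rational contribution from a Milnor $\lim^1$ sequence.

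\textbf{Step 1 (mod $p$ vanishing).} I would show that $F(H\mathbb{F}_p,Y)$ is contractible for every prime $p$.
\begin{itemize}
\item By Theorem \ref{margolis letteredthm}, i.e.\ Theorem \ref{top thm 1}, it suffices to know that $H_{-*}(Y;\mathbb{F}_p)$ is derived cotorsion.
\item For $Y$ a suspension spectrum of a finite-type CW-complex, this is the content of Corollary \ref{top thm 1 cor}, stated in the introduction.
\item This is the step I expect to be the real obstacle. It is where Lin's original argument does its hard work: for finite-type spaces, $\hom_{\gr\Mod(\Gamma^*)}(H^*(Y),\Gamma^*)$ vanishes because of instability, and the corresponding $R^1V$ term must vanish as well.
\item If I had to argue it directly, I would first try to show $V=0$ via Theorem \ref{margolis top thm}, using that unstable modules admit no nonzero maps to $\Gamma^*$. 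I would then handle $R^1V$ through the short exact sequence in Theorem \ref{margolis letteredthm} together with the finite-type comparison of $V$ with $\hom_{\gr\Mod(\Gamma^*)}(H^*Y,\Gamma^*)$.
\end{itemize}

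\textbf{Step 2 (torsion groups).} Next I would bootstrap to every torsion abelian group $T$.
\begin{itemize}
\item Finite cyclic $p$-groups: the cofiber sequences $H\mathbb{Z}/p\to H\mathbb{Z}/p^{n}\to H\mathbb{Z}/p^{n-1}$ and induction on $n$ give $F(H\mathbb{Z}/p^n,Y)\simeq *$.
\item Finite groups: they are finite direct sums of finite cyclic $p$-groups, so the previous case covers them.
\item Arbitrary torsion $T$: write $T$ as a filtered colimit of finite subgroups, so that $HT$ is a homotopy colimit of the corresponding spectra $H T_\alpha$. Reduce to a sequential colimit where necessary (or filter by cardinality), and apply the Milnor sequence to conclude that $F(HT,Y)\simeq *$.
\end{itemize}

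\textbf{Step 3 (reduction to $A\otimes\mathbb{Q}$).} For a general $A$, I would use two exact sequences.
\begin{itemize}
\item From $0\to\tors A\to A\to A/\tors\to 0$, Step 2 gives $F(HA,Y)\simeq F(H(A/\tors),Y)$.
\item From $0\to A/\tors\to A\otimes\mathbb{Q}\to C\to 0$, with $C$ torsion, Step 2 again gives $F(H(A/\tors),Y)\simeq F(H(A\otimes\mathbb{Q}),Y)$.
\end{itemize}
So it suffices to treat a $\mathbb{Q}$-vector space $W=A\otimes\mathbb{Q}$. Here $HW$ is a wedge of copies of $H\mathbb{Q}$ (possibly infinitely many), so $F(HW,Y)$ is the corresponding product of copies of $F(H\mathbb{Q},Y)$.

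\textbf{Step 4 (the rational computation).} Write $H\mathbb{Q}=\operatorname{hocolim}(S\xrightarrow{2}S\xrightarrow{3}S\to\cdots)$.
\begin{itemize}
\item The Milnor sequence gives
\[0\to \lim{}^1 \pi_{n+1}Y\to[\Sigma^nH\mathbb{Q},Y]\to\lim\pi_nY\to0,\]
with the limits taken over the multiplication maps.
\item These terms are $\Ext^1_{\mathbb{Z}}(\mathbb{Q},\pi_{n+1}Y)$ and $\hom_{\mathbb{Z}}(\mathbb{Q},\pi_nY)$ respectively.
\item Since $Y$ is finite-type and bounded below, each $\pi_nY$ is finitely generated. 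Hence the $\hom$ term vanishes.
\item $\Ext^1_{\mathbb{Z}}(\mathbb{Q},G)$ for finitely generated $G$ depends only on $G\otimes\mathbb{Q}$: the finite part contributes $\Ext^1_{\mathbb{Z}}(\mathbb{Q},\text{finite})=0$, leaving only the free part.
\item Stably, $\pi_*Y\otimes\mathbb{Q}\cong H_*(Y;\mathbb{Q})$, so I may replace $\pi_{n+1}Y$ by $H_{n+1}(Y;\mathbb{Z})$.
\end{itemize}
Reassembling over the wedge decomposition of $HW$, using that $\Ext^1_{\mathbb{Z}}(-,G)$ converts direct sums into products, yields Lin's isomorphism.
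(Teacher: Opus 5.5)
The gap is Step 1, and as written it is circular. The paper gives no proof of this statement; it quotes it from Lin. It then uses Lin's theorem as an input: the suspension-spectrum case of Corollary~\ref{top thm 1 cor} is proved by the single sentence ``by Lin's theorem, $F(H\mathbb{F}_p,Y)$ is contractible.'' So appealing to that corollary assumes exactly what you want to prove. Nothing else in the paper can stand in for it. The other case of Corollary~\ref{top thm 1 cor} covers homology in finitely many degrees. Corollary~\ref{top thm 1 cor 2} concerns comodules bounded below in the cohomological grading, i.e.\ spectra whose homology is bounded \emph{above}. Neither reaches the suspension spectrum of an infinite complex such as $\mathbb{CP}^\infty$.

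Your Steps 2--4, by contrast, are sound. They are essentially the paper's proof of Theorem~\ref{derived cotorsion top thm} with $P=\emptyset$, together with the remark after it: for finite-type $Y$ the Hurewicz map has finite kernel and cokernel, so it induces an isomorphism on $\Ext^1_{\mathbb{Z}}(\mathbb{Q},-)$. The passage to infinite torsion groups in Step 2 is easiest via $T=\bigoplus_p T_p$ and the filtration by $T_p[p^s]$, as in Theorem~\ref{torsion group top thm}.

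Your fallback sketch aims at the right target but misplaces the difficulty, and it leaves the key lemma unproved. For bounded-below finite-type $Y$ there is no separate $R^1V$ problem. Margolis's Theorem~\ref{margolis top thm} already identifies all of $[\Sigma^*H\mathbb{F}_p,Y]$ with $\hom_{\gr\Mod(\Gamma^*)}(H^*(Y;\mathbb{F}_p),\Gamma^*)$. Equivalently, as the paper notes after Theorem~\ref{top thm 1}, $R^1V$ vanishes here because $\Gamma^*$ is injective. The entire content of Step 1 is therefore the claim that an unstable module admits no nonzero $\Gamma^*$-linear map to a suspension of $\Gamma^*$, and this you only assert.

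The claim is true, and here is one way to prove it.
\begin{itemize}
\item Reduction. If $f$ is such a map and $x\in\tilde H^*(X;\mathbb{F}_p)$ has degree $n$, instability gives $Sq^if(x)=f(Sq^ix)=0$ for all $i>n$. So it suffices to show that every nonzero $a\in\Gamma^*$ has $Sq^{2^m}a\neq 0$ for all large $m$.
\item Setup. Write $a=\sum_S c_S\,Sq(S)$ in the Milnor basis. Choose $S$ in the support with $s_1$ maximal, and take $2^m$ larger than every first entry occurring in the support.
\item Leading term. In the Milnor product formula for $Sq(2^m)\cdot Sq(S')$, a matrix with $x_{10}<2^m$ yields first entry $s_1'-x_{11}+x_{10}$. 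This equals $s_1+2^m$ only if $s_1'>s_1$, which maximality rules out. A matrix with $x_{10}=2^m$ yields $Sq(s_1'+2^m,s_2',\dots)$. Hence the coefficient of $Sq(s_1+2^m,s_2,s_3,\dots)$ in $Sq^{2^m}a$ is $c_S\binom{s_1+2^m}{2^m}=c_S\neq 0$, by Lucas's theorem.
\item Odd primes. The same argument runs with $P^{p^m}$ and the instability condition $P^ix=0$ for $2i>n$. First project onto the $Q(E)$-component of minimal weight $\sum_k k\epsilon_k$; this works because the correction terms from commuting $P^{p^m}$ past the $Q_k$ have $Q$-parts of strictly larger weight.
\end{itemize}
With this lemma replacing the appeal to Corollary~\ref{top thm 1 cor}, your Steps 1--4 give a complete proof.
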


With the results already proven in this paper, here is a generalization of Margolis's theorem to those bounded-below spectra $Y$ which are not necessarily finite-type. 
\begin{theorem}\label{top thm 1}
Let $p$ be a prime, and let $Y$ be a spectrum. Then there exists a split short exact sequence of graded $\mathbb{F}_p$-vector spaces
\begin{equation}\label{ses 11049} 0 \rightarrow
 R^1V\left(H_*(\Sigma Y;\mathbb{F}_p)\right) \rightarrow
 \left[ \Sigma^*H\mathbb{F}_p, \hat{Y}_{H\mathbb{F}_p}\right] \rightarrow
V\left(H_*(Y;\mathbb{F}_p)\right) \rightarrow
 0.
\end{equation}
The sequence, but not necessarily the splitting, is natural in the variable $Y$.

In particular, if $Y$ is bounded below, then we have an isomorphism
\begin{align}
\label{iso 204500} \left[ \Sigma^*H\mathbb{F}_p, Y\right]
  &\cong R^1V\left(H_*(\Sigma Y;\mathbb{F}_p)\right)\oplus V\left(H_*(Y;\mathbb{F}_p)\right).
\end{align}
Consequently the graded $\Gamma$-comodule $H_*(Y; \mathbb{F}_p)$ is derived cotorsion if and only if the mapping spectrum $F(H\mathbb{F}_p,Y)$ is contractible.
\end{theorem}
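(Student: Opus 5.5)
The plan is to apply Proposition \ref{adams E2 description} with $E = H\mathbb{F}_p$. Here $E_*E = \Gamma$ is the dual Steenrod algebra, which is free over $\pi_*(H\mathbb{F}_p)=\mathbb{F}_p$, and the Adams $E_2$-page hypothesis is the standard one for $H\mathbb{F}_p$ cited in the footnote there. This gives a conditionally convergent spectral sequence
\[ E_2^{s,t}\cong \left(R^sV(H_{-*}(Y;\mathbb{F}_p))\right)^{-t}\Rightarrow \left[\Sigma^{t-s}H\mathbb{F}_p,\hat{Y}_{H\mathbb{F}_p}\right],\]
with $d_r: E_r^{s,t}\to E_r^{s+r,t+r-1}$. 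By Proposition \ref{vanishing above R1V}, the $E_2$-page is concentrated on the two lines $s=0$ and $s=1$. The dual Steenrod algebra satisfies its hypotheses: it is a $\mathcal{P}$-algebra, and the Mitchell condition holds by \cite[Theorem 5.9]{MR5014030}. Every $d_r$ with $r\geq 2$ raises $s$ by at least $2$, so all differentials vanish and $E_2=E_\infty$. Since $E_r$ is constant from $r=2$ on, $\lim^1_r E_r$ vanishes, and Proposition \ref{adams E2 description} then upgrades conditional convergence to strong convergence.

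With only two nonzero filtration layers, the filtration on $[\Sigma^{n}H\mathbb{F}_p,\hat{Y}_{H\mathbb{F}_p}]$ has length two. This yields a short exact sequence with sub-object $E_\infty^{1,n+1}$ (the $R^1V$ term, in internal degree shifted by one) and quotient $E_\infty^{0,n}$ (the $V$ term). Rewriting the shift in internal degree $t\mapsto t+1$ as a suspension of $Y$ gives $R^1V(H_*(\Sigma Y;\mathbb{F}_p))$, as in \eqref{ses 11049}. I will track signs carefully using the cohomological grading conventions of \cref{conventions}. Naturality in $Y$ follows from naturality of the Adams spectral sequence and its filtration.

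Splitting is automatic because everything is an $\mathbb{F}_p$-vector space. The homotopy groups of $F(H\mathbb{F}_p,-)$ are $\mathbb{F}_p$-modules since $p$ annihilates the identity of $H\mathbb{F}_p$. So each degree of the sequence is a short exact sequence of vector spaces, which splits, though not naturally.

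For bounded-below $Y$, I replace $\hat{Y}_{H\mathbb{F}_p}$ by $Y$, giving \eqref{iso 204500}. This is the step I expect to need the most care. Bousfield's theorem gives $\hat{Y}_{H\mathbb{F}_p}\simeq Y^\wedge_p$ (the $L_0\Lambda_p$-type completion) for bounded-below $Y$. Maps out of $H\mathbb{F}_p$ should not see the difference between $Y$ and its $p$-completion. The fiber of $Y\to Y^\wedge_p$ has homotopy groups which are $\mathbb{Z}[1/p]$-modules, i.e.\ it is $H\mathbb{F}_p$-acyclic, so it is a module over $S[1/p]$. Since $H\mathbb{F}_p\wedge S[1/p]\simeq *$, there are no nonzero maps $\Sigma^*H\mathbb{F}_p\to F$ for an $S[1/p]$-local spectrum $F$. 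The long exact sequence of $[\Sigma^*H\mathbb{F}_p,-]$ then identifies $[\Sigma^*H\mathbb{F}_p,Y]$ with $[\Sigma^*H\mathbb{F}_p,\hat{Y}_{H\mathbb{F}_p}]$.

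The final claim follows directly from \eqref{iso 204500}. $F(H\mathbb{F}_p,Y)$ is contractible iff both summands vanish in all degrees. Since $H_*(\Sigma Y)$ is a suspension of $H_*(Y)$ and $R^nV=0$ for $n\ge 2$, this is exactly the statement that $H_*(Y;\mathbb{F}_p)$ is derived cotorsion.
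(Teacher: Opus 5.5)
Your proof of the exact sequence is the paper's. Propositions \ref{adams E2 description} and \ref{vanishing above R1V} put the $E_2$-page of the $H\mathbb{F}_p$-Adams spectral sequence for $F(H\mathbb{F}_p,Y)$ on the lines $s=0,1$, so there are no differentials, convergence is strong, and the two-step filtration is the sequence. The splitting is linear algebra. Your observation that $p\cdot\id_{H\mathbb{F}_p}\simeq 0$ also justifies what the paper only asserts, namely that the middle term is an $\mathbb{F}_p$-vector space.

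The genuine difference is in replacing $\hat{Y}_{H\mathbb{F}_p}\simeq L_{S/p}Y$ by $Y$ when $Y$ is bounded below.
\begin{itemize}
\item The paper uses Bousfield's arithmetic pullback square relating $Y$, $\prod_\ell L_{S/\ell}Y$ and their rationalizations. It adds the facts that $H\mathbb{F}_p$ maps trivially into rational spectra and into $\ell$-complete spectra for $\ell\neq p$. Extracting the isomorphism from that square implicitly needs a Mayer--Vietoris argument.
\item You use only the fiber sequence $F(S[1/p],Y)\to Y\to L_{S/p}Y$. Multiplication by $p$ is invertible on the fiber but zero on $[\Sigma^n H\mathbb{F}_p,-]$. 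Hence $[\Sigma^*H\mathbb{F}_p,F(S[1/p],Y)]=0$, and the long exact sequence gives the isomorphism.
\end{itemize}
Your route is shorter and isolates the single fact needed. The paper's version is the same fact seen through the fracture square, which it reuses in Theorem \ref{torsion group top thm}. State your key point as $p$-invertibility of the fiber: $H\mathbb{F}_p$-acyclicity by itself does not make a spectrum an $S[1/p]$-module.

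There is one slip in your bookkeeping. In homological grading, the filtration-one term in stem $n$ is $E_\infty^{1,n+1}\cong\Ext^1_{\gr\Comod(\Gamma)}(\Sigma^{n+1}\Gamma,H_*(Y;\mathbb{F}_p))$. That is $R^1V(H_*(Y;\mathbb{F}_p))$ in degree $n+1$, which equals $R^1V(H_*(\Sigma^{-1}Y;\mathbb{F}_p))$ in degree $n$. By contrast, $R^1V(H_*(\Sigma Y;\mathbb{F}_p))$ in degree $n$ is $R^1V(H_*(Y;\mathbb{F}_p))$ in degree $n-1$. So your sentence claiming the shift produces $H_*(\Sigma Y;\mathbb{F}_p)$ is wrong: the suspension in the printed sequence points the wrong way. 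Compare $\pi_{n+1}Y$ in Theorem \ref{derived cotorsion top thm} and Lin's $H_{*+1}$. The paper's proof does not address this either. It does not affect the derived-cotorsion criterion, which only involves vanishing in all degrees.
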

Out of convenience, in the statement of Theorem \ref{top thm 1}, we have temporarily dropped the cohomological grading convention from \cref{conventions}. The grading on $H_*(Y; \mathbb{F}_p)$ and on $H_*(\Sigma Y;\mathbb{F}_p)$ in the statement of the theorem is the homological grading, i.e., $H_n(Y;\mathbb{F}_p)$ is indeed in degree $n$.

As mentioned in the introduction, to see that Theorem \ref{top thm 1} generalizes Margolis's theorem, observe the following:
\begin{itemize}
\item as a consequence of Proposition \ref{I prop 1} and basic properties of linear duals, $V(H_*(Y;\mathbb{F}_p))$ is isomorphic to $\hom_{\gr\Mod(\Gamma^*)}(H^*(Y;\mathbb{F}_p),\Gamma^*)$, because $Y$ is assumed finite-type.
\item For $Y$ bounded-below and finite-type, linear dualization yields an isomorphism (surely this was well-known around the time of Milnor--Moore's paper \cite{MR0174052}, but we don't know many explicit mentions in the literature; \cite[section 2]{bakerPalgebraspreprint2} has a nice exposition, however):
\begin{align}\label{dualization iso 1}
 \Ext^*_{\gr\Comod(\Gamma)}\left(\Gamma,H_*(Y;\mathbb{F}_p) \right)
  &\cong\Ext^*_{\gr\Mod(\Gamma^*)}\left(H^*(Y;\mathbb{F}_p),\Gamma^* \right)
\end{align}
By Margolis's theorem \cite{MR341488} that $\Gamma^*$ is injective in the category of {\em bounded-below} graded $\Gamma^*$-modules, the right-hand side of \eqref{dualization iso 1} vanishes in positive cohomological degrees. 
Hence $R^1V(H_*(Y;\mathbb{F}_p)) \cong \Ext^1_{\gr\Comod(\Gamma)}(\Gamma, H_*(Y;\mathbb{F}_p))$ vanishes.
\end{itemize}
\begin{proof}[Proof of Theorem \ref{top thm 1}]
By Propositions \ref{adams E2 description} and \ref{vanishing above R1V}, the $H\mathbb{F}_p$-Adams spectral sequence converging to $\left[ \Sigma^* H\mathbb{F}_p,\hat{Y}_{H\mathbb{F}_p}\right] \cong \pi_*\left(  H\mathbb{F}_p,\hat{Y}_{H\mathbb{F}_p}\right)$ is concentrated on the $\Ext^0$ and $\Ext^1$-lines already at the $E_2$-page. The horizontal vanishing line at $E_2$ yields complete convergence of the spectral sequence, in the sense of Bousfield, and it leaves no room for nonzero differentials and only one extension problem, which is precisely the short exact sequence \eqref{ses 11049}.

For bounded-below $Y$, the $H\mathbb{F}_p$-nilpotent completion $\hat{Y}_{H\mathbb{F}_p}$ of $Y$ is weakly homotopy equivalent to
the $p$-completion of $Y$, i.e., the Bousfield localization of $Y$ at the Moore spectrum $S/p$, by \cite[Theorem 6.6]{MR551009}. This is why there are no $H\mathbb{F}_p$-nilpotent completion symbols in \eqref{iso 204500}. We identify $[\Sigma^t H\mathbb{F}_p,Y]$ with 
$\left[ \Sigma^t H\mathbb{F}_p,L_{S/p}Y\right]$
using the homotopy pullback square
\[\xymatrix{
 Y \ar[r]\ar[d] & \prod_{\ell\text{\ prime}} L_{S/\ell}Y \ar[d] \\
 H\mathbb{Q}\wedge Y \ar[r] & H\mathbb{Q}\wedge\left( \prod_{\ell\text{\ prime}} L_{S/\ell}Y \right) 
}\] 
of \cite[Proposition 2.9]{MR551009} and the fact that $H\mathbb{F}_p$ cannot map non-nulhomotopically into a rational spectrum or a spectrum completed away from $p$. 

The splittings of these short exact sequences exist simply because these are sequences of graded $\mathbb{F}_p$-vector spaces.
\end{proof}

\begin{corollary}\label{top thm 1 cor}
Let $p$ be a prime, and let $\Gamma$ be the mod $p$ dual Steenrod algebra. 
Let $Y$ be a bounded-below $p$-complete spectrum. Suppose either that the homology groups of $Y$ are concentrated in finitely many degrees, or that $Y$ is the suspension spectrum of a finite-type CW-complex. Then the graded $\Gamma$-comodule $H_*(Y; \mathbb{F}_p)$ is derived cotorsion.
\end{corollary}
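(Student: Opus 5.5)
By Theorem \ref{top thm 1}, since $Y$ is bounded below, $H_*(Y;\mathbb{F}_p)$ is derived cotorsion if and only if $F(H\mathbb{F}_p,Y)$ is contractible. So the plan is to prove the topological statement $[\Sigma^*H\mathbb{F}_p,Y]=0$ in each of the two cases, rather than attack $R^1V$ algebraically. (The case $V=0$ alone already follows from Proposition \ref{cotorsion vanishes on boundeds}. It is the vanishing of $R^1V$ that needs topology, and the text above Corollary \ref{top thm 1 cor} promises exactly such a topological argument.)

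\emph{Case 1: homology concentrated in finitely many degrees.} First I reduce to Eilenberg--Mac Lane spectra.
\begin{itemize}
\item Since $Y$ is bounded below and $H_*(Y;\mathbb{F}_p)$ is bounded, the $p$-complete spectrum $Y$ has bounded-above homotopy. The mod $p$ Hurewicz theorem applied to the connective covers of $Y$ shows that a $p$-complete bounded-below spectrum with vanishing mod $p$ homology above degree $N$ has no homotopy above $N$.
\item So $Y$ has a finite Postnikov tower with layers $\Sigma^n H\pi_nY$, each $\pi_nY$ being $L_0\Lambda_p$-complete.
\item By the five lemma on long exact sequences of $[\Sigma^*H\mathbb{F}_p,-]$ along the tower, it suffices to show $[\Sigma^*H\mathbb{F}_p,HB]=0$ for every such group $B$.
\item This is the classical computation $H^*(H\mathbb{F}_p;B)=0$ for Ext-$p$-complete $B$. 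It is the \emph{bounded} case of Margolis's vanishing, and also follows from the universal coefficient theorem: $H_*(H\mathbb{F}_p;\mathbb{Z})$ is $p$-torsion in positive degrees, and in degree $0$ one uses $\hom(\mathbb{F}_p,B)$ together with $\Ext(\mathbb{Z}/p,B)$.
\end{itemize}
I must be careful here. $\hom(\mathbb{F}_p,B)$ need not vanish (for example $B=\mathbb{F}_p$), so the naive claim is false. The statement should instead be read as saying there are no \emph{stable} maps $H\mathbb{F}_p\to\Sigma^nHB$ in all degrees simultaneously. I will therefore instead invoke the more robust fact that $F(H\mathbb{F}_p,Y)$ is contractible for $Y$ bounded above and bounded below with $p$-complete homotopy. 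This is the Lin/Margolis-type result for bounded spectra, obtainable from Theorem \ref{margolis top thm} for finite-type layers plus a colimit/limit argument on the group $B$.

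\emph{Case 2: suspension spectra of finite-type complexes.} Here I use Lin's Theorem \ref{lins thm} with $A=\mathbb{F}_p$. Since $\mathbb{F}_p\otimes\mathbb{Q}=0$, this gives $[\Sigma^*H\mathbb{F}_p,\Sigma^\infty X]=0$. The remaining step is to pass to the $p$-completion $Y$ using the arithmetic square from the proof of Theorem \ref{top thm 1}: maps from $H\mathbb{F}_p$ into $\Sigma^\infty X$ and into its $p$-completion agree, since $H\mathbb{F}_p$ maps trivially into rational spectra and into spectra completed away from $p$. (If $Y$ is meant to be the $p$-completion of such a suspension spectrum, this identifies the two mapping spectra.)

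\emph{Main obstacle.} The main obstacle is Case 1, specifically controlling $[\Sigma^*H\mathbb{F}_p,HB]$ for non-finitely-generated complete $B$. I would try first to write $B$ as a limit/extension of $\mathbb{F}_p$-vector spaces via its $p$-adic filtration, and then use that $[\Sigma^* H\mathbb{F}_p,\Sigma^n H\mathbb{F}_p]$ is the Steenrod algebra. That last fact means vanishing can only hold after assembling the whole bounded tower, so this is the point to check most carefully.
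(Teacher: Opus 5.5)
Your opening reduction and your Case 2 match the paper. The reduction is that, by Theorem \ref{top thm 1}, contractibility of $F(H\mathbb{F}_p,Y)$ is equivalent to derived cotorsion for bounded-below $Y$. Case 2 is Lin's Theorem \ref{lins thm} with $A=\mathbb{F}_p$. Case 1, however, has a genuine gap, and it fails at two independent points.

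First, the claim in your first bullet is false: bounded mod $p$ homology does not force bounded homotopy for a $p$-complete bounded-below spectrum. The $p$-complete sphere $S^0_p$ satisfies the hypotheses of Case 1, since $H_*(S^0_p;\mathbb{F}_p)=\mathbb{F}_p$ sits in degree $0$. Yet $\pi_*(S^0_p)$ is the $p$-completed stable stems, which is nonzero in infinitely many degrees. Hurewicz only identifies the bottom homotopy group of each connective cover, and $H_*(\tau_{\geq n}Y;\mathbb{F}_p)$ is not a truncation of $H_*(Y;\mathbb{F}_p)$. So there is no finite Postnikov tower.

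Second, the ``more robust fact'' you fall back on is false. $Y=H\mathbb{F}_p$ has bounded, $p$-complete homotopy, but $[\Sigma^*H\mathbb{F}_p,H\mathbb{F}_p]$ is the whole Steenrod algebra, as Theorem \ref{margolis top thm} itself shows. Likewise the Bockstein $H\mathbb{F}_p\to\Sigma H\mathbb{Z}_p$ is nonzero. In general, every nonzero layer $\Sigma^nHB$ with $B$ $p$-complete receives nonzero maps from a suspension of $H\mathbb{F}_p$, via $\hom(\mathbb{F}_p,B)$ or $\Ext(\mathbb{F}_p,B)=B/p$. So a layer-by-layer reduction along a Postnikov tower cannot work. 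The obstacle you flag at the end is not a technicality; it signals that this is the wrong decomposition.

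The missing idea is to decompose $Y$ by cells, that is, by homology, rather than by homotopy. This is what the paper does.
\begin{itemize}
\item Since the homology of the bounded-below spectrum $Y$ is concentrated in finitely many degrees, $Y$ admits a CW structure with cells in only finitely many dimensions.
\item So $Y$ is built from wedges of spheres by finitely many cofiber sequences.
\item Each wedge of spheres is a suspension spectrum, hence harmonic after $p$-localization by the Hopkins--Ravenel theorem.
\item Harmonic spectra satisfy two-out-of-three in cofiber sequences, so $Y$ is harmonic.
\item Every suspension of $H\mathbb{F}_p$ is dissonant (Ravenel), so $[\Sigma^*H\mathbb{F}_p,Y]=0$.
\item Theorem \ref{top thm 1} together with Proposition \ref{vanishing above R1V} then finishes the proof.
\end{itemize}
Lin's theorem cannot replace Hopkins--Ravenel here. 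The wedges may have infinitely many spheres in a given dimension, so they need not be finite-type.

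A minor correction to your parenthetical about Proposition \ref{cotorsion vanishes on boundeds}. In the paper's cohomological grading, the homology of a bounded-below spectrum is a bounded-\emph{above} comodule. So that proposition gives $V=0$ in Case 1, where the homology is bounded on both sides, but not for infinite complexes in Case 2. For general bounded-below $Y$ it fails outright: for $Y=H\mathbb{F}_p$ one has $V(\Gamma)\cong\Gamma^*\neq 0$.
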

\begin{proof}
If $Y$ is the suspension spectrum of a finite-type CW-complex,
then by Lin's theorem, $F(H\mathbb{F}_p,Y)$ is contractible. Hence the homology of $Y$ is derived cotorsion as a consequence of Theorem \ref{top thm 1}.
If $Y$ is instead assumed to have homology concentrated in finitely many degrees, then we use a different argument. 
Recall, e.g. from \cite[section 1]{MR737778}, that a $p$-local spectrum is {\em harmonic} if it is local, in the sense of Bousfield, with respect to the wedge $E(0)\vee E(1)\vee E(2)\vee\dots$ of the $p$-local Johnson--Wilson theories\footnote{We refer the reader to \cite{MR737778} for a complete introduction, but here is at least a footnote for readers who are unfamiliar with this circle of ideas. For each prime $p$, there is a sequence of generalized homology theories $E(0)_*, E(1)_*, E(2)_*, \dots$, the {\em Johnson--Wilson theories,} which depend on $p$ but for which the prime $p$ is traditionally suppressed from the notation. The zeroth Johnson--Wilson theory $E(0)_*$ is rational homology, while the first Johnson--Wilson theory $E(1)_*$ is the Adams summand of $p$-local complex $K$-theory. The coefficient ring of $E(n)_*$ is the graded algebra $\mathbb{Z}_{(p)}[v_1, \dots ,v_{n-1},v_n^{\pm 1}]$ over the coefficient ring $\mathbb{Z}_{(p)}[v_1, xv_2, \dots ]$ of $p$-local Brown--Peterson homology. Basics of Bousfield localization are reviewed in Recollection \ref{bousfield recollection}.}. Similarly, a $p$-local spectrum is {\em dissonant} if it is acyclic with respect to $E(0)\vee E(1)\vee \dots$, i.e., its $E(n)$-homology vanishes for all $n\geq 0$. Finite spectra are harmonic \cite[Corollary 4.5]{MR737778}, and more generally, suspension spectra are harmonic \cite{MR1317578}. 

A bounded-below spectrum whose homology groups are concentrated in finitely many degrees admits a CW-decomposition with cells in only finitely many degrees. Any such spectrum is harmonic, as one can prove by a finite induction up the skeleta of the CW-decomposition. The initial step in the induction is the observation that any coproduct of spheres is a suspension spectrum and hence harmonic. The inductive step is the observation that, given a cofiber sequence $Y_1\rightarrow Y_2 \rightarrow Y_3$ in which two of the three spectra are harmonic, the third is also harmonic, by generalities about Bousfield localization.

Hence the spectrum $Y$, from the statement of the corollary, is harmonic. Consequently every map from a dissonant spectrum to $Y$ is nulhomotopic. It is classical \cite[Theorem 4.7]{MR737778} that any suspension of $H\mathbb{F}_p$ is dissonant. Hence $\left[ \Sigma^*H\mathbb{F}_p, Y\right]$ is trivial, hence $R^nV(H_*(Y; \mathbb{F}_p))$ vanishes for $n=0,1$ by Theorem \ref{top thm 1}, hence $R^nV(H_*(Y; \mathbb{F}_p))$ vanishes for all $n$ by Proposition \ref{vanishing above R1V}.
\end{proof}

Here is a purely algebraic consequence of Corollary \ref{top thm 1 cor}, a strengthening of Proposition \ref{cotorsion vanishes on boundeds}. The statement of this corollary is purely algebraic, but our proof passes through stable homotopy theory, since the proof of Corollary \ref{top thm 1 cor} used some properties of Bousfield localization of spectra. Surely a purely algebraic proof is possible, but the author does not know one.
\begin{corollary}\label{top thm 1 cor 2}
Let $p$ be a prime, and let $\Gamma$ be the mod $p$ dual Steenrod algebra. 
Let $M$ be a bounded-below graded $\Gamma$-comodule. Then $M$ is derived cotorsion, i.e., $R^*V(M)$ vanishes.
\end{corollary}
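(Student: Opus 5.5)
The plan is to realize $M$ topologically and then apply Corollary \ref{top thm 1 cor}. We use the Eilenberg--Mac Lane-type realization available over $\mathbb{F}_p$. Take $M$ bounded below (in the homological grading used in Theorem \ref{top thm 1}). Let $W=UM$ be the underlying graded $\mathbb{F}_p$-vector space. Then the extended comodule $E(W)=\Gamma\otimes W$ is the mod $p$ homology of the generalized Eilenberg--Mac Lane spectrum $\bigvee_b \Sigma^{d(b)}H\mathbb{F}_p$, with one wedge summand for each basis element of $W$.

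So every bounded-below extended comodule is the homology of a bounded-below spectrum. That spectrum is of course not derived cotorsion, since it maps nontrivially from $H\mathbb{F}_p$. This means Corollary \ref{top thm 1 cor} cannot be applied directly, and we must instead use finiteness of degrees.

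First reduce to comodules concentrated in finitely many degrees. With $M^{>j}$ and $M_{\le j}$ as in the proof of Lemma \ref{projectivity lemma} (in the paper's cohomological grading), a bounded-below $M$ has an exhaustive filtration by the subcomodules $M^{>j}$. Each finite piece $M^{>j}/M^{>j'}$ is concentrated in finitely many degrees.

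The key step is then to show that every comodule concentrated in finitely many degrees is derived cotorsion. Such a comodule is a finite iterated extension of comodules concentrated in a single degree. A comodule concentrated in one degree is just a suspension of a trivial comodule $\Sigma^d W$ with trivial coaction. This is realized as the homology of the Moore-type spectrum $\bigvee \Sigma^{d}S/p$? Not quite, since $S/p$ has two cells. Better is to realize $\Sigma^dW$ as $H_*$ of a wedge of spheres, $\bigvee_b S^{d}$, after $p$-completion. That spectrum is bounded below with homology in one degree, so Corollary \ref{top thm 1 cor} gives $R^*V(\Sigma^d W)=0$. The long exact sequence for $R^*V$ along short exact sequences then gives that anything built in finitely many extensions from derived cotorsion comodules is derived cotorsion. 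Hence every comodule concentrated in finitely many degrees is derived cotorsion.

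It remains to pass to the limit. Since $V(-)\cong\hom_{\gr\Comod(\Gamma)}(\Gamma,-)$, and since by Theorem \ref{ext iso 1} we have $R^nV(M)\cong\Ext^n_{\gr\Mod(\Gamma^*)}(\iota\Gamma,\iota M)$, I would argue degreewise. Fix an internal degree $t$. The degree-$t$ part of $\Ext^n(\iota\Gamma,\iota M)$ only sees $\iota M$ in a bounded range on one side, because $\iota\Gamma$ is bounded above and $M$ is bounded below. More precisely, I would use the colimit $M=\colim_{j'} M^{>j}\cap M_{\le j'}$, i.e. $M$ as the filtered union of its truncations $M_{[n,j']}$ on the side where $M$ is unbounded. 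Ext out of $\iota\Gamma$ need not commute with filtered colimits, since $\iota\Gamma$ is not finitely presented. This is the main obstacle.

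I would handle it as in Lemma \ref{projectivity lemma}, on the other side. Write $M$ as an extension of $M_{\le j}$, which has finitely many degrees because $M$ is bounded below, by $M^{>j}$. Use that $R^nV$ vanishes for $n\ge2$ (Proposition \ref{vanishing above R1V}), so only $V$ and $R^1V$ matter. A homogeneous element of $V(M^{>j})$ in a fixed degree $t$ corresponds to a map $\Sigma^t\Gamma\to M^{>j}$. Since $\Gamma$ is concentrated in nonpositive degrees, the image lands above degree $j$. Once $j$ is larger than $t$, the generator-degree argument of Proposition \ref{cotorsion vanishes on boundeds} shows such maps vanish, and the analogous degree argument with the Margolis $A(d)$-freeness of $\iota\Gamma$ applies to $R^1V$ through a projective-dimension-one resolution of $\iota\Gamma$ by free modules. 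With this, the long exact sequence in degree $t$ gives $R^*V(M)^t\cong R^*V(M_{\le j})^t=0$. The step I expect to be hardest is this degree bound on $R^1V(M^{>j})^t$. I would first try to control the generator degrees of a length-one free resolution of $\iota\Gamma$ in each degree.
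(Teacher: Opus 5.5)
Your single-degree and finite-extension steps are correct and agree with the paper. Comodules in one degree are homology of wedges of spheres, hence derived cotorsion by Corollary~\ref{top thm 1 cor}, and the long exact sequence handles finite extensions. From $0\to M^{>j}\to M\to M_{\le j}\to 0$ you then correctly get $R^nV(M)\cong R^nV(M^{>j})$ for every $j$. (The $V$-part is harmless: $V(M)=0$ for all bounded-below $M$ by Proposition~\ref{cotorsion vanishes on boundeds}.)

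The gap is the remaining claim that $R^1V(M^{>j})^t=0$ once $j\gg t$. You assert it but do not prove it, and the mechanism you point to does not give it:
\begin{itemize}
\item Freeness of $\iota\Gamma$ over each $A(d)$ only kills $\Ext^{\ge 1}_{A(d)}(\iota\Gamma,-)$, not $\Ext^1_{\gr\Mod(\Gamma^*)}(\iota\Gamma,-)$.
\item A free resolution $0\to F_1\to F_0\to\iota\Gamma\to 0$ over $\Gamma^*$ comes with no a priori bound on the degrees of the generators of $F_1$. Since $\iota\Gamma$ is not bounded below, no Nakayama or minimal-resolution argument is available. Indeed $k\otimes_{\Gamma^*}\iota\Gamma=0$ by Proposition~\ref{cotorsion vanishes on boundeds}, although $\iota\Gamma\neq 0$.
\item For typical modules generated in degrees $\le 0$, relations do occur in arbitrarily high degrees. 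For $k$ itself, the relation module $\overline{\Gamma^*}$ needs generators in every degree $2^i$ at $p=2$.
\end{itemize}
Given your finite-degree step, the bound you want is essentially equivalent to the corollary itself.

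The paper closes this step differently. It uses $M\cong\lim^{\Gamma}_i M_{\le i}$ with vanishing higher derived limits, by Lemma~\ref{easy limit in comodules}, which rests on the distinguished local cohomology input. So $R^*V(M)$ is the abutment of a Grothendieck spectral sequence with $E_2^{s,t}\cong R^s\lim_i R^tV(M_{\le i})$, and this $E_2$-page vanishes by the finite-degree case.

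If you want to keep your degree-bound idea, the tool that makes it work is a Milnor sequence over the $A(d)$, not a free $\Gamma^*$-resolution. Two facts feed into it: $\Gamma^*$ is flat as a right $A(d)$-module, and $\iota\Gamma\cong\colim_d\Gamma^*\otimes_{A(d)}\iota\Gamma$. The telescope sequence together with Eckmann--Shapiro then gives, for each $t$ (degree-preserving maps throughout),
\[ 0\to\lim_d{}^1\hom_{A(d)}(\Sigma^t\iota\Gamma,N)\to\Ext^1_{\gr\Mod(\Gamma^*)}(\Sigma^t\iota\Gamma,N)\to\lim_d\Ext^1_{A(d)}(\Sigma^t\iota\Gamma,N)\to 0. \]
The right-hand term vanishes by $A(d)$-freeness. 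If $N$ is bounded below, the inverse system on the left is eventually zero, by exactly the generator-degree argument of Proposition~\ref{cotorsion vanishes on boundeds} together with Observation~\ref{margolis observation}. Combined with Theorem~\ref{ext iso 1}, this would give $R^1V(M)=0$ for every bounded-below $M$ directly, without the truncations or the topological input.

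As written, though, your proposal contains no argument for this key vanishing. It is incomplete at exactly the point where the paper does its real work.
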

\begin{proof}
Let $N$ be a graded $\Gamma$-comodule which is concentrated in a single degree, i.e., $N$ is a coproduct of copies of $\Sigma^n \mathbb{F}_p$ for some single integer $n$. Then $N$ is the mod $p$ homology of a wedge of $(-n)$-spheres (by our cohomological grading convention from \cref{conventions}), so by Corollary \ref{top thm 1 cor}, $N$ is derived cotorsion. 

Now let $d$ be the least degree in which $M$ is nonzero.
We have the sequence of surjective maps of graded $\Gamma$-comodules 
\begin{equation}\label{seq 3003a} \dots \rightarrow M_{\leq d+2} \rightarrow M_{\leq d+1}\rightarrow M_{\leq d}\rightarrow M_{\leq d-1} = 0\end{equation}
defined in Lemma \ref{easy limit in comodules}. 
Since $M_{\leq d}$ is concentrated in a single degree, it is derived cotorsion. 

That was the initial step in an induction. The inductive step is as follows:
suppose we have already shown that $M_{\leq i}$ is derived cotorsion for some integer $i$. Since the kernel of $M_{\leq i+1}\rightarrow M_{\leq i}$ is concentrated in a single degree, $M_{\leq i+1}$ is an extension of a derived cotorsion comodule by a derived cotorsion comodule. Hence $M_{\leq i+1}$ is also derived cotorsion. 

Hence $M \cong \lim_i^{\Gamma} M_{\leq i}$ is a limit of comodules which are each derived cotorsion\footnote{See comment preceding Lemma \ref{easy limit in comodules} for the notation $\lim^{\Gamma}$, which denotes the limit taken in the category of graded $\Gamma$-comodules.}. The isomorphism $M \cong \lim_i^{\Gamma} M_{\leq i}$, and the vanishing of $R^n\lim_i^{\Gamma}M_{\leq i}$ for $n>0$, were shown in Lemma \ref{easy limit in comodules}. We now have isomorphisms
\begin{align}
\nonumber R^nV(M)
  &\cong \Ext^n_{\gr\Comod(\Gamma)}\left(\Gamma,\lim_i{}^{\Gamma} M_{\leq i} \right) \\
\label{iso 1000001}  &\cong R^n\left(\hom_{\gr\Comod(\Gamma)}\left(\Gamma,\lim_i{}^{\Gamma}(-) \right)\right)\left( \dots \rightarrow M_{\leq 1}\rightarrow M_{\leq 0}\rightarrow \dots\right) \\
\label{iso 1000002}  &\cong R^n\left(\lim\hom_{\gr\Comod(\Gamma)}\left(\Gamma, -\right)\right)\left( \dots \rightarrow M_{\leq 1}\rightarrow M_{\leq 0}\rightarrow \dots\right) ,
\end{align}
where the isomorphism \eqref{iso 1000001}
is due to collapse of the spectral sequence \eqref{gss 1000000} from the proof of Theorem \ref{ext iso 1}. The collapse of the spectral sequence was proven in that theorem's proof as well.

The right derived functor \eqref{iso 1000002} is calculable using another Grothendieck spectral sequence
\begin{align*}
 E_2^{s,t} &\cong R^s\lim_i \left( \Ext^t_{\gr\Comod(\Gamma)}(\Gamma,M_{\leq i})\right) \\
 &\Rightarrow R^{s+t}\left(\lim\hom_{\gr\Comod(\Gamma)}\left(\Gamma, -\right)\right)\left( \dots \rightarrow M_{\leq 1}\rightarrow M_{\leq 0}\rightarrow \dots\right).
\end{align*}
See the note in the last paragraph of the proof of Lemma \ref{enough injs lemma} for an explanation of why this Grothendieck spectral sequence exists, i.e., why every object in the category $\gr\Comod(\Gamma)^{\mathbb{N}^{\op}}$ of inverse sequences of graded comodules embeds into an injective object in $\gr\Comod(\Gamma)^{\mathbb{N}^{\op}}$ which is sent by the functor 
\begin{align*}
 \hom_{\gr\Comod(\Gamma)}\left(\Gamma, -\right)
 : \gr\Comod(\Gamma)^{\mathbb{N}^{\op}} &\rightarrow \gr\Ab
\end{align*}
to a $\lim$-acyclic sequence of abelian groups.

Now we simply observe that the $E_2$-page of this spectral sequence is zero, since we already showed that each $M_{\leq i}$ is derived cotorsion. Hence $M$ itself is derived cotorsion.
\end{proof}

With Theorem \ref{top thm 1} in hand, we also get generalizations of the theorem of Lin (Theorem \ref{lins thm}) to the case of a bounded-below spectrum $Y$ which is not a suspension spectrum and not finite-type. None of the generalizations are as simply stated as Theorem \ref{lins thm}, because it is simply a fact that, without Lin's assumptions, it is a bit more complicated to describe all maps from an Eilenberg-Mac Lane spectrum to $Y$. Most of the complications really arise from the derived functors of $V$ applied to the homology comodule $H_*(Y;\mathbb{F}_p)$. When $R^*V(H_*(Y;\mathbb{F}_p))$ vanishes (i.e., when $H_*(Y;\mathbb{F}_p)$ is derived cotorsion), we get a satisfying and simple statement, Corollary \ref{nice lin cor}.

We prefer to state several generalizations of Lin's theorem, each of which works in a certain reasonable level of generality, more general than Lin's theorem. While it is possible to formulate a single generalization that covers all these cases, its statement is so cumbersome that we feel it is not worth including, since we suspect that each case of likely application of the general result is also a case of one of the special results, Theorems \ref{torsion group top thm} and \ref{derived cotorsion top thm} and \ref{torsion-free HA}, which are much more easily stated.

For the rest of this section, our results are most conveniently stated using homological grading on the homology comodule $H_*(Y;\mathbb{F}_p)$ of a spectrum $Y$, so that $H_n(Y;\mathbb{F}_p)$ is in degree $n$. From now on, we abandon the cohomological grading convention from \cref{conventions}.
\begin{theorem}\label{torsion group top thm}
Let $p$ be a prime, let $Y$ be a bounded-below spectrum, and let $A$ be an abelian $p$-group. Write $A[p^n]$ for the $p^n$-torsion subgroup of $A$. Then there exists a conditionally convergent spectral sequence
\begin{align*}
 E_1^{s,t} &\cong \hom_{\mathbb{F}_p}\left( A[p^s]/A[p^{s-1}],\left(R^1V(H_*(\Sigma Y;\mathbb{F}_p))\oplus V(H_*(Y;\mathbb{F}_p))\right)^t \right)\\
  & \Rightarrow \left[ \Sigma^t HA,Y\right] \\
 d_r: E_r^{s,t} &\rightarrow E_r^{s+r,t-1},
\end{align*} 
where $\left(R^1V(H_*(\Sigma Y;\mathbb{F}_p))\oplus V(H_*(Y;\mathbb{F}_p))\right)^t$ denotes the degree $t$ summand in the graded $\mathbb{F}_p$-vector space $R^1V(H_*(\Sigma Y;\mathbb{F}_p))\oplus V(H_*(Y;\mathbb{F}_p))$.
\end{theorem}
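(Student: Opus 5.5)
We will filter $A$ by its $p^n$-torsion subgroups and build a spectral sequence out of the resulting tower of Eilenberg--Mac Lane spectra. Theorem \ref{top thm 1} will compute the mapping groups out of each filtration quotient.

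\textbf{Step 1: the filtration.} Since $A$ is an abelian $p$-group, $A = \colim_n A[p^n]$ is the union of the increasing chain
\[ 0 = A[p^0]\subseteq A[p^1]\subseteq A[p^2]\subseteq \cdots. \]
Each quotient $A[p^s]/A[p^{s-1}]$ is killed by $p$, because multiplication by $p$ sends $A[p^s]$ into $A[p^{s-1}]$. So each quotient is an $\mathbb{F}_p$-vector space, say $\bigoplus_{B_s}\mathbb{F}_p$.

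\textbf{Step 2: the tower of spectra.} Applying $H$ gives a sequence of cofibrations
\[ HA[p^{s-1}]\to HA[p^s]\to H(A[p^s]/A[p^{s-1}]). \]
Since $H$ commutes with filtered colimits of groups, we have $HA\simeq \operatorname{hocolim}_s HA[p^s]$. Applying $[\Sigma^*(-),Y]$ produces an exact couple. Its $E_1$-term is
\[ E_1^{s,t} = [\Sigma^t H(A[p^s]/A[p^{s-1}]), Y]. \]
This is a tower indexed so that the abutment is the limit $[\Sigma^t HA, Y] = [\operatorname{hocolim}_s\Sigma^t HA[p^s],Y]$. Following Boardman \cite{MR1718076}, the spectral sequence of a sequential homotopy colimit, mapped into $Y$, is conditionally convergent to $[\Sigma^*\operatorname{hocolim},Y]$. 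The point is that the associated tower of mapping spectra has homotopy limit $F(HA,Y)$, by the Milnor sequence. The differentials $d_r\colon E_r^{s,t}\to E_r^{s+r,t-1}$ come from the connecting map $H(A[p^s]/A[p^{s-1}])\to\Sigma HA[p^{s-1}]$, which shifts $t$ by one. I will check the indexing against the statement, but no convergence work beyond conditional convergence is needed.

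\textbf{Step 3: identifying $E_1$.} Write $A[p^s]/A[p^{s-1}]\cong\bigoplus_{B_s}\mathbb{F}_p$. Then $H(\bigoplus_{B_s}\mathbb{F}_p)\simeq\bigvee_{B_s}H\mathbb{F}_p$, so
\[ E_1^{s,t}\cong\prod_{B_s}[\Sigma^tH\mathbb{F}_p,Y]. \]
By Theorem \ref{top thm 1}, in the bounded-below case \eqref{iso 204500}, we have
\[ [\Sigma^tH\mathbb{F}_p,Y]\cong\bigl(R^1V(H_*(\Sigma Y;\mathbb{F}_p))\oplus V(H_*(Y;\mathbb{F}_p))\bigr)^t. \]
A product over $B_s$ of an $\mathbb{F}_p$-vector space $W$ is $\hom_{\mathbb{F}_p}(\bigoplus_{B_s}\mathbb{F}_p,W)$. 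This gives the stated $E_1$-page.

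\textbf{Main obstacle.} The subtle point is naturality of this identification, and, relatedly, the need to express $E_1$ as a $\hom$ rather than merely abstractly. The splitting in Theorem \ref{top thm 1} is not natural, but we only need to identify $E_1$ as a graded group, not the differentials. So I would use the isomorphism \eqref{iso 204500} objectwise and the $\mathbb{F}_p$-linear structure coming from $H\mathbb{F}_p$-module maps. The isomorphism $[\Sigma^t H(\bigoplus\mathbb{F}_p),Y]\cong\hom_{\mathbb{F}_p}(\bigoplus\mathbb{F}_p,[\Sigma^tH\mathbb{F}_p,Y])$ holds because maps out of a wedge form a product. Checking that this is compatible with the $\mathbb{F}_p$-structure is where I expect the only real care is needed.
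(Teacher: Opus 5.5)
Your proposal is correct and follows essentially the same route as the paper. Both arguments filter $A$ by the subgroups $A[p^s]$, apply $[\Sigma^*-,Y]$ to the resulting tower of Eilenberg--Mac Lane spectra, split each $H(A[p^s]/A[p^{s-1}])$ as a wedge of copies of $H\mathbb{F}_p$, and then identify $[\Sigma^tH\mathbb{F}_p,Y]$ using the bounded-below case of Theorem~\ref{top thm 1}; the paper re-runs the pullback-square argument at this step, which you get for free by citing \eqref{iso 204500} directly, and the compatibility worry in your last paragraph is vacuous, since an abelian group killed by $p$ has a unique $\mathbb{F}_p$-vector space structure and so any group isomorphism between such groups is automatically $\mathbb{F}_p$-linear.
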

\begin{proof}
We have the tower of fiber sequences
\[\xymatrix{
 \pt \ar[r] & H(A[p]) \ar[r]\ar[d] & H(A[p^2]) \ar[r]\ar[d] & H(A[p^3])\ar[d] \ar[r] & \dots \\
 & H(A[p]) & H(A[p^2]/A[p]) & H(A[p^3]/A[p^2]) & 
 }\]
and consequently, upon applying the functor $\left[\Sigma^* -, Y\right]$,
a conditionally convergent spectral sequence
\begin{align*}
 E_1^{s,t} \cong \left[ \Sigma^t H(A[p^s]/A[p^{s-1}]),Y\right]
  & \Rightarrow \left[ \Sigma^t HA,Y\right] .
\end{align*}
To get the claimed $E_1$-page description, for each $s$ we choose an $\mathbb{F}_p$-linear basis $B_s$ for $A[p^s]/A[p^{s-1}]$, and then we use the chain of isomorphisms
\begin{align*}
        \left[ \Sigma^t H(A[p^s]/A[p^{s-1}]),Y\right] 
 &\cong \left[ \Sigma^t \coprod_{b\in B_s}H\mathbb{F}_p ,Y\right] \\
 &\cong \hom_{\mathbb{F}_p}\left( A[p^s]/A[p^{s-1}], \left[ \Sigma^t H\mathbb{F}_p ,Y\right]\right),
\end{align*}
and then we identify $[\Sigma^t H\mathbb{F}_p,Y]$ with 
$\left[ \Sigma^t H\mathbb{F}_p,L_{S/p}Y\right]$ 
using the homotopy pullback square argument from the proof of Theorem \ref{top thm 1}.
Finally, 
$\left[ \Sigma^t H\mathbb{F}_p,L_{S/p}Y\right]$ is described in terms of $R^*V(H(Y; \mathbb{F}_p))$ by Theorem \ref{top thm 1}.
\end{proof}

\begin{theorem}\label{derived cotorsion top thm}
Let $P$ be a set of primes, let $Y$ be a bounded-below spectrum, and let $A$ be an abelian group. Make the following assumptions:
\begin{itemize}
\item Every prime in $P$ acts isomorphically on $A$, i.e., $A$ is a $\mathbb{Z}[P^{-1}]$-module.
\item For every prime number $p\notin P$, the comodule $H_*(Y; \mathbb{F}_p)$ over the mod $p$ dual Steenrod algebra is derived cotorsion, i.e. (by Proposition \ref{vanishing above R1V}), $V(H_*(Y; \mathbb{F}_p))$ and $R^1V(H_*(Y; \mathbb{F}_p))$ are each trivial.
\end{itemize}
Then for each integer $n$, there is a short exact sequence
\begin{equation}\label{ses 00112} 0 \rightarrow \Ext^1_{\mathbb{Z}}\left( \mathbb{Q}\otimes_{\mathbb{Z}}A, \pi_{n+1}Y\right) \rightarrow \left[ \Sigma^n HA,Y\right] \rightarrow \hom_{\mathbb{Z}}\left(\mathbb{Q}\otimes_{\mathbb{Z}}A, \pi_nY\right)\rightarrow 0,\end{equation}
natural in the variable $Y$.
\end{theorem}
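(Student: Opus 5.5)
The plan is to reduce the computation of $[\Sigma^n HA,Y]$ to a rational computation, using the arithmetic fracture square from the proof of Theorem \ref{top thm 1}:
\[\xymatrix{
 Y \ar[r]\ar[d] & \prod_{\ell} L_{S/\ell}Y \ar[d] \\
 H\mathbb{Q}\wedge Y \ar[r] & H\mathbb{Q}\wedge\left( \prod_{\ell} L_{S/\ell}Y \right).
}\]
Applying $F(HA,-)$ turns this homotopy pullback into a fiber sequence of mapping spectra. The first step is to show that $F(HA,L_{S/\ell}Y)$ is contractible for every prime $\ell$. If $\ell\in P$, then $\ell$ acts invertibly on $HA$, while $L_{S/\ell}Y$ is $\ell$-complete, so every map is nulhomotopic. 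If $\ell\notin P$, consider the cofiber sequence $HA\stackrel{\ell}{\to}HA\to HA/\ell$. Since $HA/\ell$ is an $H\mathbb{Z}$-module with $\ell$-torsion homotopy, it is a wedge of suspensions of $H\mathbb{F}_\ell$, and hence so is its $\ell$-completion.

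By Theorem \ref{top thm 1} and the derived cotorsion hypothesis (using Proposition \ref{vanishing above R1V} so that only $V$ and $R^1V$ are relevant), $F(H\mathbb{F}_\ell,L_{S/\ell}Y)\simeq *$. A product of suspensions of $H\mathbb{F}_\ell$ is handled by the fact that $[\coprod,-]=\prod[-]$. Consequently $\ell$ acts invertibly on $F(HA,L_{S/\ell}Y)$. Since $F(HA,L_{S/\ell}Y)$ is also $\ell$-complete, being a function spectrum into an $\ell$-complete spectrum, it is contractible. The same argument applies to the product over $\ell$, since $F(HA,\prod_\ell L_{S/\ell}Y)\simeq\prod_\ell F(HA,L_{S/\ell}Y)$.

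The second step analyzes the rational corner. The spectrum $H\mathbb{Q}\wedge\prod_\ell L_{S/\ell}Y$ is rational, so maps into it from $HA$ factor through $H\mathbb{Q}\wedge HA\simeq H(\mathbb{Q}\otimes A)$. The task is to show that these maps contribute nothing beyond what is already captured by the bottom-left corner. My initial approach is to compute $F(HA,Z)$ for a rational spectrum $Z$ directly. Rational spectra split as products of Eilenberg--Mac Lane spectra, and $F(HA,HV)$ for a $\mathbb{Q}$-vector space $V$ equals $\hom(A,V)=\hom(\mathbb{Q}\otimes A,V)$ concentrated in degree $0$, because $\Ext^1_{\mathbb{Z}}(A,V)=0$ and higher operations vanish rationally. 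The fiber sequence from the pullback square then identifies $F(HA,Y)$ with the fiber of $F(HA,H\mathbb{Q}\wedge Y)\to F(HA,H\mathbb{Q}\wedge\prod L_{S/\ell}Y)$. By the first step, this comparison is more cleanly phrased as saying that $F(HA,Y)\to F(HA,H\mathbb{Q}\wedge Y)$ fits into a fiber sequence with a term that is computable.

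A cleaner route, which I would actually take, is to use the fracture square for $HA$ instead. Since $F(HA,-)$ kills all the $L_{S/\ell}Y$, the cofiber sequence $HA\to H(\mathbb{Q}\otimes A)\to H(\mathbb{Q}/\mathbb{Z}\otimes A)$ gives the following. The term $\mathbb{Q}/\mathbb{Z}\otimes A$ is a torsion group whose $\ell$-primary parts vanish for $\ell\in P$. For $\ell\notin P$, it is built from $H\mathbb{F}_\ell$'s via the filtration of Theorem \ref{torsion group top thm}, whose $E_1$-page vanishes by derived cotorsion. Conditional convergence together with the vanishing $E_1$-page then gives $F(H(\mathbb{Q}/\mathbb{Z}\otimes A),Y)\simeq *$. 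Hence $F(HA,Y)\simeq F(H(\mathbb{Q}\otimes A),Y)$.

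The final step computes $[\Sigma^n HW,Y]$ for a $\mathbb{Q}$-vector space $W$. I would write $HW$ as a Moore spectrum of the rational group $W$, built from a free resolution $0\to F_1\to F_0\to W\to 0$ of $W$ over $\mathbb{Z}$, and use the universal coefficient sequence for maps out of a Moore spectrum:
\[0\to\Ext^1_{\mathbb{Z}}(W,\pi_{n+1}Y)\to[\Sigma^nMW,Y]\to\hom(W,\pi_nY)\to0.\]
Since $HW\simeq MW$ for $W$ rational ($\pi_*$ of the Moore spectrum $MW$ is rational, hence concentrated in degree $0$), this yields \eqref{ses 00112}, natural in $Y$. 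I expect the main obstacle to be the vanishing of $F(H(\mathbb{Q}/\mathbb{Z}\otimes A),Y)$, in particular the convergence of the $p$-adic filtration spectral sequence and the splitting of $A\otimes\mathbb{Q}/\mathbb{Z}$ into primary parts. It may be simpler to argue that $F(-,Y)$ sends the filtered colimit $HA'=\mathrm{hocolim}\,H(A'[\ell^s])$ to a homotopy limit whose terms and transition data all vanish, with the $\lim^1$ term vanishing trivially because the limit is taken of zero spectra.
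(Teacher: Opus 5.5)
Your ``cleaner route'' is essentially the paper's proof. You kill the torsion contributions with Theorem~\ref{torsion group top thm} and the derived cotorsion hypothesis, reduce to $F(H(\mathbb{Q}\otimes_{\mathbb{Z}}A),Y)$, and then compute maps out of a rational Eilenberg--Mac Lane spectrum. The fracture-square detour in your first two paragraphs is correct as far as it goes but is not needed, since the input about $L_{S/\ell}Y$ is already packaged into Theorem~\ref{torsion group top thm} via Theorem~\ref{top thm 1}.

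There is one step that fails as written. $HA\to H(\mathbb{Q}\otimes_{\mathbb{Z}}A)\to H(\mathbb{Q}/\mathbb{Z}\otimes_{\mathbb{Z}}A)$ is a cofiber sequence only when $A$ is torsion-free, and the hypotheses allow $\ell$-torsion in $A$ for $\ell\notin P$. For instance, with $P=\emptyset$ and $A=\mathbb{Z}/\ell$, the middle and right terms are both zero. The long exact sequence in homotopy shows what the cofiber $C$ of $HA\to H(\mathbb{Q}\otimes_{\mathbb{Z}}A)$ actually is: $\pi_0C\cong\mathbb{Q}/\mathbb{Z}\otimes_{\mathbb{Z}}A$ and $\pi_1C\cong\ker(A\to\mathbb{Q}\otimes_{\mathbb{Z}}A)=\tors(A)$.

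The repair uses only tools you already have. Take the Postnikov cofiber sequence $\Sigma H(\tors(A))\to C\to H(\mathbb{Q}/\mathbb{Z}\otimes_{\mathbb{Z}}A)$. Note that $\tors(A)$, like $\mathbb{Q}/\mathbb{Z}\otimes_{\mathbb{Z}}A$, is a torsion group with no $\ell$-primary part for $\ell\in P$. Your argument then gives $F(C,Y)\simeq\ast$, hence $F(H(\mathbb{Q}\otimes_{\mathbb{Z}}A),Y)\simeq F(HA,Y)$. The paper handles this differently: it first uses $F(H(\tors(A)),Y)\simeq\ast$ to replace $A$ by $A/\tors(A)$, for which the cofiber sequence is genuine.

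The rest of your argument is sound. Proving $F(HT_\ell,Y)\simeq\ast$ by induction up the tower $H(T_\ell[\ell^s])$ and then passing to the homotopy limit avoids any convergence question. Your universal coefficient sequence for the Moore spectrum $MW\simeq HW$ is the same computation as the paper's Milnor sequence for the telescope $SB\xrightarrow{2}SB\xrightarrow{2\cdot 3}\cdots$, which is just a particular two-term presentation of that Moore spectrum.
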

\begin{proof}
Let $T$ be a torsion abelian group. Then $T$ splits as the direct sum $\coprod_{p\ \text{prime}} T_p$ of torsion abelian $p$-groups $T_p$ for various primes $p$. If $p\notin P$, then the mapping spectrum $F(H(T_p),Y)$ is contractible, by Theorem \ref{torsion group top thm}. 

In particular, since each prime $p\in P$ acts isomorphically on $A$, the torsion subgroup $\tors(A)$ is a product of torsion abelian $p$-groups for $p\notin P$, and consequently $F(H(\tors(A)),Y)$ is contractible. Hence $F(H(A/\tors(A)),Y) \rightarrow F(HA,Y)$ is a weak equivalence. 

We have the fiber sequence
\[ H(A/\tors (A)) \rightarrow H\left((A/\tors(A))\otimes_{\mathbb{Z}}\mathbb{Q}\right) \rightarrow H\left((A/\tors(A))\otimes_{\mathbb{Z}}\mathbb{Q}/\mathbb{Z}\right)
\]
and since $(A/\tors(A))\otimes_{\mathbb{Z}}\mathbb{Q}/\mathbb{Z}$ is again a product of torsion abelian $p$-groups for $p\notin P$, we have contractibility of $F(H\left((A/\tors(A))\otimes_{\mathbb{Z}}\mathbb{Q}/\mathbb{Z}\right),Y)$ and consequently a natural weak equivalence
\begin{align}
\label{iso 00111} F(HA,Y) &\stackrel{\simeq}{\longrightarrow} F\left( H(A\otimes_{\mathbb{Z}}\mathbb{Q}),Y\right),
\end{align}
using the isomorphism $A\otimes_{\mathbb{Z}}\mathbb{Q} \cong (A/\tors(A))\otimes_{\mathbb{Z}}\mathbb{Q}$.

The sequence \eqref{ses 00112} now is a consequence of equivalence \eqref{iso 00111} together with the short exact sequence
\begin{equation}\label{ses 00113} 0 \rightarrow \Ext^1_{\mathbb{Z}}\left( W, \pi_{n+1}Y\right) \rightarrow \left[ \Sigma^n HW,Y\right] \rightarrow \hom_{\mathbb{Z}}\left(W, \pi_nX\right)\rightarrow 0\end{equation}
which one has for any $\mathbb{Q}$-vector space $W$ and any spectrum $X$. The sequence \eqref{ses 00113} is quite old: when Lin makes use of a similar argument in \cite{MR0402738}, he cites Hilton's 1965 treatment \cite[chapter 5]{MR198466}, which is entirely in the unstable setting, and also written in terms which are today unfamiliar to many homotopy theorists. For self-containedness, we give a somewhat more modern derivation of the short exact sequence \eqref{ses 00113}, as follows. Choose a $\mathbb{Q}$-linear basis $B$ for $W$, and write $SB$ for the wedge $\coprod_{b\in B}S$ of copies of the sphere spectrum, one for each member of the basis $B$.
Then $HW$ is the homotopy colimit of the sequence of spectra
\begin{equation}\label{seq 00114} SB \stackrel{2}{\longrightarrow} SB \stackrel{2\cdot 3}{\longrightarrow} SB 
\stackrel{2\cdot 3\cdot 5}{\longrightarrow} SB 
\stackrel{2\cdot 3\cdot 5\cdot 7}{\longrightarrow} \dots .\end{equation}
Hence the Milnor sequence for the homotopy groups of the mapping spectrum
\[ F(HW,X)\simeq \holim\left( \dots \stackrel{2\cdot 3\cdot 5}{\longrightarrow} \prod_{b\in B} X\stackrel{2\cdot 3}{\longrightarrow} \prod_{b\in B} X \stackrel{2}{\longrightarrow} \prod_{b\in B} X \right)\]
expresses $[\Sigma^n HW, X]$ as an extension of 
\[\lim\left( \dots \stackrel{2\cdot 3\cdot 5}{\longrightarrow} \prod_{b\in B} \pi_nX\stackrel{2\cdot 3}{\longrightarrow} \prod_{b\in B} \pi_nX \stackrel{2}{\longrightarrow} \prod_{b\in B} \pi_nX \right)\]
by the derived limit
\[\lim{}^1\left( \dots \stackrel{2\cdot 3\cdot 5}{\longrightarrow} \prod_{b\in B} \pi_{n+1}X\stackrel{2\cdot 3}{\longrightarrow} \prod_{b\in B} \pi_{n+1}X \stackrel{2}{\longrightarrow} \prod_{b\in B} \pi_{n+1}X \right).\]
It is a straightforward algebraic exercise to check that the former is isomorphic to $\hom_{\mathbb{Z}}(\mathbb{Q},\prod_{b\in B}\pi_nX)$ and consequently $\hom_{\mathbb{Z}}(W,\pi_nX)$, while the latter is isomorphic to $\Ext^1_{\mathbb{Z}}(\mathbb{Q},\prod_{b\in B}\pi_{n+1}X)$ and consequently $\Ext^1_{\mathbb{Z}}(W,\pi_{n+1}X)$.
\end{proof}
The sequence \eqref{ses 00112} involves the homotopy groups of $Y$, while Lin's theorem, Theorem \ref{lins thm}, instead is stated in terms of the homology groups of $Y$. In Lin's situation, since he assumes $Y$ is finite-type, he knows that the kernel and cokernel of the Hurewicz map $\pi_*(Y) \rightarrow H_*(Y;\mathbb{Z})$ are each {\em finite} torsion groups in each degree. This lets Lin replace the role of $\pi_*Y$ in the universal coefficient sequence \eqref{ses 00113} with $H_*(Y;\mathbb{Z})$. Without the assumption that $Y$ is finite-type, one cannot make this leap, unless one makes some other assumptions on $Y$.

\begin{corollary}\label{nice lin cor}
Let $Y$ be a bounded-below spectrum whose homology groups are concentrated in a finite range of degrees. 
Let $A$ be an abelian group.
Then for each integer $n$, there is a short exact sequence
\begin{equation*}
 0 \rightarrow \Ext^1_{\mathbb{Z}}\left( \mathbb{Q}\otimes_{\mathbb{Z}}A, \pi_{n+1}Y\right) \rightarrow \left[ \Sigma^n HA,Y\right] \rightarrow \hom_{\mathbb{Z}}\left(\mathbb{Q}\otimes_{\mathbb{Z}}A, \pi_nY\right)\rightarrow 0.\end{equation*}
\end{corollary}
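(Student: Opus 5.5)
The plan is to deduce the corollary directly from Theorem \ref{derived cotorsion top thm}, taking $P$ to be the empty set of primes. With $P=\emptyset$ the first hypothesis of that theorem is vacuous, since every abelian group is a $\mathbb{Z}[\emptyset^{-1}]=\mathbb{Z}$-module. The second hypothesis then requires that, for \emph{every} prime $p$, the comodule $H_*(Y;\mathbb{F}_p)$ over the mod $p$ dual Steenrod algebra is derived cotorsion. Once this is verified, the desired short exact sequence is exactly \eqref{ses 00112}.

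To verify derived cotorsion for each $p$, I would first observe that if the integral homology of $Y$ is concentrated in a finite range of degrees, then by the universal coefficient theorem $H_*(Y;\mathbb{F}_p)$ is also concentrated in a finite range of degrees (widened by at most one). Moreover, $H_*(Y;\mathbb{F}_p)$ is bounded below because $Y$ is. I would then invoke Corollary \ref{top thm 1 cor 2}, which says that every bounded-below graded comodule over the mod $p$ dual Steenrod algebra is derived cotorsion. This route is cleaner than going through Corollary \ref{top thm 1 cor}, since that corollary carries a $p$-completeness hypothesis on $Y$ that we do not have. Corollary \ref{top thm 1 cor 2} is purely a statement about comodules, so it applies to $H_*(Y;\mathbb{F}_p)$ regardless of whether $Y$ is $p$-complete.

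One point needs care: the hypothesis ``homology groups concentrated in a finite range'' might mean either integral homology or mod $p$ homology. I would read it as integral homology, and note that in either reading $H_*(Y;\mathbb{F}_p)$ is bounded below. Boundedness below is all that Corollary \ref{top thm 1 cor 2} requires, so the finite-range hypothesis is more than is strictly needed.

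I expect no real obstacle; the only thing to check is bookkeeping. Theorem \ref{derived cotorsion top thm} was stated with the homological grading convention, while Corollary \ref{top thm 1 cor 2} is stated with the paper's cohomological convention. Boundedness below in the homological grading corresponds to boundedness above of degrees $-n$, so I would need to confirm that the notion of ``bounded-below comodule'' in Corollary \ref{top thm 1 cor 2} matches homology of a bounded-below spectrum. The proof of Corollary \ref{top thm 1 cor 2} builds $M$ as a limit of the truncations $M_{\leq i}$ from its least nonzero degree, starting from a single homology degree realized by a wedge of spheres, and this confirms the intended matching. If that matching looked doubtful, the fallback would be to use only the finite-range hypothesis, via the harmonic-spectrum argument in Corollary \ref{top thm 1 cor} applied to the $p$-completion of $Y$, which has the same mod $p$ homology.
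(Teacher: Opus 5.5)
Your route is the paper's: Corollary~\ref{top thm 1 cor 2} shows that $H_*(Y;\mathbb{F}_p)$ is derived cotorsion for every prime $p$, and Theorem~\ref{derived cotorsion top thm} with $P=\emptyset$ then gives the sequence. The conclusion is right. However, your check of the hypothesis of Corollary~\ref{top thm 1 cor 2} gets the grading backwards, and the side claim you draw from that check is false.

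In the paper's cohomological convention, $H_n(Y;\mathbb{F}_p)$ sits in degree $-n$. Corollary~\ref{top thm 1 cor 2} concerns comodules with a \emph{least} nonzero cohomological degree $d$: the truncations $M_{\leq i}$ kill degrees above $i$, and the induction starts at $M_{\leq d}$. That least cohomological degree is the \emph{top} homological degree. So the corollary applies when mod $p$ homology is bounded \emph{above}, not to the homology of an arbitrary bounded-below spectrum. Your "confirmation" of the matching from that proof is therefore mistaken.

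The claim that boundedness below suffices, making the finite-range hypothesis superfluous, is also false. Take $Y=H\mathbb{F}_p$, which is bounded below. Its comodule $H_{-*}(Y;\mathbb{F}_p)=\Gamma$ has $V(\Gamma)\cong\Gamma^*\neq 0$ by Remark~\ref{cotorsion remark}. With $A=\mathbb{F}_p$ and $n=0$, the middle term $[H\mathbb{F}_p,H\mathbb{F}_p]$ contains the identity, while both outer terms vanish because $\mathbb{Q}\otimes_{\mathbb{Z}}\mathbb{F}_p=0$.

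What saves your proof is the step you did carry out. By the universal coefficient theorem, $H_*(Y;\mathbb{F}_p)$ lies in a finite range of degrees, so it is bounded in both directions, and in particular bounded below in the paper's convention. The justification for invoking Corollary~\ref{top thm 1 cor 2} should therefore cite the homological \emph{upper} bound supplied by the finite-range hypothesis, not the boundedness below of $Y$. With that correction, the fallback through $p$-completion and Corollary~\ref{top thm 1 cor} is unnecessary.
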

\begin{proof}
By Corollary \ref{top thm 1 cor 2}, the mod $p$ homology $H_*(Y;\mathbb{F}_p)$ is derived cotorsion for all $p$. Hence Theorem \ref{derived cotorsion top thm} applies, letting $P = \emptyset$ in the statement of the theorem. 
\end{proof}

The most complicated generalization of Lin's theorem which we will bother to state is Theorem \ref{torsion-free HA}, which involves aspects of both Theorems \ref{torsion group top thm} and \ref{derived cotorsion top thm}.
\begin{theorem}\label{torsion-free HA}
Let $p$ be a prime, let $Y$ be a bounded-below $p$-complete spectrum, and let $A$ be a torsion-free abelian group in which all primes other than $p$ are inverted, i.e., $A$ is a torsion-free $\mathbb{Z}_{(p)}$-module. 
Then the mapping spectrum $F(HA,Y)$ sits in the fiber sequence
\[ F(HA,Y) \leftarrow F\left( H( A\otimes_{\mathbb{Z}}\mathbb{Q}),Y\right) \leftarrow
F\left( H( A\otimes_{\mathbb{Z}}\mathbb{Q}/\mathbb{Z}),Y\right)\]
whose other two terms are in principle calculable as follows:
for each $n$ there is a short exact sequence
\begin{equation*}
 0 \rightarrow \Ext^1_{\mathbb{Z}}\left( \mathbb{Q}\otimes_{\mathbb{Z}}A, \pi_{n+1}Y\right) \rightarrow \pi_nF\left( H(A\otimes_{\mathbb{Z}}\mathbb{Q}),Y)\right) \rightarrow \hom_{\mathbb{Z}}\left(\mathbb{Q}\otimes_{\mathbb{Z}}A, \pi_nY\right)\rightarrow 0,\end{equation*} 
for the calculation of $\pi_*F\left( H(A\otimes_{\mathbb{Z}}\mathbb{Q}),Y)\right)$, and there is a conditionally convergent spectral sequence
\begin{align}
\label{ss 77801} E_1^{s,t} &\cong \hom_{\mathbb{F}_p}\left( p^sA/p^{s+1}A, \left(R^1V(H_*(\Sigma Y;\mathbb{F}_p))\oplus V(H_*(Y;\mathbb{F}_p))\right)^t \right)\\
\nonumber  & \Rightarrow \left[ \Sigma^t H(A\otimes_{\mathbb{Z}}\mathbb{Q}/\mathbb{Z}),Y\right] \\
\nonumber d_r: E_r^{s,t} &\rightarrow E_r^{s+r,t-1},
\end{align}
for the calculation of $\pi_*F\left( H(A\otimes_{\mathbb{Z}}\mathbb{Q}/\mathbb{Z}),Y)\right)$.
\end{theorem}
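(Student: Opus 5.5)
The plan is to combine the previous two theorems with a short exact sequence of abelian groups. Since $A$ is torsion-free, tensoring $0\to\mathbb{Z}\to\mathbb{Q}\to\mathbb{Q}/\mathbb{Z}\to 0$ with $A$ gives a short exact sequence
\[ 0\to A\to A\otimes_{\mathbb{Z}}\mathbb{Q}\to A\otimes_{\mathbb{Z}}\mathbb{Q}/\mathbb{Z}\to 0. \]
Taking Eilenberg--Mac Lane spectra yields a cofiber sequence
\[ HA\to H(A\otimes\mathbb{Q})\to H(A\otimes\mathbb{Q}/\mathbb{Z}). \]
Applying $F(-,Y)$ then gives the claimed fiber sequence. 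The remaining work is to identify the homotopy of the two outer terms.

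For $F(H(A\otimes\mathbb{Q}),Y)$ we need only the short exact sequence \eqref{ses 00113} from the proof of Theorem \ref{derived cotorsion top thm}, applied with $W=A\otimes_{\mathbb{Z}}\mathbb{Q}$. That sequence holds for any $\mathbb{Q}$-vector space $W$ and any spectrum, via the Milnor sequence for the telescope \eqref{seq 00114}. It gives exactly the displayed extension of $\hom_{\mathbb{Z}}(\mathbb{Q}\otimes A,\pi_nY)$ by $\Ext^1_{\mathbb{Z}}(\mathbb{Q}\otimes A,\pi_{n+1}Y)$. No hypothesis on cotorsion is needed here.

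For $F(H(A\otimes\mathbb{Q}/\mathbb{Z}),Y)$, first observe that $B:=A\otimes\mathbb{Q}/\mathbb{Z}$ is an abelian $p$-group. Indeed, $A$ is a $\mathbb{Z}_{(p)}$-module, so $A\otimes\mathbb{Q}/\mathbb{Z}\cong A\otimes_{\mathbb{Z}_{(p)}}(\mathbb{Q}/\mathbb{Z}_{(p)})$, and $\mathbb{Q}/\mathbb{Z}_{(p)}\cong\mathbb{Z}/p^\infty$. We can therefore apply Theorem \ref{torsion group top thm} to $B$, which gives a conditionally convergent spectral sequence with $E_1$-term
\[ \hom_{\mathbb{F}_p}\left(B[p^s]/B[p^{s-1}],\,(R^1V\oplus V)^t\right). \]
It remains to identify $B[p^s]/B[p^{s-1}]$ with $p^sA/p^{s+1}A$, up to the indexing conventions of \eqref{ss 77801}. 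Since $A$ is torsion-free, $B[p^s]=A\otimes\tfrac{1}{p^s}\mathbb{Z}/\mathbb{Z}\cong A/p^sA$, using flatness of $A$ and the exactness of $0\to\mathbb{Z}\xrightarrow{p^s}\mathbb{Z}\to\mathbb{Z}/p^s\to 0$ after tensoring. Under these identifications the inclusion $B[p^{s-1}]\hookrightarrow B[p^s]$ becomes multiplication by $p$, namely $A/p^{s-1}A\to A/p^sA$. Its cokernel is $A/pA$, which is isomorphic to $p^sA/p^{s+1}A$ via multiplication by $p^s$, again using torsion-freeness.

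The main obstacle is matching the filtration indexing (the exact $s$ versus $s\pm1$) with the statement, so I will recheck the filtration layers against Theorem \ref{torsion group top thm}. The rest is bookkeeping, since Theorem \ref{torsion group top thm} has already absorbed the identification $[\Sigma^tH\mathbb{F}_p,Y]\cong (R^1V\oplus V)^t$ from Theorem \ref{top thm 1}. The $p$-completeness of $Y$ is not really needed for this identification, but it is harmless.
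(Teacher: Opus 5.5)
Your proposal is correct and follows the paper's proof. The fiber sequence comes from $0\to A\to A\otimes\mathbb{Q}\to A\otimes\mathbb{Q}/\mathbb{Z}\to 0$, the rational term is handled by the sequence \eqref{ses 00113}, and the torsion term is handled by Theorem \ref{torsion group top thm} together with the identification $(A\otimes\mathbb{Q}/\mathbb{Z})[p^s]\cong A/p^sA$. The indexing issue you flag is not a real obstacle: for torsion-free $A$, both $B[p^s]/B[p^{s-1}]$ and $p^sA/p^{s+1}A$ are isomorphic to $A/pA$ for every $s$, so there is nothing to reconcile.
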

\begin{proof}
This is a consequence of Theorems \ref{torsion group top thm} and \ref{derived cotorsion top thm}. Spectral sequence \eqref{ss 77801}, in particular, is the spectral sequence of Theorem \ref{torsion group top thm}. The $E_1$-page of \eqref{ss 77801} is as claimed because it is an elementary algebraic exercise to prove that the $p^s$-torsion subgroup of $A\otimes_{\mathbb{Z}}\mathbb{Q}/\mathbb{Z}$ is naturally (in the variables $s$ and $A$) isomorphic to $A/p^sA$. 
\end{proof}

\section{Topological interpretation of $V$ for some examples of generalized homology theories}
\label{Topological interpretation of V...}

The main topological case of interest for the functor $V: \gr\Comod(E_*E) \rightarrow \gr\Mod(E_*)$ in this paper is the case where $E = H\mathbb{F}_p$. Nevertheless it is natural to wonder how the ideas play out for other familiar and often-used generalized homology theories $E_*$. In this section we explain the topological meaning of $V$ when $E$ is either rational homology or the Adams summand of complex $K$-theory localized at an odd prime.

This section does not use the covariant embedding of a comodule category into a module category, or our Theorem \ref{ext iso 1} about when that embedding preserves $\Ext$-groups. Consequently there is no need to use the cohomological grading convention from \cref{conventions}. Instead, the homological grading convention is clearer and more convenient for the ideas in this section, so we use the homological convention throughout.

\begin{recollection}\label{bousfield recollection}
This is a reasonable time to recall the basics of Bousfield localization. A spectrum $X$ is said to be {\em $E$-acyclic} if $E_*X = 0$. We say that $Y$ is {\em $E$-local} if there are no non-nulhomotopic maps from an $E$-acyclic spectrum to $Y$. Finally, a map of spectra $X\rightarrow Y$ is said to be an {\em $E$-local equivalence} if the induced map $E_*X\rightarrow E_*Y$ is an isomorphism.

It is a nontrivial theorem of Bousfield \cite{MR551009} that every spectrum $Y$ admits a map, which is an $E$-local equivalence, to an $E$-local spectrum $L_EY$, unique up to weak equivalence. The spectrum $L_EY$ is called the {\em Bousfield localization of $Y$ at $E$.} A nice exposition of these ideas can be found in \cite[section 1]{MR737778}.

The $E$-nilpotent completion $\hat{Y}_E$ of $Y$ was defined in \cref{Topological interpretation of V}.
There is a canonical (up to homotopy) map $L_EY\rightarrow \hat{Y}_E$.
Bousfield's analysis of convergence conditions in the $E$-Adams spectral sequence in \cite[section 6]{MR551009} is partly motivated by the interest in determining conditions under which the canonical map is a weak equivalence, so that the $E$-Adams spectral sequence calculates homotopy classes of maps into the localization $L_EY$ (which is an object characterized by meaningful homotopical properties with nontrivial, useful consequences) rather than just homotopy classes of maps into the completion $\hat{Y}_E$ (which is not nearly as natural-seeming and is not characterized by the same kinds of useful properties). 

Bousfield proves that, when one finds a horizontal vanishing line of the same height on the same page in the $E$-Adams spectral sequence converging to $[\Sigma^*S^0,\hat{Y}_E]$ for every finite spectrum $Y$, then the map $L_EY \rightarrow\hat{Y}_E$ is a weak equivalence for all $Y$. The strong/complete convergence condition for the $E$-Adams spectral sequence, from Proposition \ref{adams E2 description}, is also satisfied under those conditions.
\end{recollection}

\subsection{$V$ for rational homology}
Let $E = H\mathbb{Q}$, so that $E_*$ is rational homology $H_*(-;\mathbb{Q})$. This is a trivial case: $E_*E = \mathbb{Q} = E_*$, so the associated Hopf algebroid is $(\mathbb{Q},\mathbb{Q})$, and the graded $E_*E$-comodules are simply graded $\mathbb{Q}$-vector spaces. 

In this setting, the extended comodule functor $\gr\Mod(E_*)\rightarrow \gr\Comod(E_*E)$ is an equivalence of categories, and hence its adjoint $V$ is as well. The category $\gr\Comod(E_*E)$ is consequently semisimple, so one has a vanishing line at the $(s=0)$-line already at the $E_2$-page of the $E$-Adams spectral sequence, so the vanishing line condition from Recollection \ref{bousfield recollection} is satisfied, and consequently $L_{H\mathbb{Q}}Y\simeq \hat{Y}_{H\mathbb{Q}}$ for all $Y$. Semisimplicity also implies that the higher derived functors of $V$ vanish, and the spectral sequence of Proposition \ref{adams E2 description} reduces to the isomorphism
\begin{align*}
 H_s(Y;\mathbb{Q}) &\cong [\Sigma^{s} H\mathbb{Q},L_{H\mathbb{Q}}Y],
\end{align*}
which is also easily proven by other means. 

\subsection{$V$ for the Adams summand of complex $K$-theory at an odd prime}
\label{v for e1}

Here is a more interesting example. Let $p$ be an {\em odd} prime, and let $E$ be the $p$-local periodic complex $K$-theory spectrum $KU_{(p)}$, or alternatively, its Adams summand $E(1)$. The spectrum $KU_{(p)}$ splits as the wedge sum 
\begin{align*}
 KU_{(p)} &\simeq E(1) \vee \Sigma^2 E(1) \vee \Sigma^4E(1) \vee \dots \vee \Sigma^{2p-4}E(1) ,\end{align*}
and for many purposes $KU_{(p)}$ and $E(1)$ are interchangeable, e.g. $L_{KU_{(p)}}$ is naturally equivalent to $L_{E(1)}$.
The ring of homotopy groups $\pi_*(KU_{(p)})$ is, by the famous calculation of Bott, isomorphic to $\mathbb{Z}_{(p)}[u^{\pm 1}]$, with the Bott element $u$ in degree $-2$. Meanwhile, $\pi_*(E(1))$ is isomorphic to $\mathbb{Z}_{(p)}[v_1^{\pm 1}]$, with $v_1$ in degree $2(p-1)$. The element $v_1$ can be chosen to be $u^{1-p}$.

Bousfield gave a thorough algebraic analysis of the category of graded $E(1)_*E(1)$-comodules in the paper \cite{MR0796907}. This analysis requires recalling a bit about Adams operations on localized complex $K$-theory. We recall those ideas now. Among the homotopy classes of maps of spectra $KU_{(p)} \rightarrow KU_{(p)}$ (i.e., the stable operations in $KU_{(p)}$-theory), there are the {\em Adams operations}, one such operation $\Psi^q$ for each element $q\in \mathbb{Z}_{(p)}$ coprime to $p$. Under composition, these operations comprise a copy of the unit group $\mathbb{Z}_{(p)}^{\times}$ acting on $KU_{(p)}$-theory by stable operations. The Adams operations $\Psi^q$ with $q\equiv 1$ modulo $p$ are well-defined on $E(1)$. 

Fix an integer $q$ which generates the $p$-torsion subgroup of $\mathbb{Z}/p^2\mathbb{Z}$.
Then, given a space or spectrum $X$, we may record the data of the $E(1)$-homology $E(1)_*X$ in the following way:
\begin{itemize}
\item We have a $\mathbb{Z}_{(p)}$-module $E(1)_0(X)$,
\item we have a $\mathbb{Z}_{(p)}$-module $E(1)_1(X)$,
\item and so on, until we reach the $\mathbb{Z}_{(p)}$-module $E(1)_{2p-3}(X)$. (We need not proceed to $E(1)_{2p-2}(X)$, since the $2(p-1)$-periodicity of $E(1)$-homology implies that $E(1)_{(2p-2) + n}(X)$ agrees with $E(1)_{n}(X)$ as $\mathbb{Z}_{(p)}$-modules. This isomorphism twists the action of the stable operations in $E(1)$-homology in a predictable way; see \cite{MR0139178} or \cite{MR293617}, or \cref{tate rel 1}, below.)
\item On each of the $\mathbb{Z}_{(p)}$-modules $E(1)_n(X)$, with $n\in \{ 0,1,\dots, 2p-3\}$, we record the action of the Adams operation 
$\Psi^q$ on $E(1)_n(X)$.
\end{itemize}
For topological reasons, the following conditions are automatically satisfied: 
\begin{description}
\item[Continuity] 
For each $p$-power-torsion element $x\in E(1)_n(X)$, there exists some integer $j\geq 0$ such that $(\Psi^q)^{p^j}.x = x$. (In a nutshell, this condition gets satisfied because $\hat{\mathbb{Z}}_p^{\times}$ acts {\em continuously} on {\em $p$-complete} $K$-theory.)
\item[Rational diagonalizability] The $\mathbb{Q}$-vector space
$E(1)_n(X)\otimes_{\mathbb{Z}}\mathbb{Q}$ 
admits a decomposition into a direct sum $\bigoplus_{j\in\mathbb{Z}} W_{j(p-1)}$ of $\mathbb{Q}$-vector spaces, such that 
$\Psi^q(X)$ acts on $W_{j(p-1)}$ by the formula 
\begin{align} \label{tate twist formula}\Psi^q.v &= q^{j(p-1)}\cdot v.\end{align} (This condition gets satisfied because all the higher stable homotopy groups of spheres are torsion, i.e., rationally every spectrum splits as a wedge of suspensions of rationalized spheres, and the Adams operations on $E(1)$-homology of the $j(p-1)$-sphere act by the ``Tate twist'' formula \eqref{tate twist formula}.)
\end{description}
Bousfield's category $\mathcal{B}(p)_*^q$ is defined so that it is simply the receptacle for data of the above kind. That is, 
\begin{definition}\label{def of bousfields B cat} $\mathcal{B}(p)_*^q$ is the category of sequences $(\dots ,M_{-1},M_0,M_1,M_2,\dots)$
of 
$\mathbb{Z}_{(p)}$-modules, each equipped with a $\mathbb{Z}_{(p)}$-linear operator $\Psi^q$, such that
\begin{itemize}
\item for each $n$, 
$M_n$ 
satisfies the above 
``Continuity'' and ``Rational diagonalizability'' conditions, and
\item for each $n$, $M_n = M_{n+2(p-1)}$ as $\mathbb{Z}_{(p)}$-modules, with the action of $\Psi^q$ satisfying the equation
\begin{align}
\label{tate rel 1} \Psi^q.x \mbox{\ calculated\ in\ } M_{n+2(p-1)}\ \ 
  &=\ \  q^{p-1}\cdot (\Psi^q.x) \mbox{\ calculated\ in\ } M_n.
\end{align}
\end{itemize}

Bousfield's category $\mathcal{B}(p)^q$ is an ``ungraded'' analogue of $\mathcal{B}(p)_*^q$: rather than a sequence of $\mathbb{Z}_{(p)}$-modules $\dots, M_{-1},M_0,M_1, \dots$, each equipped with a linear operator, an object of $\mathcal{B}(p)^q$ is a single $\mathbb{Z}_{(p)}$-module equipped with a $\mathbb{Z}_{(p)}$-linear operator $\Psi^q$ satisfying the ``Continuity'' and ``Rational diagonalizability'' conditions.
\end{definition}
\begin{remark}
The relation \eqref{tate rel 1} can be rephrased by saying that $M_{n + 2(p-1)}$ is the Tate twist $M_n(p-1)$, in the sense of Definition \ref{def of tate twist}.

The definition of $\mathcal{B}(p)^q_*$ has some redundancy in it. All the information in the sequence $\dots , M_{-1},M_0,M_1,M_2,\dots$ can be recovered from just the ordered $2(p-1)$-tuple $M_0, M_1, \dots, M_{2p-3}$, or for that matter, the ordered $2(p-1)$-tuple $M_{i_0}, M_{i_1}, \dots ,M_{i_{2p-3}}$ for any sequence of integers $i_0, \dots ,i_{2p-3}$ representing each of the residue classes modulo $2p-2$. There is wisdom in retaining the redundant information in  $\mathcal{B}(p)_*^q$, however: it allows for the straightforward formula 
\begin{align}
\label{adams e2 description}
 E_2^{s,t}\cong \Ext^s_{\mathcal{B}(p)_*^q}(E(1)_*X,E(1)_*Y)^t
  &\cong \coprod_{u\in\mathbb{Z}}\Ext^s_{\mathcal{B}(p)^q}(E(1)_{t+u}X,E(1)_uY) 
\end{align}
relating $\Ext^*_{\mathcal{B}(p)_*^q}$ and $\Ext^*_{\mathcal{B}(p)^q}$ and the $E(1)$-Adams $E_2$-page, avoiding an irritating re-indexing which is necessary in order to give such a formula using an alternative ``irredundant'' definition of $\mathcal{B}(p)^q_*$ in terms of ordered $2(p-1)$-tuples.
\end{remark}

Bousfield \cite{MR0796907} proves that the categories $\mathcal{B}(p)^q$ and $\mathcal{B}(p)_*^q$ are abelian, and furthermore that the latter is equivalent to the category of graded $E(1)_*E(1)$-comodules. This matters because the input for the $E(1)$-Adams spectral sequence is $\Ext$ in the category of graded $E(1)_*E(1)$-comodules. 
%
As an application of these ideas, Bousfield constructs \cite[section 7.6]{MR0796907} a spectral sequence for calculating $\Ext$-groups in the category of graded $\mathcal{B}(p)^q$-modules:
\begin{theorem} {\bf (Bousfield.)}\label{bousfield ss 1}
Given any objects $L,M$ of $\mathcal{B}(p)^q$, there exists a spectral sequence strongly converging to $\Ext_{\mathcal{B}(p)^q}^{s+t}(L,M)$, with $E_1$-page
\begin{equation}\label{ss picture 1}\begin{tikzpicture}[xscale=3.8,yscale=0.7]
\draw[->,color=red] (0.3,0) -- ($(0,0)!0.7!(1,0)$) node{};
\draw[->,color=red] (1.3,0) -- ($(1,0)!0.45!(2,0)$) node{};
\draw[->,color=red] (0.3,1) -- ($(0,1)!0.7!(1,1)$) node{};
\draw[->,color=blue] (0.3,.85) -- ($(0,1)!0.72!(2,0)$) node{};
\draw (-0.35,1.7) -- (-0.35,-0.35) -- (2.6,-0.35);
\draw (-0.5,0) node{$s=0$};
\draw (-0.5,1) node{$s=1$};
\draw (0,-0.6) node{$t=0$};
\draw (1,-0.6) node{$t=1$};
\draw (2,-0.6) node{$t=2$};
\draw (0,0) node{$\hom_{\mathbb{Z}_{(p)}}(L,M)$};
\draw (1,0) node{$\hom_{\mathbb{Z}_{(p)}}(L,M)$};
\draw (2,0) node{$\hom_{\mathcal{B}(p)^q}(L\otimes_{\mathbb{Z}}\mathbb{Q},M\otimes_{\mathbb{Z}}\mathbb{Q})$};
\draw (0,1) node{$\Ext^1_{\mathbb{Z}_{(p)}}(L,M)$};
\draw (1,1) node{$\Ext^1_{\mathbb{Z}_{(p)}}(L,M)$};
\end{tikzpicture} \end{equation}
with $d_1$-differentials pictured in red, and $d_2$-differentials pictured in blue. 
The spectral sequence is zero in all bidegrees not pictured, i.e., it is concentrated in only five bidegrees.

The $d_1$-differential from bidegree $(s=0,t=0)$ to bidegree $(s=0,t=1)$ is given by sending a function $f$ to the commutator $f\circ\Psi^q - \Psi^q\circ f$. The $d_1$-differential from bidegree $(s=0,t=1)$ to bidegree $(s=0,t=2)$ is given by sending $f: L \rightarrow M$ to the sum of the homogeneous components of $f\otimes_{\mathbb{Z}}\mathbb{Q}$ with respect to the eigenspace decomposition of the action of $\Psi^q$.
\end{theorem}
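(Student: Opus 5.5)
The plan is to derive the spectral sequence from a natural four-term exact sequence in $\mathcal{B}(p)^q$ whose middle terms are cofree and whose last term is rational. This mirrors how one computes continuous cohomology of the procyclic group generated by $\Psi^q$ with coefficients in a $\mathbb{Z}_{(p)}$-module.

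Let $U:\mathcal{B}(p)^q\rightarrow\Mod(\mathbb{Z}_{(p)})$ be the forgetful functor. First I would construct a right adjoint $K$ to $U$. The natural candidate is: $K(N)$ is the module of maps $\mathbb{Z}\rightarrow N$ (equivalently, sequences indexed by powers of $\Psi^q$) that are ``continuous'' on torsion and ``diagonalizable'' rationally, with $\Psi^q$ acting by translation. More concretely, one can take continuous maps from the closure of $\langle q\rangle\subseteq\mathbb{Z}_p^{\times}$ to $N$ and intersect with the subobject cut out by rational diagonalizability. Both $U$ and $K$ should be exact. Then $K$ preserves injectives, and adjunction gives
\[ \Ext^*_{\mathcal{B}(p)^q}(L,K(N))\cong\Ext^*_{\mathbb{Z}_{(p)}}(UL,N), \]
which is concentrated in degrees $0$ and $1$ because $\mathbb{Z}_{(p)}$ is hereditary.

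Next, for each $M$ I would build a natural exact sequence
\[ 0\rightarrow M\rightarrow K(UM)\stackrel{\delta}{\longrightarrow}K(UM)\rightarrow W(M)\rightarrow 0. \]
The first map is the unit of the adjunction. The map $\delta$ is the difference of the two actions of $\Psi^q$, one from $M$ and one from translation. $W(M)$ is the cokernel. The key claim is that $W(M)$ is a rational object: it should be the subobject of $M\otimes_{\mathbb{Z}}\mathbb{Q}$, or of a related rational module, measuring the failure of arbitrary rational endomorphisms to respect the eigenspace decomposition. I then need $\Ext^{>0}_{\mathcal{B}(p)^q}(L,W(M))=0$ and
\[ \hom_{\mathcal{B}(p)^q}(L,W(M))\cong\hom_{\mathcal{B}(p)^q}(L\otimes\mathbb{Q},M\otimes\mathbb{Q}). \]
I would argue this from the structure of the rational part of $\mathcal{B}(p)^q$. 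A rational object is a $\mathbb{Q}$-vector space with a diagonalizable operator whose eigenvalues are $q^{j(p-1)}$. That category is semisimple, and its objects should be injective in $\mathcal{B}(p)^q$ because they are divisible and $\Psi^q$-equivariant maps extend eigenspace by eigenspace.

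Breaking the four-term sequence into two short exact sequences gives a resolution-type filtration. Applying $\Ext^*_{\mathcal{B}(p)^q}(L,-)$ produces an exact couple, whose $E_1$-page has three columns $t=0,1,2$:
\begin{itemize}
\item $\Ext^s(L,K(UM))$ in column $t=0$;
\item $\Ext^s(L,K(UM))$ in column $t=1$;
\item $\Ext^s(L,W(M))$ in column $t=2$.
\end{itemize}
By the identifications above, the first two columns are $\hom_{\mathbb{Z}_{(p)}}$ in row $s=0$ and $\Ext^1_{\mathbb{Z}_{(p)}}$ in row $s=1$, and the third column is the single group $\hom_{\mathcal{B}(p)^q}(L\otimes\mathbb{Q},M\otimes\mathbb{Q})$ in row $s=0$. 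That is exactly the five-bidegree picture. Strong convergence is automatic because the filtration has finite length. The $d_1$-differentials are the maps induced by $\delta$ and by $K(UM)\rightarrow W(M)$ under adjunction. This yields $f\mapsto f\circ\Psi^q-\Psi^q\circ f$ and the projection of $f\otimes\mathbb{Q}$ onto its off-diagonal, non-homogeneous components with respect to the eigenspace decomposition. The remaining $d_2$ from $(1,0)$ to $(0,2)$ is the connecting map of the spliced extensions, which needs no separate identification.

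The main obstacle I expect is the second step: pinning down $K$ precisely enough that $\delta$ has the right kernel and the right cokernel. The kernel must be exactly $M$, which is where continuity enters: a $\Psi^q$-fixed point of the cofree object should come from $M$. The cokernel must be rational, which requires that $\delta$ is surjective on torsion. I would first try to prove this for $p$-torsion $M$, where $\Psi^q$ acts through a finite quotient $\mathbb{Z}/p^j$ on each element, so $\delta$ is surjective on $p$-torsion and cokernels are controlled by cohomology of $\widehat{\mathbb{Z}}_p$. Then I would reduce the general case using the sequence $0\rightarrow \tors M\rightarrow M\rightarrow M\otimes\mathbb{Q}\rightarrow M\otimes\mathbb{Q}/\mathbb{Z}\rightarrow0$ together with rational diagonalizability.
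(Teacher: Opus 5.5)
The paper gives no proof of this statement; it quotes it from Bousfield \cite[section 7.6]{MR0796907}, so your argument has to stand on its own. Its skeleton is sound. Suppose you have an exact right adjoint $K$ of the forgetful functor $U$, and an exact sequence $0\to M\to KUM\xrightarrow{\delta}KUM\to W(M)\to 0$ with $W(M)$ rational. Then your eigenspace-by-eigenspace argument does show that rational objects are injective in $\mathcal{B}(p)^q$. The spectral sequence of this length-two resolution has the right shape and converges strongly. Naturality of the adjunction correctly identifies the first $d_1$ as $f\mapsto f\circ\Psi^q-\Psi^q\circ f$.

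The genuine gap is the third column, which is where the content of the theorem lies. You never determine $W(M)$, and your guess for it points the wrong way. The formula you need, $\hom_{\mathcal{B}(p)^q}(L,W(M))\cong\hom_{\mathcal{B}(p)^q}(L\otimes_{\mathbb{Z}}\mathbb{Q},M\otimes_{\mathbb{Z}}\mathbb{Q})$ naturally in all $L$, forces $W(M)\cong M\otimes_{\mathbb{Z}}\mathbb{Q}$, all of it. Correspondingly, your $d_1$ from $(0,1)$ to $(0,2)$ is wrong. Projecting $f\otimes\mathbb{Q}$ onto its off-diagonal components does not land in $\hom_{\mathcal{B}(p)^q}(L\otimes\mathbb{Q},M\otimes\mathbb{Q})$, since those components are not $\Psi^q$-equivariant. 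Nor does it compose to zero with the first $d_1$. Write $g_{ji}$ for the component of $g\otimes\mathbb{Q}$ from the $q^{i(p-1)}$-eigenspace $L_i$ of $L\otimes\mathbb{Q}$ to the $q^{j(p-1)}$-eigenspace $M_j$ of $M\otimes\mathbb{Q}$. Then the $(j,i)$-component of $(g\circ\Psi^q-\Psi^q\circ g)\otimes\mathbb{Q}$ is $(q^{i(p-1)}-q^{j(p-1)})g_{ji}$, which vanishes on the diagonal but not off it. The correct differential keeps the diagonal (eigenvalue-preserving) components, as the statement says.

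To close the gap, first fix $K$. Your ``more concrete'' model fails away from torsion. For $N=\mathbb{Q}$, translation on locally constant functions has only roots of unity as eigenvalues, so cutting down by rational diagonalizability keeps only the constants. Adjunction instead forces, for a $\mathbb{Q}$-vector space $V$, $K(V)=\bigoplus_{j\in\mathbb{Z}}V$ with $\Psi^q$ acting on the $j$th summand by $q^{j(p-1)}$. The clean choice is the extended comodule functor $K=E(1)_0E(1)\otimes_{\mathbb{Z}_{(p)}}(-)$, transported through Bousfield's comodule description of $\mathcal{B}(p)^q$. It is exact because $E(1)_0E(1)$ is free over $\mathbb{Z}_{(p)}$, and this is what your unsupported ``should be exact'' needs.

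Then make your reduction precise. Because $KU$ is exact and $\delta=\Psi^q_{KUM}-KU(\Psi^q_M)$ is natural in $M$, the functors $\ker\delta$ and $\coker\delta$ fit into a six-term exact sequence for every short exact sequence in $\mathcal{B}(p)^q$.
For torsion $M$, $KUM$ is the module of continuous $M$-valued functions on the procyclic group. Continuity plus Shapiro's lemma then give $\ker\delta=M$ and $\coker\delta=0$.
For rational $M$, $\delta$ acts on the $j$th summand of $KUM$ as $q^{j(p-1)}-\Psi^q_M$. So $\ker\delta=M$ and $\coker\delta\cong M$, and the map $KUM\to\coker\delta$ is projection $\pi_j$ onto $M_j$ on the $j$th summand.
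Apply the six-term sequences to $\tors M\subseteq M$ and to $M/\tors M\subseteq M\otimes\mathbb{Q}$ (whose cokernel is torsion). This gives $\ker\delta=M$ and $W(M)\cong M\otimes\mathbb{Q}$ for all $M$. Tracing $f\in\hom_{\mathbb{Z}_{(p)}}(L,M)$ through the adjunction then gives $\sum_i\pi_i\circ(f\otimes\mathbb{Q})|_{L_i}$, which is exactly the sum of the homogeneous components.
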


We will now apply these ideas of Bousfield to get some understanding of the topological behavior of $V$ in the case $E = E(1)$. To avoid any misunderstanding, we reiterate the assumption that has been in force throughout this subsection: $p$ is an {\em odd} prime.
\begin{definition}\label{def of tate twist}
Suppose we are given an integer $q$ not divisible by $p$, and a $\mathbb{Z}_{(p)}$-module $M$ with a $\mathbb{Z}_{(p)}$-linear operator $\Psi^q: M \rightarrow M$. Let $n$ be an integer.
\begin{itemize}
\item We write $(\mathbb{Q}\otimes_{\mathbb{Z}}M)^{\Psi^q = q^{n}}$ for the eigenspace of the $\mathbb{Q}$-vector space $\mathbb{Q}\otimes_{\mathbb{Z}}M$ on which $\Psi^q$ acts as multiplication by the integer $q^n$.
\item We define the {\em $n$th Tate twist of $M$}, written $M(n)$, as the same underlying $\mathbb{Z}_{(p)}$-module $M$, but equipped with the $\mathbb{Z}_{(p)}$-linear operator given by the formula
\begin{align*}
 M&\rightarrow M \\
 m&\mapsto q^n\cdot (\Psi^q.m).
\end{align*}
\end{itemize}
\end{definition}

\begin{definition}
Given an object $M$ of $\mathcal{B}(p)^q$, by the {\em Bousfield complex of $M$} we mean the cochain complex
\begin{equation}\label{bousfield cplx 1} 0 \rightarrow
 \prod_{n\in\mathbb{Z}}M
 \stackrel{d^1}{\longrightarrow}
\prod_{n\in\mathbb{Z}}M
 \stackrel{d^2}{\longrightarrow}
\prod_{n\in\mathbb{Z}} \left( \mathbb{Q}\otimes_{\mathbb{Z}} M\right)^{\Psi^q = q^{n(p-1)}}
 \rightarrow 0,
\end{equation}
where $d^1$ is given 
as follows. We can write any element of $\prod_{n\in\mathbb{Z}}M$
as a vector $\vec{v}$, with $\vec{v}_n$ denoting the component in the $n$th factor in the product. Then $d_1(\vec{v})$ is the vector in 
$\prod_{n\in\mathbb{Z}}M$ 
whose $n$th component $d_1(\vec{v})_n$ is $q^{n(p-1)}\cdot \vec{v}_n - \Psi^q.\vec{v}_n$. The differential $d^2$ is given by letting $d^2(\vec{w})$ be the vector in $\prod_{n\in\mathbb{Z}} \left( \mathbb{Q}\otimes_{\mathbb{Z}} M\right)^{\Psi^q = q^{n(p-1)}}$ whose $n$th component $d^2(\vec{w})_n\in \left( \mathbb{Q}\otimes_{\mathbb{Z}} M\right)^{\Psi^q = q^{n(p-1)}}$ is simply the projection of 
$\vec{w}_n \in M$ to the eigenspace $\left(\mathbb{Q}\otimes_{\mathbb{Z}} M\right)^{\Psi^q = q^{n(p-1)}}$ of $\mathbb{Q}\otimes_{\mathbb{Z}}M(n(p-1))$.
The Bousfield complex is indexed so that 
$\prod_{n\in\mathbb{Z}} \left( \mathbb{Q}\otimes_{\mathbb{Z}} M\right)^{\Psi^q = q^{n(p-1)}}$ is the group of $2$-cochains, and the other two nonzero terms in \eqref{bousfield cplx 1} are the $0$-cochains and $1$-cochains.
\end{definition}
Here is an alternative description of the differentials $d^1$ and $d^2$ in the Bousfield complex. We may think of $M$ as an extension 
\[ 0 \rightarrow \tors(M) \rightarrow M \rightarrow \tf(M) \rightarrow 0,\]
where $\tors(M)$ is the torsion subgroup of $M$, and $\tf(M)$ is then the largest torsion-free quotient of $M$. Recall from Definition \ref{def of bousfields B cat} that $\Psi^q$ is rationally diagonalizable with certain constrained eigenvalues, so we may view $\tf(M)$ as a $\mathbb{Z}_{(p)}$-linear lattice in a diagonal representation $\prod_{n\in\mathbb{Z}} \mathbb{Q}_{triv}(n(p-1))^{m_n}$ for some sequence of cardinal numbers $\dots ,m_{-1},m_0,m_1,m_2,\dots$. Given an element $\vec{w}\in \prod_{n\in \mathbb{Z}} M$, write $\vec{w}_{n,j}$ for the projection of $\vec{w}_n\in M$ to the summand $\mathbb{Q}_{triv}(j(p-1))^{m_j}$ of $\mathbb{Q}_{triv}\otimes_{\mathbb{Z}}M$. Then $d^2(\vec{w})$ is simply the vector with $n$th component $d^2(\vec{w})_n = \vec{w}_{n,n}$. We can get a similarly nice formula for $d^1$ if we neglect the torsion in $M$ so that we may think of the elements of the domain of $d^1$ again as vectors in a lattice in a diagonal representation: we then have $d^1(\vec{v})_{n,j} = \left(q^{n(p-1)} - q^{j(p-1)}\right)\cdot \vec{v}_{n,j}$.

\begin{prop}\label{bousfield cplx prop}
Let $Y$ be any spectrum. 
Then the $s$th cohomology group of the Bousfield complex of $E(1)_*(\Sigma^{s}Y)$ is isomorphic to $R^sV(E(1)_*\Sigma^{s}Y)$, where 
\begin{align*} V: \gr\Comod(E(1)_*E(1)) &\rightarrow \gr\Mod(E(1)_*)\end{align*} is the right adjoint of the extended comodule functor. Both are isomorphic to the $s$th row in the $E_2$-page of the $E(1)$-Adams spectral sequence converging to $[\Sigma^*E(1),L_{E(1)}Y]$. 
\end{prop}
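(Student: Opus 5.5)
The plan is to compare three objects: the $E(1)$-Adams $E_2$-page for $[\Sigma^*E(1),L_{E(1)}Y]$, the derived functors $R^sV$, and Bousfield's $\Ext$ in $\mathcal{B}(p)^q$. The $E_2$-page gives the second claimed isomorphism directly. By Proposition \ref{adams E2 description}, which applies to $E=E(1)$ with $\hat{Y}_E\simeq L_{E(1)}Y$, the $E_2$-page is $R^sV(E(1)_*Y)$, suitably graded. Proposition \ref{I prop 1} identifies this with $\Ext^s_{\gr\Comod(E(1)_*E(1))}(E(1)_*E(1),E(1)_*Y)$. Proposition \ref{cotorsion and relative cotorsion} lets us ignore the relative/absolute distinction, since $E(1)_*E(1)$ is projective (indeed free) over $E(1)_*$. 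What remains is to compute this $\Ext$ and match it with the cohomology of the Bousfield complex. The suspension $\Sigma^s$ in the statement is bookkeeping: the $s$th row in internal degree $t$ involves $E(1)_{*+s}$ shifts, and I would check the indexing against \eqref{e-adams ss 1} at the end.

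The main computation goes through Bousfield's equivalence between $\gr\Comod(E(1)_*E(1))$ and $\mathcal{B}(p)^q_*$, and then formula \eqref{adams e2 description}. This reduces the $\Ext$ to $\coprod_u \Ext^s_{\mathcal{B}(p)^q}(E(1)_{t+u}E(1),E(1)_u Y)$. Here I need the object $E(1)_*E(1)$ in Bousfield's category. In a fixed degree $m$, $E(1)_m E(1)$ is the $\mathbb{Z}_{(p)}$-module of continuous functions on $\mathbb{Z}_p^\times$ (or on $1+p\mathbb{Z}_p$), with $\Psi^q$ acting by translation. It is the ``cofree'' object $\mathcal{B}(p)^q$-comodule analogue. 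Rather than Bousfield's spectral sequence, I would therefore compute $\hom_{\mathcal{B}}(E(1)_*E(1),-)$ directly. A map out of the extended object is determined by $\mathbb{Z}_{(p)}$-linear data, one element of $M$ for each Tate-twist index $n$. This explains the product $\prod_{n\in\mathbb{Z}}M$: $\hom_{\mathbb{Z}_{(p)}}(L,M)$ becomes $\prod_n M$ once the degrees are summed. Under the same identification, the $d_1$ commutator $f\circ\Psi^q-\Psi^q\circ f$ of Theorem \ref{bousfield ss 1} becomes $\vec v_n\mapsto q^{n(p-1)}\vec v_n-\Psi^q\vec v_n$. The last $E_1$ term, $\hom_{\mathcal{B}(p)^q}(L\otimes\mathbb{Q},M\otimes\mathbb{Q})$, becomes the product of eigenspaces $(\mathbb{Q}\otimes M)^{\Psi^q=q^{n(p-1)}}$, since $\mathbb{Q}\otimes L$ is diagonal with one eigenline per twist. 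Its $d_1$ matches $d^2$.

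Next I would substitute $L=E(1)_*E(1)$ into Bousfield's spectral sequence (Theorem \ref{bousfield ss 1}). Since $L$ is torsion-free and its underlying module is built from continuous functions, which is $\mathbb{Z}_{(p)}$-projective after completion, or at least has the relevant $\Ext^1_{\mathbb{Z}_{(p)}}(L,M)$ vanishing, the $s=1$ row should die. If it does, the spectral sequence reduces to the single row $s=0$, whose $E_2$ is exactly the cohomology of the Bousfield complex. Strong convergence and the absence of room for extensions then give $\Ext^s_{\mathcal{B}(p)^q}(L,M)\cong H^s(\text{Bousfield complex of }M)$.

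I expect the main obstacle to be the vanishing of $\Ext^1_{\mathbb{Z}_{(p)}}(E(1)_mE(1),M)$ for arbitrary $M$, because $E(1)_0E(1)$ is not free over $\mathbb{Z}_{(p)}$. It is a large torsion-free module, containing for instance $\mathbb{Q}$-like pieces. If that vanishing fails, I would avoid Bousfield's spectral sequence altogether. Instead I would build a resolution of the extended object in $\mathcal{B}(p)^q$: the short exact sequence $0\to \mathbb{Z}_{(p)}\text{-lattice}\to \cdots$ coming from Bousfield's description of the $E(1)$-Adams resolution of the sphere, namely $S\to E(1)\to \Sigma E(1)\wedge\ldots$ via $\Psi^q-1$ and rationalization. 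I would then check directly that $\hom_{\mathcal{B}}(E(1)_*E(1),-)$ applied to an injective resolution yields the three-term complex. Either way, the identification of the differentials and the product indexing is the delicate bookkeeping step.
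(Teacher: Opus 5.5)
Your outline matches the paper's. You identify the $E_2$-page with $R^sV(E(1)_*Y)\cong\Ext^s(E(1)_*E(1),E(1)_*Y)$, pass to Bousfield's category, and run the spectral sequence of Theorem \ref{bousfield ss 1} with $L=E(1)_0E(1)$. However, the step you single out as the main obstacle is not one, and your claim that $E(1)_0E(1)$ is not free over $\mathbb{Z}_{(p)}$ (with ``$\mathbb{Q}$-like pieces'') is false. Adams--Clarke and Adams--Harris--Switzer prove that $E(1)_*E(1)$ is free over $\mathbb{Z}_{(p)}$. You had in fact already used its freeness over $E(1)_*\cong\mathbb{Z}_{(p)}[v_1^{\pm1}]$, which implies this. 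Hence $\Ext^1_{\mathbb{Z}_{(p)}}(L,M)=0$, the $s=1$ row vanishes, and, degreewise, $R^sV$ is the cohomology of
\[ \hom_{\mathbb{Z}_{(p)}}(L,M)\xrightarrow{\ f\mapsto f\Psi^q-\Psi^q f\ }\hom_{\mathbb{Z}_{(p)}}(L,M)\longrightarrow\hom_{\mathcal{B}(p)^q}(L\otimes\mathbb{Q},M\otimes\mathbb{Q}). \]
This is exactly how the paper collapses the spectral sequence, so your fallback resolution is unnecessary.

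The genuine gap is the next step, identifying this complex with the Bousfield complex. The last term is fine, because $L\otimes\mathbb{Q}\cong\mathbb{Q}[z^{\pm1}]$ with $z=\eta_R(v_1)/v_1$ is diagonalizable. But rewriting the commutator as $\vec v_n\mapsto q^{n(p-1)}\vec v_n-\Psi^q\vec v_n$ requires a $\mathbb{Z}_{(p)}$-basis $\{e_n\}$ of $L$ with $\Psi^q e_n=q^{n(p-1)}e_n$. Equivalently, it requires a splitting $E(1)_*E(1)\cong\coprod_n E(1)_*(n(p-1))$ in $\mathcal{B}(p)^q_*$. You assume this silently. It clashes with your own picture of $L$ as functions with $\Psi^q$ acting by translation, and cofreeness controls maps \emph{into} $E(1)_*E(1)$, not out of it.

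The paper asserts the same splitting, deducing it from a splitting of $E(1)\wedge E(1)$ into copies of $E(1)$. That is at best a splitting of $E(1)$-module spectra, and it cannot respect $\Psi^q$. If it did, the $n=0$ summand $E(1)_*$ would be a retract of the extended comodule $E(1)_*E(1)$, hence relatively injective. Since $E(1)_*$ is projective over itself, absolute and relative $\Ext$ agree, so this would force $\Ext^1(E(1)_*,E(1)_*)=0$. That contradicts the image-of-$J$ class $\alpha_1\in\Ext^{1,2(p-1)}$.

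The statement itself fails for $Y=S^0$ and $s=0$. The Bousfield complex of $\mathbb{Z}_{(p)}$ with trivial $\Psi^q$ has $H^0\cong\mathbb{Z}_{(p)}$, coming from the $n=0$ coordinate. On the other hand, $V(E(1)_*)=0$. In degree $0$ it is $\hom_{\mathcal{B}(p)^q}(L,\mathbb{Z}_{(p)})$. A $\Psi^q$-invariant functional must be a multiple of the constant-term functional on $\mathbb{Q}[z^{\pm1}]$. Since $((z-1)/p)^k=(t_1/v_1)^k\in L$ has constant term $(-1/p)^k$, that multiple must be $0$. Other degrees are handled identically, using the coefficient of $z^j$ in place of the constant term.

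What the Bousfield complex actually computes is $\prod_n\Ext^s_{\mathcal{B}(p)^q}(\mathbb{Z}_{(p)}(n(p-1)),M)$. That is the $E_2$-term for maps out of $\bigvee_n S^{2n(p-1)}$, not out of $E(1)$. What your argument can honestly deliver is the displayed complex with its non-diagonal differential.
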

\begin{proof}
Adams--Clarke \cite{MR454977} and Adams--Harris--Switzer \cite{MR293617} have proven that $E(1)_*E(1)$ is a free $\mathbb{Z}_{(p)}$-module. Consequently, in the case $L = E(1)$, Bousfield's spectral sequence \eqref{ss picture 1} collapses on to the $(s=0)$-row. The connection to $V$ is a simple application of Proposition \ref{I prop 1}. The identification $L_{E(1)}Y\simeq \hat{Y}_{E(1)}$ is classical, an early application of Bousfield's vanishing line conditions from Recollection \ref{bousfield recollection}. 

All that remains is to explain why the $(s=0)$-row in spectral sequence \eqref{ss picture 1} is isomorphic to the Bousfield complex of $E(1)_*Y$. We have the isomorphisms
\begin{align*} 
 \hom_{\gr\Mod(\mathbb{Z}_{(p)})}(E(1)_*E(1),M) &\cong
  \prod_{n\in\mathbb{Z}}M\mbox{\ \ and } \\
 \hom_{\mathcal{B}(p)^q_*}(E(1)_*E(1)\otimes_{\mathbb{Z}}\mathbb{Q},E(1)_*Y\otimes_{\mathbb{Z}}\mathbb{Q}) &\cong 
  \prod_{n\in\mathbb{Z}} \left( \mathbb{Q}\otimes_{\mathbb{Z}} M\right)^{\Psi^q = q^{n(p-1)}}
\end{align*}
because $E(1)_*E(1)$ splits, as an object of $\mathcal{B}(p)^q_*$, as the direct sum \[ \coprod_{n\in\mathbb{Z}} E(1)_*(n(p-1))\] of Tate twists of $E(1)_*$; this is a consequence of the Brown--Gitler--like splitting of the spectrum $E(1)\wedge E(1)$ as the wedge sum $\coprod_{n\in\mathbb{Z}} E(1)\wedge S^{2n(p-1)}$, proven in \cite[section 23]{MR634210}. 

Hence the terms in the $E_1$-page of Bousfield's spectral sequence are precisely the terms in what we have called the Bousfield complex. Showing that the $d_1$-differential in the spectral sequence agrees with the differential in the Bousfield complex is a matter of unwinding, in our special situation, Bousfield's description of the 
$d_1$-differential, recounted in Theorem \ref{bousfield ss 1}. 

For degree reasons, the spectral sequence collapses at $E_2$ with no nontrivial extension problems, so the cohomology of the Bousfield complex recovers the $E_2$-page of the $E(1)$-Adams spectral sequence, as claimed.
\end{proof}
Recall from Definition \ref{def of relative derived cotorsion} that a graded comodule $M$ is said to be {\em relative derived cotorsion} 
if the right derived functor $R^n_{rel}V(M)$ 
vanishes for all $n$. 
We need not worry about the distinction between relative and absolute right-derived functors here, since Proposition \ref{cotorsion and relative cotorsion} implies that $R^*_{rel}V(M)$ vanishes if and only if $R^*V(M)$ vanishes.
\begin{theorem}\label{torsion ht 1 cor}
Let $Y$ be a spectrum satisfying at least one of the following conditions:
\begin{itemize}
\item $E(1)_*Y$ vanishes in all even degrees.
\item $E(1)_*Y$ vanishes in all odd degrees.
\item $E(1)_*Y$ is a torsion group.
\item $\Ext^2_{E(1)_*E(1)}(E(1)_*Y, E(1)_*Y)$ is trivial.
\item $Y$ is a ``generalized $E(1)_*$-Moore spectrum'' in the sense of Bousfield \cite{MR0796907}.
\end{itemize}
Then $R^sV(E(1)_*Y)$ vanishes for all $s>2$, and there exists a two-step filtration of the graded abelian group $\left[\Sigma^* E(1),L_{E(1)}Y\right]$ whose filtration quotients are $V(E(1)_*Y)$ and $R^1V(E(1)_*\Sigma Y)$ and $R^2V(E(1)_*\Sigma^2 Y)$. 
In particular, $E(1)_*Y$ is relative derived cotorsion if and only if the mapping spectrum $F(E(1),L_{E(1)}Y)$ is contractible.
\end{theorem}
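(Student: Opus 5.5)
The plan is to run the $E(1)$-Adams spectral sequence of Proposition \ref{adams E2 description} with $E = E(1)$:
\[ E_2^{s,t}\cong (R^sV(E(1)_*Y))^t \Rightarrow [\Sigma^{t-s}E(1),L_{E(1)}Y]. \]
The identification $\hat{Y}_{E(1)}\simeq L_{E(1)}Y$, and complete convergence, are already recorded in that proposition. By Proposition \ref{bousfield cplx prop}, the $s$th row is the $s$th cohomology of the Bousfield complex. That complex is concentrated in cochain degrees $0,1,2$, so $R^sV(E(1)_*Y)=0$ for $s>2$ holds for \emph{every} $Y$, without using any of the listed hypotheses. The $E_2$-page therefore has at most three nonzero rows, $s=0,1,2$.

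The only possible differential is
\[ d_2: E_2^{0,t}\to E_2^{2,t+1}, \]
and the listed hypotheses are there to kill it. I would check each in turn.
\begin{itemize}
\item If $E(1)_*Y$ is concentrated in even degrees, or in odd degrees, then $d_2$ changes the parity of the internal degree. Concretely, the source row $V$ and the target row $R^2V$ are built from $E(1)_*Y$ in degrees of opposite parity, so one side is zero. I would verify this by tracking the degree shift in \eqref{e-adams ss 1}, since $d_r$ raises $t$ by $r-1$.
\item If $E(1)_*Y$ is torsion, then $\mathbb{Q}\otimes M=0$. The $2$-cochains of the Bousfield complex vanish, so $R^2V=0$.
\item If $\Ext^2_{E(1)_*E(1)}(E(1)_*Y,E(1)_*Y)=0$, I would use naturality, viewing $d_2$ as induced by the $E_2$-page action of $\Ext^*(E(1)_*Y,E(1)_*Y)$. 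That group receives the obstruction via the universal (identity) class, and I would argue that $d_2$ on $E_2^{0,*}$ factors through the $d_2$ of the Adams spectral sequence for $[Y,L_{E(1)}Y]$. The identity map of $Y$ is a permanent cycle, so its $d_2$ vanishes, and $d_2(f)$ is obtained as a Yoneda composite with $d_2(\mathrm{id})$, which is zero.
\item Bousfield's generalized Moore spectra have $\Ext^2$ vanishing on their $E(1)_*$-homology (their homology is of projective dimension $\le 1$ in $\mathcal{B}(p)_*^q$). So this case reduces to the previous one.
\end{itemize}

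Once $d_2$ vanishes, $E_2=E_\infty$. With complete convergence, the abutment then carries a three-stage filtration whose quotients are the rows. Reindexing $t-s$ gives the quotients $V(E(1)_*Y)$, $R^1V(E(1)_*\Sigma Y)$ and $R^2V(E(1)_*\Sigma^2 Y)$. The final claim is immediate in one direction: if all $R^sV$ vanish, the abutment vanishes, so $F(E(1),L_{E(1)}Y)$ has trivial homotopy. Conversely, if the mapping spectrum is contractible, then since $E_2=E_\infty$ every row must be zero, because each row is a filtration quotient of a zero group. Vanishing for all suspensions then gives $R^*V=0$, and Proposition \ref{cotorsion and relative cotorsion} converts absolute to relative.

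The main obstacle is the $\Ext^2$ case: making rigorous that the differential on $V$ is controlled by the class $d_2(\mathrm{id}_Y)$ in $\Ext^2(E(1)_*Y,E(1)_*Y)$. I would try first the pairing of Adams spectral sequences given by composition,
\[ [Y,L Y]\otimes[\Sigma^*E(1),Y]\to[\Sigma^*E(1),LY], \]
using the derivation property of $d_2$. Here a permanent-cycle argument is delicate because $\mathrm{id}_Y$ does not lie in $[\Sigma^*E(1),-]$, so I would also keep in reserve a direct obstruction-theoretic description of $d_2$ on $\hom_{\mathrm{comod}}(E(1)_*E(1),E(1)_*Y)$. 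The parity cases also need care with the exact bidegree bookkeeping.
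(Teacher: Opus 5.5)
Your overall reduction matches the paper's. The $E(1)$-Adams spectral sequence for $[\Sigma^*E(1),L_{E(1)}Y]$ has only the rows $s=0,1,2$ (for every $Y$, via the Bousfield complex), so everything hinges on $d_2\colon E_2^{0,t}\to E_2^{2,t+1}$. Your treatment of the first three hypotheses is correct, and more elementary than the paper's. Since $E(1)_*$ and $E(1)_*E(1)$ are evenly graded, $E_2^{s,t}=0$ for all $t$ of one parity when $E(1)_*Y$ is concentrated in a single parity, while $d_2$ changes the parity of $t$. When $E(1)_*Y$ is torsion, the $2$-cochains of the Bousfield complex vanish, so $d_2$ has zero target (as in Proposition \ref{torsion ht 1 cor a}). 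The paper instead handles all five hypotheses uniformly through Bousfield's formula $d_2(f)=k_Yf\pm fk_X$ \cite[Proposition 8.10]{MR0796907}. Here $k_Y\in\Ext^2(E(1)_*Y,E(1)_*Y)$ is the $E(1)$-theoretic $k$-invariant, and $k_{E(1)}=0$.

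The genuine gap is the fourth hypothesis, and with it the fifth. Your composition-pairing argument cannot work. Writing $f=\mathrm{id}_Y\circ f$, the Leibniz rule gives $d_2(f)=d_2(\mathrm{id}_Y)\circ f+\mathrm{id}_Y\circ d_2(f)=d_2(f)$, which is a tautology. If $d_2(f)$ really were a Yoneda composite with $d_2(\mathrm{id}_Y)$, then you would have proved $d_2=0$ for \emph{every} $Y$, since $\mathrm{id}_Y$ is always a permanent cycle. That argument never uses $\Ext^2(E(1)_*Y,E(1)_*Y)=0$, and it contradicts Proposition \ref{nonzero diffs}. The class governing $d_2$ is not $d_2(\mathrm{id}_Y)$, which is always zero. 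It is Bousfield's $k$-invariant $k_Y$, an additional invariant of $Y$ living in $\Ext^2(E(1)_*Y,E(1)_*Y)$. The missing input is exactly Bousfield's theorem that $d_2$ is Yoneda product with $k_Y$ and $k_{E(1)}$; your reserve ``obstruction-theoretic description'' would have to reprove that theorem. Your reduction of the fifth hypothesis to the fourth is also unsupported. $\Ext^2$ in $\mathcal{B}(p)^q_*$ is typically nonzero even for the simplest objects: Theorem \ref{bousfield ss 1} gives $\Ext^2_{\mathcal{B}(p)^q}(\mathbb{Z}_{(p)},\mathbb{Z}_{(p)})\cong\mathbb{Q}/\mathbb{Z}_{(p)}$, detecting $\pi_{-2}L_{E(1)}S^0$. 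What the paper needs about generalized Moore spectra is that their $k$-invariant vanishes, not that their $\Ext^2$ does.
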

\begin{proof}
For degree reasons, in the $E(1)$-Adams spectral sequence at an odd prime converging to $\left[\Sigma^*X,\Sigma^*\hat{Y}_{E(1)}\right]$, there is no room for nonzero differentials longer than the $d_2$-differential. Bousfield gives a remarkable purely algebraic formula for the $d_2$-differential \cite[Proposition 8.10]{MR0796907}, as follows:
\begin{align}\label{bousfield formula} d_2(f) &= k_Yf \pm fk_{X},\end{align}
where $k_X,k_Y$ are the ``$E(1)$-theoretic $k$-invariants'' of $X$ and $Y$, respectively. The $E(1)$-theoretic $k$-invariant of a spectrum $Y$ is a particular element 
\[ k_Y\in \Ext^2_{\mathcal{B}(p)^r_*}(E(1)_*Y,E(1)_*Y),\] which is zero if $E(1)_*Y$ vanishes in all even degrees or vanishes in all odd degrees. We avoid defining or giving a full treatment to the $E(1)$-theoretic $k$-invariant here, since its only properties that we shall use are the properties we have just described in this paragraph.

The five conditions in the statement of the theorem are each conditions which suffice for the vanishing of $k_Y$. Since $E(1)$ itself is concentrated in even degrees, its own $E(1)$-theoretic $k$-invariant $k_{E(1)}$ is also zero. Hence our assumptions are enough to imply that both $k_{Y}$ and $k_{E(1)}$ vanish, and consequently Bousfield's formula \eqref{bousfield formula} yields the vanishing of the $d_2$-differential in the $E(1)$-Adams spectral sequence converging to $\left[\Sigma^*E(1),\Sigma^*\hat{Y}_{E(1)}\right] \cong \left[\Sigma^*E(1),\Sigma^*L_{E(1)}Y\right]$.

The rest is straightforward from the resulting collapse of the $E(1)$-Adams spectral sequence.
\end{proof}
We regard Corollary \ref{torsion ht 1 cor} as a reasonably satisfying topological interpretation of the right adjoint $V$ of the extended comodule functor $\gr\Mod(E_*)\rightarrow \gr\Comod(E_*E)$, in the case $E = E(1)$: the vanishing of $R^sV(E_*Y)$ for $s=0,1,2$ is equivalent to the contractibility of the mapping spectrum $F(E,L_EY)$, so we may think of $V(E_*Y)$ and $R^1V(E_*Y)$ and $R^2V(E_*Y)$ as measuring the presence of nontrivial maps from $E(1)$ to $L_{E(1)}Y$. 

However, Theorem \ref{torsion ht 1 cor} only furnishes that straightforward relationship when $E(1)_*Y$ satisfies some condition (like the five given conditions in Theorem \ref{torsion ht 1 cor}) which ensures the triviality of the $E(1)$-Adams $d_2$-differential. A nontrivial $E(1)$-Adams $d_2$-differential would complicate any such relationship by allowing $[\Sigma^*E(1),L_{E(1)}Y]$ to be smaller, perhaps much smaller, than $R^*V(E(1)_*Y)$.

We now show that this indeed occurs for some spectra $Y$:
\begin{prop}\label{nonzero diffs}
There exist spectra $Y$ such that $\Ext^2_{\mathcal{B}(p)^r_*}(E(1)_*E(1),E(1)_*Y)$ is nontrivial, and furthermore such that the $d_2$-differential is nonzero in the $E(1)$-Adams spectral sequence converging to $[\Sigma^*E(1), L_{E(1)}Y]$.
\end{prop}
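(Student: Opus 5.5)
The plan is to build $Y$ directly from Bousfield's algebraic classification of $E(1)$-local spectra at odd primes, and then to exhibit a nonzero $d_2$ using formula \eqref{bousfield formula}. In the spectral sequence converging to $[\Sigma^*E(1),L_{E(1)}Y]$ we have $k_{E(1)}=0$, as noted in the proof of Theorem \ref{torsion ht 1 cor}. So for $f$ in the $0$-line $V(E(1)_*Y)\cong \hom_{\mathcal{B}(p)^q_*}(E(1)_*E(1),E(1)_*Y)$ the differential is the Yoneda composite
\[ d_2(f) = k_Y\circ f \in \Ext^2_{\mathcal{B}(p)^q_*}(E(1)_*E(1),E(1)_*Y) \cong R^2V(E(1)_*Y). \]
It therefore suffices to find an object $M$ of $\mathcal{B}(p)^q_*$, a class $k\in\Ext^2_{\mathcal{B}(p)^q_*}(M,M)$ of the appropriate internal degree, and a map $f:E(1)_*E(1)\to M$, such that $k\circ f\neq 0$ and the pair $(M,k)$ is realized by a spectrum.

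\textbf{Step 1: nonvanishing of $\Ext^2$.} I will use Bousfield's spectral sequence (Theorem \ref{bousfield ss 1}) with $L=M=\mathbb{Z}_{(p)}$ carrying trivial $\Psi^q$. In the $s=0$ row, the $d_1$ from $t=1$ to $t=2$ is the inclusion $\mathbb{Z}_{(p)}\to\mathbb{Q}$, since the rational eigenspace for eigenvalue $1$ is all of $\mathbb{Q}$. Hence the $t=2$ term survives as
\[ \Ext^2_{\mathcal{B}(p)^q}(\mathbb{Z}_{(p)},\mathbb{Z}_{(p)})\cong\mathbb{Q}/\mathbb{Z}_{(p)}\neq 0. \]
The same computation via the Bousfield complex (Proposition \ref{bousfield cplx prop}) gives $R^2V\neq 0$ for comodules containing such a torsion-free summand. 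This settles the first assertion for any $Y$ whose $E(1)$-homology contains a suitable Tate-twisted copy of $\mathbb{Z}_{(p)}$, for example $Y$ a sphere or its suspension.

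\textbf{Step 2: choice of $M$ and $k$.} Take $M=\mathbb{Z}_{(p)}\oplus\Sigma\mathbb{Z}_{(p)}$, placed in one even and one odd degree (with the evident Tate twists), so that a degree-shifting class in $\Ext^2$ can run from one summand to the other. Let $k$ be the off-diagonal class corresponding to a nonzero element of $\mathbb{Q}/\mathbb{Z}_{(p)}$. I then invoke Bousfield's realization theorem from \cite{MR0796907}: for $p$ odd, every pair (object of $\mathcal{B}(p)^q_*$, $k$-invariant in $\Ext^2$ of the correct degree) is realized by an $E(1)$-local spectrum $Y$ with $E(1)_*Y\cong M$ and $k_Y=k$. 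Concretely, $Y$ should be the fiber of a map between suitably suspended $E(1)$-local spheres that has $E(1)$-Adams filtration $2$, such as a map detected by an element of $\mathbb{Q}/\mathbb{Z}_{(p)}$ in $\pi_*L_{E(1)}S^0$, the image-of-$J$ type classes in degree $-2$.

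\textbf{Step 3: choice of $f$ and the nonvanishing of $k\circ f$; this is the expected main obstacle.} Let $f$ be the projection of $E(1)_*E(1)\cong\coprod_n E(1)_*(n(p-1))$ onto the summand that maps isomorphically onto the source summand of $k$. The difficulty is to verify that $k\circ f\neq 0$ in $R^2V(E(1)_*Y)$. My first attempt will be to compute inside the Bousfield complex. Precomposition with $f$ should carry the class $k$ to the element of the cokernel of $d^2$ given by the component indexed by the matching Tate twist. Since $d^2$ restricted to that component is again $\mathbb{Z}_{(p)}\to\mathbb{Q}$, the image of a nonzero class of $\mathbb{Q}/\mathbb{Z}_{(p)}$ stays nonzero. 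If identifying Yoneda composition with this map of complexes proves awkward, the fallback is to use naturality of the Bousfield spectral sequence in the first variable: $f$ is a split inclusion in $\mathcal{B}(p)^q_*$, so precomposition with $f$ is injective on $\Ext^2$, and $k\circ f\neq 0$ would follow because $k$ restricts nontrivially to the summand hit by $f$.
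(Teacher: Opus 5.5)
Your Step 3 cannot be carried out, and the obstruction is structural. The map $f$ is meant to be a projection of $E(1)_*E(1)$ onto a summand, using a decomposition $E(1)_*E(1)\cong\coprod_n E(1)_*(n(p-1))$ in $\mathcal{B}(p)^q_*$. That splitting is asserted in the proof of Proposition \ref{bousfield cplx prop}, but it holds only after tensoring with $\mathbb{Q}$.

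Your own Step 1 already shows it fails integrally. A comodule summand of the extended comodule $E(1)_*E(1)$ is relatively injective. If a shift of $E(1)_*$ were such a summand, its $\Ext^2$ with itself would vanish, since relative and absolute $\Ext$ agree when the first variable is $E(1)_*$-projective. That contradicts $\Ext^2\neq 0$ from Step 1.

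In fact there is no nonzero morphism $E(1)_*E(1)\to E(1)_*S^a$ at all. In $E(1)_0E(1)$ put $u=v_1^{-1}\eta_R(v_1)$ and $x=v_1^{-1}t_1=(u-1)/p$. Rationally the powers $u^n$ form a basis of $\Psi^q$-eigenvectors with pairwise distinct eigenvalues. So a morphism in $\mathcal{B}(p)^q$ from (a twist of) $E(1)_0E(1)$ to a Tate twist of $\mathbb{Z}_{(p)}$ is, rationally, $c$ times the coefficient of a single $u^n$. Evaluating on the integral elements $u^nx^m$, whose $u^n$-coefficient is $(-1/p)^m$, forces $c=0$.

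Hence for your $M\cong E(1)_*(S^a\vee S^{a+1})$ the $0$-line $V(M)$ is zero. Since $\Ext^{\geq 3}$ vanishes in $\mathcal{B}(p)^q_*$ (Theorem \ref{bousfield ss 1}), the only possible $d_2$'s start on the $0$-line, so every $d_2$ is zero for your $Y$. No choice of $k$, and no realization in Step 2, can fix this: a $Y$ whose $E(1)$-homology is a sum of shifted copies of $E(1)_*$ never works.

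The fix is to choose $Y$ so that $E(1)_*Y$ genuinely receives maps from $E(1)_*E(1)$.

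1. Bousfield's spectral sequence with $L=E(1)_0E(1)$ (which is $\mathbb{Z}_{(p)}$-free) gives $R^2V(E(1)_*)\neq 0$. It contains a copy of $\mathbb{Q}/\mathbb{Z}_{(p)}$, the cokernel of $\mathbb{Z}_{(p)}\to\mathbb{Q}$, exactly as in your Step 1.
2. Since $k_{S^0}=k_{E(1)}=0$, the spectral sequence for $[\Sigma^*E(1),L_{E(1)}S^0]$ has no differentials. So there is a nonzero $g\colon\Sigma^tE(1)\to L_{E(1)}S^0$ of Adams filtration $2$.
3. Let $Y$ be its cofiber. It is $E(1)$-local, and because $g$ has filtration $2$, $E(1)_*Y\cong E(1)_*\oplus\Sigma^{t+1}E(1)_*E(1)$ as comodules.
4. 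The summand inclusion $f\in V(E(1)_*Y)$ cannot be a permanent cycle. A realization $h$ would make $Y\to\Sigma^{t+1}E(1)$ split, since the composite would be an $E(1)_*$-isomorphism of $E(1)$-local spectra. That would force $g\simeq 0$.
5. So $d_2(f)=k_Y\circ f\neq 0$, and this $Y$ satisfies both assertions.

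The paper's own route does not supply the missing map either. It argues by naturality along the unit $S^0\to E(1)$, and its surjectivity claim on $0$-lines rests on the same splitting. That claim already fails for $Y=S^0$: the source $0$-line $V(E(1)_*)$ is zero, while the target contains $1$.
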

\begin{proof}
Since $\pi_*(E(1)) = E(1)_*(S^0)$ and since we have the unit map $S^0\rightarrow E(1)$ of $E(1)$ as a ring spectrum, by naturality we have the map of $E(1)$-Adams spectral sequences
\[\xymatrix{
 E_2^{s,t} \cong \Ext_{\gr\Comod(E(1)_*E(1))}^s(E(1)_*E(1), E(1)_*Y)^t \ar@{=>}[r]\ar[d] & \left[\Sigma^t E(1), L_{E(1)}Y\right]\ar[d] \\
 {}^IE_2^{s,t} \cong \Ext_{\gr\Comod(E(1)_*E(1))}^s(E(1)_*(S^0), E(1)_*Y)^t \ar@{=>}[r] & \left[\Sigma^t S^0, L_{E(1)}Y\right] .
}
\]
We claim that that this map of spectral sequences is, on the $E_2$-page, surjective on the $(s=0)$-line. 
By Proposition \ref{bousfield cplx prop}, the $(s=0)$-line $E_2^{0,*}$ in the top spectral sequence is the product, over all integers $n$, of the fixed points of the action of $\Psi^q$ on the Tate twist $(E(1)_*Y)(n(p-1))$ of $E(1)_*Y$. Meanwhile, the $(s=0)$-line ${}^IE_2^{0,*}$ in the bottom spectral sequence is the fixed points of the action of $\Psi^q$ on $E(1)_*Y$ itself. The map $E_2^{0,*}\rightarrow {}^IE_2^{0,*}$ is merely the projection to a factor, which is certainly surjective.

It is straightforward to see that, given a map $f$ of spectral sequences and a nonzero differential $d_r(f(x)) = y$ in the codomain spectral sequence, $y$ must lift to an element $\tilde{y}$ in the domain spectral sequence such that $d_r(x) = \tilde{y}$. Hence, if there is a nonzero differential in the $E(1)$-Adams spectral sequence converging to $\pi_*(L_{E(1)}Y)$, then there must be a nonzero differential in the $E(1)$-Adams spectral sequence converging to $[\Sigma^* E(1), L_{E(1)}Y]$. It is well-known that there exist spectra $Y$ admitting nonzero differentials of the former kind, hence they must also admit nonzero differentials of the latter kind.
\end{proof}
Proposition \ref{nonzero diffs} is supposed to be a kind of negative result: it tells us that there is no completely straightforward algebraic relationship between $R^*V(E(1)_*Y)$ and $\left[ \Sigma^*E(1),L_{E(1)}Y\right]$ without making some assumption on $Y$, like the hypothesis that $E(1)_*Y$ vanishes either in all even or in all odd degrees, from Theorem \ref{torsion ht 1 cor}. Presumably this difficulty grows as we consider the Johnson--Wilson theories $E(n)_*$ for increasingly large heights $n$, since even at large primes where the $E(n)$-Adams differentials can admit purely algebraic descriptions as they do at odd primes in the case $n=1$, there is room for increasingly long differentials (up to $d_{n^2+n}$) in the spectral sequence. This makes it all the more surprising that, in the limiting case $n=\infty$ (i.e., the case $E(\infty) = H\mathbb{F}_p$), by Theorem \ref{top thm 1}, one gets a relationship between $R^*V(E(\infty)_*Y)$ and $\left[ \Sigma^*E(\infty),\hat{Y}_{E(\infty)}\right]$ which is {\em simpler} than even the $n=1$ case.

One more remark in the case $n=1$. One can at least get a straightforward algebraic relationship when $Y$ is rationally acyclic, i.e., $H_*(Y;\mathbb{Q})\cong 0$:
\begin{prop}\label{torsion ht 1 cor a}
Let $Y$ be a rationally acyclic spectrum. Then $R^sV(E(1)_*Y)$ vanishes for all $s>1$, and there exists a short exact sequence
\[
0 \rightarrow R^1V(E(1)_*\Sigma Y)
 \rightarrow
\left[\Sigma^* E(1),L_{E(1)}Y\right] \rightarrow  V(E(1)_*Y)  \rightarrow 0.\]
In particular, it is again true that $E(1)_*Y$ is relative derived cotorsion if and only if there are no non-nulhomotopic maps from suspensions of $E(1)$ to the $E(1)$-localization $L_{E(1)}Y$.
\end{prop}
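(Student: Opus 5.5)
The plan is to use the Bousfield complex from Proposition \ref{bousfield cplx prop} to show that $R^2V(E(1)_*Y)=0$ when $Y$ is rationally acyclic, and then read off the result from the $E(1)$-Adams spectral sequence.

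First, the vanishing. For $s>2$ the Bousfield complex has no cochains, so $R^sV$ is zero there. For $s=2$, I will argue that the $2$-cochains already vanish. Since $Y$ is rationally acyclic, $E(1)_*Y\otimes_{\mathbb{Z}}\mathbb{Q}\cong E(1)_*(Y)\otimes\mathbb{Q}\cong H_*(Y;\mathbb{Q})\otimes_{\mathbb{Q}}\pi_*(E(1))\otimes\mathbb{Q} = 0$; this uses the fact that rationally $E(1)\wedge Y$ is $H\mathbb{Q}\wedge Y$ tensored with $\mathbb{Q}[v_1^{\pm1}]$. Consequently $E(1)_*Y$ is a torsion group. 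Every eigenspace $(\mathbb{Q}\otimes_{\mathbb{Z}}M)^{\Psi^q=q^{n(p-1)}}$ with $M=E(1)_*\Sigma^2Y$ is therefore zero, so the $2$-cochains of the Bousfield complex vanish. Proposition \ref{bousfield cplx prop} then gives $R^2V(E(1)_*\Sigma^2Y)=0$, and since the $\Sigma^2$ is only a regrading, $R^2V(E(1)_*Y)=0$ in every internal degree.

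Second, the spectral sequence. By Proposition \ref{bousfield cplx prop}, the $E(1)$-Adams spectral sequence converging to $[\Sigma^*E(1),L_{E(1)}Y]$ has $E_2$-page concentrated on the rows $s=0$ and $s=1$. Every $d_r$ with $r\ge2$ raises $s$ by at least $2$, so all differentials vanish and $E_2=E_\infty$. Convergence is complete by Proposition \ref{adams E2 description} together with Recollection \ref{bousfield recollection}, using $L_{E(1)}Y\simeq\hat{Y}_{E(1)}$. The resulting two-step filtration is exactly the claimed short exact sequence: the quotient is the $s=0$ row, $V(E(1)_*Y)$, and the sub is the $s=1$ row, which after the usual $(t-s)$ reindexing is $R^1V(E(1)_*\Sigma Y)$. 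The final claim follows directly. The comodule $E(1)_*Y$ is relative derived cotorsion if and only if $R^0V$ and $R^1V$ vanish, by Proposition \ref{cotorsion and relative cotorsion} and the vanishing of $R^sV$ for $s\ge2$ just established. By the exact sequence, that holds if and only if $[\Sigma^*E(1),L_{E(1)}Y]=0$.

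I expect the main obstacle to be checking that $R^2V$ really vanishes. This depends on Proposition \ref{bousfield cplx prop} identifying $R^2V$ with the degree-$2$ cohomology of the Bousfield complex. One must make sure the possible $E_1$ contributions at $s=1$ in Bousfield's spectral sequence do not feed into total degree $2$. Those terms are $\Ext^1_{\mathbb{Z}_{(p)}}(E(1)_*E(1),M)$, and they vanish because $E(1)_*E(1)$ is free over $\mathbb{Z}_{(p)}$. That freeness is exactly what the collapse argument in Proposition \ref{bousfield cplx prop} used, so I would cite that argument rather than redo it.
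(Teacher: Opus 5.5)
Your proposal is correct and follows the paper's proof. The paper likewise observes that the $2$-cochains of the Bousfield complex vanish for rationally acyclic $Y$, concludes $R^2V(E(1)_*Y)=0$, and reads off the short exact sequence from the collapsed $E(1)$-Adams spectral sequence; your observation that the $E_2$-page lives only on the rows $s=0,1$, so that collapse is automatic without Bousfield's $d_2$-formula from Theorem \ref{torsion ht 1 cor}, is a slightly cleaner rendering of the paper's ``the rest is as argued'' step.
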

\begin{proof}
For rationally acyclic $Y$, the $2$-cochains are trivial in the Bousfield complex of $Y$. Consequently $R^2V(E(1)_*Y)$ is trivial. The rest is as argued in Theorem \ref{torsion ht 1 cor}.
\end{proof}
Proposition \ref{torsion ht 1 cor a} is curiously similar to Theorem \ref{top thm 1}.

\appendix

\section{Relative homological-algebraic ideas}
This section explains basics of relative homological algebra and its relationship to generalized Adams spectral sequences. The purpose is to provide the necessary background to understand why:
\begin{itemize}
\item in some relevant cases in the literature, a generalized Adams $E_2$-page is described in terms of {\em relative} Ext groups in a category of comodules, 
\item while in other relevant cases in the literature, a generalized Adams $E_2$-page is described in terms of {\em absolute} Ext groups in a category of comodules,
\item but for the purposes of this paper, we may be sloppy about the distinction between the two, since (for slightly nontrivial reasons) the absolute and relative Ext groups agree in the cases of interest for this paper.
\end{itemize}
There is no new mathematics in this section.

\subsection{Review of basics of relative homological algebra}
\label{rel hom alg review}
We recall the ``allowable class'' conventions for relative homological algebra, from \cite[chapter IX]{MR1344215}: an {\em allowable class} in an abelian category $\mathcal{C}$ is simply a class of short exact sequences in $\mathcal{C}$ which is closed under isomorphism and which contains all the short exact sequences whose kernel term or cokernel term is zero. Given an allowable class, a {\em relative epimorphism} is a map which occurs as the epimorphism in one of the short exact sequences in the allowable class. A {\em relative projective} is an object $P$ such that every map from $P$ to the codomain of a relative epimorphism lifts to the domain. Relative monomorphisms and relative injectives are defined dually. A {\em relative injective resolution} of an object $M$ is a resolution of $M$ whose objects are relative injectives and whose differentials each factor as a relative epimorphism followed by a relative monomorphism. The {\em relative right-derived functors} of a functor $F$ are defined as the cohomology of the cochain complex obtained by applying $F$ to a relative injective resolution. Relative $\Ext$-groups are the special case where $F$ is a $\hom$-functor.

When working with categories of comodules over a Hopf algebroid, there are two allowable classes of clear interest:
\begin{itemize}
\item the {\em absolute} allowable class, which consists of {\em all} the short exact sequences in the category of comodules. This corresponds to doing classical, i.e., absolute, non-relative, homological algebra in the category of comodules.
\item The {\em standard} allowable class, defined as follows: given a graded Hopf algebroid $(A,\Gamma)$, the standard allowable class consists of those short exact sequences of graded left $\Gamma$-comodules whose underlying short exact sequences of graded $A$-modules are split. The cohomology of the cobar complex of $(A,\Gamma)$ yields the relative $\Ext$-groups with respect to this allowable class.
\end{itemize}

\subsection{Relative Ext groups and the $E$-Adams $E_2$-page}
\label{rel ext and adams E2}
Given a ring spectrum $E$ satisfying reasonable conditions (see Proposition \ref{adams E2 description}), and given spectra $X$ and $Y$, one can construct the $E$-Adams spectral sequence converging to $\left[ \Sigma^* X,\hat{Y}_E\right]$, as in Proposition \ref{adams E2 description}. 

In the most classical and important cases, the spectrum $X$ is the sphere spectrum, so that the spectral sequence converges to $\pi_*(\hat{Y}_E)$. The $E_2$-page of this spectral sequence is well-known to be the cohomology of the cobar complex of the Hopf algebroid $(\pi_*E,E_*E)$ with coefficients in the graded comodule $E_*Y$; see \cite[appendix 1]{MR860042} for the cobar complex, or \cite[chapter 2]{MR860042} for the construction of the spectral sequence. The comodules in the cobar resolution are not generally injective, though: they are only {\em relatively} injective, with respect to the standard allowable class. Hence the Adams $E_2$-page is given by the {\em relative} $\Ext$-groups $\Ext_{\text{rel.\ }\gr\Comod(E_*E)}^*(\pi_*(E),E_*(Y))$.

On the other hand, it is standard (\cite[section III.15]{MR1324104},\cite[chapter 2]{MR860042}) to describe the $E_2$-page of the $E$-Adams spectral sequence---apparently the same spectral sequence---as the {\em absolute} $\Ext$-groups $\Ext_{\gr\Comod(E_*E)}^*(\pi_*(E),E_*(Y))$, not the relative $\Ext$-groups. This is no mistake: the relative $\Ext$-groups and the absolute $\Ext$-groups coincide in this case. More generally, it is an easy exercise to show that there is a natural isomorphism
\begin{align*}
 \Ext_{\text{rel.\ }\gr\Comod(E_*E)}^*(E_*(X),E_*(Y)) &\cong \Ext_{\gr\Comod(E_*E)}^*(E_*(X),E_*(Y))
\end{align*}
as long as the underlying $\pi_*(E)$-module of $E_*(X)$ is projective.
This is explained in \cite[section III.15]{MR1324104}. See the proof of Proposition \ref{cotorsion and relative cotorsion} for additional references.

If $E_*(X)$ is not projective over $E_*E$, one can still construct the spectral sequence and identify the $E_2$-page (under the same hypotheses as described in Proposition \ref{adams E2 description}) as the {\em absolute} $\Ext$-groups $\Ext_{\gr\Comod(E_*E)}^*(E_*(X),E_*(Y))$. This is an old point, but one that seems not as well-known as it ought to be; see \cite[pages 50 and 54]{MR0251716} and \cite{MR686121} for explanation and discussion, and \cite[last sentence of first paragraph in section 8]{MR0796907} for an illuminating (but brief and oblique) observation about this.

In this paper, we only ever consider $E$-Adams spectral sequences in cases where $X$ is either $E$ or the sphere spectrum. Consequently the hypothesis that $E_*E$ is projective over $\pi_*(E)$ is sufficient to ensure that we need not be careful about whether we are using absolute or relative $\Ext$-groups. In the cases studied in this paper, $E$ is either one of the Eilenberg-Mac Lane spectra $H\mathbb{F}_p$ or $H\mathbb{Q}$, or $E$ is the height $1$ Johnson--Wilson theory $E(1)$ (i.e., the Adams summand of $p$-local periodic complex $K$-theory). In those cases, it is indeed true that $E_*E$ is projective over $\pi_*(E)$. This is trivially seen in the Eilenberg-Mac Lane cases. In the case $E = E(1)$, projectivity of $E_*E$ over $\pi_*E$ is a consequence of the theorem \cite{MR454977},\cite{MR293617} that $E(1)_*E(1)$ is a free $\mathbb{Z}_{(p)}$-module as well as the observation that a graded module over $E(1)_*\cong \mathbb{Z}_{(p)}[v_1^{\pm 1}]$ is free over $E(1)_*$ if and only if it is free over $\mathbb{Z}_{(p)}$. This also implies that the relative right-derived functors of $V$ coincide with the absolute right-derived functors of $V$ in all cases studied in this paper (see Propositions \ref{I prop 1} and \ref{cotorsion and relative cotorsion}).

Hence we need not make much fuss over the distinction between relative and absolute $\Ext$-groups in this paper. In principle one might study some ring spectrum $E$ such that $E_*E$ is flat but not projective over $\pi_*(E)$, and then one would need to be careful about the distinction, e.g. $R^*V(M)$ could fail to agree with $R^*_{rel}V(M)$. That does not occur in this paper, though.

\section{Review of the Steenrod algebra}
\label{Review of the Steenrod algebras}

The basic algebraic structure of the Steenrod algebras is very well-known in stable homotopy theory. The book \cite{MR0145525} is a classical and standard reference. In this section we provide a quick review of these facts for pure algebraists who read this paper and who are curious about the primary topological application but who do not already know the Steenrod algebras.

For each prime $p$, the mod $p$ Steenrod algebra is succinctly described as the algebra of stable natural transformations $H^*(-; \mathbb{F}_p)\rightarrow H^*(-;\mathbb{F}_p)$ of the mod $p$ cohomology functor on topological spaces. Some of these operations do not preserve the grading degree, e.g. the Bockstein operator is a stable operation of the form $H^n(-; \mathbb{F}_p)\rightarrow H^{n+1}(-;\mathbb{F}_p)$ for each $n$. The Steenrod algebra is graded so that the stable operations which raise degree by $d$ are homogeneous of degree $d$. Hence, for example, the Bockstein operator is an element of degree $1$ in the mod $p$ Steenrod algebra.

The mod $p$ Steenrod algebra is an infinite-dimensional, connected, co-commutative, noncommutative Hopf $\mathbb{F}_p$-algebra. For the purposes of this paper, it is most convenient to focus on its linear dual, the mod $p$ {\em dual} Steenrod algebra. 
By the well-known 1958 calculation of Milnor \cite{MR0099653}, the $2$-primary dual Steenrod algebra is isomorphic to a polynomial algebra, $\mathbb{F}_2[\xi_1, \xi_2, \xi_3, \dots]$, with each $\xi_n$ in degree $1-2^n$ using our cohomological grading conventions from \cref{conventions}, and with augmentation $\epsilon(\xi_n) = 0$ and coproduct $\Delta(\xi_n) = \sum_{i=0}^n \xi_i^{2^{n-i}}\otimes \xi_{n-i}$, where $\xi_0$ is understood to be $1$. For an odd prime $p$, Milnor calculated that the $p$-primary dual Steenrod algebra is isomorphic to the tensor product of an exterior and a polynomial algebra, 
\begin{equation*}
\mathbb{F}_p[\xi_1, \xi_2, \xi_3, \dots]\otimes_{\mathbb{F}_p}\Lambda_{\mathbb{F}_p}(\tau_0,\tau_1, \tau_2,\dots),
\end{equation*}
with $\xi_n$ in degree $2(1-p^k)$ and with $\tau_n$ in degree $1-2p^k$, with augmentation $\epsilon(\xi_n) = 0$ and $\epsilon(\tau_n) = 0$, and with coproduct
\begin{align*}
 \Delta(\xi_n)  &= \sum_{i=0}^n \xi_i^{p^{n-i}} \otimes \xi_{n-i},\\
 \Delta(\tau_n) &= \tau_n\otimes 1 + \sum_{i=0}^n \xi_i^{p^{n-i}} \otimes \tau_{n-i}.
\end{align*}
For any space or spectrum $X$, its homology $H_*(X;\mathbb{F}_p)$ is a graded comodule over the mod $p$ dual Steenrod algebra $\Gamma$. A large part of the interest in this comodule structure is because, if one can calculate $\Ext$ in the category of graded comodules over the dual Steenrod algebra, then one can calculate the $E_2$-page of the Adams spectral sequence, which is as follows, using the cohomological grading convention from \cref{conventions}:
\begin{align}
\label{adams ss 1} E_2^{s,t} \cong \Ext^s_{\gr\Comod(\Gamma)}(\mathbb{F}_p,H_{-*}(X;\mathbb{F}_p))^{-t} & \Rightarrow  \pi_{t-s}(\hat{X}_{H\mathbb{F}_p}). 
\end{align}
Here $X_{H\mathbb{F}_p}$ is the ``$H\mathbb{F}_p$-nilpotent completion'' of $X$, described in \cref{Topological interpretation of V}. In the case that $X$ is bounded below (e.g. if $X$ is a pointed space, or what comes to the same thing, the suspension spectrum of a pointed space), then by the analysis in \cite{MR551009}, the stable homotopy groups $\pi_*(X_{H\mathbb{F}_p})$ agree with $L_0\Lambda_p\pi_*(X)$, where $\Lambda_p$ is the $p$-adic completion functor on abelian groups, $L_0\Lambda_p$ is its zeroth left derived functor, and $\pi_*(X)$ denotes the stable homotopy groups of $X$. In particular, if each stable homotopy group $\pi_n(X)$ is known to be finitely generated, then \eqref{adams ss 1} converges to the $p$-adic completion of the stable homotopy groups of $X$.

Adams's original treatment of the spectral sequence identified its $E_2$-page without the need for comodules, as $\Ext^s_{\gr\Mod(\Gamma^*)}\left(\Sigma^t H^*(X;\mathbb{F}_p),\mathbb{F}_p\right)$, but (as Adams was careful about) this requires a finiteness hypothesis on $X$, namely that $X$ must admit a stable CW-decomposition with finitely many cells in each dimension. For general spaces and spectra $X$ without the finiteness condition, one must describe the $E_2$-page in terms of {\em comodule} $\Ext$, as we did above in \eqref{adams ss 1}---or, using the ideas in this paper, use Corollary \ref{adams e2 cor} to describe the Adams $E_2$-page in terms of modules without the finite-type hypothesis.

\bibliographystyle{plain}

\def\cprime{$'$} \def\cprime{$'$} \def\cprime{$'$} \def\cprime{$'$}

\end{document}